\newcommand{\im}{{\rm im}\:}
\newcommand{\Gu}{\underline{G}}
\newcommand{\Z}{{\mathbb Z}}
\newcommand{\Q}{{\mathbb Q}}
\newcommand{\R}{{\mathbb R}}
\newcommand{\Br}{\mathrm{Br}}
\newcommand{\Ga}{\mathrm{Gal}}
\newtheorem{thm}{Theorem}[section]
\newtheorem{lemma}[thm]{Lemma}
\newtheorem{prop}[thm]{Proposition}
\newtheorem{cor}[thm]{Corollary}
\theoremstyle{definition}
\newcommand{\gen}{\mathbf{gen}}
\newcommand{\he}{H_{\mathrm{\acute{e}t}}}
\newcommand{\uG}{\underline{G}}
\begin{document}

\title[Simple algebraic groups with good reduction]{Simple algebraic groups with the same maximal tori, weakly commensurable Zariski-dense subgroups, and good reduction}

\author[V.I.~Chernousov]{Vladimir I. Chernousov}

\author[A.S.~Rapinchuk]{Andrei S. Rapinchuk}

\author[I.A.~Rapinchuk]{Igor A. Rapinchuk}

\begin{abstract}
We provide a new condition for an absolutely almost simple algebraic group to have good reduction with respect to a discrete valuation of the base field which is formulated in terms of the existence  of  maximal tori with special properties. This characterization, in
particular, shows that the Finiteness Conjecture for forms of an absolutely almost simple algebraic group over a finitely generated
field that have good reduction at a divisorial set of places of the field (cf. \cite{RR-survey}) would imply the finiteness of the genus
of the group at hand. It also leads to a new phenomenon that we refer to as ``killing the genus by a purely transcendental extension."
Yet another application  deals with the investigation of ``eigenvalue rigidity" of Zariski-dense subgroups (cf. \cite{R-ICM}), which in turn is
related to the analysis of length-commensurable Riemann surfaces and general locally symmetric spaces. Finally, we analyze the Finiteness Conjecture and the genus problem for
simple algebraic groups of type $\textsf{F}_4$.
\end{abstract}

\address{Department of Mathematics, University of Alberta, Edmonton, Alberta T6G 2G1, Canada}

\email{vladimir@ualberta.ca}

\address{Department of Mathematics, University of Virginia,
Charlottesville, VA 22904-4137, USA}

\email{asr3x@virginia.edu}

\address{Department of Mathematics, Michigan State University, East Lansing, MI
48824, USA}

\email{rapinchu@msu.edu}

\maketitle

\hfill \parbox[t]{5cm}{\it To Gopal Prasad}

\vskip3mm

\section{Introduction}\label{S:Intro}

Let $G$ be a reductive affine algebraic group over a field $k$. Given a discrete valuation $v$ of $k$, we denote by $k_v$ the corresponding completion, with valuation ring $\mathcal{O}_v$ and residue field $k^{(v)}$. We recall that $G$ has \emph{good reduction} at $v$ if there exists a reductive group scheme $\mathscr{G}$ over $\mathcal{O}_v$  whose generic fiber $\mathscr{G}  \times_{\mathcal{O}_v} k_v$ is isomorphic to $G \times_k k_v$;  then the closed fiber $\mathscr{G} \times_{\mathcal{O}_v} k^{(v)}$ is called the {\it reduction} of $G$ at $v$ and will be denoted $\uG^{(v)}$ (see \S\ref{S:GR} for more details, including the uniqueness of reduction, and variations). The focus of the recent work \cite{CRR3}, \cite{CRR4}, \cite{CRR-Spinor}, \cite{RR-tori1}, \cite{RR-tori2} was on the analysis of $k$-forms of $G$ that have good reduction at all valuations in some natural set $V$ of discrete valuations of $k$. We refer the reader to the survey \cite{RR-survey} for a detailed discussion of this problem and some natural choices for $k$ and $V$. One is particularly interested in the case
where $k$ is a finitely generated field and $V$ is a {\it divisorial set} of valuations of $k$ (which means that $V$ consists of the discrete valuations that correspond to all prime divisors on a model $\mathfrak{X}$ of $k$, i.e. an irreducible separated normal scheme of finite type over $\mathbb{Z}$ with function field $k$ --- see \cite[5.3]{RR-survey}). In this case, there is the following  \underline{Finiteness Conjecture} (cf. \cite[Conjecture 5.7]{RR-survey}): {\it the set of $k$-isomorphism classes of $k$-forms of $G$ that have good reduction at all $v \in V$ is finite} (at least when the characteristic of $k$ is ``good"). This conjecture has been established in a number of cases, but the general case remains the focus of ongoing work. Its significance for the current effort to develop the arithmetic theory of algebraic groups over higher-dimensional fields is predicated on deep connections with other important problems. In particular, the validity of the conjecture for an absolutely almost simple simply connected $k$-group $G$ and any divisorial set of places of $k$ would imply the properness of the global-to-local map $H^1(k , \overline{G}) \to \prod_{v \in V} H^1(k_v , \overline{G})$ in Galois cohomology for the corresponding adjoint group $\overline{G}$ (cf. \cite[\S 6]{RR-survey}). In the present paper, we will focus on several other applications of the Finiteness Conjecture, particularly those related to the {\it genus problem} for absolutely almost simple algebraic groups. This includes a new phenomenon that we have termed ``killing the genus by a purely transcendental extension," and the investigation of ``eigenvalue  rigidity" of Zariski-dense subgroups (cf. \cite{R-ICM}) --- the latter is related to the analysis of length-commensurable Riemann surfaces and general locally symmetric spaces in differential geometry (cf. \cite{PR-WC}, \cite{PR-Appl}). Finally, we develop new techniques for tackling the genus problem for some groups of type $\textsf{F}_4$ and obtain several finiteness results in this case.

To prepare for the discussion of  the genus problem, we recall that two semisimple algebraic groups $G_1$ and $G_2$ defined over a field $k$ are said to have the {\it same isomorphism classes of maximal $k$-tori} if every maximal $k$-torus $T_1$ of $G_1$ is $k$-isomorphic to some maximal $k$-torus $T_2$ of $G_2$, and vice versa. We then define the ($k$-)genus $\gen_k(G)$ (resp., the extended ($k$-)genus $\gen^+_k(G)$) of an absolutely almost simple $k$-group $G$ as the set of $k$-isomorphism classes of {\it inner} $k$-forms of $G$ (resp., {\it all} $k$-forms of $G$) that have the same isomorphism classes of maximal $k$-tori as $G$. (We note that we always have the inclusion $\gen_k(G) \subset \gen^+_k(G)$, which, in fact, is an equality whenever $k$ is finitely generated --- see Corollary \ref{C:+}.)
The analysis of the genus is the subject of the {\it genus problem}. In particular, one expects to prove that the genus is always finite whenever the field $k$ is finitely generated (of ``good" characteristic) and is trivial in some special situations (see \cite[\S 8]{RR-survey}). One of our main results is the following theorem that relates the genus problem to good reduction.

\begin{thm}\label{T:GoodReduction1}
Let $G$ be an absolutely almost simple linear algebraic group over a field $k$ and let $v$ be a discrete valuation of $k$. Assume that the residue field $k^{(v)}$ is finitely generated and that $\mathrm{char}\: k^{(v)} \neq 2$ if $G$ is of type $\textsf{B}_{\ell}$ $(\ell \geq 2)$.
If $G$ has good reduction at $v$, then any $G' \in \gen_k(G)$ also has good reduction at $v$. Moreover, the
reduction ${\uG'}^{(v)}$ lies in the genus $\gen_{k^{(v)}}(\uG^{(v)})$ of the reduction $\uG^{(v)}$.
\end{thm}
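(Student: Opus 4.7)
\smallskip

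\noindent\textbf{Proof proposal.} The plan is to deduce the theorem directly from the new characterization of good reduction in terms of maximal tori, which is the conceptual centerpiece of the paper (advertised in the abstract and presumably established in an earlier section). That characterization should say: an absolutely almost simple $k$-group $H$ has good reduction at $v$ if and only if $H$ contains a maximal $k$-torus $T$ whose base change $T \times_k k_v$ is ``of good type'' at $v$ in a purely tori-theoretic sense --- concretely, that $T$ is unramified at $v$ (i.e.\ the inertia group acts trivially on its cocharacter lattice) and its Galois-module cocharacter lattice matches that of a maximal torus of a reductive $\cO_v$-model. The essential observation is that this characterization depends only on the $k$-isomorphism class of the torus $T$, not on its embedding in $H$.

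Granting this, the first claim is almost formal. Since $G$ has good reduction at $v$, we obtain a maximal $k$-torus $T \subset G$ of the distinguished type above. Because $G' \in \gen_k(G)$, there is a maximal $k$-torus $T' \subset G'$ with $T' \simeq T$ as $k$-tori. The ``good type'' condition at $v$ is an intrinsic property of the $k$-torus (it reads off the cocharacter lattice and the Galois action), so $T'$ also satisfies it; the characterization, applied in the reverse direction to $G'$, then yields good reduction of $G'$ at $v$. This is the step where the residue-characteristic hypothesis for type $\textsf{B}_\ell$ enters, since it will be used when invoking the characterization.

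For the second statement, I would argue by transferring tori in both directions between the reductions. Given a maximal $k^{(v)}$-torus $\bar{T}$ of $\uG^{(v)}$, standard Hensel/smoothness results for maximal tori of reductive group schemes over $\cO_v$ lift $\bar{T}$ to a maximal $\cO_v$-torus of $\cG$; combining with the density of maximal $k$-tori in the $v$-adic topology (as in \cite{RR-tori1}), we descend to a maximal $k$-torus $T$ of $G$ whose integral closure over $\cO_v$ has closed fiber $k^{(v)}$-isomorphic to $\bar{T}$. The genus hypothesis furnishes $T' \subset G'$ with $T' \simeq T$ over $k$. Because $G'$ now has good reduction at $v$ (by the first part), $T'$ is unramified at $v$ and extends to an $\cO_v$-torus of $\cG'$; its closed fiber $\bar{T}'$ is a maximal $k^{(v)}$-torus of $\uG'^{(v)}$, and reducing the $k$-isomorphism $T \simeq T'$ yields $\bar{T} \simeq \bar{T}'$ over $k^{(v)}$. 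The symmetric argument, using that $\gen_k(G) = \gen_k(G')$ once we know both have good reduction, completes the proof that $\uG'^{(v)} \in \gen_{k^{(v)}}(\uG^{(v)})$.

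The main obstacle is the tori-lifting/descent step in the second part: one must ensure that a maximal $k^{(v)}$-torus $\bar{T}$ of $\uG^{(v)}$ is realized, up to $k^{(v)}$-isomorphism, as the reduction of a maximal $k$-torus of $G$. Controlling the Galois action through this lift-and-approximate procedure (so that the cocharacter lattices match over $k^{(v)}$, not merely abstractly) is the delicate point, and this is where the finite generation of $k^{(v)}$ and the restriction on residual characteristic in type $\textsf{B}_\ell$ are critical. Once that lifting lemma is in hand, the rest of the argument is a direct application of the torus-theoretic characterization of good reduction together with the defining property of the genus.
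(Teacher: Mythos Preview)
Your overall strategy---characterize good reduction via the existence of maximal tori with special properties, then transfer such tori through the genus hypothesis---is exactly the paper's approach. But your formulation of the characterization is too optimistic, and this creates a genuine gap.

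You posit that good reduction is equivalent to possessing \emph{one} maximal torus of a ``good type'' that is ``an intrinsic property of the $k$-torus (it reads off the cocharacter lattice and the Galois action).'' This fails on two counts. First, the actual criterion (Theorem~\ref{T:GR-not}) requires the torus to be \emph{$\mathcal{K}$-generic}, i.e.\ the image of Galois in $\mathrm{Aut}(\Phi(G,T))$ contains the Weyl group. Genericity is not intrinsic to $T$: it depends on the embedding $T\hookrightarrow G$, and transferring it to $T'\subset G'$ requires knowing that $G'$ is an \emph{inner} twist of $G$ (Corollary~\ref{C:+}, then the argument in Corollary~\ref{C:notA1Bl}). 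Second, and more seriously, for types $\textsf{A}_1$ and $\textsf{B}_\ell$ a single generic torus with unramified splitting field is \emph{not} sufficient for good reduction---Example~6.5 gives explicit counterexamples. For these types the correct criterion (Theorem~\ref{T:AB}) demands \emph{two} generic tori $T_1,T_2$ with unramified splitting fields satisfying $\mathcal{K}_{T_1}\cap\mathcal{K}_{T_2}=\mathcal{K}$. Your single-torus transfer argument would therefore not close for precisely the types where the characteristic hypothesis bites. (The two-torus version does transfer through the genus, since isomorphic tori have identical splitting fields; but you need to have the right statement in hand.)

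For the second assertion your outline is close, but you add an unnecessary descent step. The paper works over $k_v$ throughout: by Corollary~\ref{C:gen} one already has $G'\in\gen_{k_v}(G)$, so there is no need to approximate by $k$-tori. The subtle point you gloss over is that a maximal $k_v$-torus $T'$ of $G'$ with unramified splitting field need not itself be the generic fiber of a torus in the model $\mathscr{G}'$; Theorem~\ref{T:unram-split-field} produces a \emph{different} maximal torus $S'\subset G'$, conjugate to $T'$ by a $k_v$-defined isomorphism, that \emph{is} such a generic fiber. One then invokes Grothendieck--Serre (Theorem~\ref{T:Nisn}) to upgrade the $k_v$-isomorphism of tori to an $\mathcal{O}_v$-isomorphism before reducing.
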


It should be pointed out that the proof of this theorem is based on an entirely new approach to good reduction of simple algebraic groups that shows
that the existence of good reduction can be characterized in terms of the presence of maximal tori with certain specific properties --- see Theorems \ref{T:GR-not} and \ref{T:AB} for precise statements. This approach enables us to extend to absolutely almost simple groups the techniques developed
earlier in \cite{CRR-Bull}, \cite{CRR3}, \cite{CRR-Isr}, and \cite{RR-manuscr} to analyze the genus of a division algebra. In particular, just like the finiteness
of the $n$-torsion of the unramified Brauer group ${}_n\mathrm{Br}(k)_V$ of a finitely generated field $k$ with respect to a divisorial set of places $V$
(provided that $n$ is prime to $\mathrm{char}\: k$) implies the finiteness of the genus of any central simple algebra $D$ of degree $n$ over $k$ (cf. \cite{CRR-Bull}, \cite{CRR3}), the above Finiteness Conjecture, in view of the following corollary of Theorem \ref{T:GoodReduction1}, would imply the finiteness of the genus of any absolutely almost simple algebraic $k$-group.

\begin{cor}\label{C:genus-GR}
Let $G$ be an absolutely almost simple algebraic group over an infinite finitely generated field $k$, and let $V$ be a divisorial set of places of $k$. Assume that $\mathrm{char}\: k \neq 2$ if $G$ is of type $\textsf{B}_{\ell}$ $(\ell \geq 2)$. Then there exists a finite subset $S \subset V$ such that every $G' \in \gen_k(G)$ has good reduction at all $v \in V \setminus S$.
\end{cor}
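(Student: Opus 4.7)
The plan is to combine Theorem~\ref{T:GoodReduction1} with a standard spreading-out argument in order to produce a cofinite subset of $V$ on which all the hypotheses of that theorem hold for $G$. Fix a model $\mathfrak{X}$ of $k$ (an irreducible, separated, normal scheme of finite type over $\Z$ with function field $k$) whose prime divisors give rise to the valuations in $V$. The group $G$, being affine and of finite presentation over $k$, spreads out to an affine flat group scheme $\mathscr{G}$ over a nonempty open $U \subset \mathfrak{X}$; since the locus where the geometric fibers are reductive is open and contains the generic point of $\mathfrak{X}$, after shrinking $U$ we may assume $\mathscr{G}$ is reductive on all of $U$. Because $\mathfrak{X}$ is Noetherian, the closed subscheme $\mathfrak{X} \setminus U$ has only finitely many irreducible components of codimension $1$ in $\mathfrak{X}$; accordingly, only finitely many valuations $v \in V$ come from prime divisors $D_v$ contained in $\mathfrak{X} \setminus U$. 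Let $S_0 \subset V$ be this finite set. For every $v \in V \setminus S_0$, the generic point of $D_v$ lies in $U$, and pulling $\mathscr{G}$ back along $\mathrm{Spec}\,\mathcal{O}_v \to U$ exhibits good reduction of $G$ at $v$.

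Next, for every $v \in V$ the residue field $k^{(v)}$ is the function field of the prime divisor $D_v$, which is itself of finite type over $\Z$, so $k^{(v)}$ is finitely generated. It remains to control the residue characteristic in the $\textsf{B}_\ell$ case. When $\mathrm{char}\: k > 0$, the hypothesis forces $\mathrm{char}\: k^{(v)} = \mathrm{char}\: k \neq 2$ for every $v \in V$, so no further exclusion is needed. When $\mathrm{char}\: k = 0$, a valuation $v \in V$ satisfies $\mathrm{char}\: k^{(v)} = 2$ if and only if $D_v$ is vertical over $(2) \subset \mathrm{Spec}\,\Z$, i.e.\ $D_v$ is an irreducible component of the fiber $\mathfrak{X} \times_\Z \F_2$; this fiber is of finite type over $\F_2$ and therefore has only finitely many irreducible components. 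Let $S_1$ denote the resulting finite set when $G$ is of type $\textsf{B}_\ell$, and set $S_1 = \emptyset$ otherwise.

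Setting $S = S_0 \cup S_1$, every $v \in V \setminus S$ satisfies the full list of hypotheses of Theorem~\ref{T:GoodReduction1} (the residue field $k^{(v)}$ is finitely generated, $G$ has good reduction at $v$, and $\mathrm{char}\: k^{(v)} \neq 2$ in the $\textsf{B}_\ell$ case), so every $G' \in \gen_k(G)$ has good reduction at $v$, as required. There is no genuine obstacle here once Theorem~\ref{T:GoodReduction1} is available: the corollary reduces to two routine finiteness statements about a Noetherian model, namely that only finitely many prime divisors are contained in any proper closed subscheme and only finitely many prime divisors are vertical over any given rational prime.
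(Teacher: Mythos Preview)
Your proof is correct and follows essentially the same approach as the paper: find a finite set $S$ outside of which $G$ has good reduction, enlarge it (in the $\textsf{B}_\ell$ case) by the finitely many $v$ with residue characteristic $2$, then invoke Theorem~\ref{T:GoodReduction1}. The paper's argument is terser---it simply asserts that such a finite $S$ exists and that one can throw in the places with $v(2)\neq 0$---whereas you spell out the spreading-out and Noetherian finiteness underlying those claims, but the logical structure is identical.
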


Next, applying Theorem \ref{T:GoodReduction1}, in conjunction with the theorem of Raghunathan-Ramanathan \cite{RagRam}, we obtain the following statement
concerning the effect of a purely transcendental base change on the genus.
\begin{thm}\label{T:transc}
Let $G$ be an absolutely almost simple algebraic group over a finitely generated field $k$ of characteristic $\neq 2$, and let $K = k(x)$ be the field of rational functions. Then any $H \in \gen_K(G \times_k K)$ is of the form $H = H_0 \times_k K$ for some $H_0 \in \gen_k(G)$.
\end{thm}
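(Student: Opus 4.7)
The plan is to combine Theorem \ref{T:GoodReduction1} with the theorem of Raghunathan--Ramanathan \cite{RagRam}. For each closed point $p$ of $\mathbb{P}^1_k$, let $v_p$ denote the corresponding discrete valuation of $K = k(x)$; its residue field $k^{(v_p)}$ is a finite, hence finitely generated, extension of $k$ of the same characteristic. Since $G \times_k K$ trivially has good reduction at $v_p$ (take the reductive model $G \times_k \mathcal{O}_{v_p}$), Theorem \ref{T:GoodReduction1} ensures that $H$ too has good reduction at each $v_p$, with a reductive extension $\mathscr{H}_p$ over $\mathcal{O}_{v_p}$. Uniqueness of reductive extensions across codimension-one points of the regular scheme $\mathbb{P}^1_k$, combined with standard patching, glues the $\mathscr{H}_p$ with the generic fibre $H$ into a reductive group scheme $\mathscr{H}$ over all of $\mathbb{P}^1_k$. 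The theorem of Raghunathan--Ramanathan, applied in its formulation for reductive group schemes over the affine (equivalently, projective) line, then forces $\mathscr{H} \simeq H_0 \times_k \mathbb{P}^1_k$ for some reductive $k$-group $H_0$, so that passing to the generic fibre yields $H \simeq H_0 \times_k K$ as required.

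To verify $H_0 \in \gen_k(G)$, let $T_0 \subset G$ be any maximal $k$-torus. Then $T_0 \times_k K \subset G \times_k K$ is a maximal $K$-torus, so by the $K$-genus hypothesis there is a maximal $K$-torus $T' \subset H_0 \times_k K$ together with a $K$-isomorphism $T' \simeq T_0 \times_k K$. Composing yields a closed $K$-embedding $T_0 \times_k K \hookrightarrow H_0 \times_k K$ with maximal-torus image. This morphism spreads out to a closed immersion of group schemes $T_0 \times_k U \hookrightarrow H_0 \times_k U$ over some nonempty open $U \subset \mathbb{A}^1_k$; after shrinking $U$ so that the open conditions ``closed immersion'' and ``image is a maximal torus of the fibre'' are preserved at every point, specialization at any $k$-rational point $a \in U(k)$ (which exists whenever $k$ is infinite; the case of finite $k$ is handled by classical finiteness of forms of $G$) gives an embedding of $T_0$ as a maximal $k$-torus of $H_0$. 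The reverse inclusion---every maximal $k$-torus of $H_0$ embeds as a maximal $k$-torus of $G$---is proved symmetrically by spreading out the analogous $K$-embedding into $G \times_k K$ over $G \times_k \mathbb{A}^1_k$.

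The principal technical hurdle is the initial local-to-global step: extracting a single reductive group scheme $\mathscr{H}$ on $\mathbb{P}^1_k$ from the compatible reductive extensions $\mathscr{H}_p$ at each closed point. This is a purity-type statement for reductive group schemes on regular bases of dimension one which, while standard in spirit, requires care with the cocycle compatibilities and crucially invokes the uniqueness part of good reduction. Once $\mathscr{H}$ is in hand, the application of Raghunathan--Ramanathan is essentially formal, and the descent of tori in the preceding paragraph is a routine spreading-out-and-specialization argument.
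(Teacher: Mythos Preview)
Your overall strategy --- good reduction everywhere plus Raghunathan--Ramanathan --- matches the paper's, but there is a genuine gap in your invocation of the latter. Raghunathan--Ramanathan (Theorem~\ref{T:RagRam}) is a statement about principal $G$-bundles on $\mathbb{A}^1_k$ that become \emph{trivial} over $\mathbb{A}^1_{k^{\mathrm{sep}}}$; it is not a blanket assertion that every reductive group scheme over the affine or projective line is constant. Translating to your setting, once you view $\mathscr{H}$ as the twist of the constant scheme by a class $\zeta \in H^1_{\mathrm{\acute{e}t}}(\mathbb{A}^1_k,\overline{G})$ (with $\overline{G}$ the adjoint group), you must check that $\zeta$ dies in $H^1_{\mathrm{\acute{e}t}}(\mathbb{A}^1_{k^{\mathrm{sep}}},\overline{G})$. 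The paper does this in two nontrivial steps (Proposition~\ref{P:A1}): first, it observes that $H$ contains a maximal $K$-torus isomorphic to $T \times_k K$ for a $k$-torus $T\subset G$, hence $H$ \emph{splits} over $k^{\mathrm{sep}}(t)$, so the image of $\zeta$ in $H^1(k^{\mathrm{sep}}(t),\overline{G})$ vanishes; second, it proves that the generic-fibre map $H^1_{\mathrm{\acute{e}t}}(\mathbb{A}^1_{k^{\mathrm{sep}}},\overline{G}) \to H^1(k^{\mathrm{sep}}(t),\overline{G})$ has trivial kernel, via a double-coset/strong-approximation argument identifying that kernel with a class-number set that is shown to be trivial. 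Neither step appears in your sketch, and without them the conclusion $\mathscr{H}\simeq H_0\times_k \mathbb{P}^1_k$ is not justified.

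Two smaller points. Your ``standard patching'' is exactly the content of Harder's result (Proposition~\ref{P:Harder} and Corollary~\ref{C-CorHarder}); the paper invokes it explicitly rather than treating it as routine, and you should too, since it uses weak approximation and is not a pure fpqc-gluing statement. On the other hand, your spreading-out-and-specialization argument for $H_0\in\gen_k(G)$ is a legitimate alternative to the paper's route, which simply applies the second clause of Theorem~\ref{T:GoodReduction1} at the valuation associated to $x$ (residue field $k$) to conclude that the reduction $H_0$ lies in $\gen_k(G)$ directly; the paper's way is shorter, but yours is fine.
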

In view of \cite[Theorem 7.5]{PR-WC}, the following is an immediate consequence of Theorem \ref{T:transc}.
\begin{cor}\label{C:transc}
Let $G$ be an absolutely almost simple simply connected algebraic group over a number field $k$, and let $K = k(x_1, \ldots , x_m)$ be the field of rational functions in $m \geqslant 1$ variables. Then the genus $\gen_K(G \times_k K)$ is finite, and in fact reduces to a single element if the type of $G$ is different from $\textsf{A}_{\ell}$ $(\ell > 1)$, $\textsf{D}_{2\ell + 1}$ $(\ell > 1)$, and $\textsf{E}_6$.
\end{cor}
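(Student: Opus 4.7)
The plan is to reduce from the rational function field $K = k(x_1, \ldots, x_m)$ down to the number field $k$ by iterated application of Theorem~\ref{T:transc}, and then to invoke the known finiteness of the genus over a number field. Set $k_0 = k$ and $k_j = k(x_1, \ldots, x_j)$ for $1 \leq j \leq m$, so that $k_j = k_{j-1}(x_j)$ is a simple purely transcendental extension of the finitely generated field $k_{j-1}$, which has characteristic $0$ and hence $\neq 2$. The base change $G \times_k k_j$ remains absolutely almost simple over $k_j$ for every $j$, since absolute almost simplicity is preserved under arbitrary field extensions, so the hypotheses of Theorem~\ref{T:transc} will be available at every stage.

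Given $H \in \gen_K(G \times_k K) = \gen_{k_m}(G \times_k k_m)$, I would first apply Theorem~\ref{T:transc} over the base field $k_{m-1}$ with variable $x_m$ to produce $H_{m-1} \in \gen_{k_{m-1}}(G \times_k k_{m-1})$ such that $H \cong H_{m-1} \times_{k_{m-1}} k_m$. Iterating this descent for $j = m-1, m-2, \ldots, 1$, at each step applying Theorem~\ref{T:transc} to the pair $(k_{j-1}, \, k_j = k_{j-1}(x_j))$, one obtains a chain $H_{m-1}, H_{m-2}, \ldots, H_0$ with $H_j \in \gen_{k_j}(G \times_k k_j)$ and $H_j \cong H_{j-1} \times_{k_{j-1}} k_j$, culminating in an $H_0 \in \gen_k(G)$ with $H \cong H_0 \times_k K$. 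In other words, the base-change map $\gen_k(G) \longrightarrow \gen_K(G \times_k K)$, $H_0 \mapsto H_0 \times_k K$, is surjective.

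By \cite[Theorem 7.5]{PR-WC}, over the number field $k$ the genus $\gen_k(G)$ of an absolutely almost simple simply connected group $G$ is finite, and in fact consists of a single element whenever the type of $G$ is not among $\textsf{A}_\ell$ ($\ell > 1$), $\textsf{D}_{2\ell+1}$ ($\ell > 1$), and $\textsf{E}_6$. Combined with the surjectivity above, this immediately yields the asserted finiteness of $\gen_K(G \times_k K)$, as well as its triviality in the cases not on the list. There is no genuine obstacle beyond the bookkeeping of the iterative descent and the verification that finite generation, characteristic $\neq 2$, and absolute almost simplicity propagate through each intermediate field $k_j$; all of the substantive work has been discharged by Theorem~\ref{T:transc} and by the cited theorem of Prasad--Rapinchuk.
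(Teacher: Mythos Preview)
Your proof is correct and follows exactly the approach the paper indicates: iterate Theorem~\ref{T:transc} to descend from $K = k(x_1,\ldots,x_m)$ to the number field $k$, then invoke \cite[Theorem 7.5]{PR-WC}. The paper simply states that the corollary is an immediate consequence of Theorem~\ref{T:transc} and \cite[Theorem 7.5]{PR-WC}, and your argument spells this out in detail.
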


Theorem \ref{T:transc} prompts the question of whether for an absolutely almost simple algebraic group $G$ over a field $k$ and any $G' \in \gen_k(G)$, the
group $G' \times_k K$ obtained by base change to the field of rational functions $K = k(x)$ lies in $\gen_K(G \times_k K)$. It turns out that not only is the answer to this question negative, but in fact one should expect an opposite phenomenon that we have termed ``killing the genus by a purely transcendental extension." The nature of this phenomenon reveals itself in the following two statements.
\begin{thm}\label{T:Kill1}
Let $A$ be a central simple algebra of degree $n$ over a finitely generated field $k$, and let $G = \mathrm{SL}_{1 , A}$. Assume that $\mathrm{char}\: k$ is prime to $n$, and let $K = k(x_1, \ldots , x_{n-1})$ be the field of rational functions in $(n-1)$ variables. Then $\gen_K(G \times_k K)$ consists of (the isomorphism classes of) groups of the form $H \times_k K$, where $H = \mathrm{SL}_{1 , B}$ and $B$ is a central simple algebra of degree $n$ such that its class $[B]$ in the Brauer group $\Br(k)$ generates the same subgroup as the class $[A]$.
\end{thm}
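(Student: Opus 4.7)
The plan is to combine the good-reduction characterization of the genus (Theorem \ref{T:GoodReduction1}) with purity for Brauer groups over purely transcendental extensions, and then convert the resulting equality of maximal étale $K$-subalgebras into a constraint on Brauer classes. Every $H \in \gen_K(G \times_k K)$ is an inner $K$-form of $\mathrm{SL}_{1, A \otimes_k K}$, so $H = \mathrm{SL}_{1, \mathcal{B}}$ for some central simple $K$-algebra $\mathcal{B}$ of degree $n$; the task is to show that $\mathcal{B} \cong B \otimes_k K$ for a central simple $k$-algebra $B$ of degree $n$ with $\langle [B] \rangle = \langle [A] \rangle$ in $\Br(k)$, together with the converse.

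\emph{Descent step.} Let $V_0$ be the set of discrete valuations of $K$ corresponding to the prime divisors of $\A^{n-1}_k = \mathrm{Spec}\: k[x_1, \dots, x_{n-1}]$; these are trivial on $k$ with residue fields finitely generated over $k$. For each $v \in V_0$, the algebra $A \otimes_k K$ manifestly extends to the Azumaya algebra $A \otimes_k \mathcal{O}_v$, so $G \times_k K$ has good reduction at $v$; by Theorem \ref{T:GoodReduction1}, $H$ also has good reduction at $v$, which amounts to $\mathcal{B}$ being unramified at $v$. Because $\deg \mathcal{B} = n$, the class $[\mathcal{B}]$ is $n$-torsion, and iterating Faddeev's exact sequence for $K = k(x_1, \dots, x_{n-1})/k$ (valid since $\mathrm{char}\: k$ is coprime to $n$) identifies the $V_0$-unramified part of $\Br(K)[n]$ with $\Br(k)[n]$. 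Thus $[\mathcal{B}]$ descends to some $[B_0] \in \Br(k)[n]$, and since the index is preserved under purely transcendental extensions, $\mathcal{B} \cong B \otimes_k K$ for a uniquely determined central simple $k$-algebra $B$ of degree $n$.

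\emph{Brauer-class step and main obstacle.} The hypothesis $H \in \gen_K(G \times_k K)$ translates to the equality of the sets of maximal étale $K$-subalgebras of $A \otimes_k K$ and $B \otimes_k K$. My plan is to exploit the $n - 1$ independent transcendentals by constructing, for each nontrivial power $[A]^i$, a ``generic'' maximal étale $K$-subalgebra $E_i \subset A \otimes_k K$ (built as a cyclic extension using $x_1, \dots, x_{n-1}$) whose forced presence in $B \otimes_k K$ pins down how $B^{\otimes i}$ splits. Combining these constraints with Amitsur's identification $\ker(\Br(K) \to \Br(K(\mathrm{SB}(A \otimes_k K)))) = \langle [A \otimes_k K] \rangle$ yields $\langle [B \otimes_k K] \rangle \subseteq \langle [A \otimes_k K] \rangle$; symmetry then gives equality, which descends to $\Br(k)$ via $\Br(k) \hookrightarrow \Br(K)$. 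The converse direction is standard: $\langle [B] \rangle = \langle [A] \rangle$ yields equality of maximal étale subalgebras after base change to $K$, so $\mathrm{SL}_{1, B \otimes_k K} \in \gen_K(G \times_k K)$. The main obstacle is the Brauer-class step: the choice of exactly $n - 1$ transcendentals is critical (matching the dimension of $\mathrm{SB}(A)$, the natural source of generic splitting fields realizing each power in $\langle [A] \rangle$), and the heart of the argument is the uniform construction of the generic étale subalgebras $E_i$ and verification that they collectively pin down the Brauer class of $B$ modulo $\langle [A] \rangle$.
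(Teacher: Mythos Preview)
Your descent step is essentially correct and parallels the paper's argument (the paper uses Theorem~\ref{T:transc} repeatedly rather than iterated Faddeev/purity for the Brauer group, but both routes land on $\mathcal{B}\cong B\otimes_k K$ with $B$ central simple of degree $n$ over $k$).

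The Brauer-class step, however, contains a genuine gap. You assert that ``$H\in\gen_K(G\times_k K)$ translates to the equality of the sets of maximal \'etale $K$-subalgebras of $A\otimes_k K$ and $B\otimes_k K$.'' This is false. The genus condition gives equality of isomorphism classes of \emph{maximal $K$-tori}. A maximal $K$-torus of $\mathrm{SL}_{1,A\otimes_k K}$ is a norm-one torus $\mathrm{R}^{(1)}_{E/K}(G_m)$ for some degree-$n$ \'etale subalgebra $E$, but two such tori can be $K$-isomorphic without the underlying \'etale algebras being isomorphic (the paper notes Scott's example: $G=\mathrm{PSL}_2(\F_{29})$ has nonconjugate subgroups $H_1,H_2\cong A_5$ with $\Z[G/H_1]\cong\Z[G/H_2]$). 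So your plan of matching \'etale subalgebras and then invoking Amitsur does not get off the ground.

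The paper closes this gap in two steps. First (Proposition~\ref{P:SB2}, refining Saltman), the function field $F_A$ of $\mathrm{SB}(A)$ is realized as a \emph{generic} degree-$n$ extension of $K=k(x_1,\ldots,x_{n-1})$, i.e.\ with Galois closure having group $S_n$; since $F_A$ splits $A$, it embeds as a maximal subfield of $A\otimes_k K$, yielding a maximal $K$-torus $T=\mathrm{R}^{(1)}_{F_A/K}(G_m)$ of $G\times_k K$. Second (Lemma~\ref{L:Iso}), if $T\cong T'=\mathrm{R}^{(1)}_{E/K}(G_m)$ with $F_A/K$ generic, then $E\cong F_A$. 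Now the genus condition gives $T'\subset H=\mathrm{SL}_{1,B\otimes_k K}$ isomorphic to $T$, hence $F_A$ embeds in $B\otimes_k K$, so $F_A$ splits $B$, and Amitsur yields $[B]\in\langle[A]\rangle$; symmetry finishes. Your proposed family $E_i$ for each power $[A]^i$ is not needed --- one generic extension suffices --- and in any case your construction would face the same torus-versus-field obstruction unless you build in the genericity and invoke something like Lemma~\ref{L:Iso}.
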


The proof uses Amitsur's theorem on generic splitting fields \cite{Amit}, and a result of D.~Saltman \cite{Salt1}, \cite{Salt2} on
function fields of Severi-Brauer varieties.
\begin{thm}\label{T:Kill2}
Let $G$ be a group of type $\textsf{G}_2$ over a finitely generated field $k$ of characteristic $\neq 2, 3$, and let $K = k(x_1, \ldots , x_6)$ be the field of rational functions in 6 variables. Then $\gen_K(G \times_k K)$ reduces to a single element.
\end{thm}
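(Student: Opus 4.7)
The plan is to combine Theorem \ref{T:transc} with the classical dictionary between groups of type $\textsf{G}_2$ and $3$-fold Pfister forms, and then to exploit the transcendence degree of $K$ to promote the genus condition to an honest $K$-isometry. Since $\mathrm{char}\: k \neq 2, 3$, every $k$-form of $G$ is the automorphism group $\mathrm{Aut}(\mathbb{O}')$ of an octonion $k$-algebra $\mathbb{O}'$, and two such groups are $k$-isomorphic if and only if the corresponding norm $3$-fold Pfister forms are isometric. Writing $G = \mathrm{Aut}(\mathbb{O})$ with norm form $\pi = n_{\mathbb{O}}$, Theorem \ref{T:transc} realizes any $H \in \gen_K(G \times_k K)$ as $H_0 \times_k K$ for some $H_0 = \mathrm{Aut}(\mathbb{O}') \in \gen_k(G)$ with norm form $\pi'$; the claim then reduces to proving the $K$-isometry $\pi \otimes_k K \cong \pi' \otimes_k K$.

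Next I would extract from the hypothesis $H_0 \in \gen_k(G)$ a precise algebraic linkage between $\pi$ and $\pi'$. Combining the description of maximal $k$-tori of a $\textsf{G}_2$-group (as norm-one tori of cubic \'etale subalgebras of $\mathbb{O}$) with the good-reduction framework of Theorems \ref{T:GR-not}--\ref{T:AB} and Corollary \ref{C:genus-GR}, the expected output is that $\pi$ and $\pi'$ share a common $2$-fold Pfister divisor: after renumbering, $\pi = \langle\langle a, b, c \rangle\rangle$ and $\pi' = \langle\langle a, b, c' \rangle\rangle$ for suitable $a, b, c, c' \in k^\times$, so that the Arason invariants satisfy $e_3(\pi) + e_3(\pi') = (a) \cup (b) \cup (cc')$ in $H^3(k, \Z/2)$.

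Finally, I would promote this linkage to an isometry over $K = k(x_1, \ldots, x_6)$ as follows. The $3$-fold Pfister form $\sigma = \langle\langle a, b, cc'\rangle\rangle$ cuts out a $6$-dimensional Pfister quadric $X_\sigma \subset \mathbb{P}^7$; since $\dim X_\sigma = 6 = \mathrm{tr.deg}_k K$, one aims to parametrize $X_\sigma$ generically using the six free variables $x_1, \ldots, x_6$ to produce a $K$-point, forcing $\sigma \otimes_k K$ to be hyperbolic by the Arason-Pfister Hauptsatz. This yields $e_3(\pi \otimes_k K) = e_3(\pi' \otimes_k K)$, and since $3$-fold Pfister forms are classified by their Arason invariants (via Merkurjev-Suslin/Voevodsky), one concludes $\pi \otimes_k K \cong \pi' \otimes_k K$.

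The main obstacle is the second step, namely extracting the common $2$-fold Pfister divisor directly from the torus-theoretic genus condition; this is the substantive $\textsf{G}_2$-specific input that the good-reduction machinery must supply. Once that algebraic input is in hand, the generic-splitting step is driven by the numerical coincidence $\dim X_\sigma = 6 = \mathrm{tr.deg}_k K$, and it is precisely this dimension matching that realizes the \emph{killing by a purely transcendental extension} phenomenon for type $\textsf{G}_2$, in the same spirit in which Amitsur's theorem and the $(n-1)$-dimensionality of $\mathrm{SB}(A)$ drive Theorem \ref{T:Kill1}.
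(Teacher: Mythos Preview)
Your proposal has a fatal gap in Step 3 and an unjustified assumption in Step 2.

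\textbf{Step 3 fails outright.} You want to find a $K$-point on the Pfister quadric $X_\sigma$ where $K = k(x_1, \ldots, x_6)$ is purely transcendental over $k$. But anisotropy of quadratic forms is preserved under purely transcendental base change (Springer's specialization theorem, or \cite[Ch.~IX]{Lam}): if $\sigma$ is anisotropic over $k$, then $\sigma \otimes_k K$ is still anisotropic, and $X_\sigma$ has no $K$-point. The numerical coincidence $\dim X_\sigma = \mathrm{tr.deg}_k K$ is irrelevant here; it buys you nothing because $K$ is not the function field of $X_\sigma$. So even granting Step 2, your argument would only work when $\sigma$ is already split over $k$, i.e.\ when $\pi \cong \pi'$ to begin with.

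\textbf{Step 2 is also unjustified.} There is no reason why two $\textsf{G}_2$-groups in the same $k$-genus must have norm forms sharing a common $2$-fold Pfister divisor; the good-reduction results you cite do not produce such a linkage, and indeed over general $k$ the genus can be large with no such structure.

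\textbf{What the paper actually does.} The paper bypasses both obstacles by exploiting the six variables in a completely different way. It takes the \emph{function field} of the Pfister quadric of $q_1$: writing $q_1 = x_0^2 + q_1'(x_1,\ldots,x_7)$, one sets
\[
L = K\bigl(\sqrt{-q_1'(x_1,\ldots,x_6,1)}\bigr),
\]
a quadratic extension of $K$. Over $L$ the form $q_1$ is isotropic, hence $G_1 \times_k K$ is $L$-split and contains a maximal $K$-torus of the shape $\mathrm{R}^{(1)}_{L/K}(G_m) \times \mathrm{R}^{(1)}_{L/K}(G_m)$. The genus hypothesis transfers this torus to $G_2 \times_k K$, forcing $G_2 \times_k K$ to split over $L$ as well. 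Thus $q_2$ becomes hyperbolic over the function field of $q_1$, and the generic splitting theory of Pfister forms (\cite[Ch.~X, Cor.~4.10]{Lam}) yields $q_1 \cong q_2$ over $k$. The point is that the six variables are used to \emph{construct the generic splitting field of $q_1$ as a quadratic extension of $K$}, not to find rational points over $K$ itself; this is the exact analogue of how the $(n-1)$ variables in Theorem~\ref{T:Kill1} realize the function field of the Severi--Brauer variety.
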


The proof relies on properties of Pfister forms (cf. \cite{Lam}). These results prompt the following conjecture.

\vskip2mm

\addtocounter{thm}{1}

\noindent {\bf Conjecture 1.7.} {\it Let $G$ be an absolutely almost simple group over a finitely generated field $k$. Assume that $\mathrm{char}\: k$ is prime to the order of the Weyl group of $G$. Then there exists a purely transcendental extension $K = k(x_1, \ldots , x_m)$ of transcendence degree $m$ depending only on the Cartan-Killing type of $G$ such that every $H \in \gen_K(G \times_k K)$ is of the form $H_0 \times_k K$, where $H_0$ has the property that $H_0 \times_k F \in \gen(G \times_k F)$ for \emph{any} field extension $F/k$.}

\vskip2mm

In \S \ref{S:killing}.4, we relate this conjecture to the notion of the {\it motivic genus}  that was proposed by A.S.~Merkurjev.

Next, we will discuss applications of our results to the analysis of weakly commensurable Zariski-dense subgroups, which was initiated in
\cite{PR-WC} in connection with some problems in differential geometry. So, let $G_1$ and $G_2$ be absolutely almost simple algebraic groups over a field $F$ of characteristic zero, and let $\Gamma_1 \subset G_1(F)$ and $\Gamma_2 \subset G_2(F)$ be two finitely generated Zariski-dense subgroups. We refer the reader to \cite[\S 1]{PR-WC} (see also \S \ref{S:generic} of the present paper) for the technical definition of the relation of {\it weak commensurability}; here, we only mention that it is a way of matching the eigenvalues of semisimple elements of $\Gamma_1$ and $\Gamma_2$. This relation is expected to lead to a new form of rigidity, called ``eigenvalue rigidity," for arbitrary finitely generated Zariski-dense subgroups, where traditional forms of rigidity are inapplicable (cf. \cite{R-ICM}). In this paper, we will show that one of the key issues in eigenvalue rigidity can be reduced to the Finiteness Conjecture. To provide more context, we recall that given a Zariski-dense subgroup $\Gamma \subset G(F)$, where $G$ is an absolutely almost simple algebraic group defined over a field $F$, the {\it trace field} $k_{\Gamma}$ is defined to be the subfield of $F$ generated by the traces $\mathrm{tr}(\mathrm{Ad}\: \gamma)$ of elements $\gamma \in \Gamma$ in the adjoint representation on the Lie algebra $\mathfrak{g}$. According to a theorem of E.B.~Vinberg \cite{Vinberg}, the field $k = k_{\Gamma}$ is the {\it minimal field of definition of} $\Gamma$. This means that $k$ is the minimal subfield of $F$ with the property that all transformations in $\mathrm{Ad}\: \Gamma$ can be simultaneously represented by matrices having all entries in $k$ in a suitable basis of $\mathfrak{g}$. If such a basis is chosen, then the Zariski-closure of $\mathrm{Ad}\: \Gamma$ in $\mathrm{GL}(\mathfrak{g})$ is a simple algebraic $k$-group $\mathscr{G}$. It is an $F/k$-form of the adjoint group $\overline{G}$  called the {\it algebraic hull} of $\mathrm{Ad}\: \Gamma$. It should be mentioned that if $\Gamma$ is {\it arithmetic}, the pair $(k , \mathscr{G})$ determines the commensurability class of $\Gamma$. While for general Zariski-dense subgroups this is no longer the case, the pair $(k , \mathscr{G})$ remains an important invariant of the commensurability class.

Now let $\Gamma_1 \subset G_1(F)$ and $\Gamma_2 \subset G_2(F)$ be finitely generated Zariski-dense subgroups of absolutely almost simple algebraic groups $G_1$ and $G_2$. Assume that $\Gamma_1$ and $\Gamma_2$ are weakly commensurable. Then $k_{\Gamma_1} = k_{\Gamma_2} =: k$ (cf. \cite[Theorem 2]{PR-WC}). Furthermore, $G_1$ and $G_2$ either have the same Cartan-Killing type, or one of them has type $\textsf{B}_{\ell}$ and the other type $\textsf{C}_{\ell}$ for some $\ell \geq 3$. So, apart from the ambiguity between types $\textsf{B}$ and $\textsf{C}$, the corresponding algebraic hulls $\mathscr{G}_1$ and $\mathscr{G}_2$ are $k$-forms of one another. The remaining critical issue is the relationship between $\mathscr{G}_1$ and $\mathscr{G}_2$. More precisely, if we fix $\Gamma_1$, what can one say about the set of the forms $\mathscr{G}_2$ as $\Gamma_2 \subset G_2(F)$ runs through finitely generated Zariski-dense subgroups that are weakly commensurable to $\Gamma_1$? There is a conjecture (cf. \cite[Conjecture 6.1]{R-ICM}) that this set consists of finitely many $k$-isomorphism classes --- see \S\ref{S:lattice} for the precise formulation. If true, this would be a very strong statement\footnote{Which, in particular, would be stronger than the finiteness of the genus.} asserting that the eigenvalues of elements of a Zariski-dense subgroup (which could be, for example, just a free group on two generators) determine the ambient algebraic group up to finitely many possibilities. For example, if $G = \mathrm{SL}_{1 , A}$, where $A$ is a central simple algebra of degree $n$ over a field $k$, and $\Gamma \subset G(k)$ is a finitely generated Zariski-dense subgroup with trace field $k$, then there would be only finitely many choices for a central simple $k$-algebra $A'$ (necessarily of the same degree $n$) such that for $G' = \mathrm{SL}_{1 , A'}$, the group $G'(k)$ contains a finitely generated Zariski-dense subgroup weakly commensurable to $\Gamma$. What we will see in \S \ref{S:WC}  is that this conjecture again can be derived from the Finiteness Conjecture with the help of the following result (and in fact, the above statement for groups of type $\mathrm{SL}_{1 , A}$ is already a theorem due to the fact that the Finiteness Conjecture has been confirmed in this case).
\begin{thm}\label{T:WC-GR}
Let $G$ be an absolutely almost simple algebraic group over a finitely generated field $k$ of characteristic zero, and let $V$ be a divisorial set of places
of $k$. Given a finitely generated Zariski-dense subgroup $\Gamma \subset G(k)$ with trace field $k$, there exists a finite subset $S(\Gamma) \subset V$ such that every absolutely almost simple algebraic $k$-group $G'$ with the property that there exists a finitely generated Zariski-dense subgroup $\Gamma' \subset
G'(k)$ that is weakly commensurable to $\Gamma$ has good reduction at all $v \in V \setminus S(\Gamma)$.
\end{thm}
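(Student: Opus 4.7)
The strategy combines the new characterization of good reduction via maximal tori (Theorems \ref{T:GR-not} and \ref{T:AB}) with the torus-matching machinery for weakly commensurable subgroups developed in \cite{PR-WC}. After applying Vinberg's theorem, I may assume that $\mathrm{Ad}(\Gamma)$ takes values in $\mathscr{G}(k)$, where $\mathscr{G}$ is the algebraic hull -- an adjoint $k$-form of $\overline{G}$. Since $\Gamma$ is finitely generated and $V$ is divisorial, a standard spreading-out argument produces a finite $S_0 \subset V$ such that, for $v \in V \setminus S_0$, the group $G$ has good reduction at $v$ and every element of $\mathrm{Ad}(\Gamma)$ is $v$-integral on a fixed smooth $\cO_v$-model of $\mathscr{G}$.

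Next, I would extract a \emph{witness torus} from $\Gamma$. Choosing a sufficiently generic regular semisimple $\gamma \in \Gamma$, its centralizer is a maximal $k$-torus $T \subset G$. Theorems \ref{T:GR-not} and \ref{T:AB} characterize good reduction of $G$ at $v$ by the existence of a maximal $k$-torus with prescribed splitting behavior over $k_v$, so by enlarging $S_0$ to a finite subset $S = S(\Gamma) \subset V$ we may arrange that this single torus $T$ witnesses good reduction of $G$ at every $v \notin S$. Now consider any $G'$ with $\Gamma' \subset G'(k)$ weakly commensurable to $\Gamma$. The matching results of \cite{PR-WC} (cf. Theorem~2 of loc.~cit., together with the generic-element analysis developed in \S\ref{S:generic}) produce a regular semisimple $\gamma' \in \Gamma'$, weakly commensurable to $\gamma$, lying in a unique maximal $k$-torus $T' \subset G'$ that is $k$-isomorphic to $T$ (up to the standard $\textsf{B}_\ell$/$\textsf{C}_\ell$ ambiguity, which is compatible with the torus criteria). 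Transporting the witness property through this $k$-isomorphism and invoking Theorem \ref{T:GR-not} or \ref{T:AB} for the pair $(G', T')$ then yields good reduction of $G'$ at every $v \notin S$.

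The main obstacle is upgrading the eigenvalue matching provided by weak commensurability to a genuine $k$-isomorphism $T \cong T'$: a priori the eigenvalue relations only determine the character lattices up to $\overline{k}$-isogeny, so the choice of $\gamma$ must be made carefully enough that the splitting field of $T$ and the Galois-module structure on its character lattice can be reconstructed from the trace data of $\mathrm{Ad}(\gamma)$ alone. This is where I expect most of the technical work to reside, and it is the point at which the analysis of generic tori in Zariski-dense subgroups (in the spirit of \cite{PR-WC}) must be coupled with the new torus-theoretic characterization of good reduction. A secondary point is ensuring that $S(\Gamma)$ depends only on $\Gamma$ and not on $(G',\Gamma')$; this is handled by fixing the witness pair $(\gamma, T)$ in advance, so that every condition defining $S$ -- integrality of $\gamma$, good reduction of $\mathscr{G}$, and the local splitting behavior of $T$ -- is intrinsic to $(G,\Gamma)$.
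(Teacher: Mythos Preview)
There is a genuine gap in your strategy: a single global witness torus $T$ cannot satisfy the hypotheses of Theorem~\ref{T:GR-not} (or Theorem~\ref{T:AB}) at almost all $v\in V$. Those criteria require $T$ to be \emph{$k_v$-generic}, i.e.\ the image of $\mathrm{Gal}((k_v)_T/k_v)$ in $\mathrm{Aut}(\Phi)$ must contain the Weyl group $W$. But $\mathrm{Gal}((k_v)_T/k_v)$ is (for unramified $v$) just the decomposition group of $v$ in $\mathrm{Gal}(k_T/k)$, and this is typically a \emph{proper} subgroup. When $k$ is a number field the decomposition groups are cyclic, so they never contain $W$ unless $W$ itself is cyclic; in the higher-dimensional case there are still infinitely many divisorial $v$ at which $v$ splits enough in $k_T$ to make the decomposition group too small. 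Thus ``enlarging $S_0$ so that this single torus $T$ witnesses good reduction of $G$ at every $v\notin S$'' is impossible, and the plan collapses at this step. (Incidentally, the issue you flag as the ``main obstacle''---upgrading weak commensurability to a $k$-isomorphism $T\simeq T'$---is not actually needed: Proposition~\ref{P:Isog} already gives a $k$-isogeny, and isogeny preserves both the splitting field and $k_v$-genericity, which is all the torus criteria require.)

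The paper's proof resolves this by making the witness element depend on $v$ while keeping $S(\Gamma)$ independent of $v$ and of $(G',\Gamma')$. It splits into two cases. If $k$ is a number field, it bypasses Theorems~\ref{T:GR-not}/\ref{T:AB} entirely: using strong approximation to guarantee that the closure of $\Gamma$ in $G(k_v)$ is open for $v\notin S(\Gamma)$, it picks (for each such $v$) a $k$-generic $\gamma\in\Gamma$ whose torus contains a maximal $k_v$-split torus of $G$; matching with $\gamma'\in\Gamma'$ via the Isogeny Theorem then forces $\mathrm{rk}_{k_v}G'\ge\mathrm{rk}_{k_v}G$, so $G'$ is $k_v$-quasi-split and has good reduction. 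If $\dim\mathfrak{X}>1$, the paper first proves (Proposition~\ref{P:R-ZD}) that the reduction $\Gamma^{(v)}\subset\underline{G}^{(v)}(k^{(v)})$ is Zariski-dense for all $v$ outside a finite set; this is what defines $S(\Gamma)$. Then, \emph{for each} $v\notin S(\Gamma)$, one chooses a generic $\bar{\gamma}\in\Gamma^{(v)}$ and lifts it to $\gamma\in\Gamma$; by construction the associated torus is $k_v$-generic with unramified splitting field, and matching via weak commensurability transports these local properties to a torus in $G'$, so Theorem~\ref{T:GR-not} (or, for types $\textsf{A}_1$, $\textsf{B}_\ell$, a two-torus variant via Theorem~\ref{T:AB}) applies. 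The point is that $S(\Gamma)$ is fixed by global conditions on $\Gamma$ (openness of closures, or Zariski-density of reductions), while the witness $\gamma$ is manufactured anew at each $v$.
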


The results on weakly commensurable arithmetic groups developed in \cite{PR-WC} were used to settle some long-standing problems about isospectral and length-commensurable locally symmetric spaces. Here we will give only one application of the results on good reduction to not necessarily arithmetically defined Riemann surfaces. For a Riemannian manifold $M$, we denote by $L(M)$ the {\it (weak) length spectrum} of $M$, i.e. the collection of the lengths of all closed geodesics in $M$. We then call two Riemannian manifolds $M_1$ and $M_2$ {\it length-commensurable} if $\Q \cdot L(M_1) = \Q \cdot L(M_2)$. Consider a Riemann surface $M$ of the form $\mathbb{H}/\Gamma$, where $\mathbb{H}$ is the complex upper half-plane and $\Gamma \subset \mathrm{SL}_2(\R)$ is a discrete subgroup with torsion-free image in $\mathrm{PSL}_2(\R)$. We will assume that $\Gamma$ is finitely generated and Zariski-dense in $\mathrm{SL}_2$ (which is automatically true if $M$ is, for example, compact). Then one can naturally associate to $\Gamma$ a quaternion algebra $A_{\Gamma}$ whose center is the trace field of $\Gamma$ --- see \cite[3.2]{MacReid} and \S\ref{S:WC}. If $\Gamma$ is arithmetic, then $A_{\Gamma}$ is {\it the} quaternion algebra required for its description, and in the general case it is an invariant of the commensurability class of $\Gamma$. In \S\ref{S:WC}, we will  prove the following result that contains no arithmeticity assumptions.
\begin{thm}\label{T:RiemSurf}
Let $M_i = \mathbb{H}/\Gamma_i$ $(i \in I)$ be a family of length-commensurable Riemann surfaces, where $\Gamma_i \subset \mathrm{SL}_2(\R)$ is a discrete finitely generated Zariski-dense subgroup with torsion-free image in $\mathrm{PSL}_2(\R)$. Then the quaternion algebras $A_{\Gamma_i}$ $(i \in I)$ belong to finitely many isomorphism classes over the common center (= trace field of all the $\Gamma_i$'s).
\end{thm}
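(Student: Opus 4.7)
The plan is to combine the classical dictionary between closed geodesics and loxodromic conjugacy classes with Theorem \ref{T:WC-GR} and the finiteness of the group of unramified $2$-torsion Brauer classes of a finitely generated field.

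First I would translate the length-commensurability hypothesis into weak commensurability of the subgroups $\Gamma_i$. A closed geodesic on $M = \mathbb{H}/\Gamma$ is represented by a primitive loxodromic element $\gamma \in \Gamma$, and its length equals $2\log|\lambda_\gamma|$, where $\lambda_\gamma$ is the eigenvalue of $\gamma$ of largest absolute value; under our discreteness and torsion-freeness assumptions, this correspondence is a bijection between primitive closed geodesics of $M$ and primitive loxodromic conjugacy classes in $\Gamma$. Hence $\Q \cdot L(M)$ is the $\Q$-span of $\{\log|\lambda_\gamma| : \gamma \in \Gamma \text{ loxodromic}\}$, and the assumption $\Q \cdot L(M_i) = \Q \cdot L(M_j)$ says that for every loxodromic $\gamma \in \Gamma_i$ there exist a loxodromic $\delta \in \Gamma_j$ and nonzero integers $a,b$ with $|\lambda_\gamma|^a = |\lambda_\delta|^b$. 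Choosing the positive eigenvalues, this is exactly the definition of weak commensurability for $\Gamma_i$ and $\Gamma_j$ inside $\mathrm{SL}_2$ (cf.\ \cite[\S 8]{PR-WC}). Consequently, by \cite[Theorem 2]{PR-WC}, all the $\Gamma_i$ share a common trace field $k$, which is finitely generated over $\Q$ because each $\Gamma_i$ is; and the algebraic hulls of the $\Ad\:\Gamma_i$ are inner forms of $\mathrm{PSL}_2$ over $k$ whose simply connected covers are the groups $G_i' := \mathrm{SL}_{1, A_{\Gamma_i}}$.

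Next I would invoke Theorem \ref{T:WC-GR}. Fix $i_0 \in I$, put $\Gamma := \Gamma_{i_0}$, and choose any divisorial set $V$ of places of $k$. Applied to $G := G_{i_0}'$ and $\Gamma$, the theorem furnishes a finite subset $S = S(\Gamma) \subset V$ such that every $G_i' = \mathrm{SL}_{1, A_{\Gamma_i}}$ has good reduction at each $v \in V \setminus S$. For groups of type $\textsf{A}_1$, good reduction of $\mathrm{SL}_{1,A}$ at $v$ is equivalent to the quaternion algebra $A$ being unramified at $v$, i.e.\ the class $[A_{\Gamma_i}] \in {}_2\Br(k)$ lies in the kernel of the residue map $\partial_v \colon {}_2\Br(k) \to H^1(k^{(v)}, \Z/2)$. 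Thus each $[A_{\Gamma_i}]$ belongs to the subgroup ${}_2\Br(k)_{V \setminus S}$ of $2$-torsion classes unramified at every place outside $S$.

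The proof would then conclude with the standard finiteness statement that ${}_2\Br(k)_{V \setminus S}$ is a finite group when $k$ is a finitely generated field of characteristic zero: this is the same finiteness that underpins the proof of the Finiteness Conjecture in type $\textsf{A}_1$ (cf.\ \cite{CRR-Bull}, \cite{CRR3}) and follows from the étale-cohomological description of the unramified Brauer group on a smooth model of $k$ minus the finitely many bad fibers. Since a quaternion algebra over $k$ is determined up to $k$-isomorphism by its Brauer class, only finitely many $A_{\Gamma_i}$ occur. The principal technical obstacle is the first step — making the transition from length-commensurability to weak commensurability precise without any arithmeticity hypothesis on the $\Gamma_i$; this comes down to the hyperbolic-geometric fact that primitive closed geodesics on $\mathbb{H}/\Gamma$ correspond bijectively to primitive loxodromic conjugacy classes in $\Gamma$ for any discrete $\Gamma$ with torsion-free image in $\mathrm{PSL}_2(\R)$, which is standard but must be checked in this generality.
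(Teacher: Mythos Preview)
Your proposal is correct and follows essentially the same route as the paper's proof: translate length-commensurability into weak commensurability of the $\Gamma_i$, apply Theorem~\ref{T:WC-GR} to obtain good reduction of each $\mathrm{SL}_{1,A_{\Gamma_i}}$ outside a fixed finite set, and conclude via the known finiteness of the unramified $2$-torsion Brauer group (equivalently, the Finiteness Conjecture in type~$\textsf{A}_1$). The only refinement the paper makes explicit is the passage to the subgroups $\Gamma_i^{(2)}$, since it is $\Gamma_i^{(2)}$ (not $\Gamma_i$ itself) that sits inside $\mathrm{SL}_{1,A_{\Gamma_i}}(k)$; the weak commensurability carries over because $[\Gamma_i:\Gamma_i^{(2)}]<\infty$.
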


To the best of our knowledge, this statement is one of the first examples of applications of techniques from arithmetic geometry to  nonarithmetic Riemann surfaces.

\vskip1mm

We conclude the paper with a series of results on forms with good reduction and the genus of simple algebraic groups of type $\textsf{F}_4$, which have never been previously analyzed over fields more general than number fields. The first three results treat those forms that split over a quadratic extension of the base field (see Appendix 2 for a characterization of such forms in terms of cohomological invariants). We recall that the $\mathbb{Q}$-forms of type $\textsf{F}_4$ that have good reduction at all primes were described in \cite{Gross} and \cite{ConradZ}, and that for any simple group $G$ of that type over a number field $k$, the genus $\gen_k(G)$ is trivial \cite[Theorem 7.5]{PR-WC}. We will prove the following version of the ``Stability Theorem" that was established previously for groups of the form $\mathrm{SL}_{1,A}$, where $A$ is a central simple algebra of exponent 2 (cf. \cite{CRR-Bull}), and groups of type $\textsf{G}_2$ (cf. \cite{CRR-Spinor}).
\begin{thm}\label{T:F4Stab}
Let $k_0$ be a number field, and set $k = k_0(x)$. Then for any absolutely simple algebraic $k$-group $G$ of type $\textsf{F}_4$ that splits over a quadratic extension of $k$, the genus $\gen_k(G)$ is trivial.
\end{thm}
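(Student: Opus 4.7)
The plan follows the template of the stability theorems established in \cite{CRR-Bull} for $\mathrm{SL}_{1,A}$ with $A$ of exponent $2$, and in \cite{CRR-Spinor} for $\textsf{G}_2$. Three ingredients are to be combined: (i) the translation of the genus into the language of good reduction provided by Corollary \ref{C:genus-GR} and Theorem \ref{T:GoodReduction1}; (ii) the triviality of $\gen_{k_0}(G_0)$ for every $\textsf{F}_4$-group $G_0$ over a number field $k_0$, established in \cite[Theorem 7.5]{PR-WC}; and (iii) the cohomological classification of $\textsf{F}_4$-groups splitting over a quadratic extension recorded in Appendix 2, which involves only the mod-$2$ Rost--Serre invariants $f_3 \in H^3(k, \Z/2)$ and $f_5 \in H^5(k, \Z/2)$.

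Fix $G' \in \gen_k(G)$ and let $K = k(\sqrt{d})$ be a quadratic extension splitting $G$. First I would verify that $G'$ also splits over $K$, so that the classification of Appendix 2 applies to both groups. Since $G_K$ is split, $G$ contains a maximal $k$-torus $T$ whose base change $T_K$ is $K$-split; the matching of $k$-isomorphism classes of maximal tori supplied by the hypothesis $G' \in \gen_k(G)$ yields a maximal $k$-torus $T'$ of $G'$ with $T' \simeq T$, which is therefore also $K$-split. Since $\textsf{F}_4$ admits no outer forms, the presence of a $K$-split maximal torus forces $G'_K$ to be split. In particular $g_3(G') = g_3(G) = 0$, and the problem reduces to showing $f_i(G) = f_i(G')$ for $i \in \{3, 5\}$.

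Next, I would invoke Corollary \ref{C:genus-GR} to choose a finite exceptional set $S \subset V$ outside which both $G$ and $G'$ have good reduction, taking $V$ to be the divisorial set associated with the model $\mathfrak{X} = \mathbb{P}^1_{\cO_{k_0}}$. For a valuation $v \in V \setminus S$ corresponding to a closed point of $\mathbb{P}^1_{k_0}$, the residue field $k^{(v)}$ is a number field; by Theorem \ref{T:GoodReduction1}, $\underline{G'}^{(v)}$ lies in $\gen_{k^{(v)}}(\underline{G}^{(v)})$, and the number-field input (ii) forces $\underline{G'}^{(v)} \simeq \underline{G}^{(v)}$. Translating this into the language of cohomological invariants, the class $\alpha_i := f_i(G) - f_i(G') \in H^i(k, \Z/2)$ is unramified at $v$ and its specialization in $H^i(k^{(v)}, \Z/2)$ vanishes for every $v \in V \setminus S$.

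The final step is to deduce $\alpha_i = 0$ from this unramifiedness-plus-vanishing-specialization data. Here I would appeal to the Faddeev--Milnor exact sequence for $H^i(k_0(x), \Z/2)$, which splits an arbitrary class into a constant part in $H^i(k_0, \Z/2)$ and a sum of residue contributions indexed by closed points of $\mathbb{A}^1_{k_0}$. Vanishing of $\partial_v \alpha_i$ at all but finitely many closed points, together with vanishing of the specialization at some $k_0$-rational place outside $S$, pins down the constant part of $\alpha_i$ as zero. The hard part will be controlling the possibly surviving residues of $\alpha_i$ at the finitely many bad places in $S$; I would handle these via a ramification analysis of the Rost invariants of Albert algebras in the spirit of \cite{CRR-Spinor}, possibly combined with an auxiliary change of variable in $k_0(x)$ that confines $S$ to the vertical or infinite fibre of $\mathfrak{X}$. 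Once $\alpha_3 = \alpha_5 = 0$, the classification of Appendix 2 yields $G' \simeq G$, and $\gen_k(G)$ is trivial.
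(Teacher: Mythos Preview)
Your opening reductions are correct and match the paper: showing that $G'$ splits over the same quadratic extension, hence $g_3(G')=0$, and reducing to $f_i(G)=f_i(G')$ for $i=3,5$ via Theorem~A2.1. Your specialization argument for killing the constant part $\alpha_i\in H^i(k_0,\Z/2\Z)$ at a well-chosen $k_0$-rational point is also exactly what the paper does.

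The gap is precisely where you flag it. Invoking Corollary~\ref{C:genus-GR} only yields equality of residues $\rho^i_v(f_i(G))=\rho^i_v(f_i(G'))$ at the places $v\notin S$ of good reduction, and the Faddeev sequence then leaves $\alpha_i$ with possibly nonzero residues at the finitely many bad horizontal places. Your proposed fixes do not work: a change of variable in $k_0(x)$ cannot move the bad-reduction locus of a \emph{fixed} group $G$, and ``a ramification analysis in the spirit of \cite{CRR-Spinor}'' is not a method here without a concrete lemma behind it.

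The paper bypasses good reduction almost entirely and proves $\rho^i_v(f_i(G))=\rho^i_v(f_i(G'))$ at \emph{every} closed point $v$ of $\mathbb{A}^1_{k_0}$. The key input you are missing is Lemma~\ref{L:splitA}: for any field $\mathscr{K}$ and any $G'\in\gen_{\mathscr{K}}(G)$, every extension $\mathscr{L}/\mathscr{K}$ of degree $\le 2$ that splits $f_3(G)$ (resp.\ $f_5(G)$) also splits $f_3(G')$ (resp.\ $f_5(G')$). This is proved by the same torus trick you already used (intersecting a Borel or parabolic with its Galois conjugate), applied over $\mathscr{K}=K_v$ via Corollary~\ref{C:gen}. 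With this lemma in hand, one carries out a case analysis on the parities of $v$ applied to the slots of the symbols $f_3,f_5$: in the totally unramified cases both residues vanish; in the mixed cases Lemma~\ref{L:splitA} forces one of the invariants to be trivial over $K_v$; and in the genuinely ramified case the residues are classes of quaternion algebras (for $f_3$) or degree-$4$ symbols (for $f_5$) over the number field $K^{(v)}$, and Lemma~\ref{L:splitA} shows they share the same quadratic splitting fields, whence they coincide by the known triviality of the corresponding genus over number fields. Only after this does the paper use good reduction, and only at a single place $v_0$ chosen so that all symbol slots are units, to kill the remaining constant $\alpha_i$ via Theorem~\ref{T:GoodReduction1} and \cite[Theorem~7.5]{PR-WC}.
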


Next, following Kato \cite{Kato}, we recall that a 2-dimensional global field is defined to be the function field of either a curve over a number field or a surface over a finite field.
\begin{thm}\label{T:F4genus}
Let $k$ be either a 2-dimensional global field of characteristic $\neq 2, 3$ or a purely transcendental extension $k = k_0(x , y)$ of transcendence degree 2 of a number field $k_0$. Then for any absolutely simple $k$-group $G$ of type $\textsf{F}_4$ that splits over a quadratic extension of $k$, the genus $\gen_k(G)$ is finite.
\end{thm}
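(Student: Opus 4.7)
The plan is to combine the good-reduction criterion of Corollary~\ref{C:genus-GR} with cohomological classification in order to reduce the finiteness of $\gen_k(G)$ to the finiteness of a suitable set of unramified cohomology classes over $k$. Fix a divisorial set of places $V$ of $k$ (coming from a model over $\Z$, or from the curve/surface defining $k$). Since $\mathrm{char}\: k \neq 2$, Corollary~\ref{C:genus-GR} supplies a finite subset $S \subset V$ such that every $G' \in \gen_k(G)$ has good reduction at all $v \in V \setminus S$. It therefore suffices to show that the set of $k$-isomorphism classes of $k$-forms of $G$ of type $\textsf{F}_4$ that split over a quadratic extension of $k$ and have good reduction at each $v \in V \setminus S$ is finite.

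The next step is to translate this into a problem about cohomological invariants. By the cohomological characterization in Appendix~2, any such form $G'$ is determined, up to $k$-isomorphism, by a pair of mod-$2$ symbol invariants $f_3(G') \in H^3(k, \mathbb{Z}/2)$ and $f_5(G') \in H^5(k, \mathbb{Z}/2)$ (coming from the associated 3-fold and 5-fold Pfister forms attached to the Albert algebra of $G'$); the quadratic-splitting hypothesis is exactly what forces the mod-$3$ invariant $g_3$ to vanish, so that no third family of invariants appears. Using the compatibility of these invariants with reduction (which for $f_3$ follows from the theory of octonion algebras and quadratic forms with good reduction, and for $f_5$ is obtained similarly by a Pfister-form argument), good reduction of $G'$ at $v$ implies that both $f_3(G')$ and $f_5(G')$ lie in the kernel of the residue map $\partial_v$. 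Thus it suffices to bound the cardinality of the set of $3$-symbols and $5$-symbols in $H^*(k, \mathbb{Z}/2)$ that are unramified at all $v \in V \setminus S$.

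The last step applies general finiteness properties of unramified cohomology for the two families of fields under consideration. If $k$ is a 2-dimensional global field of characteristic $\neq 2, 3$, then the work of Kato on cohomological Hasse principles and the cohomological dimension of such fields forces $H^i_{\mathrm{nr}}(k, \mathbb{Z}/2)$ (allowing ramification only on the finite set $S$) to be finite for $i = 3$ and trivial for $i \geq 4$, which handles both $f_3$ and $f_5$. If $k = k_0(x, y)$ is purely transcendental of degree $2$ over a number field $k_0$, one proceeds by iterated use of the Bloch--Ogus / Faddeev residue exact sequences, first reducing from $k_0(x, y)$ to $k_0(x)$ and then to $k_0$, at each stage absorbing the finitely many residues at places in $S$; finiteness over $k_0$ itself is then classical (finiteness of $n$-torsion in Brauer groups and of higher Galois cohomology with controlled ramification for number fields).

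The main obstacle, and the reason this requires more than just citing Corollary~\ref{C:genus-GR}, is the control of the degree-$5$ invariant $f_5$: the relevant finiteness of unramified symbols in $H^5(k, \mathbb{Z}/2)$ is delicate because $k$ has cohomological dimension $3$ or $4$, so $H^5$ need not vanish outright and one must exploit that the classes in question are \emph{symbols} rather than arbitrary cohomology classes, together with the precise structure of ramification at $V$ afforded by the divisorial set. The quadratic-splitting hypothesis is crucial for this strategy, since without it one would have to deal simultaneously with the mod-$3$ invariant $g_3$, requiring analogous finiteness of unramified $H^3(k, \mathbb{Z}/3)$, and more seriously, the classification of forms of $\textsf{F}_4$ by their invariants over these fields is not known in that generality.
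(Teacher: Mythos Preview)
Your overall strategy matches the paper's proof exactly: invoke Corollary~\ref{C:genus-GR} to get good reduction outside a finite set, then use that forms with $g_3=0$ are classified by $(f_3,f_5)$ (Theorem~A2.1), that good reduction forces these invariants to be unramified (Theorem~A2.6), and finally appeal to finiteness of the unramified cohomology groups $H^3(k,\mathbb{Z}/2\mathbb{Z})_V$ and $H^5(k,\mathbb{Z}/2\mathbb{Z})_V$ (this is precisely Theorem~\ref{T:F4GR}, which the paper proves by citing \cite{CRR-Spinor} and \cite{RR-tori1}).

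There is one genuine gap. You reduce to bounding forms ``that split over a quadratic extension of $k$ and have good reduction,'' but you never justify why every $G'\in\gen_k(G)$ splits over a quadratic extension. The paper fills this in one line: since $G$ splits over $\ell$, Proposition~A2.3 gives a maximal $k$-torus $T\subset G$ splitting over $\ell$; any $G'\in\gen_k(G)$ contains a $k$-isomorphic copy of $T$ and hence also splits over $\ell$. Without this, you cannot conclude $g_3(G')=0$, and the $(f_3,f_5)$ classification does not apply.

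Your ``main obstacle'' paragraph is also off target. You suggest that finiteness in degree $5$ is delicate and requires exploiting that the classes are \emph{symbols}. The paper does nothing of the sort: it simply cites finiteness of the \emph{entire} groups $H^i(k,\mathbb{Z}/2\mathbb{Z})_V$ for $i=3,5$, which holds for both classes of fields in the statement (Proposition~4.2 and Corollary~6.2 of \cite{CRR-Spinor} for $2$-dimensional global fields; Theorem~5.1(b) of \cite{RR-tori1} for $k_0(x,y)$). No symbol-specific argument is needed, and your iterated Faddeev reduction sketch, while in the right spirit, is unnecessary once those references are in hand.
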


This is derived by combining  Corollary \ref{C:genus-GR} with the following theorem.

\begin{thm}\label{T:F4GR}
Let $k$ be either a 2-dimensional global field of characteristic $\neq 2, 3$ or a purely transcendental extension $k = k_0(x , y)$ of transcendence degree 2 of a number field $k_0$, and let $V$ be a divisorial set of discrete valuations of $k$. Then the set $\mathscr{I}$ of $k$-isomorphism classes of $k$-forms of type $\textsf{F}_4$ that split over a quadratic extension of $k$ and have good reduction at all $v \in V$ is finite.
\end{thm}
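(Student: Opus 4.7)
The plan is to combine a cohomological parametrization of the relevant $\textsf{F}_4$-forms with finiteness results for unramified Galois cohomology over the fields in the statement. For a simple group $G$ of type $\textsf{F}_4$ over $k$ that splits over a quadratic extension $\ell/k$, the Rost invariant $g_3(G) \in H^3(k, \mathbb{Z}/3)$ vanishes: it is killed by restriction to $\ell$, and a standard corestriction-restriction argument then gives $2\, g_3(G) = 0$, from which $g_3(G) = 0$ since $2$ is invertible on $3$-torsion. By the characterization of such forms recalled in Appendix 2, the $k$-isomorphism class of $G$ is then determined by its Pfister-form invariants
\[
(f_3(G),\, f_5(G)) \in H^3(k, \mathbb{Z}/2) \times H^5(k, \mathbb{Z}/2).
\]

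Next I would check that good reduction at a valuation $v$ with residue characteristic $\neq 2$ forces both invariants to be unramified at $v$. Starting from a reductive $\cO_v$-model $\mathscr{G}$ of $G$, one can represent $f_3(G)$ and $f_5(G)$ by symbols with entries in $\cO_v^\times$, and such symbols lie in the kernel of the residue map. Consequently, the assignment $G \mapsto (f_3(G), f_5(G))$ yields an injection from $\mathscr{I}$ into
\[
H^3(k, \mathbb{Z}/2)_V \times H^5(k, \mathbb{Z}/2)_V, \qquad \text{where}\ H^n(k, \mathbb{Z}/2)_V := \bigcap_{v \in V} \ker\bigl(\partial_v\colon H^n(k, \mathbb{Z}/2) \to H^{n-1}(k^{(v)}, \mathbb{Z}/2)\bigr),
\]
so it remains to establish finiteness of these two unramified cohomology groups.

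For a $2$-dimensional global field $k$ of characteristic $\neq 2,3$, such finiteness is essentially the content of the work of Kato (and Jannsen), whose arithmetic duality and Hasse principle for higher-dimensional class field theory bound $H^n(k, \mathbb{Z}/2)_V$ by finite global pieces in the relevant range. For $k = k_0(x, y)$ with $k_0$ a number field, my approach is to fix a regular proper model $\mathfrak{X} = \mathbb{P}^2_{\cO_{k_0, S}}$ (after inverting a finite set $S$ of primes) and to use birational invariance of unramified cohomology for smooth proper varieties, together with the rationality of $\mathbb{P}^2$, to reduce the computation to the étale cohomology of $\mathrm{Spec}\,\cO_{k_0, S}$ with $\mathbb{Z}/2$-coefficients, which is finite by classical results on rings of integers in number fields.

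The main obstacle I anticipate is the degree-$5$ case in the purely transcendental setting: while $H^3(k, \mathbb{Z}/2)_V$ is essentially the $2$-torsion of the unramified Brauer group ${}_2\mathrm{Br}(k)_V$ and is well-controlled by the techniques already developed in the CRR series, handling $H^5_V$ requires a more careful traversal of the Bloch-Ogus/Gersten spectral sequence on $\mathfrak{X}$ and a matching of the given divisorial set $V$ with the codimension-one points of the chosen model (up to a harmless finite discrepancy, which does not affect the finiteness conclusion).
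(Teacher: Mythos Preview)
Your proposal is correct and follows essentially the same route as the paper: reduce to $g_3=0$ via restriction--corestriction, use Springer's theorem (Theorem~A2.1) to inject $\mathscr{I}$ into $H^3(k,\mathbb{Z}/2)_V\times H^5(k,\mathbb{Z}/2)_V$ after showing the invariants are unramified under good reduction, and then invoke finiteness of these unramified cohomology groups. The paper packages the step ``good reduction $\Rightarrow$ $f_3,f_5$ unramified'' as Theorem~A2.6, proved via Chernousov's explicit description of the constants $c_{\alpha_i}$ for an admissible torus in the $\mathcal{O}_v$-model (your one-line sketch hides this nontrivial input), and for the finiteness of $H^3_V$ and $H^5_V$ it simply cites \cite[Proposition~4.2, Corollary~6.2]{CRR-Spinor} in the 2-dimensional global case and \cite[Theorem~5.1(b)]{RR-tori1} in the $k_0(x,y)$ case, rather than redoing the Bloch--Ogus/Kato analysis you outline.
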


We note that similar results for groups of type $\textsf{G}_2$ were obtained in \cite{CRR-Spinor} over 2-dimensional global field and in \cite{RR-tori1} over purely transcendental extensions of transcendence degree 2 of number fields.

Our final result applies to {\it all} forms of type $\textsf{F}_4$ and contributes to one of the main remaining problems in the theory of Jordan algebras. We refer the reader to subsection A2.1 of Appendix 2 for the definition of the map $\phi$ that describes forms of type $\textsf{F}_4$ in terms of the cohomological invariants $f_3$, $f_5$ and $g_3$. J.-P.~Serre has raised the problem of whether $\phi$ is injective. We will show that, assuming the Finiteness Conjecture, we can at least confirm that $\phi$ is proper.
\begin{thm}\label{T:F4phi}
Let $k$ be a finitely generated field of characteristic $\neq 2, 3$. Assume that the Finiteness Conjecture holds for $k$-groups of type
$\textsf{F}_4$ with respect to any divisorial set $V$ of discrete valuations of $k$. Then the map $\phi$ is proper, i.e. the preimage of a finite set is finite.
\end{thm}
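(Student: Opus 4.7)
The proof plan is the following. Fix a finite subset $\Sigma \subset \mathrm{im}(\phi)$; we must show $\phi^{-1}(\Sigma)$ is finite. Choose a divisorial set $V$ of discrete valuations of $k$ for which the Finiteness Conjecture is assumed to hold.

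First, I would control the ramification of the invariants. Each $\sigma \in \Sigma$ is a triple $(f_3, f_5, g_3)$ of Galois cohomology classes of $k$; each class is a finite sum of symbols and hence involves only finitely many elements of $k^\times$, so it is unramified at all but finitely many $v \in V$. Taking the union of these finite ``ramification sets'' over the finitely many $\sigma \in \Sigma$ produces a finite subset $S \subset V$ such that for every $G \in \phi^{-1}(\Sigma)$ and every $v \in V \setminus S$, all three invariants $f_3(G), f_5(G), g_3(G)$ are unramified at $v$.

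The technical core is the next step: I would prove that for any $F_4$-form $G$ over $k$ and any discrete valuation $v$ of $k$ with residue characteristic $\neq 2, 3$ at which all three invariants are unramified, the group $G$ has good reduction at $v$. Using the description of $F_4$-forms by the invariants $(f_3, f_5, g_3)$ given in Appendix 2 (equivalently, via 27-dimensional exceptional Jordan algebras and the associated Pfister forms), unramified classes over $k_v$ yield reduction data over $\cO_v$, producing a reductive $\cO_v$-model whose generic fiber is $G_{k_v}$. Alternatively, one can invoke the torus-theoretic characterization of good reduction provided by Theorems \ref{T:GR-not} and \ref{T:AB}, combined with the observation that unramified invariants over the Henselian field $k_v$ guarantee the existence of maximal $k_v$-tori of the form required by those criteria.

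Finally, I would invoke the Finiteness Conjecture. Since $S$ is finite, $V \setminus S$ is again a divisorial set of places (realized by removing from an underlying model $\fX$ of $k$ the divisors corresponding to the finitely many places in $S$). By the previous step, $\phi^{-1}(\Sigma)$ is contained in the set of $k$-isomorphism classes of $F_4$-forms with good reduction at every $v \in V \setminus S$, and the latter set is finite by the assumed Finiteness Conjecture; hence $\phi^{-1}(\Sigma)$ is finite. The main obstacle is the middle step: establishing that unramified cohomological invariants force good reduction of an $F_4$-form over a DVR of residue characteristic prime to $6$. This requires a careful analysis of the interaction between the three invariants $f_3, f_5, g_3$ and the structural theory of $F_4$-forms, combined with the good reduction criteria developed earlier in the paper.
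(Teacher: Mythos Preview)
Your overall architecture is sound and matches the paper's: control ramification of the invariants on a cofinite subset of a divisorial $V$, deduce good reduction there, and apply the Finiteness Conjecture. The difference lies in the ``technical core.''

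You propose to prove directly that \emph{unramified invariants imply good reduction}. The paper does \emph{not} establish this standalone implication. Instead it argues fiber by fiber: fix $G$ with $\phi(G)=\phi(G')$ and choose $V$ so that $G$ itself has good reduction and $g_3(G)$ is unramified at every $v\in V$. Then for $G'$ with bad reduction at some $v$, a Bruhat--Tits case analysis (Proposition~\ref{P:spl-field}) shows that over $k_v$ either (a) $G'$ splits over an unramified $2$-power extension, forcing $g_3(G'_{k_v})=0$, or (b) $G'$ admits a certain anisotropic torus split by an unramified cubic extension. In case (a), the known injectivity of $\phi$ on the locus $g_3=0$ (Theorem~A2.1) gives $G'_{k_v}\simeq G_{k_v}$, contradicting the differing reduction behavior; in case (b), Proposition~\ref{P:degree3} forces $g_3(G')$ to be ramified, contradicting $g_3(G')=g_3(G)$. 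Thus the argument is a \emph{comparison} against the reference form $G$, not a direct construction of a model from unramified invariants.

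Your two suggested routes for the direct implication both run into trouble. The first (``unramified classes over $k_v$ yield reduction data over $\mathcal{O}_v$, producing a reductive $\mathcal{O}_v$-model'') tacitly uses that the invariants determine the form, which is precisely the open injectivity question for $\phi$; the paper avoids this by only invoking injectivity on the $g_3=0$ locus. The second (invoking Theorems~\ref{T:GR-not}/\ref{T:AB}) requires producing a \emph{generic} maximal $k_v$-torus with unramified splitting field from the hypothesis that the invariants are unramified; this is not obvious, and the paper nowhere supplies such a construction. In short, your middle step is plausible but is a separate (and possibly harder) theorem; the paper's comparison argument via Propositions~\ref{P:spl-field} and~\ref{P:degree3} sidesteps it.
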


\vskip3mm

\noindent {\bf Notations and conventions.} We use standard notations associated with the Galois cohomology of algebraic groups (cf. \cite{Serre-GC}). In particular, given an algebraic group $G$ defined over a field $k$ and a Galois extension $\ell/k$, we denote by $H^1(\ell/k , G)$ the set $H^1(\mathrm{Gal}(\ell/k) , G(\ell))$ of noncommutative continuous Galois cohomology, and we write $H^1(k , G)$ for $H^1(k^{\mathrm{sep}}/k , G(k^{\mathrm{sep}})),$ where $k^{\mathrm{sep}}$ is a separable closure of $k$. Similar conventions are used for the set $Z^1(\ell/k , G)$ of noncommutative continuous 1-cocycles. We will slightly abuse notation and use
lowercase Greek letters to denote both cocycles and cohomology classes whenever this does not lead to confusion. However, when we need to distinguish between the two, we will write $\xi$ for a cocycle and $[\xi]$ for the corresponding cohomology class. We extend these notations also to \'{e}tale (\v{C}ech) cocycles and the cohomology classes they define.

For an algebraic torus $T$, we let $X(T)$ and $X_*(T)$ denote the corresponding groups of characters and cocharacters, respectively. Furthermore, we denote by $\mathbb{G}_m$ the one-dimensional split torus.

Next, given a field $k$ equipped with a discrete valuation $v$, we denote by $k_v$ and $k^{(v)}$ the corresponding completion and residue field, respectively. Furthermore, we set $\mathcal{O}_v \subset k_v$ and $\mathcal{O}_{k,v} \subset k$ to be the associated valuation rings.

Finally, we recall some definitions and notations pertaining to commutative Galois cohomology and unramified cohomology, which will be needed mainly in \S\ref{S:F4} and in Appendix 2. For a $\mathrm{Gal}(k^{\mathrm{sep}}/k)$-module $M$, we write $H^i(k, M)$ for the Galois cohomology group $H^i(\mathrm{Gal}(k^{\mathrm{sep}}/k), M).$ Now, if $\mathrm{char}~k^{(v)}$ is prime to $n$, then there exists a residue map
$$
\rho^i_v \colon H^i(k, \mu_n^{\otimes j}) \to H^{i-1}(k^{(v)}, \mu_n^{\otimes (j-1)}).
$$
We say that a class $x \in H^i(k, \mu_n^{\otimes j})$ is {\it unramified at $v$} if $x \in \ker \rho^i_v$ and that it is {\it ramified} otherwise. Furthermore, if $V$ is a set of discrete valuations of $k$ such that $\mathrm{char}~k^{(v)}$ is prime to $n$ for all $v \in V$, then one defines the corresponding {\it unramified cohomology of degree $i$} to be
$$
H^i(k, \mu_n^{\otimes j})_V = \bigcap_{v \in V} \ker \rho^i_v.
$$
We refer the reader to \cite[Ch. III and IV]{GMS} for further details on these constructions.


\vskip5mm

\section{Groups with good reduction}\label{S:GR}

\vskip3mm

\noindent {\bf 2.1. Good reduction: definition and examples.} Even though the definition of good reduction for a reductive algebraic group at a discrete valuation of the base field has already been mentioned in \S 1, we begin by repeating it here for the convenience of further references.

\vskip1mm

\noindent {\bf Definition 2.1.} Let $G$ be a reductive algebraic group over a field $k$, and let $v$ be a discrete valuation of $k$. 
We say that $G$ has {\it good reduction} at $v$ if there exists a reductive group scheme\footnotemark \ $\mathscr{G}$ over the valuation ring $\mathcal{O}_v \subset k_v$ with generic fiber
$$
\mathscr{G} \times_{\mathcal{O}_v} k_v \simeq G \times_k k_v.
$$
Then the $k^{(v)}$-group scheme $\mathscr{G} \times_{\mathcal{O}_v} k^{(v)}$ is called the {\it reduction} of $G$ at $v$ and will be denoted $\underline{G}^{(v)}$.

\footnotetext{Let $A$ be a commutative ring and $S = \mathrm{Spec}\: A$. A reductive $A$-group scheme is a smooth affine group scheme $\mathscr{G} \to S$ such that the geometric fibers $\mathscr{G}_{\bar{s}}$ are connected reductive groups for all geometric points $\bar{s}$ of $S$, cf. \cite[Exp. XIX, Definition 2.7]{SGA3} or \cite[Definition 3.1.1]{ConradZ}.}

\vskip.5mm

(We will see in subsection 2.2 below that the reduction $\underline{G}^{(v)}$ is well-defined.) We now consider a couple of examples of good reduction that are relevant for the present paper.

\vskip1mm

\noindent {\bf Example 2.2.} Let $G = \mathrm{SL}_{1 , A}$, where $A$ is a central simple algebra of degree $n$ over $k$, and let $v$ be a discrete valuation of $k$. Assume that $A$ is unramified at $v$, which means that there exists an Azumaya $\mathcal{O}_v$-algebra $\mathscr{A}$ such that $A \otimes_k k_v \simeq \mathscr{A} \otimes_{\mathcal{O}_v} k_v$ as $k_v$-algebras (cf. \cite{IR1} and references therein). Let $\mathscr{G} = \mathrm{SL}_{1 , \mathscr{A}}$ be the semisimple group scheme over $\mathcal{O}_v$ associated with $\mathscr{A}$ (cf. \cite[3.5.0.91]{CalFas}). Then
\begin{equation}\label{E:GRed1}
G \times_k k_v \simeq \mathscr{G} \times_{\mathcal{O}_v} k_v
\end{equation}
as $k_v$-groups, hence $G$ has good reduction at $v$.

Conversely, suppose $G = \mathrm{SL}_{1 , A}$ has good reduction at $v$, and let $\mathscr{G}$ be the corresponding reductive scheme over $\mathcal{O}_v$. It is known that any inner form of the $\mathcal{O}_v$-group scheme $\mathrm{SL}_n$ is of the form $\mathrm{SL}_{1 , \mathscr{A}}$ for some Azumaya $\mathcal{O}_v$-algebra $\mathscr{A}$ of degree $n$ (cf. \cite[3.5.0.92]{CalFas}).  So, if we  write  $\mathscr{G}$ as $\mathrm{SL}_{1 , \mathscr{A}}$,  the isomorphism (\ref{E:GRed1}) implies that
$$
\text{either} \ \ A \otimes_k k_v \simeq \mathscr{A} \otimes_{\mathcal{O}_v} k_v \ \ \text{or} \ \ A \otimes_k k_v \simeq \mathscr{A}^{\mathrm{op}}  \otimes_{\mathcal{O}_v} k_v.
$$
In either case, $A \otimes_k k_v$ comes from an Azumaya $\mathcal{O}_v$-algebra, and therefore $A$ is unramified. Thus, $G = \mathrm{SL}_{1 , A}$ has good reduction at $v$ if and only if $A$ is unramified at $v$.

\vskip1mm

\noindent {\bf Example 2.3.} Let $G = \mathrm{Spin}_n(q)$,  where $q$ is a nondegenerate quadratic form of dimension $n \geq 2$ over a field $k$ of characteristic $\neq 2$, and let $v$ be a discrete valuation of $k$ with residue characteristic $\mathrm{char}\: k^{(v)} \neq 2$. We will show that $G$ has good reduction at $v$ if and only if $q$ is equivalent over $k_v$ to a quadratic form of the shape
\begin{equation}\label{E:qform}
\lambda (u_1 x_1^2 + \cdots + u_n x_n^2), \ \ \text{with} \ \ \lambda \in k_v^{\times} \ \ \text{and} \ \ u_1, \ldots , u_n \in \mathcal{O}_v^{\times}.
\end{equation}
First, let us assume that $q$ is $k_v$-equivalent to such a form and set $q_0 = u_1 x_1^2 + \cdots + u_n x_n^2$. Then $G \times_k k_v = \mathrm{Spin}_n(q) = \mathrm{Spin}_n(q_0)$. On the other hand, since $q_0$ is a {\it regular} quadratic form on $\mathcal{O}_v^n$, there is a semisimple group scheme $\mathscr{G} = \mathscr{SPIN}_n(q_0)$ over $\mathcal{O}_v$ with generic fiber $G \times_k k_v$ (cf. \cite[4.5.2.6, 6.2.0.28, 8.2.0.59]{CalFas}).  This means that $G$ has good reduction at $v$.

Conversely, suppose $G = \mathrm{Spin}_n(q)$ has good reduction at $v$. When $n = 2$, the group $G$ is a 1-dimensional torus whose splitting field is unramified at $v$, implying that $q$ is  equivalent to a form as in (\ref{E:qform}). Now suppose $n > 2$. Let $q_0$ be a an $n$-dimensional split quadratic form, and let $\mathscr{G}_0 = \mathscr{SPIN}_n(q_0)$. Assume that $G = \mathrm{Spin}_n(q)$ has good reduction at $v$, i.e. there exists a reductive group $\mathcal{O}_v$-scheme $\mathscr{G}$ with generic fiber $G \times_k k_v$. Then $\mathscr{G}$ is obtained from $\mathscr{G}_0$ by twisting using an \'{e}tale 1-cocycle $\xi \in Z^1(\mathcal{O}_v , \mathrm{Aut}(\mathscr{G}_0))$. If $n$ is odd, then $\mathrm{Aut}(\mathscr{G}_0) = \mathscr{SO}_n(q_0)$. Then $\xi$ can be used to twist the quadratic form $q_0$ and obtain thereby a regular quadratic form $q'$ over $\mathcal{O}_v$, in which case $\mathscr{G} = \mathscr{SPIN}_n(q')$. Passing to the generic fiber, we obtain that $\mathrm{Spin}_n(q) \simeq \mathrm{Spin}_n(q')$ over $k_v$ and therefore $q$ is $k_v$-equivalent to a scalar multiple of $q'$. On the other hand, since $\mathrm{char}\: k^{(v)} \neq 2$, the form $q'$, being regular, can be diagonalized over $\mathcal{O}_v$ as $u_1 x_1^2 + \cdots + u_n x_n^2$ with $u_i \in \mathcal{O}_v^{\times}$, proving our claim.

The same argument works when $n$ is even provided we can show that in this case, the cohomology class $[\xi]$ lies in the image of the map $\lambda \colon H^1(\mathcal{O}_v , \mathscr{O}_n(q_0)) \to H^1(\mathcal{O}_v , \mathrm{Aut}(\mathscr{G}_0))$ coming from the canonical morphism $\nu \colon \mathscr{O}_n(q_0) \to \mathrm{Aut}(\mathscr{G}_0)$. First, we observe that when $n = 8$, the group scheme $\mathscr{G}$ cannot be a triality form as otherwise the generic fiber $G$ would also be a triality form, which is not the case. This means that in all cases, $[\xi]$ is represented by a cocycle having values in $B = {\rm Im}~\nu$ (we note that $B$ is represented by $\mathscr{PSO}_n(q_0) \rtimes \mathbb{Z}/2\mathbb{Z}$). The exact sequence
$$
1 \to \mu_2 \longrightarrow \mathscr{O}_n(q_0) \stackrel{\nu}{\longrightarrow} B \to 1
$$
gives rise to the exact sequence
\begin{equation}\label{E:ES1}
H^1(\mathcal{O}_v , \mathscr{O}_n(q_0)) \stackrel{\lambda}{\longrightarrow} H^1(\mathcal{O}_v , B) \stackrel{\theta}{\longrightarrow} H^2(\mathcal{O}_v , \mu_2) = {}_2\mathrm{Br}(\mathcal{O}_v).
\end{equation}
We note that $\theta([\xi])$ is precisely the class of the Azumaya algebra involved in the description of $\mathscr{G}$. Since the generic fiber of $\mathscr{G}$ is the spinor group of a quadratic form, the image of $\theta([\xi])$ under the map $\mathrm{Br}(\mathcal{O}_v) \to \mathrm{Br}(k_v)$ is trivial, and then $\theta([\xi])$ is itself trivial since the latter map is well-known to be injective (cf. \cite[Ch. IV, Corollary 2.6]{Milne-EC}). The exact sequence (\ref{E:ES1}) then yields that $[\xi]$ lies in the image of $\lambda$, as required. \hfill $\Box$

\vskip1mm

\noindent {\bf 2.2. The Grothendieck-Serre conjecture and its consequences.}  The Grothendieck-Serre conjecture predicts that for a reductive group scheme $\mathscr{G}$ over a regular local ring $A$ with fraction field $k$, the map of nonabelian \'etale cohomology sets
$$
H^1(A , \mathscr{G}) \to H^1(k , G) \ \ \text{(where \ } G = \mathscr{G} \times_A k \text{)}
$$
has trivial kernel. Very significant progress on the conjecture was achieved in \cite{FPan}, where it was proved under the assumption that $R$ contains
an infinite field; the case where $A$ contains a finite field was treated in \cite{Pan}. In the present paper, however, we will only need the case where $A$ is a discrete valuation ring, which goes back to work of Y.~Nisnevich \cite{Nisn1}, \cite{Nisn2} (in fact, we will only need the case of a {\it complete} discrete valuation ring).

\addtocounter{thm}{3}

\begin{thm}\label{T:Nisn}
Let $\mathscr{G}$ be a reductive group scheme over a  discrete valuation ring $A$. Then the map of nonabelian \'etale cohomology sets
$$
H^1(A , \mathscr{G}) \to H^1(k , G) \ \ \text{(where \ } G = \mathscr{G} \times_A k \text{)}
$$
is injective.
\end{thm}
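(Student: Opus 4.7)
The plan is to follow the classical strategy due to Nisnevich, reducing the injectivity to a smoothness-plus-Hensel argument in three steps.

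Step 1 (Twisting). By the standard twisting principle for nonabelian cohomology, applied to each inner twist of $\mathscr{G}$ (which, being an inner twist of a reductive group scheme, is again reductive and satisfies the hypotheses of the theorem), the injectivity of $H^1(A, \mathscr{G}) \to H^1(k, G)$ is equivalent to the triviality of the kernel in every instance. Concretely, this amounts to showing: if $\mathscr{P}$ is a $\mathscr{G}$-torsor over $A$ whose generic fibre $\mathscr{P} \times_A k$ admits a $k$-rational point, then $\mathscr{P}$ itself admits an $A$-rational point.

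Step 2 (Smoothness + Hensel). Since $\mathscr{G}$ is smooth and affine over $A$, so is the torsor $\mathscr{P}$. Because $A$ is a discrete valuation ring, and in the case we need, a complete (in particular Henselian) one, Hensel's lemma yields the surjectivity of the reduction map $\mathscr{P}(A) \twoheadrightarrow \mathscr{P}_{\kappa}(\kappa)$, where $\kappa = k^{(v)}$ denotes the residue field. It therefore suffices to produce a $\kappa$-rational point on the special fibre $\mathscr{P}_{\kappa}$.

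Step 3 (Point on the special fibre). This is the principal obstacle. For a general smooth affine $A$-scheme, a $k$-rational point need not give rise to a $\kappa$-rational point: already the element $t = \pi \in \mathbb{G}_m(k) = \mathrm{Spec}\,A[t,t^{-1}]$ fails to extend, since its scheme-theoretic closure $V(t - \pi) \simeq \mathrm{Spec}\,k$ misses the special fibre entirely. The torsor structure is what rescues the situation. Invoking Grothendieck's theorem (SGA~3, Exp.~XIX) on the existence of a maximal $A$-torus $\mathscr{T} \subset \mathscr{G}$, and using the given $k$-point to trivialise $\mathscr{P}_k \simeq G$, one applies Bruhat--Tits theory for $G$ over $k$ --- with $\mathscr{G}(A)$ playing the role of the hyperspecial parahoric subgroup at $v$ --- to translate the $k$-point of $\mathscr{P}_k$ by a well-chosen element of $G(k)$ so that its closure in $\mathscr{P}$ does meet the special fibre. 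This produces the required $\kappa$-rational point on $\mathscr{P}_{\kappa}$, which lifts via Step 2 to an $A$-rational point of $\mathscr{P}$, completing the proof. The reductive hypothesis on $\mathscr{G}$ (as opposed to merely smooth affine) enters essentially in this last step, through the existence of the maximal torus and the parahoric structure.
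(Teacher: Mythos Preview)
The paper does not prove this theorem; it simply cites it as a known result of Nisnevich (for perfect residue fields) and Guo (in general), and remarks that only the complete case is actually needed later. So there is no ``paper's own proof'' to compare against --- you are sketching the background literature, not reproducing an argument from the paper.

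As a sketch of Nisnevich's strategy, your outline is on the right track but has two genuine gaps. First, in Step~2 you explicitly restrict to the Henselian (e.g.\ complete) case, whereas the theorem as stated is for an arbitrary discrete valuation ring. Passing from the Henselian case to the general case is not automatic: one needs an approximation argument (this is where weak approximation for the generic fibre, or a d\'evissage to the Henselization, enters), and you have not supplied one. Second, and more seriously, Step~3 is not a proof but a gesture. Saying one should ``translate the $k$-point by a well-chosen element of $G(k)$ so that its closure meets the special fibre'' is precisely the hard content of Nisnevich's argument, and the actual mechanism --- the Iwasawa/Cartan-type decomposition of $G(k)$ relative to the parahoric $\mathscr{G}(A)$, together with an analysis of how double cosets interact with the torsor --- is nowhere indicated. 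Without it, one has no reason to believe such a translation exists; indeed, your own counterexample with $\mathbb{G}_m$ shows that the desired conclusion is delicate. If you want this to stand as a proof rather than a summary, Step~3 needs to be written out in full, or else replaced by an explicit citation to the relevant theorem in Nisnevich or Guo.
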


This is \cite[Ch. 2, Theorem 7.1]{Nisn1} and \cite[Theorem 4.2]{Nisn2} in the case where the residue field of $A$ is {\it perfect}. The general case is treated in \cite[Theorem 1]{Guo}. This result has the following important consequence.

\begin{prop}\label{P:UnMod}
{\rm \cite[\S 6, Corollary 3]{Guo}}
Let $A$ be a discrete valuation ring, and $k$ be its field of fractions. Then any reductive $k$-group has at most one reductive model over $A$.
\end{prop}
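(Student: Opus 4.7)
My plan is to deduce the proposition from Theorem \ref{T:Nisn} by studying the scheme of isomorphisms between two reductive models. So let $\mathscr{G}_1$ and $\mathscr{G}_2$ be two reductive $A$-group schemes with a fixed isomorphism of generic fibers $\phi \colon \mathscr{G}_{1,k} \xrightarrow{\sim} \mathscr{G}_{2,k}$. Consider the $A$-scheme $\mathscr{I} := \underline{\mathrm{Isom}}_A(\mathscr{G}_1, \mathscr{G}_2)$, which is a torsor over $\mathrm{Spec}\,A$ under the automorphism group scheme $\mathscr{A} := \underline{\mathrm{Aut}}_A(\mathscr{G}_1)$. Producing an $A$-point of $\mathscr{I}$ is equivalent to the sought $A$-isomorphism, so the task reduces to showing that the kernel of the pointed map $H^1(A, \mathscr{A}) \to H^1(k, \mathscr{A}_k)$ is trivial, since $\phi$ supplies a $k$-point of $\mathscr{I}$ and hence makes $[\mathscr{I}]_k$ trivial.

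Theorem \ref{T:Nisn} would give this at once if $\mathscr{A}$ were reductive, but it is not: it sits in an extension
$$
1 \longrightarrow \mathscr{G}_1^{\mathrm{ad}} \longrightarrow \mathscr{A} \longrightarrow \Xi \longrightarrow 1,
$$
where $\mathscr{G}_1^{\mathrm{ad}}$ is the adjoint (hence reductive) $A$-group scheme of $\mathscr{G}_1$ and $\Xi$ is a finite \'etale $A$-group scheme of outer automorphisms. I would split the problem accordingly. First, push $\mathscr{I}$ forward to a $\Xi$-torsor $\mathscr{I}/\mathscr{G}_1^{\mathrm{ad}}$ on $\mathrm{Spec}\,A$ whose generic fiber is trivial. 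Because $A$ is a normal one-dimensional local domain and $\Xi$ is finite \'etale, the valuative criterion for \'etale morphisms (equivalently, Zariski's main theorem) forces the closure of the generic section to be a connected component of the total space that maps isomorphically to $\mathrm{Spec}\,A$, so $\mathscr{I}/\mathscr{G}_1^{\mathrm{ad}}$ is itself trivial. A trivialization yields a reduction of structure group $\mathscr{I} = \mathscr{J} \wedge^{\mathscr{G}_1^{\mathrm{ad}}} \mathscr{A}$ for some $\mathscr{G}_1^{\mathrm{ad}}$-torsor $\mathscr{J}$ on $\mathrm{Spec}\,A$.

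It remains to show that $\mathscr{J}$ is itself trivial, for which, by Theorem \ref{T:Nisn} applied to the reductive group scheme $\mathscr{G}_1^{\mathrm{ad}}$, it suffices to verify triviality of $\mathscr{J}_k$ over $k$. A priori we only know that $\mathscr{J}_k$ becomes trivial after induction to $\mathscr{A}_k$, i.e.\ that $[\mathscr{J}_k]$ lies in the kernel of $H^1(k,\mathscr{G}_{1,k}^{\mathrm{ad}}) \to H^1(k,\mathscr{A}_k)$; by the standard exact sequence of pointed sets, this kernel is the image of the connecting map $\delta\colon \Xi(k) \to H^1(k,\mathscr{G}_{1,k}^{\mathrm{ad}})$. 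Now the set of reductions of structure group of $\mathscr{I}$ to $\mathscr{G}_1^{\mathrm{ad}}$ is a principal homogeneous space under $\Xi(A)$, and $\Xi(A) = \Xi(k)$ by the same valuative-criterion argument used above. Writing $[\mathscr{J}_k] = \delta(\tau_k)$ for some $\tau_k \in \Xi(k) = \Xi(A)$ and twisting the chosen reduction by $\tau_k^{-1}$ produces a new $\mathscr{J}'$ with trivial generic fiber. Theorem \ref{T:Nisn} then gives that $\mathscr{J}'$ is trivial over $A$, so $\mathscr{I}$ is trivial, as desired.

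The main obstacle is the bookkeeping around the outer part $\Xi$: one must track how the non-canonical nature of the reduction of structure group interacts with the non-abelian cohomology sequence, and this is precisely where the equality $\Xi(A) = \Xi(k)$ for finite \'etale $A$-group schemes is indispensable. Once this reduction has been engineered so that the residual inner torsor is generically trivial, Theorem \ref{T:Nisn} does all the remaining work.
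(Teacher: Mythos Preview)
Your argument is correct. The paper does not give a self-contained proof of this proposition but only refers to \cite{Nisn2} and \cite[\S6]{Guo}; the argument in those references is essentially the one you outline: pass to the $\underline{\mathrm{Aut}}$-torsor of isomorphisms, handle the finite \'etale outer part using normality of $A$, and then invoke Theorem~\ref{T:Nisn} for the remaining torsor under the adjoint (reductive) group scheme.

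One simplification is worth noting. The excursion through the connecting map $\delta$ and the subsequent re-twisting by $\tau_k^{-1}$ is unnecessary. Rather than first trivializing the $\Xi$-torsor $\mathscr{I}/\mathscr{G}_1^{\mathrm{ad}}$ by an arbitrary section and then correcting, take directly the unique $A$-section $s$ that extends the image $\bar\phi \in (\mathscr{I}/\mathscr{G}_1^{\mathrm{ad}})(k)$ of the given $\phi \in \mathscr{I}(k)$; such an extension exists precisely because $\mathscr{I}/\mathscr{G}_1^{\mathrm{ad}}$ is finite \'etale over the normal base $\mathrm{Spec}\,A$ (this is the same closure/Zariski's-main-theorem argument you already used). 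With this choice of $s$, the point $\phi$ lies in the fiber $\mathscr{J}=s^{*}\mathscr{I}$ by construction, so $\mathscr{J}_k$ is trivial on the nose and Theorem~\ref{T:Nisn} finishes the argument immediately. This bypasses the analysis of the fiber of $H^1(k,\mathscr{G}_{1,k}^{\mathrm{ad}}) \to H^1(k,\mathscr{A}_k)$ altogether.
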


For semisimple groups and perfect residue fields, this statement appears as Theorem 5.1 in \cite{Nisn2}; the general case is treated in \cite[\S 6]{Guo}, where the argument combines  Theorem \ref{T:Nisn} with a general statement concerning the uniqueness of reductive models over regular semilocal rings --- see
\cite[\S 6, Proposition 14]{Guo} for the details. In the context of Definition 2.1, Proposition \ref{P:UnMod} yields the uniqueness up to isomorphism of the $\mathcal{O}_v$-scheme of $\mathscr{G}$, implying, in particular, that the reduction $\underline{G}^{(v)}$ is well-defined.

\vskip1mm

\noindent {\bf 2.3. A different approach to good reduction.} The above definition of good reduction is most convenient for our purposes, in particular,
for investigating connections with local-global principles. We note, however, that it is more traditional to define good reduction without passing to completions, i.e. by requiring the existence of a reductive group scheme $\mathscr{G}$ over the valuation ring $\mathcal{O}_{k,v} \subset k$ with generic fiber $\mathscr{G} \times_{\mathcal{O}_{k,v}} k$ isomorphic to $G$. Of course, our definition is less restrictive, so, a priori, we are  considering a more general situation. For the sake of completeness, however, we will now briefly explain that for tori and absolutely almost simple  groups, the two definitions are equivalent.

If a $k$-torus $T$ has good reduction at $v$ in the sense of Definition 2.1, then $v$ is unramified in the minimal splitting field $k_T$ (cf. \cite{NX}). Let $\widetilde{\mathcal{O}}$ denote the integral closure of $\mathcal{O}_{k,v}$ in $k_T$. Then $\widetilde{\mathcal{O}}/\mathcal{O}_{k,v}$ is a Galois extension of rings. Let $d = \dim T$, and let $\xi \in Z^1(k_T/k , \mathrm{GL}_d(\mathbb{Z}))$ be a cocycle such that the corresponding twist ${}_{\xi}(\mathbb{G}_m^d)$ of the $d$-dimensional $k$-split torus is $k$-isomorphic to $T$ (here we identify the automorphism group $\mathrm{Aut}(\mathbb{G}_m^d)$ with $\mathrm{GL}_d(\mathbb{Z})$ through the action on the character group $X(\mathbb{G}_m^d) = \mathbb{Z}^d$).  Then the Hopf $k$-algebra $k[T]$ is obtained by Galois descent from $k_T[T] = k_T[\mathbb{Z}^d]$ for the action of $\mathrm{Gal}(k_T/k)$ that coincides with the standard action on $k_T$ and is given by $\xi$ on $\mathbb{Z}^d$. Since $\widetilde{\mathcal{O}}/\mathcal{O}_{k,v}$ is a Galois extension of rings, we can likewise carry out Galois descent on $\widetilde{\mathcal{O}}[\mathbb{Z}^d]$ for the same action. This generates a Hopf $\mathcal{O}_{k,v}$-algebra that yields a torus $\mathscr{T}$ over $\mathcal{O}_{k,v}$ with generic fiber $T$, verifying thereby the traditional definition.

Next, let $G$ be an absolutely almost simple simply connected algebraic $k$-group that has good reduction at $v$ in the sense of Definition 2.1, and let $\ell$ be the minimal Galois extension of $k$ over which $G$ becomes an inner form of the split group $G_0$. The fact that $G$ has good reduction at $v$ implies that the extension $\ell/k$ is unramified at $v$. Let $\widetilde{\mathcal{O}}$ denote the integral closure of $\mathcal{O}_{k,v}$ in $\ell$; then $\widetilde{\mathcal{O}}/\mathcal{O}_{k,v}$ is a Galois extension of rings. Let $G_1$ be a quasi-split inner $k$-form of $G$. Then $G_1$ is isomorphic to the twist ${}_{\xi}G_0$ for some $\xi \in Z^1(\ell/k , \Sigma)$, where $\Sigma$ is the group of symmetries of the Dynkin diagram of $G_0$, naturally considered as a subgroup of the automorphism group $\mathrm{Aut}(G_0)$. Recall that $G_0$ can be identified with the base change $\mathbf{G} \times_{\mathbb{Z}} k$ of the Chevalley group scheme $\mathbf{G}$ over $\Z$, and that the action of $\Sigma$ comes from its action on $\mathbf{G}$. The Hopf algebra
$k[G_1]$ is then obtained from $\ell[G_1] = \ell[G_0]$ by Galois descent for the action of $\mathrm{Gal}(\ell/k)$ on $\ell[G_0] = \ell \otimes_{\mathbb{Z}}
\mathbb{Z}[\mathbf{G}]$, which coincides with the standard action on $\ell$ and the action via the homomorphism $\xi \colon \mathrm{Gal}(\ell/k) \to \Sigma$ on
$\mathbb{Z}[\mathbf{G}]$. This action leaves $\widetilde{\mathcal{O}}[G_0] := \widetilde{\mathcal{O}} \otimes_{\mathbb{Z}} \mathbb{Z}[\mathbf{G}]$ invariant, and since $\widetilde{\mathcal{O}}/\mathcal{O}_{k,v}$ is a Galois extension of rings, we can carry out Galois descent in this situation. This yields a Hopf $\mathcal{O}_{k,v}$-algebra that corresponds to a reductive group scheme $\mathscr{G}_1$ over $\mathcal{O}_{k,v}$ with generic fiber $G_1$. This verifies that $G_1$ has good reduction in the traditional sense. To prove this fact for $G$, we need a result of Harder, whose proof ultimately depends on weak approximation.

\vskip1mm

To give the statement, we first need to introduce some notations that are different from the ones used elsewhere in this paper.
So, let $A$ be a Dedekind domain with fraction field $k$. For each maximal ideal $\mathfrak{p} \subset A$, denote by $\widehat{k}_{\mathfrak{p}}$ the corresponding completion of $k$ with valuation ring $\widehat{A}_{\mathfrak{p}} \subset \widehat{k}_{\mathfrak{p}}$. Given a flat group scheme $\mathbb{G}$ over $A$, we let $G$ denote its generic fiber $\mathbb{G} \times_A k$, and set $H^1_A(k, G)$ to be the image of the natural map $H^1_{fppf}(A, \mathbb{G}) \to H^1_{fppf}(k, G)$ of flat cohomology. Furthermore, for $\xi \in H^1_{fppf}(k, G)$, we denote by $\xi_{\mathfrak{p}}$ its image in $H^1_{fppf}(\widehat{k}_{\mathfrak{p}}, G)$ under the restriction map.
\begin{prop}\label{P:Harder} {\rm (\cite[Lemma 4.1.3]{Harder})}
Let $\mathbb{G}$ be a flat group scheme of finite type over $A$ whose generic fiber $G$ is a reductive $k$-group. Then
$$
H^1_A(k, G) = \{ \xi \in H^1(k , G) \mid \xi_{\mathfrak{p}} \in \mathrm{Im} (H^1_{fppf}(\widehat{A}_{\mathfrak{p}}, \mathbb{G}) \to H^1_{fppf}(\widehat{k}_{\mathfrak{p}}, G)) \ \ \text{for all maximal ideals} \ \mathfrak{p} \subset A \}.
$$
\end{prop}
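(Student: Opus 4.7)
The plan is to verify the two inclusions defining $H^1_A(k,G)$. The forward inclusion $H^1_A(k,G) \subseteq \{\xi \in H^1(k,G) : \xi_{\mathfrak{p}} \in \mathrm{Im}(\cdots) \text{ for all } \mathfrak{p}\}$ is formal: the maps $A \to \widehat{A}_{\mathfrak{p}} \to \widehat{k}_{\mathfrak{p}}$ induce a commutative square of flat cohomology, and naturality shows that any class on the left coming from $H^1_{fppf}(A, \mathbb{G})$ localizes at $\mathfrak{p}$ to something in the image of $H^1_{fppf}(\widehat{A}_{\mathfrak{p}}, \mathbb{G})$. The substance of the statement is the reverse inclusion.

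To prove it, I fix $\xi \in H^1_{fppf}(k,G)$ satisfying the local hypothesis and choose $\eta_{\mathfrak{p}} \in H^1_{fppf}(\widehat{A}_{\mathfrak{p}}, \mathbb{G})$ restricting to $\xi_{\mathfrak{p}}$ at each maximal ideal $\mathfrak{p}$. First, I would spread out $\xi$: since $\mathbb{G}$ is of finite type over $A$ and flat cohomology with coefficients of finite type commutes with filtered colimits of affine schemes, there exist a nonempty open $U \subset \mathrm{Spec}\, A$ and a class $\eta_U \in H^1_{fppf}(U, \mathbb{G}|_U)$ whose generic fiber is $\xi$. Because $A$ is Dedekind, the complement $S := \mathrm{Spec}\, A \setminus U$ is a finite set of closed points, so the nontrivial part of the local condition is only an assumption at the $\mathfrak{p} \in S$; at the remaining $\mathfrak{p}$ it is witnessed by $\eta_U$ itself. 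At each $\mathfrak{p} \in S$, both $\eta_U$ and $\eta_{\mathfrak{p}}$ restrict to $\xi_{\mathfrak{p}}$ over $\widehat{k}_{\mathfrak{p}}$, so the corresponding $G$-torsors over $\widehat{k}_{\mathfrak{p}}$ admit (non-canonical) isomorphisms. One then wants to glue the torsor represented by $\eta_U$ on $U$ with the torsors represented by the $\eta_{\mathfrak{p}}$ on the formal disks $\mathrm{Spec}\, \widehat{A}_{\mathfrak{p}}$ ($\mathfrak{p} \in S$) along these identifications, and apply a Beauville--Laszlo type patching principle (equivalently, faithfully flat descent along the cover $U \sqcup \coprod_{\mathfrak{p} \in S} \mathrm{Spec}\, \widehat{A}_{\mathfrak{p}} \to \mathrm{Spec}\, A$) to obtain a $\mathbb{G}$-torsor over all of $\mathrm{Spec}\, A$ with generic fiber $\xi$.

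The hard part, and the reason the reductive hypothesis on $G$ is essential, is arranging that the chosen local identifications over the $\widehat{k}_{\mathfrak{p}}$ satisfy the descent cocycle condition. The available freedom is to twist each local identification by an element of $G(\widehat{k}_{\mathfrak{p}})$; the obstruction to matching these choices globally lives in a double coset space of the form $\prod_{\mathfrak{p} \in S} G(\widehat{k}_{\mathfrak{p}}) / \bigl(G(k) \cdot \prod_{\mathfrak{p} \in S} G(\widehat{A}_{\mathfrak{p}})\bigr)$. The tool to kill this obstruction is weak approximation for the connected reductive $k$-group $G$ at the finite set of places corresponding to $S$, which ensures that the diagonal image of $G(k)$ is dense in $\prod_{\mathfrak{p} \in S} G(\widehat{k}_{\mathfrak{p}})$ modulo the open subgroups $G(\widehat{A}_{\mathfrak{p}})$. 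I expect this weak-approximation input to be the genuine obstacle: the rest of the argument is essentially structural, whereas this is the point where the reductivity of $G$ is indispensable, in line with the remark preceding the statement that Harder's proof ultimately depends on weak approximation.
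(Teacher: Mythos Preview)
The paper does not supply a proof of this proposition: it is simply quoted from Harder \cite[Lemma~4.1.3]{Harder}, with the remark that ``the proof ultimately depends on weak approximation.'' Your sketch is essentially a correct reconstruction of Harder's argument and lines up with that remark: spread out $\xi$ to a torsor over a dense open $U$, patch in the given integral classes over the finitely many missing completed local rings via Beauville--Laszlo descent, and kill the gluing ambiguity by weak approximation at the finite set $S$.

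Two refinements worth making explicit. First, the automorphisms you are free to modify by are automorphisms of the torsor, so the group in which the obstruction lives is the inner twist ${}_{\xi}G$ rather than $G$ itself; since ${}_{\xi}G$ is again connected reductive, this changes nothing. Second, ``weak approximation'' here is not the (sometimes delicate) adelic statement over global fields but the elementary fact that for a connected reductive group $H$ over any field $k$ and any finite set of inequivalent discrete valuations, $H(k)$ is dense in the product of the completions. This follows from the unirationality of $H$ over $k$: a dominant rational map from an open in $\mathbb{A}^n$ has image containing a nonempty $v$-adic open at each place, and translates of such opens by $H(k)$ cover $H(k_v)$. With these two points filled in, your outline matches Harder's proof.
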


Next, let $k$ be a field equipped with a discrete valuation $v$, and set $A$ to be the corresponding valuation ring $\mathcal{O}_{k,v}$. Then it follows from the proposition that for any reductive group scheme $\mathscr{G}$ over $\mathcal{O}_{k,v}$ with generic fiber $G$, we have the following (note that since $\mathscr{G}$ is, by definition, smooth, its flat cohomology coincides with \'etale cohomology).
\begin{cor}\label{C-CorHarder}
The natural diagram of pointed sets
$$
\xymatrix{\he^1(\mathcal{O}_{k,v}, \mathscr{G}) \ar[r] \ar[d]_{\varphi} & \he^1(\mathcal{O}_v, \mathscr{G}) \ar[d]^{\varphi_v} \\ H^1(k, G) \ar[r]^{r_v} & H^1(k_v, G)}
$$
is cartesian.
\end{cor}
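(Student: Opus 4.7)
The claim is that the square fits into a pullback diagram of pointed sets, so we need to verify both surjectivity and injectivity of the canonical map
$$
\Phi \colon \he^1(\mathcal{O}_{k,v}, \mathscr{G}) \longrightarrow \he^1(\mathcal{O}_v, \mathscr{G}) \times_{H^1(k_v, G)} H^1(k, G).
$$
The plan is to obtain surjectivity from Harder's proposition (Proposition 2.6) and injectivity from the Grothendieck--Serre type injectivity theorem of Nisnevich--Guo (Theorem 2.4).

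For surjectivity, suppose we are given a pair $(\eta, \xi) \in \he^1(\mathcal{O}_v, \mathscr{G}) \times H^1(k, G)$ with $\varphi_v(\eta) = r_v(\xi)$. I would apply Proposition 2.6 to the Dedekind domain $A := \mathcal{O}_{k,v}$, which has the single maximal ideal corresponding to $v$, and to the reductive (hence smooth, so flat cohomology $=$ \'etale cohomology) group scheme $\mathbb{G} := \mathscr{G}$. The compatibility condition on $(\eta, \xi)$ says precisely that $\xi_v = r_v(\xi) = \varphi_v(\eta)$ lies in the image of $\he^1(\widehat{A}_v, \mathscr{G}) = \he^1(\mathcal{O}_v, \mathscr{G})$, so by Proposition 2.6 there exists $\tilde{\xi} \in \he^1(\mathcal{O}_{k,v}, \mathscr{G})$ with $\varphi(\tilde{\xi}) = \xi$. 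Let $\eta' \in \he^1(\mathcal{O}_v, \mathscr{G})$ denote the image of $\tilde{\xi}$ under the top horizontal arrow. Then both $\eta$ and $\eta'$ map to $\varphi_v(\eta) = r_v(\xi) = \varphi_v(\eta')$ in $H^1(k_v, G)$, so applying Theorem 2.4 to the complete DVR $\mathcal{O}_v$ (which gives the injectivity of $\varphi_v$) yields $\eta' = \eta$. Thus $\Phi(\tilde{\xi}) = (\eta, \xi)$.

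For injectivity of $\Phi$, suppose $\alpha, \beta \in \he^1(\mathcal{O}_{k,v}, \mathscr{G})$ have the same image in the fiber product; in particular, $\varphi(\alpha) = \varphi(\beta)$ in $H^1(k, G)$. Applying Theorem 2.4 directly to the (non-complete) DVR $\mathcal{O}_{k,v}$ --- which is permitted, since the statement in \cite{Guo} covers arbitrary DVRs, not only complete ones --- gives that $\varphi$ is itself injective, so $\alpha = \beta$.

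No significant obstacle is anticipated: the two ingredients, Harder's lemma and the Nisnevich--Guo injectivity theorem, are already assembled earlier in the subsection, and the only mildly delicate point is to note that Proposition 2.6 is stated for flat cohomology while smoothness of the reductive group scheme $\mathscr{G}$ lets us freely identify $H^1_{fppf}$ with $\he^1$ throughout. One should also be slightly careful about the ``pointed-set'' nature of the cartesian square, but since injectivity of $\Phi$ holds at every base point (by a twisting argument, or simply because Theorem 2.4 applies after twisting $\mathscr{G}$ by any cocycle), the verification goes through uniformly.
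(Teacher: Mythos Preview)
Your proposal is correct and follows essentially the same approach as the paper, which derives the corollary directly from Harder's Proposition~2.6 applied to the DVR $A=\mathcal{O}_{k,v}$; you have correctly supplied the additional ingredient (Theorem~2.4 for both $\mathcal{O}_{k,v}$ and $\mathcal{O}_v$) needed to upgrade the image description in Proposition~2.6 to a genuine cartesian square, a detail the paper leaves implicit in its one-line derivation.
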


Now let $\overline{G}_1$ be the adjoint group for the quasi-split group $G_1$ considered above, and let $\overline{\mathscr{G}}_1$ be a reductive $\mathcal{O}_{k,v}$-scheme with generic fiber $\overline{G}_1$. By construction, $G$ is an inner twist of $G_1$, so we can choose $\xi \in Z^1(k ,
\overline{G}_1)$ so that $G = {}_{\xi} G_1$. We then consider the cartesian diagram from Corollary \ref{C-CorHarder} with $\mathscr{G}$ and $G$ replaced by
$\overline{\mathscr{G}}_1$ and $\overline{G}_1$, respectively, and keeping the notations for the maps. The fact that $G$ has good reduction at $v$ means that we may assume that $r_v([\xi]) \in \im \varphi_v$. We then conclude from the diagram that $[\xi] = \varphi([\zeta])$ for some $\zeta \in Z^1(\mathcal{O}_{k,v} , \overline{\mathscr{G}}_1)$. Then $\mathscr{G} := {}_{\zeta}\mathscr{G}_1$ is a reductive group scheme over $\mathcal{O}_{k,v}$ with generic fiber $G$, as required.

\vskip2mm

\vskip.5cm

\section{Generic tori, generic elements, and applications to weak commensurability}\label{S:generic}

\noindent {\bf 3.1. Generic tori.} For an algebraic torus $T$ defined over a field $k$, we denote by $k_T$ the minimal splitting field of $T$.
It is well-known that the Galois group $\mathscr{G}_T = \Ga(k_T/k)$ acts faithfully on the group of characters $X(T)$. Now, let
$G$ be a semisimple $k$-group, $T$ a maximal $k$-torus of $G$, and $\Phi(G , T)$
the corresponding root system. Then the action of $\mathscr{G}_T$ on $X(T)$ permutes the roots, yielding a group homomorphism
$$
\theta_T \colon \mathscr{G}_T \longrightarrow \mathrm{Aut}(\Phi(G ,
T)) \ \ (\subset \mathrm{GL}(X(T) \otimes_{\mathbb{Z}} \mathbb{Q})).
$$
Since $\Phi(G , T)$ generates a finite index subgroup of $X(T)$, this homomorphism is {\it injective}.
We say that $T$ is {\it generic} over $k$, or $k$-{\it generic}, if the image of $\theta_T$ contains the Weyl group $W(G , T)$.
It is known that if $k$ is an infinite finitely generated field, then every semisimple $k$-group $G$ contains $k$-generic
maximal $k$-tori. This can be established by first showing that $G$ possesses a generic torus over a purely transcendental extension
of $k$ and then specializing the parameters in order to obtain a required generic torus defined over $k$ --- see Voskresenski\={\i} \cite[4.2]{Voskr} and
also \cite{PR-Irred}; we note that the specialization part is based on the fact that an infinite finitely generated field is Hilbertian --- see \cite[Theorem 13.4.2]{FrJar}.

A different approach to the construction of generic tori was first developed in \cite{PR-MRL1} over fields of characteristic zero and then extended to fields of arbitrary characteristic in \cite{PR-Vin}. Among other things, this approach demonstrates that to ensure the genericity of a maximal $k$-torus, it is enough to prescribe its local  behavior at finitely many specially chosen valuations. More precisely, assuming that $\mathrm{char}\: k = 0$, one can choose $r$ distinct primes $p_1, \ldots , p_r$, where $r$ is the number of conjugacy classes in the Weyl group of $G$, such that there exist embeddings $\iota_i \colon k \hookrightarrow \mathbb{Q}_{p_i}$ for $i = 1, \ldots , r$. Furthermore, letting $v_i$ denote the pullback of the $p_i$-adic valuation under $\iota_i$,  for each $i = 1, \ldots , r$, one can specify a maximal $k_{v_i}$-torus $T_i$ of $G$ so that any maximal $k$-torus $T$ of $G$ that is conjugate to $T_i$ by an element of $G(k_{v_i})$ for all $i = 1, \ldots , r$ is necessarily $k$-generic. For a very similar statement in the case of positive characteristic, we refer the reader to \cite{PR-Vin}. This construction of generic tori yields the following stronger form of the existence theorem.
\begin{thm}\label{T:Exist}
{\rm (cf. \cite[Theorem 3.1]{PR-Fields}, \cite{PR-Vin})} Let $G$ be a semisimple algebraic group over an infinite finitely generated field $k$. For any finitely generated extension $\ell$ of $k$, the group $G$ contains a maximal $k$-torus that is generic over $\ell$.
\end{thm}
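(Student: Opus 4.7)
The plan is to adapt the local-specification construction of generic tori from \cite{PR-MRL1}, \cite{PR-Vin}, choosing the auxiliary valuations of $k$ so as to be compatible with the given finitely generated extension $\ell/k$. The guiding idea is that once a local condition at a valuation $v_i$ of $k$ forces a prescribed Weyl conjugacy class to appear in the Galois image on the character lattice, one can arrange that this class persists after the finite base change to the completion $\ell_{w_i}$ at a place $w_i \mid v_i$; then, by the classical fact that a subgroup of a finite group meeting every conjugacy class equals the whole group, the $\ell$-Galois image contains the full Weyl group.

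Let $r$ be the number of conjugacy classes in the Weyl group of $G$. Since $\ell$ is itself an infinite finitely generated field, one can select $r$ distinct rational primes $p_1,\ldots,p_r$ of good characteristic for $G$ together with embeddings $\iota_i\colon \ell\hookrightarrow \Q_{p_i}$ (or into $\F_{p_i}((t))$ in positive characteristic). Let $w_i$ be the pullback of the $p_i$-adic valuation to $\ell$, and let $v_i$ be its restriction to $k$, so that $k_{v_i}\subseteq \ell_{w_i}\subseteq \Q_{p_i}$ with $[\ell_{w_i}:k_{v_i}]<\infty$. Moreover, the $p_i$ may be taken so that $\ell/k$ is unramified with trivial residue extension at $v_i$; in the most convenient case this gives $\ell_{w_i}=k_{v_i}$.

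For each $i$, apply the construction of \cite{PR-Vin} over $k_{v_i}$ to produce a maximal $k_{v_i}$-torus $T_i$ of $G\times_k k_{v_i}$ whose Galois image in $\mathrm{Aut}(X(T_i))$ contains a representative of the $i$-th Weyl conjugacy class. The construction leaves considerable latitude in the choice of the minimal splitting field $E_i$ of $T_i$ over $k_{v_i}$, which one uses to arrange $E_i$ to be linearly disjoint from $\ell_{w_i}$ over $k_{v_i}$; then $\mathrm{Gal}(E_i \cdot \ell_{w_i}/\ell_{w_i})\simeq \mathrm{Gal}(E_i/k_{v_i})$, so the Galois image of $T_i\times_{k_{v_i}}\ell_{w_i}$ still contains a representative of the $i$-th Weyl conjugacy class. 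Since the variety of maximal tori of $G$ is a rational $k$-variety (an open subvariety of $G/N_G(T_0)$ for a fixed maximal $k$-torus $T_0$), weak approximation at the finite set $\{v_1,\ldots,v_r\}$ --- provided by the approximation theorem for pairwise inequivalent discrete valuations applied to the coordinates of a rational parametrization --- yields a maximal $k$-torus $T$ of $G$ such that $T\times_k k_{v_i}$ is $G(k_{v_i})$-conjugate to $T_i$ for every $i$. The image of $\mathrm{Gal}(\overline{\ell}/\ell)$ on $X(T)$ then contains, for each $i$, the image of $\mathrm{Gal}(\overline{\ell_{w_i}}/\ell_{w_i})$, and therefore meets every conjugacy class of the Weyl group; the classical lemma then shows that this image is the whole Weyl group, so $T\times_k\ell$ is $\ell$-generic.

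The main technical obstacle is the persistence argument in the middle step: the local condition forcing the $i$-th Weyl conjugacy class over $k_{v_i}$ must survive the passage to $\ell_{w_i}$. This is handled by the joint flexibility in the choice of the auxiliary primes $p_i$ (chosen to avoid arithmetic obstructions arising from $\ell/k$) and of the splitting fields $E_i$ (kept linearly disjoint from $\ell_{w_i}$ over $k_{v_i}$). Everything else --- weak approximation on a rational variety and the meeting-every-conjugacy-class lemma --- is classical.
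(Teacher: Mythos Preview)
Your proposal is correct and follows essentially the same route as the references the paper cites (the paper itself does not give a proof, only the sketch preceding the statement together with Lemma~\ref{L:tor-appr}). One simplification you partly note but then do not exploit: in characteristic zero, once you fix an embedding $\iota_i\colon \ell\hookrightarrow \Q_{p_i}$, the induced completions satisfy $k_{v_i}=\ell_{w_i}=\Q_{p_i}$, because $\Q$ is already dense in $\Q_{p_i}$ and hence so are $k$ and $\ell$. Thus the ``persistence'' issue disappears entirely --- the local Galois images over $k_{v_i}$ and over $\ell_{w_i}$ are literally the same --- and your linear-disjointness maneuver with the splitting fields $E_i$, while not wrong, is unnecessary. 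The positive-characteristic case via $\F_p((t))$ works analogously. With this observation the argument reduces exactly to the sketch in the paper: choose the $p_i$ and the local tori $T_i$ as in \cite{PR-MRL1}, \cite{PR-Vin}, then invoke Lemma~\ref{L:tor-appr} (weak approximation for maximal tori) to globalize.
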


In connection with this local-global construction, we would like to recall the following approximation statement for maximal tori and
derive one consequence needed for our purposes.
\begin{lemma}\label{L:tor-appr}
Let $G$ be a reductive algebraic group over a field $k$, and let $V$ be a finite set of discrete valuations of $k$. Suppose that for each $v \in V$,
we are given a maximal $k_v$-torus $T_v$ of $G \times_k k_v$. Then there exists a maximal $k$-torus $T$ of $G$ that is conjugate to $T_v$ by an element
of $G(k_v)$ for all $v \in V$.
\end{lemma}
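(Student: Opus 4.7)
The proof rests on Artin--Whaples weak approximation on affine space, together with a parametrization of maximal tori by regular semisimple elements. For clarity I assume the characteristic is good for $G$; the general case requires working with regular semisimple elements of $G$ itself via a Cayley-type birational chart, but the overall structure is unchanged.

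Let $Y$ denote the variety of maximal tori of $G$, viewed as the homogeneous $G$-space $G/N_G(T_0)$ for some fixed maximal $k$-torus $T_0$. For each $v \in V$, the orbit morphism $\alpha_v \colon G_{k_v} \to Y_{k_v}$, $g \mapsto g T_v g^{-1}$, is smooth at $e$ (its differential has kernel $\mathrm{Lie}(T_v)$ and target of matching codimension). By the implicit function theorem over the complete field $k_v$, the $G(k_v)$-orbit $\mathcal{O}_v \subset Y(k_v)$ of $T_v$ is therefore open. It thus suffices to find $T \in Y(k)$ whose image in $Y(k_v)$ lies in $\mathcal{O}_v$ for every $v \in V$.

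For each $v \in V$, choose a regular semisimple element $x_v \in \mathrm{Lie}(T_v)(k_v)$; such elements form a nonempty Zariski open subset of $\mathrm{Lie}(T_v)$, hence exist since $k_v$ is infinite. Applying the Artin--Whaples weak approximation theorem to the affine space $\mathfrak{g} = \mathrm{Lie}(G) \simeq \mathbb{A}^{\dim G}$, I obtain an element $x \in \mathfrak{g}(k)$ that is arbitrarily $v$-adically close to each $x_v$. Since regular semisimplicity is a Zariski open condition, a tight enough approximation makes $x$ itself regular semisimple, so that $T := Z_G(x)^0$ is a maximal $k$-torus of $G$.

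Finally, the centralizer morphism $\mu \colon \mathfrak{g}^{\mathrm{reg\,ss}} \to Y$, $y \mapsto Z_G(y)^0$, is a $k$-morphism of varieties, hence induces a $v$-adically continuous map on $k_v$-points. Since $\mu(x_v) = T_v$, the point $\mu(x) = T$ can be made arbitrarily close to $T_v$ in $Y(k_v)$; taking the approximation of $x$ tight enough places $T$ in $\mathcal{O}_v$ for every $v \in V$ simultaneously, so that $T \times_k k_v$ is $G(k_v)$-conjugate to $T_v$, as required.

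The main technical matters are the openness of $\mathcal{O}_v$ and the $v$-adic continuity of $\mu$, both routine consequences of the implicit function theorem over complete discretely valued fields once the smoothness of the relevant morphisms is in hand. Accommodating bad characteristic requires replacing $\mathfrak{g}^{\mathrm{reg\,ss}}$ with the regular semisimple locus of $G$ itself and transferring weak approximation from $\mathfrak{g}$ to $G$ via a Cayley-type chart, but does not alter the skeleton of the argument.
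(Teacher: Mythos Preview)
Your argument is correct (with the characteristic caveat you flag), but it is not the route the paper indicates. The paper does not prove the lemma in-line; it cites \cite[\S7.2, Corollary 3]{PR} and records only the key input: \emph{the variety $Y$ of maximal tori of $G$ is rational over $k$}. Once that is known, weak approximation holds directly for $Y(k)$ in $\prod_v Y(k_v)$, and combining this with the openness of the $G(k_v)$-orbits (which you also use) finishes the proof in one line, uniformly in all characteristics.

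Your approach bypasses the rationality theorem for $Y$ and instead pulls weak approximation back from the affine space $\mathfrak{g}$ (or from $G$ via a Cayley-type chart) through the centralizer map $\mu$. This is more self-contained in that it does not invoke Chevalley's rationality result, but the price is the characteristic restriction and the need to justify that $\mu$ is a morphism and that $Z_G(x)^\circ$ is genuinely a maximal torus; in small characteristic the Lie-algebra version can fail, and the fix via Cayley charts, while standard, is exactly the kind of argument the rationality theorem packages once and for all. In short: both proofs reduce to ``find a $k$-point of $Y$ in a prescribed product of open $k_v$-sets''; the paper does this intrinsically on $Y$ via rationality, while you do it extrinsically via an auxiliary affine parametrization.
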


This is Corollary 3 in \cite[\S7.2]{PR}; the proof uses the fact that the variety of maximal tori is rational over $k$.

\vskip.5mm

\begin{cor}\label{C:gen}
Let $G$ be a semisimple algebraic group over a field $k$, and let $v$ be a discrete valuation of $k$. If $G' \in \gen_k(G)$, then
$G' \times_k k_v \in \gen_{k_v}(G \times_k k_v)$.
\end{cor}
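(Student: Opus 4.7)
The plan is to reduce the statement to the approximation Lemma \ref{L:tor-appr} by unpacking the definition of the genus and exploiting symmetry. Concretely, I would take an arbitrary maximal $k_v$-torus $T'_v$ of $G' \times_k k_v$ and show it is $k_v$-isomorphic to the base change of some maximal $k_v$-torus of $G \times_k k_v$ (and vice versa).

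First I would apply Lemma \ref{L:tor-appr} to $G'$ with the single-valuation set $V = \{v\}$ and the prescribed local torus $T'_v$. This produces a maximal $k$-torus $T'$ of $G'$ such that $T' \times_k k_v$ is $G'(k_v)$-conjugate to $T'_v$; in particular, $T' \times_k k_v \simeq T'_v$ as $k_v$-tori. Next, since $G' \in \gen_k(G)$, the definition of the genus gives a maximal $k$-torus $T$ of $G$ with $T \simeq T'$ as $k$-tori. Base change then yields
\[
T \times_k k_v \ \simeq \ T' \times_k k_v \ \simeq \ T'_v
\]
as $k_v$-tori, so every maximal $k_v$-torus of $G' \times_k k_v$ is $k_v$-isomorphic to a maximal $k_v$-torus of $G \times_k k_v$.

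The reverse inclusion follows by the same argument with the roles of $G$ and $G'$ interchanged: the relation of lying in the same genus is symmetric (both groups have the same isomorphism classes of maximal $k$-tori), and being an inner form is also symmetric and is preserved under base change. Hence $G' \times_k k_v$ and $G \times_k k_v$ share the same isomorphism classes of maximal $k_v$-tori, proving $G' \times_k k_v \in \gen_{k_v}(G \times_k k_v)$.

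There is no real obstacle here; the only nontrivial input is Lemma \ref{L:tor-appr}, which is quoted, and the argument is essentially a formal consequence of the definition of the genus combined with weak approximation on the variety of maximal tori.
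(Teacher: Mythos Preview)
Your proof is correct and follows essentially the same approach as the paper: both arguments invoke Lemma \ref{L:tor-appr} to see that every maximal $k_v$-torus of $G \times_k k_v$ (resp.\ $G' \times_k k_v$) is $k_v$-isomorphic to the base change of a maximal $k$-torus of $G$ (resp.\ $G'$), and then the conclusion follows from the definition of the genus. Your version spells out the symmetry and the preservation of the inner-form condition under base change a bit more explicitly, but the substance is identical.
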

Indeed, the lemma implies that every  maximal $k_v$-torus of $G \times_k k_v$ (resp., of $G' \times_k k_v$) is $k_v$-isomorphic to a maximal $k$-torus of $G$
(resp., of $G'$), and our claim immediately follows from the definitions.

\vskip1mm

\begin{prop}\label{P:Prop-gen1}
Let $G_1$ and $G_2$ be absolutely almost simple algebraic groups
over a finitely generated field $k$, and let $\ell_i$ be the minimal
Galois extension of $k$ over which $G_i$ becomes an inner form of the split group.
Assume that $G_1$ and $G_2$ have the same isogeny classes of maximal
$k$-tori. Then

\vskip2mm

\noindent \ {\rm (i)} \parbox[t]{16cm}{either $G_1$ and $G_2$ are of
the same Killing-Cartan type, or one of them is of type $\textsf{B}_{\ell}$ and the
other is of type $\textsf{C}_{\ell}$ for some $\ell \geqslant 3$;}

\vskip1mm

\noindent {\rm (ii)} \parbox[t]{16cm}{$\ell_1 = \ell_2$, and consequently, if the groups $G_1$ and $G_2$ are of the same
Killing-Cartan type and are both either simply connected or adjoint, then they are inner twists of one another.}
\end{prop}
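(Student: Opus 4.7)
The plan is to exploit a maximal $k$-torus that $G_1$ and $G_2$ share up to $k$-isogeny, chosen so generically that its Galois structure pins down both the root system and the outer-form structure of each group.

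By Theorem \ref{T:Exist}, applied to the finite (hence finitely generated) Galois extension $\ell := \ell_1\ell_2$ of $k$, I first choose a maximal $k$-torus $T_1 \subset G_1$ that is generic over $\ell$. The hypothesis of the proposition provides a maximal $k$-torus $T_2 \subset G_2$ together with a $k$-isogeny $\phi \colon T_1 \to T_2$, whose pullback on characters is a $\Ga_k$-equivariant isomorphism identifying $V := X(T_1) \otimes_{\Z} \Q$ with $X(T_2) \otimes_{\Z} \Q$. Transporting both root systems into $V$, I write $\Phi_i \subset V$, $W_i = W(G_i,T_i) \subset \mathrm{GL}(V)$, and $\mathrm{Aut}(\Phi_i) = W_i \rtimes \Sigma_i$; the Galois action becomes a single representation $\rho \colon \Ga_k \to \mathrm{GL}(V)$ with image contained in $\mathrm{Aut}(\Phi_1) \cap \mathrm{Aut}(\Phi_2)$.

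The heart of the argument is to show $W_1 = W_2$ as subgroups of $\mathrm{GL}(V)$. Since $G_2$ becomes inner over $\ell_2$, we have $\rho(\Ga_{\ell_2}) \subseteq W_2$; genericity of $T_1$ over $\ell \supseteq \ell_2$ gives $\rho(\Ga_{\ell_2}) \supseteq \rho(\Ga_{\ell}) \supseteq W_1$; combining these yields $W_1 \subseteq W_2$. Running the analogous construction with the roles of $G_1$ and $G_2$ swapped --- starting from a maximal $k$-torus of $G_2$ generic over $\ell$ and $k$-isogenous to some maximal torus of $G_1$ --- produces $|W_2| \leq |W_1|$ by the same chain of inclusions, and the two statements together force $W_1 = W_2$.

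Part (i) then follows from the classical fact that the abstract Weyl group of an irreducible reduced root system determines the type, the sole ambiguity being that $\textsf{B}_\ell$ and $\textsf{C}_\ell$ share a Weyl group for $\ell \geq 3$ (and coincide as root systems for $\ell \leq 2$). In each of the remaining cases one verifies directly that $\mathrm{Aut}(\Phi_1) = \mathrm{Aut}(\Phi_2)$ as subgroups of $\mathrm{GL}(V)$, so the quotients $\Sigma_i = \mathrm{Aut}(\Phi_i)/W_i$ get canonically identified with a single group $\Sigma$ (trivial in the $\textsf{B}/\textsf{C}$ case). Under this identification the composites $\Ga_k \xrightarrow{\rho} \mathrm{Aut}(\Phi_i) \twoheadrightarrow \Sigma$ for $i = 1,2$ coincide as homomorphisms, so their kernels agree and therefore $\ell_1 = \ell_2$. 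If in addition $G_1$ and $G_2$ have the same Killing-Cartan type and are both simply connected or both adjoint, then the equality of these homomorphisms says precisely that the outer classes of $G_1$ and $G_2$ in $H^1(k,\Sigma)$ coincide, so $G_1$ and $G_2$ are inner twists of one another. The delicate point is in the core step above: one must verify carefully that after pulling back along $\phi$ the Weyl groups $W_1$ and $W_2$ sit inside the same copy of $\mathrm{GL}(V)$, so that the sandwich $W_1 \subseteq \rho(\Ga_{\ell_2}) \subseteq W_2$ is an honest comparison of subgroups and not merely an inclusion up to an outer automorphism of $\mathrm{GL}(V)$.
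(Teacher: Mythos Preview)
Your proof is correct, and for part (i) it follows the paper's argument almost exactly: both choose $T_1 \subset G_1$ generic over $\ell = \ell_1\ell_2$, transport the Galois action through the $k$-isogeny to $T_2$, sandwich $W_1$ inside $W_2$ using that $G_2$ is inner over $\ell_2$ while $T_1$ is generic over $\ell$, and then finish by symmetry to get $|W_1| = |W_2|$.

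For part (ii) you take a slightly different route. The paper argues by contradiction: assuming $\ell_1 \subsetneq \ell$, it shows that the image of $\Ga_{\ell_1}$ in $\mathrm{Aut}(\Phi_2)$ strictly contains $W_2$ (since $G_2$ is not inner over $\ell_1$) while its image in $\mathrm{Aut}(\Phi_1)$ equals $W_1$, violating the order equality coming from the isogeny. Your approach instead establishes $\mathrm{Aut}(\Phi_1) = \mathrm{Aut}(\Phi_2)$ as subgroups of $\mathrm{GL}(V)$, so that the two outer-action maps $\Ga_k \to \Sigma$ literally coincide and hence share the same kernel. This is more conceptual and directly yields the inner-twist conclusion, but it rests on the step you pass over with ``one verifies directly'': the point is that once $W_1 = W_2$, the reflections in $W$ pin down the common set of root lines, and since the only types with nontrivial $\Sigma$ (namely $\textsf{A}_\ell$, $\textsf{D}_\ell$, $\textsf{E}_6$) are simply-laced, $\Phi_2$ must be a scalar multiple of $\Phi_1$ in $V$, whence $\mathrm{Aut}(\Phi_1) = \mathrm{Aut}(\Phi_2)$. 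The paper's order-counting avoids needing this auxiliary lemma at the cost of a less transparent endgame.
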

\begin{proof}
These statements were proved in the context of the analysis of weakly commensurable Zariski-dense subgroups
in \cite{PR-WC} and \cite[\S 5]{PR-Appl} --- see also Theorem \ref{T:basic} below. So, we will just briefly outline the argument in our present context
of absolutely almost simple algebraic groups with the same tori.
Set $\ell = \ell_1\ell_2$. Using Theorem \ref{T:Exist}, we can find a maximal $k$-torus
$T_1$ of $G_1$ which is generic over $\ell$.
By our assumption, there exist
a maximal $k$-torus $T_2$ of $G_2$ and a $k$-defined isogeny $\nu \colon T_1 \to T_2$. We then have the following commutative
diagram
$$
\xymatrix{& {\mathrm{GL}}(X(T_1) \otimes_{\Z} \Q) 
\\ \Ga(k^{\mathrm{sep}}/k) \ar[ru]^{\theta_{T_1}} \ar[rd]_{\theta_{T_2}} & \\ & {\mathrm{GL}}(X(T_2) \otimes_{\Z} \Q),\ar[uu]_{\tilde{\nu}}
}
$$
where $k^{\mathrm{sep}}$ is a fixed separable closure of $k$, and $\tilde{\nu}$ is the isomorphism induced by $\nu$. We note that for any field
extension $F$ of $k$ contained in $k^{\mathrm{sep}}$, the map $\tilde{\nu}$ gives an isomorphism between the images of $\Ga(k^{\mathrm{sep}}/F)$ under
$\theta_{T_1}$ and $\theta_{T_2}$, hence
\begin{equation}\label{E:2-1}
\vert \theta_{T_1}(\Ga(k^{\mathrm{sep}}/F)) \vert = \vert
\theta_{T_2}(\Ga(k^{\mathrm{sep}}/F)) \vert.
\end{equation}
Since both $G_1$ and $G_2$ are inner forms over $\ell$, by \cite[Lemma 4.1]{PR-WC} we have
$$
\theta_{T_i}(\Ga(k^{\mathrm{sep}}/\ell)) \subset W(G_i , T_i) \ \ \text{for} \ \ i = 1, 2.
$$
Combining this with the fact that $T_1$ was chosen to be generic over $\ell$, we see that actually
\begin{equation}\label{E:2-2}
\theta_{T_1}(\Ga(k^{\mathrm{sep}}/\ell)) = W(G_1 , T_1).
\end{equation}
Thus (\ref{E:2-1}) with $F = \ell$ yields the inequality $\vert W(G_1 , T_1) \vert \leqslant \vert W(G_2 , T_2) \vert$.
Starting now with a  maximal $k$-torus $T'_2$ of $G_2$ that is generic over $\ell$
and considering a maximal $k$-torus $T'_1$ of $G_1$ that is $k$-isogenous to $T'_2$, we similarly obtain the inequality $\vert W(G_2 , T'_2)
\vert \leqslant \vert W(G_1 , T'_1) \vert$. Since $\vert W(G_i , T_i) \vert = \vert W(G_i , T'_i) \vert$ for $i = 1, 2$, we conclude that
\begin{equation}\label{E:2-3}
\vert W(G_1 , T_1) \vert = \vert W(G_2 , T_2) \vert.
\end{equation}
This already yields assertion (i) as the type of a reduced irreducible root system is uniquely determined by the order of the corresponding
Weyl group except for the ambiguity between types $\textsf{B}_{\ell}$ and $\textsf{C}_{\ell}$ for $\ell \geqslant 3$.  In addition, (\ref{E:2-3})
also implies that
\begin{equation}\label{E:2-4}
\theta_{T_2}(\Ga(k^{\mathrm{sep}}/\ell)) = W(G_2 , T_2).
\end{equation}
Now assume that $\ell_2 \not\subset \ell_1$, i.e. $\ell_1 \subsetneqq \ell$. Since $G_1$ is an inner form already over $\ell_1$, we conclude
from (\ref{E:2-2}) that
$$
\theta_{T_1}(\Ga(k^{\mathrm{sep}}/\ell_1)) = W(G_1 , T_1).
$$
On the other hand, it follows from (\ref{E:2-4}) that $\theta_{T_2}(\Ga(k^{\mathrm{sep}}/\ell_1))$ contains $W(G_2 , T_2)$ but is \emph{strictly
bigger} as $G_2$ is \emph{not} an inner form over $\ell_1$. In view of (\ref{E:2-3}), this contradicts (\ref{E:2-1}) with $F = \ell_1$. Thus, $\ell_2 \subset \ell_1$, and by symmetry we conclude that $\ell_1 = \ell_2$, as required. It is well-known that for absolutely almost simple simply connected or adjoint groups, this fact implies that the groups are inner twists of one another.
\end{proof}

\begin{cor}\label{C:+}
Let $G$ be an absolutely almost simple algebraic group over a finitely generated field $k$. Then $\gen^+_k(G) = \gen_k(G)$.
\end{cor}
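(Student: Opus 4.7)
The plan is to deduce the corollary from Proposition \ref{P:Prop-gen1}(ii): since having the same isomorphism classes of maximal $k$-tori is stronger than having the same isogeny classes, that proposition yields that any $G' \in \gen^+_k(G)$ shares with $G$ the minimal Galois extension $\ell$ over which both become inner forms of the split group. The remaining work is then to upgrade this common invariant to the statement that $G'$ is actually an inner twist of $G$, so that $G' \in \gen_k(G)$.

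The inclusion $\gen_k(G) \subseteq \gen^+_k(G)$ is tautological, so I focus on the converse. Given $G' \in \gen^+_k(G)$, it is by hypothesis a $k$-form of $G$, hence shares its Cartan-Killing type; moreover, the ``same isomorphism classes of maximal $k$-tori'' hypothesis implies the same isogeny classes. Proposition \ref{P:Prop-gen1}(ii) therefore gives $\ell_G = \ell_{G'} =: \ell$, where $\ell_H$ denotes the minimal Galois extension of $k$ over which $H$ becomes an inner form of the split group. To conclude that $G$ and $G'$ are inner twists of each other, I would use the classification of $k$-forms of the common split group $G_0$ by $H^1(k, \mathrm{Aut}(G_0))$, together with the exact sequence
\[
1 \longrightarrow \overline{G}_0 \longrightarrow \mathrm{Aut}(G_0) \longrightarrow \Sigma \longrightarrow 1,
\]
in which $\Sigma$ is the symmetry group of the Dynkin diagram of $G_0$. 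A standard twisting argument shows that two $k$-forms are inner twists of each other precisely when their images under the induced map $\mu \colon H^1(k, \mathrm{Aut}(G_0)) \to H^1(k, \Sigma)$ coincide, and by construction $\ell_H$ is the fixed field of the kernel of a cocycle representing $\mu(\xi_H)$.

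The main step is therefore to verify that a class in $H^1(k, \Sigma)$ is determined by the fixed field of the kernel of any representing cocycle. For absolutely almost simple groups one has $\Sigma \in \{1, \mathbb{Z}/2\mathbb{Z}, S_3\}$, and the cases $\Sigma = 1$ and $\Sigma = \mathbb{Z}/2\mathbb{Z}$ are immediate, as a continuous homomorphism $\mathrm{Gal}(k^{\mathrm{sep}}/k) \to \mathbb{Z}/2\mathbb{Z}$ is pinned down by its kernel. The main (and only) obstacle is the case $\Sigma = S_3$, arising in type $\textsf{D}_4$. Here I would observe that every nontrivial subgroup of $S_3$ is determined up to conjugacy by its order, and that the automorphism group of each such subgroup is realized by conjugation inside $S_3$; it follows that any two continuous homomorphisms $\mathrm{Gal}(k^{\mathrm{sep}}/k) \to S_3$ with a common kernel are conjugate in $S_3$, hence represent the same class in $H^1(k, S_3)$. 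Granted this case analysis, $\mu(\xi_G) = \mu(\xi_{G'})$, forcing $G$ and $G'$ to be inner twists of a common quasi-split form and therefore inner twists of each other, as required.
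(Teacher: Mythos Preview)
Your proof is correct and follows the same approach as the paper, which simply cites Proposition~\ref{P:Prop-gen1}(ii) in one line. You have essentially unpacked the ``well-known'' assertion at the end of the proof of Proposition~\ref{P:Prop-gen1} (that $\ell_1=\ell_2$ forces the groups to be inner twists of one another) by carrying out the case analysis on $\Sigma$ explicitly; one minor imprecision is that for intermediate isogeny types the outer automorphism group $\mathrm{Out}(G_0)$ may be a proper subgroup of the Dynkin symmetry group (e.g.\ $\mathbb{Z}/2\mathbb{Z}$ for $\mathrm{SO}_8$), but since it is still always one of $\{1,\mathbb{Z}/2\mathbb{Z},S_3\}$ your case analysis covers everything.
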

\begin{proof}
Let $G' \in \gen^+_k(G)$. Then according to Proposition \ref{P:Prop-gen1}, the group $G'$ is an inner twist of $G$, i.e. $G' \in \gen_k(G)$.
\end{proof}
\vskip1mm

\noindent {\bf 3.2. Generic elements.} Let $G$ be a (connected) absolutely almost simple  algebraic group over a field $k$. A regular semisimple element $\gamma \in G(k)$ of infinite order is called $k$-{\it generic} if the $k$-torus $T = C_G(\gamma)^{\circ}$ (connected component of the centralizer) is $k$-generic. The following result yields the existence of generic elements in Zariski-dense subsemigroups under one natural assumption.
\begin{thm}\label{T:ExGenElts}
{\rm (\cite[Theorem 2]{PR-Vin})}
Let $G$ be an absolutely almost simple algebraic group over a finitely generated field $k$, and let $\Gamma \subset G(k)$ be a Zariski-dense subsemigroup that contains an element of infinite order\footnotemark. Then $\Gamma$ contains a regular semisimple element $\gamma \in \Gamma$ of infinite order that is $k$-generic.
\end{thm}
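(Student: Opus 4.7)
The plan is to combine the local-global characterization of $k$-generic tori recalled just before the statement with a density/approximation property for Zariski-dense subsemigroups.

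First, I would invoke the local construction of generic tori from \cite{PR-Vin}. Since $\Gamma$ contains an element of infinite order, the finitely generated field $k$ is necessarily infinite. Accordingly, there exist finitely many discrete valuations $v_1, \ldots, v_r$ of $k$, where $r$ is the number of conjugacy classes in the Weyl group of $G$, together with maximal $k_{v_i}$-tori $S_i$ of $G \times_k k_{v_i}$, such that any maximal $k$-torus $T$ of $G$ with $T \times_k k_{v_i}$ being $G(k_{v_i})$-conjugate to $S_i$ for all $i$ is automatically $k$-generic. Consequently, any regular semi-simple element $\gamma \in G(k)$ of infinite order whose centralizer $C_G(\gamma)^{\circ}$ realizes the prescribed local type at each $v_i$ is a $k$-generic element in the sense defined before the theorem.

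For each $i$, I would pick a regular semi-simple element $\delta_i \in S_i(k_{v_i})$ and consider
$$
\Omega_i = \{ g \in G(k_{v_i}) \mid g \text{ is regular semi-simple and } C_G(g)^{\circ} \text{ is } G(k_{v_i})\text{-conjugate to } S_i \}.
$$
This set is open in the $v_i$-adic topology (the $G(k_{v_i})$-conjugacy class of the centralizer torus of a regular semi-simple element is locally constant in the analytic topology) and contains $\delta_i$. The main remaining task is to produce an element $\gamma \in \Gamma$ whose image in each $G(k_{v_i})$ lies in $\Omega_i$ and which has infinite order.

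For this last step I would appeal to the fact that the diagonal image of the Zariski-dense subsemigroup $\Gamma$ inside $\prod_i G(k_{v_i})$ is Zariski-dense in $\prod_i (G \times_k k_{v_i})$, and combine it with a Tits-type argument, leveraging the given element of infinite order in $\Gamma$, to show that the analytic-topology closure of this diagonal image contains a nonempty open subset of $\prod_i G(k_{v_i})$. Intersecting with the nonempty open set $\prod_i \Omega_i$ then yields infinitely many elements $\gamma \in \Gamma$ whose centralizers are $k$-generic tori. The main obstacle is to extract one such $\gamma$ of infinite order, since genericity of the centralizer is a purely local/analytic condition while having infinite order is a global condition not visible at any single completion; the resolution is that torsion elements of $G(k)$ of any fixed bounded exponent form a proper Zariski-closed subset of $G$ by a Jordan--Burnside type argument, so at most finitely many of our candidates can have order below any chosen bound, and the remaining ones supply the desired regular semi-simple $k$-generic element of infinite order $\gamma \in \Gamma$.
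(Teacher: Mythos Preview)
The paper does not give its own proof of this statement; it is quoted from \cite{PR-Vin} (with the characteristic-zero case going back to \cite{PR-MRL1}), and the surrounding text only records the underlying philosophy in the paragraph preceding Theorem~\ref{T:Exist}. Your outline does follow that philosophy --- fix finitely many places $v_1,\ldots,v_r$, prescribe local torus types $S_i$, and then hit the resulting open condition $\prod_i \Omega_i$ with an element of $\Gamma$ --- and this is indeed the skeleton of the argument in the cited references.

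That said, two steps in your sketch are not justified as written. The phrase ``combine it with a Tits-type argument \ldots\ to show that the analytic-topology closure of this diagonal image contains a nonempty open subset'' is where essentially all the content of the theorem lies, and Zariski-density by itself does not deliver it: one needs a genuine $p$-adic openness result for the closure of $\Gamma$ at the chosen places (this is where the careful choice of the $v_i$ enters), and the positive-characteristic case --- especially characteristics $2$ and $3$, as the paper explicitly notes after the statement --- is precisely the delicate part that \cite{PR-Vin} is devoted to. You have effectively assumed the theorem at this step.

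Second, your infinite-order argument is logically off. That $\{g \in G : g^n = e\}$ is a proper Zariski-closed subset does \emph{not} imply that only finitely many of your candidates have order at most $n$: a proper Zariski-closed set can meet $\Gamma$, or any infinite subset of it, in infinitely many points (think of $\mu_\infty \subset \mathbb{G}_m(\overline{\mathbb{Q}})$). One correct route in characteristic zero is to note that a sufficiently small open compact subgroup of $G(k_{v_i})$ is a torsion-free pro-$p$ group, so once the closure of $\Gamma$ is open one can arrange the local condition itself to force infinite order; alternatively, a proper Zariski-closed set has empty interior in the $v_i$-adic topology, so one can avoid it inside any nonempty analytic open set. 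In positive characteristic one must argue differently, and again this is part of what \cite{PR-Vin} supplies.
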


\footnotetext{We recall that any Zariski-dense subgroup $\Gamma \subset G(k)$, where $G$ is a semisimple group over a field $k$ of characteristic zero, automatically contains an element of infinite order.}

In characteristic zero, the existence of generic elements  of infinite order in an arbitrary Zariski-dense subgroup was established already in \cite{PR-MRL1} for any semisimple $G$. The case of positive characteristic (particularly of characteristics 2 and 3) requires a more delicate argument, which was given in \cite{PR-Vin} assuming $G$ to be absolutely almost simple. We will also need the following refined version of Theorem~\ref{T:ExGenElts} over fields of characteristic zero.

\vskip1mm

\begin{thm}\label{T:ExGenElts2}
{\rm (cf. \cite[Theorem 3.4]{PR-Fields})}
Let $G$ be a connected absolutely almost simple algebraic group over a finitely generated field $k$ of characteristic zero, $v$ be a discrete valuation of $k$ such that the completion $k_v$ is locally compact, and $T(v)$ be a maximal $k_v$-torus of $G$. Given a finitely generated Zariski-dense subgroup $\Gamma \subset G(k)$ whose closure in $G(k_v)$ for the $v$-adic topology is open, there exists a regular semisimple element $\gamma \in \Gamma$ of infinite order such that the corresponding torus $T = C_G(\gamma)^{\circ}$ is generic over $k$ and is conjugate to $T(v)$ by an element of $G(k_v)$.
\end{thm}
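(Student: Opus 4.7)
The plan is to refine the argument establishing Theorem \ref{T:ExGenElts} (from \cite{PR-Vin}) by adjoining a further local condition at $v$, which can be controlled using the hypothesis that the closure of $\Gamma$ in $G(k_v)$, call it $\overline{\Gamma}_v$, is open. The starting point is the local-global criterion for genericity in characteristic zero recalled in \S 3.1 (from \cite{PR-MRL1}): one can choose finitely many discrete valuations $v_1, \ldots, v_r$ of $k$ (where $r$ is the number of conjugacy classes in the Weyl group of $G$), which we may arrange to be distinct from $v$ and to have locally compact completions (by exploiting embeddings $k \hookrightarrow \Q_{p_i}$), together with maximal $k_{v_i}$-tori $S_i$ of $G$, such that any maximal $k$-torus $S$ of $G$ that is $G(k_{v_i})$-conjugate to $S_i$ for every $i$ is automatically $k$-generic.

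Set $v_0 := v$, $S_0 := T(v)$, and for each $i \in \{0, 1, \ldots, r\}$ let $\Omega_i \subset G(k_{v_i})$ be the set of regular semisimple elements of infinite order whose connected centralizer is $G(k_{v_i})$-conjugate to $S_i$. Each $\Omega_i$ is non-empty and $v_i$-open: non-emptiness is immediate, while openness follows from the continuous dependence of $C_G(g)^{\circ}$ on a regular semisimple element $g$ together with the fact that $G(k_{v_i})$-conjugacy classes of maximal tori are $v_i$-open in the variety of maximal tori (a consequence of the $k_{v_i}$-rationality of that variety). Any $\gamma \in G(k)$ whose image lies in each $\Omega_i$ under the diagonal embedding $G(k) \hookrightarrow \prod_i G(k_{v_i})$ satisfies the two required properties: the conditions at $v_1, \ldots, v_r$ force $C_G(\gamma)^{\circ}$ to be $k$-generic, and the condition at $v_0$ gives $G(k_v)$-conjugacy to $T(v)$. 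Thus it suffices to produce such a $\gamma$ inside $\Gamma$.

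For the extraction, I would first use the openness of $\overline{\Gamma}_v$ to handle $v$, and then run the Zariski-density argument of \cite{PR-Vin} for the remaining $v_i$. Since $\Omega_0$ is a non-empty $v$-open union of $G(k_v)$-conjugacy classes and $\overline{\Gamma}_v$ is an open subgroup of $G(k_v)$, the intersection $\overline{\Gamma}_v \cap \Omega_0$ is non-empty and $v$-open, so the $v$-density of $\Gamma$ in $\overline{\Gamma}_v$ provides some $\gamma_0 \in \Gamma \cap \Omega_0$. Choose a $v$-open neighborhood $N$ of the identity in $\overline{\Gamma}_v$ small enough that $\gamma_0 N \subset \Omega_0$ (possible because $\Omega_0$ is $v$-open). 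The subset $\Gamma' := \Gamma \cap \gamma_0 N$ is a Zariski-dense subsemigroup of $G$: indeed, $\gamma_0 N$ is $v$-open in $\overline{\Gamma}_v$, hence $v$-open in $G(k_v)$, so its Zariski closure in $G$ is all of $G$, and the $v$-density of $\Gamma$ in $\overline{\Gamma}_v$ forces $\Gamma'$ to be Zariski-dense as well. Applying Theorem \ref{T:ExGenElts} to $\Gamma'$ yields an element $\gamma \in \Gamma'$ of infinite order whose centralizer torus is $k$-generic; by construction $\gamma \in \Omega_0$, so $C_G(\gamma)^{\circ}$ is $G(k_v)$-conjugate to $T(v)$ as required.

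The main obstacle is justifying that $\Gamma'$, although not a subgroup, remains a Zariski-dense subsemigroup to which Theorem \ref{T:ExGenElts} can be applied --- the construction in \cite{PR-Vin} is stated for Zariski-dense subsemigroups, so this is the precise hypothesis that must be verified. The delicate point is the Zariski-density of $\Gamma'$, which rests on the combination of two facts: that $\gamma_0 N$ contains a non-empty $v$-open subset of the open subgroup $\overline{\Gamma}_v$ of $G(k_v)$ (so its Zariski closure is all of $G$) and that $\Gamma$ is $v$-dense in $\overline{\Gamma}_v$. Once this density is in hand, the \cite{PR-Vin} specialization argument applies directly to $\Gamma'$ to produce the required $\gamma$, and the membership $\gamma \in \Omega_0$ follows automatically from the containment $\Gamma' \subset \gamma_0 N \subset \Omega_0$.
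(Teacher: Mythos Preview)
There is a genuine gap: the set $\Gamma' = \Gamma \cap \gamma_0 N$ is \emph{not} a subsemigroup. If $N$ is an open neighborhood of the identity in $\overline{\Gamma}_v$ (even an open subgroup) and $\gamma_0 \notin N$, then the product of two elements of the coset $\gamma_0 N$ lies in $\gamma_0 N \gamma_0 N$, which has no reason to sit inside $\gamma_0 N$. Since Theorem \ref{T:ExGenElts} is stated for Zariski-dense sub\emph{semigroups}, it cannot be applied to $\Gamma'$ as a black box. Your closing paragraph flags the subsemigroup property as ``the precise hypothesis that must be verified,'' but no verification is given, and indeed the claim is simply false for a coset of a proper open subgroup.

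The repair is to open up the proof of Theorem \ref{T:ExGenElts} rather than invoke it externally. The arguments in \cite{PR-MRL1} and \cite{PR-Vin} actually establish more than bare existence: they produce a non-empty open set $U \subset \prod_{i=1}^r G(k_{v_i})$ such that every $\gamma \in \Gamma$ whose diagonal image lands in $U$ is regular semi-simple, of infinite order, and $k$-generic. One then adjoins the place $v_0 = v$ with the open condition $\Omega_0$, and the problem becomes showing that the diagonal image of $\Gamma$ in $\prod_{i=0}^r G(k_{v_i})$ meets $\Omega_0 \times U$. This is where the hypothesis that $\overline{\Gamma}_v$ is open is used, in combination with an approximation/independence argument across the distinct places $v, v_1, \ldots, v_r$; carrying this out is exactly the content of \cite[Theorem 3.4]{PR-Fields}, which the paper cites without reproducing a proof. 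Your overall strategy (impose one more local condition and intersect) is the right one, but the reduction to Theorem \ref{T:ExGenElts} via $\Gamma'$ does not work as written.
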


\vskip1mm

\noindent {\bf 3.3. Weak commensurability.} (Cf. \cite{PR-WC}) Let $\gamma_1 \in \mathrm{GL}_{n_1}(F)$ and $\gamma_2 \in \mathrm{GL}_{n_2}(F)$ be two semisimple matrices over an infinite field $F$ with respective eigenvalues
$$
\lambda_1, \ldots , \lambda_{n_1} \ \ \text{and} \ \ \mu_1, \ldots , \mu_{n_2}
$$
(in an algebraic closure $\overline{F}$). We say that $\gamma_1$ and $\gamma_2$ are {\it weakly commensurable} if there exist integers $a_1, \ldots , a_{n_1}$ and $
b_1, \ldots , b_{n_2}$ such that
$$
\lambda_1^{a_1} \cdots \lambda_{n_1}^{a_{n_1}} \, = \, \mu_1^{b_1} \cdots \mu_{n_2}^{b_{n_2}} \neq 1.
$$
Next, let $G_1 \subset \mathrm{GL}_{n_1}$ and $G_2 \subset \mathrm{GL}_{n_2}$ be two reductive algebraic $F$-groups, and let $\Gamma_1 \subset G_1(F)$ and $\Gamma_2 \subset G_2(F)$ be Zariski-dense subgroups that contain elements of infinite order. We say that $\Gamma_1$ and $\Gamma_2$ are {\it weakly commensurable} if every semisimple element $\gamma_1 \in \Gamma_1$ of infinite order is weakly commensurable to some semisimple element $\gamma_2 \in \Gamma_2$ of infinite order, and vice versa. It is easy to see that this relation does not depend on the choice of the matrix realizations of $G_1$ and $G_2$.

\vskip.5mm

The following theorem summarizes the basic results about weakly commensurable subgroups.
\begin{thm}\label{T:basic}
Let $G_1$ and $G_2$ be absolutely almost simple algebraic groups over a finitely generated field $k$, and let $\ell_i$ be the minimal Galois extension of $k$ over which $G_i$ becomes an inner form of the split group. Furthermore, let $\Gamma_1 \subset G_1(k)$ and $\Gamma_2 \subset G_2(k)$ be Zariski-dense subgroups containing elements of infinite order. Assume that $\Gamma_1$ and $\Gamma_2$ are weakly commensurable. Then

\vskip1.5mm

\noindent {\rm (1)} \parbox[t]{16cm}{the groups $G_1$ and $G_2$ have the same order of the Weyl groups, or equivalently, they are either of the same type or
one of them is of type $\textsf{B}_{\ell}$ and the other of type $\textsf{C}_{\ell}$ for some $\ell \geq 3$;}

\vskip1mm

\noindent {\rm (2)} \parbox[t]{16cm}{if $\mathrm{char}\: k = 0$, then the trace fields of $\Gamma_1$ and $\Gamma_2$ coincide: $k_{\Gamma_1} = k_{\Gamma_2}$;}

\vskip1mm

\noindent {\rm (3)} \parbox[t]{16cm}{$\ell_1 = \ell_2$.}
\end{thm}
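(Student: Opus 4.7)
The plan is to adapt the strategy of Proposition \ref{P:Prop-gen1} to the weak commensurability setting, the key new input being the construction of a $k$-isogeny of maximal tori from a pair of weakly commensurable generic elements. First, I would apply Theorem \ref{T:ExGenElts} to $\Gamma_1 \subset G_1(k)$ to produce a regular semi-simple element $\gamma_1 \in \Gamma_1$ of infinite order whose centralizer torus $T_1 := C_{G_1}(\gamma_1)^{\circ}$ is generic over the compositum $\ell := \ell_1 \ell_2$ (the refined existence statement allows prescribing genericity over any chosen finitely generated extension). By weak commensurability, there is a semi-simple $\gamma_2 \in \Gamma_2$ of infinite order weakly commensurable to $\gamma_1$; set $T_2 := C_{G_2}(\gamma_2)^{\circ}$.

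Computing eigenvalues in the adjoint representations (which is legitimate since the weak commensurability relation is independent of the matrix realization), the defining multiplicative identity translates into $\chi_1(\gamma_1) = \chi_2(\gamma_2) \ne 1$ for some nonzero characters $\chi_i \in X(T_i)$ lying in the root lattices. Since $\gamma_i \in G_i(k)$, applying any $\sigma \in \Ga(k^{\mathrm{sep}}/k)$ gives $(\sigma \chi_1)(\gamma_1) = (\sigma \chi_2)(\gamma_2)$, so the relations are Galois-compatible. The genericity of $T_1$ over $\ell$ forces the $\Ga(k^{\mathrm{sep}}/\ell)$-orbit of $\chi_1$ to span $X(T_1) \otimes \Q$; assembling the full family of Galois-matched relations then yields a $\Ga(k^{\mathrm{sep}}/k)$-equivariant $\Q$-linear isomorphism $\tilde{\nu} \colon X(T_1)\otimes\Q \to X(T_2)\otimes\Q$, corresponding to a $k$-isogeny $\nu \colon T_2 \to T_1$. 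This is the main obstacle: a single scalar relation propagates to an isomorphism of rational character lattices only because genericity makes the Galois orbit of $\chi_1$ spanning, and one must check carefully that the assignment $\sigma \chi_2 \mapsto \sigma \chi_1$ is well-defined and extends $\Q$-linearly to a bijection.

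With $\nu$ in hand, the argument of Proposition \ref{P:Prop-gen1} applies essentially verbatim. The equivariance of $\tilde\nu$ yields
$$
|\theta_{T_1}(\Ga(k^{\mathrm{sep}}/F))| = |\theta_{T_2}(\Ga(k^{\mathrm{sep}}/F))|
$$
for every intermediate field $F$. Taking $F = \ell$, genericity of $T_1$ gives $\theta_{T_1}(\Ga(k^{\mathrm{sep}}/\ell)) = W(G_1, T_1)$, while $\theta_{T_2}(\Ga(k^{\mathrm{sep}}/\ell)) \subseteq W(G_2, T_2)$ since $G_2$ is inner over $\ell$. Running the symmetric argument with an $\ell$-generic torus in $G_2$ gives the reverse inequality, establishing $|W(G_1)| = |W(G_2)|$, hence (1) modulo the familiar $\textsf{B}/\textsf{C}$ ambiguity. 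The same chain of equalities forces $\theta_{T_2}(\Ga(k^{\mathrm{sep}}/\ell)) = W(G_2, T_2)$; if $\ell_2 \not\subseteq \ell_1$, then the inclusion $\theta_{T_2}(\Ga(k^{\mathrm{sep}}/\ell_1)) \supseteq W(G_2, T_2)$ would be strict, while $\theta_{T_1}(\Ga(k^{\mathrm{sep}}/\ell_1)) = W(G_1, T_1)$ (as $G_1$ is inner over $\ell_1$), contradicting the cardinality identity for $F = \ell_1$. Hence $\ell_2 \subseteq \ell_1$, and by symmetry (3) follows.

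For (2) in characteristic zero, I would reselect $\gamma_1$ generic with the additional property that the subfield $K_1 \subseteq k^{\mathrm{sep}}$ generated over $k_{\Gamma_1}$ by the eigenvalues of $\Ad\,\gamma_1$ is precisely the splitting field of $T_1$ over $k_{\Gamma_1}$; this is possible because the coefficients of the characteristic polynomial of $\Ad\,\gamma_1$ lie in $k_{\Gamma_1}$ by Vinberg's theorem, while the Galois action on the eigenvalues factors through $\Ga(k_{T_1}/k_{\Gamma_1})$. Weak commensurability then expresses each eigenvalue of $\Ad\,\gamma_1$ as a rational multiplicative combination of eigenvalues of $\Ad\,\gamma_2$, and conversely after taking Galois conjugates, so that the compositum $K_1 K_2$ is a finite extension of both $k_{\Gamma_1}$ and $k_{\Gamma_2}$ of the same degree. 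A standard Galois-descent argument from the eigenvalue fields back to the trace fields (using Vinberg's characterization of $k_{\Gamma_i}$ as the minimal field of definition) then yields $k_{\Gamma_1} = k_{\Gamma_2}$.
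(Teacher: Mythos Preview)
Your plan reverses the logical order of the paper (and of \cite{PR-WC}), and the reversal creates a genuine gap precisely at the step you flag as the ``main obstacle.''  You want to build a $k$-isogeny $T_1 \leftrightarrow T_2$ directly from the single relation $\chi_1(\gamma_1)=\chi_2(\gamma_2)$ and its Galois conjugates, and \emph{then} run the Proposition~\ref{P:Prop-gen1} argument to deduce (1) and (3).  But the Isogeny Theorem (Proposition~\ref{P:Isog}, which is \cite[Theorem 4.2]{PR-WC}) is stated with the hypotheses $\vert W(G_1)\vert = \vert W(G_2)\vert$ and $\ell_1=\ell_2$ for a reason.  Concretely: to make the assignment $\sigma\chi_2 \mapsto \sigma\chi_1$ into a well-defined $\Q$-linear map you need that a relation $\sum a_i\,\sigma_i\chi_2 = 0$ forces $\sum a_i\,\sigma_i\chi_1 = 0$.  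From the first you only get $\prod (\sigma_i\chi_2(\gamma_2))^{a_i}=1$, hence $\prod (\sigma_i\chi_1(\gamma_1))^{a_i}=1$; passing from this to $\sum a_i\,\sigma_i\chi_1 = 0$ uses that $\gamma_1$ generates a Zariski-dense subgroup of $T_1$ (true by $k$-irreducibility of $T_1$).  But the \emph{other} direction, needed for injectivity and for knowing the domain is all of $X(T_2)\otimes\Q$, would require the analogous fact for $\gamma_2$ and $T_2$ --- and you have no genericity or irreducibility information about $T_2$ at this stage.  What you actually obtain is only a Galois-equivariant surjection from a Galois-stable subspace of $X(T_2)\otimes\Q$ onto $X(T_1)\otimes\Q$, which gives $\mathrm{rk}\,G_1 \le \mathrm{rk}\,G_2$ but not an identification of the actions $\theta_{T_1}$ and $\theta_{T_2}$ and not $\vert W(G_1)\vert \le \vert W(G_2)\vert$.

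The paper's route avoids this.  Part (1) is taken from \cite{PR-WC} (where it is proved by an independent argument, not via an isogeny).  For (3) the paper never builds an isogeny: from $\lambda := \chi_1(\gamma_1)=\chi_2(\gamma_2)$ it applies Lemma~\ref{L:generate} (which needs only $k$-irreducibility of $T_1$ and infinite order of $\gamma_1$) to conclude that the Galois conjugates of $\lambda$ generate $k_{T_1}$.  Since these conjugates are also values of characters of $T_2$, they lie in $k_{T_2}$, giving $k_{T_1}\subset k_{T_2}$ and hence $[\ell_{T_2}:\ell]\ge \vert W(G_1,T_1)\vert$.  Combining with $[\ell_{T_2}:\ell]\le \vert W(G_2,T_2)\vert$ and the equality of Weyl group orders from part (1) forces $\theta_{T_2}(\mathrm{Gal}(k^{\mathrm{sep}}/\ell))=W(G_2,T_2)$, and then the assumption $\ell_1\ne\ell_2$ leads to the same contradiction as in Proposition~\ref{P:Prop-gen1}.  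Only \emph{after} (1) and (3) are in hand does the paper invoke the Isogeny Theorem (Corollary~\ref{C:Isog}).  Your sketch for (2) is too vague to assess; the paper simply cites \cite[Theorem 2]{PR-WC}.
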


In characteristic zero, part (1) is Theorem 1 in \cite{PR-WC}. Its proof in positive characteristic remains exactly the same due to the existence of generic elements in {\it all} characteristics (Theorem \ref{T:ExGenElts}). The result in part (2) as stated is specific to characteristic zero; in fact, it is false in positive characteristic. Technically, part (3) was proved in \cite[Theorem 6.3(2)]{PR-WC} only when $k$ is a number field, so we will quickly sketch the general argument, which is similar to the proof of Proposition \ref{P:Prop-gen1}. We recall that a $k$-torus $T$ is called $k$-{\it irreducible} if it does not contain any proper $k$-defined subtori; the irreducibility of $T$ is equivalent to the fact that the Galois group $\mathrm{Gal}(k^{\mathrm{sep}}/k)$ acts irreducibly on either of  the $\mathbb{Q}$-vector spaces  $X(T) \otimes_{\mathbb{Z}} \mathbb{Q}$ or  $X_*(T) \otimes_{\mathbb{Z}} \mathbb{Q}$, where $X(T)$ and $X_*(T)$ are, respectively, the groups of characters and cocharacters of $T$,
hence the terminology. We will need the following result.
\begin{lemma}\label{L:generate}
{\rm (\cite[Lemma 3.6]{PR-WC})} Let $T$ be a $k$-irreducible torus. For any $t \in T(k)$ of infinite order and any nonzero character $\chi \in X(T)$, the Galois conjugates of $\lambda = \chi(t)$ generate the splitting field $k_T$.
\end{lemma}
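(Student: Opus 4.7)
The plan is to assume that $L \subsetneq k_T$, where $L \subset k_T$ denotes the subfield generated over $k$ by the Galois orbit of $\lambda = \chi(t)$, and derive a contradiction from the $k$-irreducibility of $T$ together with the infinite order of $t$.

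First, note that $L$ is automatically Galois over $k$, since it is generated by a single Galois orbit, so $H := \mathrm{Gal}(k_T/L)$ is a normal subgroup of $\mathscr{G}_T = \mathrm{Gal}(k_T/k)$. Under the working assumption $L \subsetneq k_T$, $H$ is non-trivial. The key computation: since $t \in T(k)$, for any $\sigma \in \mathscr{G}_T$ one has $\sigma(\lambda) = \sigma(\chi(t)) = (\sigma\chi)(t)$. The condition $h \cdot \sigma(\lambda) = \sigma(\lambda)$ for $h \in H$ then translates to $((h\sigma - \sigma)\chi)(t) = 1$ in multiplicative notation, i.e. the character $\psi_{h,\sigma} := h(\sigma\chi) - \sigma\chi \in X(T)$ vanishes at $t$.

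Next, I would form the submodule $I \subset X(T)$ generated by all the $\psi_{h,\sigma}$ ($h \in H$, $\sigma \in \mathscr{G}_T$). Normality of $H$ in $\mathscr{G}_T$ gives $\tau \psi_{h,\sigma} = \psi_{\tau h \tau^{-1}, \tau\sigma}$, so $I$ is $\mathscr{G}_T$-stable. Consequently the algebraic subgroup $K := \bigcap_{\psi \in I} \ker(\psi) \subset T$ is defined over $k$, and its identity component $K^{\circ}$ is a $k$-subtorus of $T$ with character lattice $X(T)/I$ modulo torsion. The element $t$ lies in $K(k)$ by construction, and since the component group of $K$ is finite, some power $t^n$ lies in $K^{\circ}(k)$ and still has infinite order.

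The conclusion then follows by dichotomy using $k$-irreducibility: the only $k$-defined subtori of $T$ are $\{1\}$ and $T$ itself. If $K^{\circ} = \{1\}$, then $K$ is finite and $t^n = 1$ for some $n$, contradicting the assumption that $t$ has infinite order. If instead $K^{\circ} = T$, then necessarily $I = 0$, which means $H$ fixes every Galois conjugate $\sigma\chi$; but $\chi \neq 0$ and $k$-irreducibility of $T$ says $\mathscr{G}_T$ acts $\mathbb{Q}$-irreducibly on $X(T) \otimes \mathbb{Q}$, so the Galois orbit of $\chi$ spans $X(T) \otimes \mathbb{Q}$, forcing $H$ to act trivially on $X(T)$; this contradicts the faithfulness of the action $\theta_T$, which follows from $k_T$ being the \emph{minimal} splitting field of $T$. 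Hence $H = 1$ and $L = k_T$, as desired. The only delicate point is the passage between the possibly disconnected group $K$ and its identity component $K^{\circ}$, which is handled cleanly by taking an appropriate power of $t$; everything else is straightforward linear algebra in $X(T)$ once the right object $I$ has been isolated.
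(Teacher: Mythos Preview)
Your proof is correct. The paper does not give its own proof of this lemma; it simply quotes the result from \cite[Lemma 3.6]{PR-WC}, so there is no argument in the present paper to compare against. Your approach---showing that the subgroup $H = \mathrm{Gal}(k_T/L)$ fixing all Galois conjugates of $\lambda$ must act trivially on $X(T)$, via the $\mathscr{G}_T$-stable sublattice $I$ generated by the differences $h(\sigma\chi)-\sigma\chi$ and the dichotomy forced by $k$-irreducibility---is the natural one and is carried out cleanly. The one minor remark is that in the case $K^{\circ}=T$ you could skip the appeal to $\mathbb{Q}$-irreducibility of the Galois action and simply note that $I=0$ already says $H$ fixes every $\sigma\chi$, and since these span $X(T)\otimes\mathbb{Q}$ (as the $\mathbb{Q}$-span of the orbit is a nonzero Galois-invariant subspace), $H$ acts trivially; but you do exactly this, so the argument is complete as written.
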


\noindent {\it Proof of Theorem \ref{T:basic}(3).}
Set $\ell = \ell_1\ell_2$. It is enough to prove the inclusion $\ell_1 \subset \ell_2$ as the opposite inclusion is obtained by a symmetric argument. Assume the contrary, i.e. $\ell_1 \not\subset \ell_2$. Using Theorem \ref{T:ExGenElts}, we can find a regular semisimple element $\gamma_1 \in \Gamma_1$ of infinite order which is generic over $\ell$. By our assumption, $\gamma_1$ is weakly commensurable to some semisimple element $\gamma_2 \in \Gamma_2$ of infinite order. Let $T_i$ be a maximal $k$-torus of $G_i$ containing $\gamma_i$. Since $T_1$ is $\ell$-generic, we have the inclusion
$
\theta_{T_1}(\mathrm{Gal}(k^{\mathrm{sep}}/\ell)) \supset W(G_1 , T_1).
$
On the other hand, the fact that $G_1$ is an inner form of a split group over $\ell$ implies the opposite inclusion (see \cite[Lemma 4.1]{PR-WC}). Thus,
$$
\theta_{T_1}(\mathrm{Gal}(k^{\mathrm{sep}}/\ell)) = W(G_1 , T_1),
$$
and in particular, $[\ell_{T_1} : \ell] = \vert W(G_1 , T_1) \vert$. The condition that $\gamma_1$ and $\gamma_2$ are weakly commensurable means that there exist characters $\chi_i \in X(T_i)$ for $i = 1, 2$ such that
$$
\lambda := \chi_1(\gamma_1) = \chi_2(\gamma_2) \neq 1.
$$
It follows from Lemma \ref{L:generate} that the Galois conjugates of $\lambda$ generate the splitting field $k_{T_1}$, yielding, in particular, the inclusion $k_{T_1} \subset k_{T_2}$, hence the inequality
\begin{equation}\label{E:ineq1}
[\ell_{T_2} : \ell] \geq [\ell_{T_1} : \ell] = \vert W(G_1 , T_1) \vert.
\end{equation}
At the same time, again by \cite[Lemma 4.1]{PR-WC}, we have the inclusion $\theta_{T_2}(\mathrm{Gal}(k^{\mathrm{sep}}/\ell)) \subset W(G_2 , T_2)$, so
\begin{equation}\label{E:ineq2}
[\ell_{T_2} : \ell] \leq \vert W(G_2 , T_2) \vert.
\end{equation}
However, by part (1) we have $\vert W(G_1 , T_1) \vert = \vert W(G_2 , T_2) \vert$, so comparing (\ref{E:ineq1}) and (\ref{E:ineq2}), we obtain that
$$
\theta_{T_2}(\mathrm{Gal}(k^{\mathrm{sep}}/\ell)) = W(G_2 , T_2).
$$
By our assumption $\ell \neq \ell_2$, so the last equality implies that
$$
\vert \theta_{T_2}(\mathrm{Gal}(k^{\mathrm{sep}}/\ell_2)) \vert > \vert W(G_2 , T_2) \vert.
$$
This, however, contradicts the inclusion $\theta_{T_2}(\mathrm{Gal}(k^{\mathrm{sep}}/\ell_2)) \subset W(G_2 , T_2)$, which again follows
from \cite[Lemma 4.1]{PR-WC} as $G_2$ is an inner form over $\ell_2$. \hfill $\Box$

\vskip2mm

We conclude this section with the following two statements.

\begin{prop}\label{P:Isog}
{\rm (cf. \cite[Isogeny Theorem 4.2]{PR-WC})}
Let $G_1$ and $G_2$ be two connected absolutely almost simple algebraic groups over an infinite field $k$, and for $i = 1, 2,$ let $\ell_i$ be the minimal Galois extension of $k$ over which $G_i$ becomes an inner form of the split group. Assume that $G_1$ and $G_2$ have the same order of the Weyl groups and that $\ell_1 = \ell_2$. Furthermore, let $T_i$ be a maximal $k$-torus of $G_i$, and let $\gamma_i \in T_i(k)$ be an element of infinite order. If $T_1$ is $k$-generic and the elements $\gamma_1$ and $\gamma_2$ are weakly commensurable, then there exists a $k$-isogeny $\pi \colon T_1 \to T_2$.
\end{prop}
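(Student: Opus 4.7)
The plan is to exploit the genericity of $T_1$ together with the hypothesis $\ell_1 = \ell_2 =: \ell$ and Lemma \ref{L:generate} to force $T_2$ to become ``$\ell$-generic'' as well, and then to construct a $\Ga(k^{\mathrm{sep}}/k)$-equivariant $\Q$-linear isomorphism between $X(T_1) \otimes_{\Z} \Q$ and $X(T_2) \otimes_{\Z} \Q$ using the common ``evaluation-at-$\gamma_i$'' maps; this isomorphism will then correspond (by contravariance) to the desired $k$-isogeny $\pi \colon T_1 \to T_2$. Write $W_i = W(G_i , T_i)$. Since $T_1$ is $k$-generic and $G_1$ becomes inner over $\ell$, \cite[Lemma 4.1]{PR-WC} yields $\theta_{T_1}(\Ga(k^{\mathrm{sep}}/\ell)) = W_1$, and since $W_1$ acts irreducibly on $X(T_1) \otimes \Q$ (as $G_1$ is absolutely almost simple, so the root system is irreducible), the torus $T_1$ is $k$-irreducible. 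The weak commensurability of $\gamma_1$ and $\gamma_2$ translates into the existence of nonzero characters $\chi_i \in X(T_i)$ with $\lambda := \chi_1(\gamma_1) = \chi_2(\gamma_2) \neq 1$.

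Applying Lemma \ref{L:generate} to the $k$-irreducible torus $T_1$, the Galois conjugates of $\lambda$ generate the splitting field $k_{T_1}$ over $k$. On the other hand, for every $\sigma \in \Ga(k^{\mathrm{sep}}/k)$ we have $\sigma \lambda = (\sigma \chi_2)(\gamma_2) \in k_{T_2}$ (using $\gamma_2 \in T_2(k)$), so $k_{T_1} \subseteq k_{T_2}$, and hence $k_{T_1} \cdot \ell \subseteq k_{T_2} \cdot \ell$. Combining this inclusion with the bound $[k_{T_2} \cdot \ell : \ell] \leq |W_2|$ that results from applying \cite[Lemma 4.1]{PR-WC} to $T_2$, with the hypothesis $|W_1| = |W_2|$, and with the identity $[k_{T_1} \cdot \ell : \ell] = |W_1|$ already noted, we obtain equality $k_{T_1} \cdot \ell = k_{T_2} \cdot \ell$ and $\theta_{T_2}(\Ga(k^{\mathrm{sep}}/\ell)) = W_2$. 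In particular, the Galois action on $X(T_2) \otimes \Q$ factors through $W_2$ and is therefore irreducible.

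Now define $\Psi_i \colon X(T_i) \otimes \Q \to \overline{k}^{\times} \otimes_{\Z} \Q$ by $\Psi_i(\chi) = \chi(\gamma_i)$; since $\gamma_i$ is $k$-rational, each $\Psi_i$ is $\Ga(k^{\mathrm{sep}}/k)$-equivariant. Its kernel is a Galois-stable $\Q$-subspace of $X(T_i) \otimes \Q$ and hence, by the irreducibility established above for both $i = 1$ and $i = 2$, is either zero or everything; the latter is excluded because $\gamma_i$ has infinite order, so some character sends it to an element of infinite order in $\overline{k}^{\times}$. Hence each $\Psi_i$ is injective. The same irreducibility forces the Galois orbit of $\chi_i$ to span $X(T_i) \otimes \Q$, so both images $\mathrm{Im}(\Psi_i)$ coincide with the $\Q$-span of the Galois orbit of $\lambda$ inside $\overline{k}^{\times} \otimes \Q$. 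Therefore $f := \Psi_2^{-1} \circ \Psi_1 \colon X(T_1) \otimes \Q \to X(T_2) \otimes \Q$ is a $\Ga(k^{\mathrm{sep}}/k)$-equivariant $\Q$-linear isomorphism; clearing denominators in $f^{-1}$ produces a Galois-equivariant injection $X(T_2) \hookrightarrow X(T_1)$ with finite cokernel, which corresponds to a $k$-isogeny $\pi \colon T_1 \to T_2$ under the anti-equivalence between $k$-tori and finitely generated Galois lattices. The step I expect to be the main obstacle is the counting argument in the second paragraph, which upgrades the inclusion $k_{T_1} \subseteq k_{T_2}$ coming from weak commensurability into the equality $k_{T_1} \cdot \ell = k_{T_2} \cdot \ell$ and the surjectivity of $\theta_{T_2}|_{\Ga(k^{\mathrm{sep}}/\ell)}$ onto $W_2$; without this upgrade, neither the injectivity of $\Psi_2$ nor the coincidence $\mathrm{Im}(\Psi_1) = \mathrm{Im}(\Psi_2)$ could be concluded.
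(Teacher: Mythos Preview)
Your argument is correct and essentially reproduces the proof of the Isogeny Theorem in \cite[\S4]{PR-WC}, which the paper merely cites without reproving. The key steps---using Lemma~\ref{L:generate} to get $k_{T_1}\subset k_{T_2}$, the counting argument to force $\theta_{T_2}(\Ga(k^{\mathrm{sep}}/\ell))=W_2$, and the construction of a Galois-equivariant $\Q$-isomorphism of character spaces via the evaluation maps $\Psi_i$---are exactly those of \cite{PR-WC}.

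One small imprecision in phrasing: you write that ``the Galois action on $X(T_2)\otimes\Q$ factors through $W_2$,'' but strictly speaking the action of the full group $\Ga(k^{\mathrm{sep}}/k)$ factors through $\mathrm{Aut}(\Phi(G_2,T_2))$, not $W_2$. What you have actually shown (and what you need) is that the subgroup $\Ga(k^{\mathrm{sep}}/\ell)$ maps \emph{onto} $W_2$; since $W_2$ already acts irreducibly on $X(T_2)\otimes\Q$, the larger group $\Ga(k^{\mathrm{sep}}/k)$ does as well. This does not affect the validity of the argument.
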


\vskip1mm

\begin{cor}\label{C:Isog}
Let $G_1$ and $G_2$ be absolutely almost simple algebraic groups over an infinite finitely generated field $k$, and let $\Gamma_1 \subset G_1(k)$ and
$\Gamma_2 \subset G_2(k)$ be Zariski-dense subgroups containing elements of infinite order. Assume that $\Gamma_1$ and $\Gamma_2$ are weakly commensurable.
If a $k$-generic element $\gamma_1 \in \Gamma_1$ of infinite order is weakly commensurable to a semisimple element $\gamma_2 \in \Gamma_2$ and $T_i$ is a maximal $k$-torus of $G_i$ containing $\gamma_i$, then there exists a $k$-defined isogeny $\pi \colon T_1 \to T_2$. In particular, the minimal splitting fields of $T_1$ and $T_2$ coincide: $k_{T_1} = k_{T_2}$, and hence the fact that $T_1$ is $k$-generic implies that $T_2$ is also $k$-generic.
\end{cor}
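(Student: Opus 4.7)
The plan is to deduce the corollary as a direct consequence of Proposition~\ref{P:Isog}, Theorem~\ref{T:basic}, and an elementary bookkeeping argument on Galois actions on character modules. First, since $\Gamma_1$ and $\Gamma_2$ are weakly commensurable Zariski-dense subgroups containing elements of infinite order, Theorem~\ref{T:basic} supplies two essential pieces of information: part~(1) gives $|W(G_1, T_1)| = |W(G_2, T_2)|$, and part~(3) gives $\ell_1 = \ell_2 =: \ell$. These are precisely the hypotheses of Proposition~\ref{P:Isog}. Applying that proposition to the weakly commensurable pair $(\gamma_1, \gamma_2)$ and using the assumed $k$-genericity of $T_1$, we produce a $k$-defined isogeny $\pi \colon T_1 \to T_2$.

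Next, I would use $\pi$ to compare the two Galois representations. A $k$-isogeny induces a Galois-equivariant $\mathbb{Q}$-linear isomorphism
$$
\tilde{\pi} \colon X(T_2) \otimes_{\mathbb{Z}} \mathbb{Q} \;\stackrel{\sim}{\longrightarrow}\; X(T_1) \otimes_{\mathbb{Z}} \mathbb{Q},
$$
so conjugation by $\tilde{\pi}$ identifies the images of $\theta_{T_1}$ and $\theta_{T_2}$, and their kernels in $\Ga(k^{\mathrm{sep}}/k)$ coincide. Since $\ker \theta_{T_i} = \Ga(k^{\mathrm{sep}}/k_{T_i})$, this yields $k_{T_1} = k_{T_2}$; moreover, for every intermediate field $F$,
$$
\bigl| \theta_{T_1}(\Ga(k^{\mathrm{sep}}/F)) \bigr| \;=\; \bigl| \theta_{T_2}(\Ga(k^{\mathrm{sep}}/F)) \bigr|.
$$

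Finally, I would transfer genericity by specializing to $F = \ell$, imitating the cardinality argument in the proof of Proposition~\ref{P:Prop-gen1}. By \cite[Lemma~4.1]{PR-WC}, since $G_i$ is an inner form of the split group over $\ell$, we have $\theta_{T_i}(\Ga(k^{\mathrm{sep}}/\ell)) \subseteq W(G_i, T_i)$ for $i = 1, 2$. The $k$-genericity of $T_1$ gives $W(G_1, T_1) \subseteq \theta_{T_1}(\Ga(k^{\mathrm{sep}}/k))$, and combining this with the above inclusion forces
$$
\theta_{T_1}(\Ga(k^{\mathrm{sep}}/\ell)) \;=\; W(G_1, T_1).
$$
The cardinality identity together with $|W(G_1, T_1)| = |W(G_2, T_2)|$ then forces $\theta_{T_2}(\Ga(k^{\mathrm{sep}}/\ell)) = W(G_2, T_2)$, so in particular $\theta_{T_2}(\Ga(k^{\mathrm{sep}}/k)) \supseteq W(G_2, T_2)$, which is the $k$-genericity of $T_2$.

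The main subtlety I expect is conceptual rather than computational: a $k$-isogeny of tori identifies only the rational spans of the character lattices, not the two root systems, so one cannot directly transport $W(G_1, T_1)$ to $W(G_2, T_2)$ through $\tilde{\pi}$. This is why the transfer of genericity must go through an equality of orders rather than an honest identification of Weyl groups, which is the reason the intermediate step invoking the $\ell$-restricted Galois group is needed.
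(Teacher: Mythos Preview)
Your proof is correct and follows the same route as the paper: invoke Theorem~\ref{T:basic} to verify the hypotheses of Proposition~\ref{P:Isog}, then apply that proposition. The paper's own proof is a two-line appeal to these results, leaving the ``In particular'' claims implicit; you spell them out with the cardinality argument, which is fine (though in the step where you deduce $\theta_{T_1}(\Ga(k^{\mathrm{sep}}/\ell)) = W(G_1,T_1)$, you are tacitly using the full statement of \cite[Lemma~4.1]{PR-WC}---that the induced map to $\mathrm{Aut}(\Phi)/W$ is exactly the $*$-action with kernel $\Ga(k^{\mathrm{sep}}/\ell)$---not merely the inclusion you cite).
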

\begin{proof}
According to Theorem \ref{T:basic}, the fact that $\Gamma_1$ and $\Gamma_2$ are weakly commensurable implies that $G_1$ and $G_2$ have the same order of the Weyl group and that $\ell_1 = \ell_2$. Now, our assertion follows immediately from Proposition \ref{P:Isog}.
\end{proof}
%
%
%
%
%
%
%

\vskip5mm

\section{One consequence of a result of Klyachko}\label{S:Klyachko}

We refer to \cite[Ch. VI]{Bour} for the terminology and notations pertaining to root systems. In particular, for a reduced irreducible root system $\Phi$ in a $\mathbb{Q}$-vector space $V$, we let $\Phi^{\vee}$ denote the dual root system, $Q(\Phi)$ the sublattice of $V$ generated by the roots ({\it root lattice}), and $P(\Phi)$ the dual lattice of $Q(\Phi^{\vee})$ ({\it weight lattice}); recall that $Q(\Phi) \subset P(\Phi)$. Furthermore, we denote by $W(\Phi)$ the Weyl group of $\Phi$, viewed as a subgroup of the automorphism group $\mathrm{Aut}(\Phi)$. The following result plays an important role in this paper.
\begin{thm}\label{T:Klyachko10}
Let $\Phi$ be a reduced irreducible root system. For any subgroup $\Gamma \subset \mathrm{Aut}(\Phi)$ containing $W(\Phi)$, we have $H^1(\Gamma , P(\Phi)) = 0$ if $\Phi$ is not of the type $\textsf{A}_1$ or $\textsf{C}_{\ell}$, and $\mathbb{Z}/2\mathbb{Z}$ otherwise.
\end{thm}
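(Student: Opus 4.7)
The plan is to leverage the structure of $\mathrm{Aut}(\Phi)$ together with Klyachko's cohomological computation for the Weyl group itself. Recall that for an irreducible root system the automorphism group decomposes as a semidirect product $\mathrm{Aut}(\Phi) = W(\Phi) \rtimes \mathrm{Sym}(\Phi)$, where $\mathrm{Sym}(\Phi)$ is the (finite) group of diagram automorphisms. Any intermediate $\Gamma$ between $W(\Phi)$ and $\mathrm{Aut}(\Phi)$ thus contains $W(\Phi)$ as a normal subgroup and splits as $\Gamma = W(\Phi) \rtimes \Sigma$ with $\Sigma := \Gamma \cap \mathrm{Sym}(\Phi) \subset \mathrm{Sym}(\Phi)$.

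First, I would apply the Lyndon--Hochschild--Serre five-term exact sequence associated with the short exact sequence $1 \to W(\Phi) \to \Gamma \to \Sigma \to 1$:
$$
0 \to H^1(\Sigma, P(\Phi)^{W(\Phi)}) \to H^1(\Gamma, P(\Phi)) \to H^1(W(\Phi), P(\Phi))^{\Sigma} \to H^2(\Sigma, P(\Phi)^{W(\Phi)}).
$$
Since $\Phi$ is irreducible, $W(\Phi)$ acts irreducibly on the $\mathbb{Q}$-vector space $V = P(\Phi) \otimes_{\mathbb{Z}} \mathbb{Q}$, hence $V^{W(\Phi)} = 0$; as $P(\Phi)$ is torsion-free, it follows that $P(\Phi)^{W(\Phi)} = 0$. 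The sequence therefore collapses to an isomorphism
$$
H^1(\Gamma, P(\Phi)) \;\cong\; H^1(W(\Phi), P(\Phi))^{\Sigma}.
$$

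Next, I would invoke Klyachko's theorem, which computes $H^1(W(\Phi), P(\Phi))$ directly: this group vanishes for every irreducible $\Phi$ distinct from $\textsf{A}_1$ and $\textsf{C}_\ell$, and equals $\mathbb{Z}/2\mathbb{Z}$ in those two exceptional cases. When the cohomology vanishes, the isomorphism above immediately gives $H^1(\Gamma, P(\Phi)) = 0$. In the exceptional cases, the Dynkin diagram has no non-trivial automorphism (the diagram of $\textsf{A}_1$ has a single node, and that of $\textsf{C}_\ell$ for $\ell \geq 2$ carries an arrow distinguishing its two terminal vertices), so $\mathrm{Sym}(\Phi) = 1$, forcing $\Sigma = 1$ and $\Gamma = W(\Phi)$; then $H^1(\Gamma, P(\Phi)) = H^1(W(\Phi), P(\Phi)) = \mathbb{Z}/2\mathbb{Z}$, as claimed.

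The main obstacle is Klyachko's computation of $H^1(W(\Phi), P(\Phi))$ itself, which requires a case-by-case argument over the irreducible types; the reduction outlined above is purely formal, relying only on the semidirect product structure of $\mathrm{Aut}(\Phi)$, the irreducibility of the $W(\Phi)$-action on $V$, and the absence of non-trivial diagram symmetries in types $\textsf{A}_1$ and $\textsf{C}_\ell$.
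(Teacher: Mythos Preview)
Your reduction via the inflation--restriction sequence is correct and is exactly the first step the paper takes: since $P(\Phi)^{W(\Phi)} = 0$, one obtains $H^1(\Gamma, P(\Phi)) \cong H^1(W(\Phi), P(\Phi))^{\Gamma/W(\Phi)}$, so it suffices to treat $\Gamma = W(\Phi)$.

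However, your proposal stops there and ``invokes Klyachko's theorem'' for the Weyl group case. That is the substantive content of the result---Theorem~\ref{T:Klyachko10} \emph{is} (the corrected special case of) Klyachko's theorem, and the paper reproduces its proof precisely because Klyachko's original source is not readily available and his general statement is in fact false. Deferring to Klyachko here is circular; you are missing the actual computation, which you yourself flag as ``the main obstacle.''

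The paper carries out this computation explicitly. Using the Coxeter presentation of $W(\Phi)$ by the simple reflections $s_\alpha$ ($\alpha \in \Pi$), it shows (Lemma~A1.1) that a 1-cocycle $f$ is determined by the tuple $(f(s_\alpha))_{\alpha \in \Pi}$, and that this sets up an isomorphism $Z^1(W(\Phi), M) \cong \bigoplus_{\alpha \in \Pi} (\mathbb{Q}\alpha \cap M)$, under which $B^1$ corresponds to $\{((\alpha^\vee, m)\alpha)_{\alpha} : m \in M\}$. For $M = P(\Phi)$ one then checks, type by type, that $\mathbb{Q}\alpha \cap P(\Phi) = \mathbb{Z}\alpha$ for every simple $\alpha$ except in types $\textsf{A}_1$ and $\textsf{C}_\ell$ (where the index jumps to~2 for one root), while the coboundary lattice always equals $\bigoplus_\alpha \mathbb{Z}\alpha$ (seen by plugging in the fundamental weights). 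This yields $H^1 = 0$ generically and $\mathbb{Z}/2\mathbb{Z}$ in the exceptional cases. Your proposal needs this argument---or an equivalent one---to be a proof rather than a reduction.
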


This theorem is a particular case of the computations of $H^1(\Gamma , M)$ for any $\Gamma$ as in the theorem and any $\Gamma$-invariant lattice $Q(\Phi) \subset M \subset P(\Phi)$ carried out by A.~Klyachko \cite{Klyachko}. Unfortunately, the result in {\it loc. cit.} is {\it false} as stated; however, the argument given therein {\it does} work in the situation described in the theorem. Since \cite{Klyachko} is not readily available, we will reproduce the argument in Appendix~1, where we will also present the original statement and explain the mistake that invalidates the argument in the general case.

\begin{cor}\label{C:triv}
Let $\overline{G}$ be a simple adjoint algebraic group over a field $K$, and $\overline{T}$ be a maximal $K$-torus of $\overline{G}$ which is $K$-generic.
If the type of $\overline{G}$ is different from $\textsf{A}_1$ and $\textsf{B}_{\ell}$ $(\ell \geq 2)$, then for the group of cocharacters $X_*(\overline{T})$, we have $H^1(\mathrm{Gal}(K_{\overline{T}}/K) , X_*(\overline{T})) = 0$.
\end{cor}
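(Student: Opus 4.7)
The plan is to reduce the statement directly to Theorem \ref{T:Klyachko10} applied to the dual root system $\Phi^{\vee}$, where $\Phi = \Phi(\overline{G}, \overline{T})$ is the root system of $\overline{G}$ with respect to $\overline{T}$. Since $\overline{G}$ is adjoint, the character lattice coincides with the root lattice, i.e.\ $X(\overline{T}) = Q(\Phi)$. Consequently, the cocharacter lattice is
\[
X_*(\overline{T}) \; = \; \mathrm{Hom}_{\mathbb{Z}}(X(\overline{T}), \mathbb{Z}) \; = \; \mathrm{Hom}_{\mathbb{Z}}(Q(\Phi), \mathbb{Z}) \; = \; P(\Phi^{\vee}),
\]
by definition of the weight lattice of the dual root system.

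Next I would analyze the Galois action. Recall from \S\ref{S:generic} that the homomorphism $\theta_{\overline{T}} \colon \mathrm{Gal}(K_{\overline{T}}/K) \to \mathrm{Aut}(\Phi)$ is injective, and its image $\Gamma$ contains $W(\Phi)$ by the assumption that $\overline{T}$ is $K$-generic. Since $\mathrm{Aut}(\Phi) = \mathrm{Aut}(\Phi^{\vee})$ canonically (the duality swaps $\Phi$ and $\Phi^{\vee}$ but preserves all automorphisms and sends $W(\Phi)$ to $W(\Phi^{\vee})$), we may regard $\Gamma$ as a subgroup of $\mathrm{Aut}(\Phi^{\vee})$ containing $W(\Phi^{\vee})$. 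The Galois action on $X_*(\overline{T}) = P(\Phi^{\vee})$ factors through $\theta_{\overline{T}}$, so the injectivity of $\theta_{\overline{T}}$ gives an identification
\[
H^1(\mathrm{Gal}(K_{\overline{T}}/K), X_*(\overline{T})) \; = \; H^1(\Gamma, P(\Phi^{\vee})).
\]

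Finally, the hypothesis that the type of $\overline{G}$ is different from $\textsf{A}_1$ and $\textsf{B}_{\ell}$ ($\ell \geq 2$) translates precisely into the condition that $\Phi^{\vee}$ is not of type $\textsf{A}_1$ or $\textsf{C}_{\ell}$ (using that $\textsf{B}_{\ell}^{\vee} = \textsf{C}_{\ell}$ and $\textsf{A}_1^{\vee} = \textsf{A}_1$, while all other irreducible reduced root systems of rank $\geq 2$ have duals outside the exceptional list in Theorem \ref{T:Klyachko10}). Applying that theorem to $\Phi^{\vee}$ and $\Gamma$ yields $H^1(\Gamma, P(\Phi^{\vee})) = 0$, which is the desired conclusion.

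Since the substantive computation is already encapsulated in Theorem \ref{T:Klyachko10}, the proof of the corollary is essentially bookkeeping: one only needs to be careful about the duality between root/weight lattices and between $\Phi$ and $\Phi^{\vee}$. The only point where a slip could occur is matching the exceptional types — verifying that the failure cases of Klyachko's theorem on the $\Phi^{\vee}$-side correspond exactly to types $\textsf{A}_1$ and $\textsf{B}_{\ell}$ on the $\Phi$-side — but this is straightforward from the classification.
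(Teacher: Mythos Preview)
Your proof is correct and follows essentially the same approach as the paper: identify $X_*(\overline{T})$ with $P(\Phi^{\vee})$ via the adjoint hypothesis, use genericity to ensure the Galois image contains the Weyl group, translate the excluded types under duality, and apply Theorem~\ref{T:Klyachko10}. You are slightly more explicit about the identification $\mathrm{Aut}(\Phi) = \mathrm{Aut}(\Phi^{\vee})$ and $W(\Phi) = W(\Phi^{\vee})$, but otherwise the arguments are identical.
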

\begin{proof}
Let $\Phi = \Phi(\overline{G} , \overline{T})$ be the root system of $\overline{G}$. Since $\overline{G}$ is adjoint, the character group $X(\overline{T})$ coincides with $Q(\Phi)$. So, the dual group $X_*(\overline{T})$ of cocharacters can be identified with $P(\Phi^{\vee})$. By our assumption, the type of $\Phi$ is different from $\textsf{A}_1$ and $\textsf{B}_{\ell}$, so the type of the dual system $\Phi^{\vee}$ is different from $\textsf{A}_1$ and $\textsf{C}_{\ell}$. Furthermore, the fact that $\overline{T}$ is generic means that the Galois group $\mathrm{Gal}(K_{\overline{T}}/K)$ in its action on $X_*(\overline{T})$ contains the Weyl group $W(\Phi)$ --- cf. \S3.1. Our assertion now follows directly from Theorem \ref{T:Klyachko10}.
\end{proof}

Since the proof of Theorem \ref{T:Klyachko10} is deferred to Appendix~1, we will now give an example that, on the one hand, shows a situation where the corollary can be checked by a direct computation, and on the other hand, demonstrates that the assertion can be false if the ambient group is not adjoint.

\vskip1mm

\addtocounter{thm}{1}

\noindent {\bf Example 4.3.} Let $L/K$ be a separable field extension of degree $n > 2$ such that  the Galois group $\mathrm{Gal}(M/K)$ of the minimal Galois extension $M$ of $K$ that contains $L$ is isomorphic to $S_n$. Set $\mathcal{G} = \mathrm{Gal}(M/K)$ and $\mathcal{H} = \mathrm{Gal}(M/L)$. Corresponding to the extension $L/K$, we have a maximal $K$-generic $K$-torus $\overline{T} = \mathrm{R}_{L/K}(\mathbb{G}_m)/\mathbb{G}_m$ of the adjoint group $G = \mathrm{PGL}_n$ of type $\textsf{A}_{n-1}$. The group of cocharacters $X_*(\overline{T})$ fits into the following exact sequence of $\mathcal{G}$-modules
$$
0 \to \mathbb{Z} \longrightarrow \mathbb{Z}[\mathcal{G}/\mathcal{H}] \longrightarrow X_*(\overline{T}) \to 0,
$$
leading to the exact sequence in cohomology
$$
0 = H^1(\mathcal{G} , \mathbb{Z}[\mathcal{G}/\mathcal{H}]) \longrightarrow H^1(\mathcal{G} , X_*(\overline{T})) \longrightarrow H^2(\mathcal{G} , \mathbb{Z})
\stackrel{\alpha}{\longrightarrow} H^2(\mathcal{G} , \mathbb{Z}[\mathcal{G}/\mathcal{H}]) = H^2(\mathcal{H} , \mathbb{Z}).
$$
In terms of the natural identifications $$H^2(\mathcal{G} , \mathbb{Z}) \simeq \mathrm{Hom}(\mathcal{G} , \mathbb{Q}/\mathbb{Z}) \ \ \text{and} \ \
H^2(\mathcal{H} , \mathbb{Z}) \simeq \mathrm{Hom}(\mathcal{H} , \mathbb{Q}/\mathbb{Z}),$$
the map $\alpha$ corresponds to the restriction map
$$
\mathrm{Hom}(\mathcal{G} , \mathbb{Q}/\mathbb{Z}) \longrightarrow \mathrm{Hom}(\mathcal{H} , \mathbb{Q}/\mathbb{Z}).
$$
It easily follows that $\alpha$ is injective for $n > 2$, and we obtain $H^1(\mathcal{G} , X_*(\overline{T})) = 0$, in agreement with Corollary \ref{C:triv}.

On the other hand, the norm torus $\widetilde{T} = \mathrm{R}^{(1)}_{L/K}(\mathbb{G}_m)$ is a maximal $K$-generic $K$-torus in the simply connected group $\widetilde{G} = \mathrm{SL}_n$. The co-character group $X_*(\widetilde{T})$ can be determined from  the following exact sequence of $\mathcal{G}$-modules
$$
0 \to X_*(\widetilde{T}) \longrightarrow \mathbb{Z}[\mathcal{G}/\mathcal{H}] \stackrel{\delta}{\longrightarrow} \mathbb{Z} \to 0,
$$
where $\delta$ is the augmentation map. This induces the exact sequence
$$
\mathbb{Z}[\mathcal{G}/\mathcal{H}]^{\mathcal{G}} \stackrel{\delta}{\longrightarrow} \mathbb{Z} \longrightarrow H^1(\mathcal{G} , X_*(\widetilde{T})) \longrightarrow H^1(\mathcal{G} , \mathbb{Z}[\mathcal{G}/\mathcal{H}]) = H^1(\mathcal{H} , \mathbb{Z}) = 0.
$$
It follows that $H^1(\mathcal{G} , X_*(\widetilde{T})) \simeq \mathbb{Z}/n\mathbb{Z}$; in particular, it is nontrivial (including the case $n = 2$).

\vskip1mm

We will now discuss a consequence of Corollary \ref{C:triv} for unramified cohomology that will be needed in subsequent sections. Let $\mathcal{K}$ be a field complete with respect to a discrete valuation $v$. For any algebraic extension $\mathcal{L}/\mathcal{K}$, we let $\mathcal{O}_{\mathcal{L}}$ denote the valuation ring of the unique extension of $v$ to $\mathcal{L}$.
We also denote by $\mathcal{K}^{\mathrm{ur}}$ the maximal unramified extension of $\mathcal{K}$. Suppose now that $T$ is a $\mathcal{K}$-torus whose minimal splitting field $\mathcal{L} = \mathcal{K}_T$ is unramified over $\mathcal{K}$.  It follows from Hilbert's Theorem 90 and the inflation-restriction sequence that $$H^1(\mathcal{K} , T) = H^1(\mathcal{K}^{\mathrm{ur}}/\mathcal{K} , T) = H^1(\mathcal{L}/\mathcal{K} , T),$$
and one also shows that
$$
H^1(\mathcal{K}^{\mathrm{ur}}/\mathcal{K} , T(\mathcal{O}_{\mathcal{K}^{\mathrm{ur}}})) = H^1(\mathcal{L}/\mathcal{K} , T(\mathcal{O}_{\mathcal{L}})).
$$
The subgroup of unramified cocycles $H^1(\mathcal{L}/\mathcal{K} , T)_{\{v\}} \subset H^1(\mathcal{L}/\mathcal{K} , T)$ is defined as the image of the natural
homomorphism $H^1(\mathcal{L}/\mathcal{K} , T(\mathcal{O}_{\mathcal{L}})) \to H^1(\mathcal{L}/\mathcal{K} , T)$.

\begin{prop}\label{P:all-unram}
Let $\overline{T}$ be a maximal $\mathcal{K}$-torus of an absolutely simple adjoint algebraic $\mathcal{K}$-group $\overline{G}$ of type different from
$\textsf{A}_1$ and $\textsf{B}_{\ell}$. If $\overline{T}$ is $\mathcal{K}$-generic with unramified minimal splitting field $\mathcal{L} = \mathcal{K}_T$,  then $H^1(\mathcal{L}/\mathcal{K} , \overline{T}) = H^1(\mathcal{L}/\mathcal{K} , \overline{T})_{\{v\}}$.
\end{prop}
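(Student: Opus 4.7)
\noindent \emph{Proof plan for Proposition \ref{P:all-unram}.} The strategy is to exhibit $H^1(\mathcal{L}/\mathcal{K}, X_*(\overline{T}))$ as the obstruction to surjectivity of the natural map $H^1(\mathcal{L}/\mathcal{K}, \overline{T}(\mathcal{O}_{\mathcal{L}})) \to H^1(\mathcal{L}/\mathcal{K}, \overline{T})$ and then kill that obstruction using Corollary \ref{C:triv}.

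First, I would set up the key short exact sequence of $\mathrm{Gal}(\mathcal{L}/\mathcal{K})$-modules
$$
1 \to \overline{T}(\mathcal{O}_{\mathcal{L}}) \longrightarrow \overline{T}(\mathcal{L}) \longrightarrow \overline{T}(\mathcal{L})/\overline{T}(\mathcal{O}_{\mathcal{L}}) \to 1.
$$
Since $\overline{T}$ splits over $\mathcal{L}$, choosing an isomorphism with $\mathbb{G}_m^n$ (where $n = \dim \overline{T}$) gives canonical $\mathrm{Gal}(\mathcal{L}/\mathcal{K})$-equivariant identifications $\overline{T}(\mathcal{L}) \simeq X_*(\overline{T}) \otimes_{\mathbb{Z}} \mathcal{L}^{\times}$ and $\overline{T}(\mathcal{O}_{\mathcal{L}}) \simeq X_*(\overline{T}) \otimes_{\mathbb{Z}} \mathcal{O}_{\mathcal{L}}^{\times}$, so the quotient becomes $X_*(\overline{T}) \otimes_{\mathbb{Z}} (\mathcal{L}^{\times}/\mathcal{O}_{\mathcal{L}}^{\times})$. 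Here the hypothesis that $\mathcal{L}/\mathcal{K}$ is unramified is crucial: a uniformizer of $\mathcal{K}$ remains a uniformizer of $\mathcal{L}$, so $\mathcal{L}^{\times}/\mathcal{O}_{\mathcal{L}}^{\times} \simeq \mathbb{Z}$ with trivial Galois action. Consequently, the quotient is isomorphic to $X_*(\overline{T})$ as a Galois module, and the sequence reads
$$
1 \to \overline{T}(\mathcal{O}_{\mathcal{L}}) \longrightarrow \overline{T}(\mathcal{L}) \longrightarrow X_*(\overline{T}) \to 1.
$$

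Next, I would pass to the associated long exact sequence of $\mathrm{Gal}(\mathcal{L}/\mathcal{K})$-cohomology, obtaining the fragment
$$
H^1(\mathcal{L}/\mathcal{K} , \overline{T}(\mathcal{O}_{\mathcal{L}})) \longrightarrow H^1(\mathcal{L}/\mathcal{K} , \overline{T}(\mathcal{L})) \longrightarrow H^1(\mathcal{L}/\mathcal{K} , X_*(\overline{T})).
$$
By definition, the middle term equals $H^1(\mathcal{L}/\mathcal{K}, \overline{T})$, and the image of the left-hand map is precisely the subgroup $H^1(\mathcal{L}/\mathcal{K} , \overline{T})_{\{v\}}$ of unramified cocycles.

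Finally, since $\overline{T}$ is $\mathcal{K}$-generic, the Galois group $\mathrm{Gal}(\mathcal{L}/\mathcal{K}) = \mathrm{Gal}(\mathcal{K}_{\overline{T}}/\mathcal{K})$ contains the Weyl group in its action on $X_*(\overline{T})$; as the type of $\overline{G}$ is different from $\textsf{A}_1$ and $\textsf{B}_{\ell}$, Corollary \ref{C:triv} yields $H^1(\mathcal{L}/\mathcal{K}, X_*(\overline{T})) = 0$. Hence the map $H^1(\mathcal{L}/\mathcal{K} , \overline{T}(\mathcal{O}_{\mathcal{L}})) \to H^1(\mathcal{L}/\mathcal{K} , \overline{T})$ is surjective, giving the claimed equality.

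The argument is short and the main subtlety is the correct identification of the quotient $\overline{T}(\mathcal{L})/\overline{T}(\mathcal{O}_{\mathcal{L}})$ with $X_*(\overline{T})$ as a Galois module, which fails without the unramifiedness assumption; beyond that, the work has all been absorbed into Corollary \ref{C:triv} (and ultimately Klyachko's theorem).
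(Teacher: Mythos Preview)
Your proof is correct and follows essentially the same approach as the paper: both identify $\overline{T}(\mathcal{L}) \simeq X_*(\overline{T}) \otimes_{\mathbb{Z}} \mathcal{L}^{\times}$, use unramifiedness to handle the uniformizer, and invoke Corollary~\ref{C:triv} to kill $H^1(\mathcal{L}/\mathcal{K}, X_*(\overline{T}))$. The only cosmetic difference is that the paper observes the short exact sequence actually \emph{splits} as Galois modules (since $\mathcal{L}^{\times} = \langle \pi \rangle \times \mathcal{O}_{\mathcal{L}}^{\times}$ with trivial action on $\langle \pi \rangle$), giving a direct product decomposition $\overline{T}(\mathcal{L}) \simeq X_*(\overline{T}) \times \overline{T}(\mathcal{O}_{\mathcal{L}})$ and hence an immediate decomposition of $H^1$, whereas you pass through the long exact sequence; both routes are equivalent here.
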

\begin{proof}
We will view cocharacters of $\overline{T}$ as 1-parameter subgroups $\mathbb{G}_m \to \overline{T}$. Then
the map
$$
X_*(\overline{T}) \otimes_{\mathbb{Z}} \mathcal{L}^{\times} \to \overline{T}(\mathcal{L}), \ \ \chi \otimes a \mapsto \chi(a),
$$
is an isomorphism of $\mathrm{Gal}(\mathcal{L}/\mathcal{K})$-modules. Furthermore, if $\pi \in \mathcal{K}$ is a uniformizer, then since $\mathcal{L}/\mathcal{K}$ is unramified, $\pi$ remains a uniformizer in $\mathcal{L}$, and therefore we have a decomposition of $\mathrm{Gal}(\mathcal{L}/\mathcal{K})$-modules
$$
\mathcal{L}^{\times} = \langle \pi \rangle \times \mathcal{U}, \ \ \text{where} \ \ \mathcal{U} = (\mathcal{O}_{\mathcal{L}})^{\times}.
$$
It follows that
$$
\overline{T}(\mathcal{L}) \simeq X_*(\overline{T}) \times \overline{T}(\mathcal{O}_{\mathcal{L}})
$$
as $X_*(\overline{T}) \otimes \mathcal{U} \simeq \overline{T}(\mathcal{O}_{\mathcal{L}})$. In view of our assumptions on $\overline{T}$, we have
$H^1(\mathcal{L}/\mathcal{K} , X_*(\overline{T})) = 0$ by Corollary \ref{C:triv},  and the required fact follows.
\end{proof}

\vskip.5mm

\noindent {\bf Example 4.5.} Let $\ell/k$ be a finite separable extension of degree $n$ such that the minimal Galois extension $m$ of $k$ containing $\ell$ has Galois group $S_n$ over $k$. Set $\mathcal{K} = k((x))$ and $\mathcal{L} = \ell((x))$. Then the norm torus $T = \mathrm{R}^{(1)}_{\mathcal{L}/\mathcal{K}}(\mathbb{G}_m)$ is a maximal $\mathcal{K}$-torus in the simply connected group $G = \mathrm{SL}_n$. Furthermore, this torus  is $\mathcal{K}$-generic and its splitting field is unramified over $\mathcal{K}$ (with respect to the standard valuation $v$ on the field of Laurent power series). We have
$$
H^1(\mathcal{K} , T) = \mathcal{K}^{\times}/N_{\mathcal{L}/\mathcal{K}}(\mathcal{L}^{\times}) = k^{\times}/N_{\ell/k}(\ell^{\times}) \times \langle x \rangle / \langle x^n \rangle.
$$
At the same time, the unramified part $H^1(\mathcal{K} , T)_v$ is easily seen to be $k^{\times}/N_{\ell/k}(\ell^{\times})$. Thus, in this case, $H^1(\mathcal{K} , T) \neq H^1(\mathcal{K} , T)_{\{v\}}$. So, the assertion of Proposition \ref{P:all-unram} may fail if the ambient group is not adjoint.

\section{Maximal tori with unramified splitting fields}

Let $\mathcal{K}$ be a field that is complete with respect to a discrete valuation $v$, with valuation ring $\mathcal{O}$ and residue field $k$. We also fix a uniformizer $\pi \in \mathcal{K}$. The goal of this section is to establish the following result, which may be known to some experts, but which does not seem to have been recorded in the literature.
\begin{thm}\label{T:unram-split-field}
Let $G$ be a reductive algebraic $\mathcal{K}$-group. Assume that $G$ has \emph{good reduction} at $v$, i.e. there exists a reductive group scheme $\mathscr{G}$ over $\mathcal{O}$ with generic fiber $G$. Then given a maximal $\mathcal{K}$-torus $S$ of $G$ whose splitting field $\mathcal{K}_S$ is unramified over $\mathcal{K}$, there exists a maximal torus $\mathscr{S}'$ of $\mathscr{G}$ such that for its generic fiber $S'$, there exists $h \in G(\mathcal{K}^{\mathrm{ur}})$ satisfying $S' = hSh^{-1}$ and the isomorphism $\varphi \colon S \to S'$, $x \mapsto h x h^{-1}$, is defined over $\mathcal{K}$.
\end{thm}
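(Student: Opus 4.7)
The plan is to reduce the problem to the maximally unramified setting---where both $\mathscr{G}$ and $S$ become split---and then descend by Galois/\'etale descent; write $\Gamma := \mathrm{Gal}(\mathcal{K}^{\mathrm{ur}}/\mathcal{K})$ throughout. The unramified hypothesis on $\mathcal{K}_S/\mathcal{K}$ gives $\mathcal{K}_S \subseteq \mathcal{K}^{\mathrm{ur}}$, so $S_{\mathcal{K}^{\mathrm{ur}}}$ is a split maximal torus of $G_{\mathcal{K}^{\mathrm{ur}}}$. Over the strictly henselian DVR $\mathcal{O}^{\mathrm{ur}}$, the reductive group scheme $\mathscr{G}_{\mathcal{O}^{\mathrm{ur}}}$ admits a split maximal torus $\mathscr{T}$, and any two maximal $\mathcal{O}^{\mathrm{ur}}$-tori are $\mathscr{G}(\mathcal{O}^{\mathrm{ur}})$-conjugate (standard facts about reductive group schemes over strictly henselian local rings from SGA~3). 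Setting $T := \mathscr{T}_{\mathcal{K}^{\mathrm{ur}}}$, the Borel--Tits conjugacy of split maximal tori over a field yields $g \in G(\mathcal{K}^{\mathrm{ur}})$ with $g T g^{-1} = S_{\mathcal{K}^{\mathrm{ur}}}$.

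Next I would analyze the Galois cocycle. For $\sigma \in \Gamma$, applying $\sigma$ to the identity above (and using $\sigma(S) = S$) yields $\sigma(g) \sigma(T) \sigma(g)^{-1} = S_{\mathcal{K}^{\mathrm{ur}}}$; since $\sigma(\mathscr{T})$ is another maximal $\mathcal{O}^{\mathrm{ur}}$-torus of $\mathscr{G}_{\mathcal{O}^{\mathrm{ur}}}$, one can pick $n_\sigma \in \mathscr{G}(\mathcal{O}^{\mathrm{ur}})$ conjugating $\sigma(\mathscr{T})$ to $\mathscr{T}$ and arrange the choices coherently so that $n_{\sigma\tau} = n_\sigma \sigma(n_\tau)$. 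Then $c(\sigma) := n_\sigma^{-1} g^{-1}\sigma(g)$ lies in $N_G(T)(\mathcal{K}^{\mathrm{ur}})$ and satisfies a (twisted) cocycle relation; reducing modulo $\mathscr{T}$ produces a $1$-cocycle $\bar c \colon \Gamma \to W$ with values in the Weyl group scheme $W = N_{\mathscr{G}}(\mathscr{T})/\mathscr{T}$, which is constant over $\mathcal{O}^{\mathrm{ur}}$ since the residue field is separably closed. Combined with the natural $\Gamma$-action on $\mathscr{T}(\mathcal{O}^{\mathrm{ur}})$, the cocycle $\bar c$ endows $\mathscr{T}$ with a twisted $\Gamma$-equivariant structure compatible with its embedding into $\mathscr{G}_{\mathcal{O}^{\mathrm{ur}}}$; by \'etale descent along $\mathcal{O}^{\mathrm{ur}}/\mathcal{O}$, it descends to a maximal $\mathcal{O}$-torus $\mathscr{S}'$ of $\mathscr{G}$, whose generic fiber $S'$ satisfies $S'_{\mathcal{K}^{\mathrm{ur}}} = g^{-1} S_{\mathcal{K}^{\mathrm{ur}}} g$ as subgroups of $G_{\mathcal{K}^{\mathrm{ur}}}$.

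Finally, I would exhibit the required $h$. Setting $h := g^{-1}$ gives $hSh^{-1} = S'_{\mathcal{K}^{\mathrm{ur}}}$, and $\varphi_h \colon S \to S'$ is $\mathcal{K}$-defined precisely when $h^{-1}\sigma(h) \in S(\mathcal{K}^{\mathrm{ur}})$ for all $\sigma \in \Gamma$. The residual obstruction reduces to a cohomology class in $H^1(\Gamma, \mathscr{T}(\mathcal{O}^{\mathrm{ur}}))$ (with respect to the twisted $\Gamma$-action from the descent step), which vanishes by Hilbert~90 applied to the split $\mathcal{O}^{\mathrm{ur}}$-torus $\mathscr{T}$; modifying $h$ by the resulting element of $\mathscr{T}(\mathcal{O}^{\mathrm{ur}})$ (an operation that preserves $hSh^{-1}$) completes the argument. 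The principal obstacle I anticipate is the cocycle analysis in the middle paragraph---arranging the $n_\sigma$ coherently so that $\bar c$ becomes an honest $1$-cocycle, and verifying that the twisted descent lives in $\mathscr{G}$ over $\mathcal{O}$ rather than merely in $\mathscr{G}_{\mathcal{O}^{\mathrm{ur}}}$. The key inputs that make this feasible are the transitivity of $\mathscr{G}(\mathcal{O}^{\mathrm{ur}})$ on maximal tori, the constancy of $W$ over $\mathcal{O}^{\mathrm{ur}}$, and the availability of \'etale descent in this setting.
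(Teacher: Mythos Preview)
Your outline has the right architecture---parametrize maximal tori by normalizer cohomology and match Weyl images---but it skips the step that carries the actual weight, and the ``Hilbert~90'' invocation at the end is incorrect.

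The gap is in the descent paragraph. The Weyl cocycle $\bar c$ endows $\mathscr{T}$ with a twisted $\Gamma$-structure \emph{as an abstract torus}, but not as a subtorus of $\mathscr{G}_{\mathcal{O}^{\mathrm{ur}}}$: the Weyl group is a quotient of the normalizer, not a subgroup, so $\bar c$ does not act on the embedding. To descend $\mathscr{T}$ to a maximal $\mathcal{O}$-torus \emph{of} $\mathscr{G}$, you must produce a cocycle $\xi'\in Z^1(\Gamma,\mathscr{N}(\mathcal{O}^{\mathrm{ur}}))$ with image $\bar c$ in $W$ \emph{and} show that $\xi'$ becomes trivial in $H^1(\Gamma,\mathscr{G}(\mathcal{O}^{\mathrm{ur}}))$. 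Only then does $\xi'(\sigma)=m^{-1}\sigma(m)$ for some $m\in\mathscr{G}(\mathcal{O}^{\mathrm{ur}})$, and $m\mathscr{T}m^{-1}$ is the $\Gamma$-stable subtorus you want. You have not argued this triviality, and it is the crux: the paper obtains it by decomposing $T(\mathcal{K}^{\mathrm{ur}})\simeq \mathscr{T}(\mathcal{O}^{\mathrm{ur}})\times (X_*(T)\otimes\langle\pi\rangle)$, stripping off the $\pi$-part of the original cocycle $\xi$ to get an integral $\xi'$ with the same Weyl image, showing $\xi'$ becomes trivial in $G$ over the totally ramified extension $\mathcal{K}(\pi^{1/n})$, invoking Grothendieck--Serre over the corresponding valuation ring, and finally using Hensel's lemma (same residue field) to descend the triviality back to $\mathscr{G}(\mathcal{O}^{\mathrm{ur}})$. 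None of this is routine bookkeeping.

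Separately, your final step fails: $H^1(\Gamma,\mathscr{T}(\mathcal{O}^{\mathrm{ur}}))$ with the \emph{twisted} $\Gamma$-action is exactly $H^1_{\mathrm{\acute et}}(\mathcal{O},\mathscr{S}')$ for the (generally non-split) descended torus $\mathscr{S}'$, and this group has no reason to vanish---for instance, it is nonzero for a norm-one torus of an unramified quadratic extension. Hilbert~90 applies only to the untwisted split torus. So even granting the descent, your mechanism for adjusting $h$ does not work. Once the triviality of $\xi'$ in $\mathscr{G}(\mathcal{O}^{\mathrm{ur}})$ is in hand, however, Lemma~\ref{L:sameW} (matching Weyl images) delivers the $\mathcal{K}$-defined conjugation directly, and no further $H^1$-vanishing is needed.
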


We begin by recalling the well-known parametrization of the conjugacy classes of maximal tori in terms of Galois cohomology. So, let $G$ be a (connected) reductive algebraic group over an arbitrary field $K$. Fix a maximal $K$-torus $T$ of $G$, and let $N = N_G(T)$ denote its normalizer in $G$. Furthermore, let $W = N/T$ denote the Weyl group, $\theta \colon N \to W$  the corresponding quotient map, and $\theta^1 \colon H^1(K , N) \to H^1(K , W)$ the induced map on Galois cohomology. Given any other maximal $K$-torus $S$, we choose $g \in G(K^{\mathrm{sep}})$ so that $S = gTg^{-1}$. Then for any $\sigma \in \mathrm{Gal}(K^{\mathrm{sep}}/K)$, the element $\xi(\sigma) := g^{-1} \cdot \sigma(g)$ belongs to $N(K^{\mathrm{sep}})$, and the correspondence $\sigma \mapsto \xi(\sigma)$ is a 1-cocycle with values in $N(K^{\mathrm{sep}})$ whose cohomology class  $[\xi] \in  H^1(K , N)$ is independent of the choice of the conjugating element $g$. Furthermore, the correspondence
$$
S \ \mapsto \ [\xi]
$$
sets up a bijection between the $G(K)$-conjugacy classes of maximal $K$-tori of $G$ and the elements of $\ker(H^1(K , N) \to H^1(K , G))$. (More generally, if $T$ splits over an extension $L/K$, then the above correspondence sets up a bijection between the $G(K)$-conjugacy classes of maximal $K$-tori of $G$ that split over $L$ and the elements of $\ker(H^1(L/K , N) \to H^1(L/K , G))$.) We will need the following
version of this fact.
\begin{lemma}\label{L:sameW}
Let $S_1$ and $S_2$ be two maximal $K$-tori of $G$, and let $[\xi_1] , [\xi_2] \in \ker(H^1(K , N) \to H^1(K , G))$ be the corresponding cohomology classes. Then $\theta^1([\xi_1]) = \theta^1([\xi_2])$ if and only if there exists $h \in G(K^{\mathrm{sep}})$ such that $S_2 = hS_1h^{-1}$ and the isomorphism $\varphi \colon S_1 \to S_2$, $x \mapsto h x h^{-1}$, is defined over $K$.
\end{lemma}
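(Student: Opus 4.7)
The strategy is to translate both conditions into explicit statements about cocycles with values in $N(K^{\mathrm{sep}})$ relative to the fixed reference torus $T$, using elements $g_i \in G(K^{\mathrm{sep}})$ with $S_i = g_i T g_i^{-1}$ and $\xi_i(\sigma) = g_i^{-1}\sigma(g_i)$, and then to observe that these two statements coincide after projecting to $W = N/T$.

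First I would note that any $h \in G(K^{\mathrm{sep}})$ satisfying $hS_1h^{-1} = S_2$ can be written uniquely as $h = g_2\, n\, g_1^{-1}$ with $n \in N(K^{\mathrm{sep}})$, since $h S_1 h^{-1} = S_2$ is equivalent to $g_2^{-1} h g_1$ normalizing $T$. Next, condition (b), namely that $\mathrm{Int}(h)\colon S_1 \to S_2$ is Galois-equivariant, amounts to $h^{-1}\sigma(h)$ centralizing $S_1$ for each $\sigma \in \Gamma := \mathrm{Gal}(K^{\mathrm{sep}}/K)$; since $S_1$ is a maximal torus of the connected reductive group $G$, we have $C_G(S_1) = S_1$, so condition (b) is $h^{-1}\sigma(h) \in S_1(K^{\mathrm{sep}})$. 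Substituting $h = g_2 n g_1^{-1}$ and using $\sigma(g_i) = g_i\, \xi_i(\sigma)$, a direct calculation yields
$$
h^{-1}\sigma(h) \;=\; g_1\bigl(\, n^{-1}\,\xi_2(\sigma)\,\sigma(n)\,\xi_1(\sigma)^{-1}\, \bigr)\, g_1^{-1},
$$
so (b) is equivalent to the element in brackets lying in $T(K^{\mathrm{sep}})$ for every $\sigma$. Passing to $W = N/T$, this is precisely the coboundary relation $\bar{\xi}_2(\sigma) = \bar{n}\,\bar{\xi}_1(\sigma)\,\sigma(\bar{n})^{-1}$, where $\bar{\xi}_i = \theta\circ\xi_i$ and $\bar{n}$ is the image of $n$ in $W(K^{\mathrm{sep}})$, i.e.\ the equality $\theta^1(\xi_1) = \theta^1(\xi_2)$ in $H^1(K,W)$.

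With this dictionary in hand, the ``only if'' direction is immediate: given $h$ satisfying (a) and (b), take $n = g_2^{-1} h g_1$ and let $\bar{n} \in W(K^{\mathrm{sep}})$ witness the equality of classes. For the ``if'' direction, starting from $\bar{w} \in W(K^{\mathrm{sep}})$ with $\bar{\xi}_2 = \bar{w}\,\bar{\xi}_1\,\sigma(\bar{w})^{-1}$, I would lift $\bar{w}$ to some $n \in N(K^{\mathrm{sep}})$ and set $h := g_2\, n\, g_1^{-1}$; then (a) is built into the construction, and the displayed identity shows that $n^{-1}\xi_2(\sigma)\sigma(n)\xi_1(\sigma)^{-1}$ has trivial image in $W$, hence lies in $T$, giving (b). The lifting is available because the exact sequence $1 \to T \to N \to W \to 1$ of smooth $K$-groups, combined with the vanishing of $H^1(K^{\mathrm{sep}},T)$ for the smooth torus $T$ over the separably closed field $K^{\mathrm{sep}}$, makes $N(K^{\mathrm{sep}}) \to W(K^{\mathrm{sep}})$ surjective.

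The argument is essentially bookkeeping; no deep input is needed beyond the structure sequence $1 \to T \to N \to W \to 1$. The only step that requires care is the computation of $h^{-1}\sigma(h)$ with $h = g_2 n g_1^{-1}$, where one must track the order of multiplication and the Galois action consistently so that the cocycles $\xi_1$ and $\xi_2$ emerge in the correct positions; this is the main (though still minor) obstacle, and once it is handled the two directions of the equivalence fall out of the same computation.
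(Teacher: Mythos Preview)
Your proof is correct and follows essentially the same approach as the paper's: both translate the condition ``$\varphi$ is $K$-defined'' into $h^{-1}\sigma(h)\in S_1$, conjugate by $g_1$ to land in $T$, and read off the equality of classes in $W$. The only bookkeeping difference is that the paper absorbs your element $n\in N(K^{\mathrm{sep}})$ into the choice of $g_2$ (equivalently, replaces $\xi_2$ by an equivalent cocycle so that $\theta(\xi_1(\sigma))=\theta(\xi_2(\sigma))$ on the nose, and then takes $h=g_2g_1^{-1}$), whereas you keep $g_1,g_2$ fixed and carry $n$ explicitly; this is the same computation written two ways.
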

\begin{proof}
Clearly, $\varphi^{-1} \circ \sigma(\varphi) \colon S_1 \to S_1$ is given by $x \mapsto (h^{-1} \sigma(h)) x (h^{-1} \sigma(h))^{-1}$, and therefore $\varphi$ is $K$-defined if and only if $s(\sigma) := h^{-1} \sigma(h) \in S_1$ for all $\sigma \in \mathrm{Gal}(K^{\mathrm{sep}}/K)$.

\vskip2mm

$\Leftarrow)$ Let $g_1 \in G(K^{\mathrm{sep}})$ be such that $S_1 = g_1 T g_1^{-1}$. Then $g_2 = hg_1$ satisfies $S_2 = g_2 T g_2^{-1}$, and the cocycles $\xi_i(\sigma) = g_i^{-1} \sigma(g_i)$, $i = 1, 2$, corresponding to $S_1$ and $S_2$, are related by
$$
\xi_2(\sigma) = g_1^{-1} s(\sigma) \sigma(g_1) = (g_1^{-1} s(\sigma) g_1) \xi_1(\sigma).
$$
Since $g_1^{-1} s(\sigma) g_1 \in T$, we have $\theta(\xi_1(\sigma)) = \theta(\xi_2(\sigma))$ for all $\sigma$, and therefore $\theta^1([\xi_1]) = \theta^1([\xi_2])$.

\vskip2mm

$\Rightarrow$) Changing $\xi_2$ to an equivalent cocycle (which amounts to a different choice of $g_2$ for which $S_2 = g_2Tg_2^{-1}$), we may assume that the elements $\xi_i(\sigma) = g_i^{-1} \sigma(g_i)$, $i = 1, 2$, satisfy $\theta(\xi_1(\sigma)) = \theta(\xi_2(\sigma))$, i.e.
$$
\xi_2(\sigma) = \xi_1(\sigma) t(\sigma) \ \ \text{with} \ \ t(\sigma) \in T,
$$
for all $\sigma \in \mathrm{Gal}(K^{\mathrm{sep}}/K)$. Set $h = g_2g_1^{-1}$. It is enough to show that $h^{-1} \sigma(h) \in S_1$ for all $\sigma$. We have
$$
h^{-1} \sigma(h) = g_1 \xi_2(\sigma) \sigma(g_1)^{-1} = g_1(\xi_1(\sigma) t(\sigma)) \sigma(g_1)^{-1} = (g_1 \xi_1(\sigma) \sigma(g_1)^{-1}) \cdot (\sigma(g_1) t(\sigma) \sigma(g_1)^{-1}) =
$$
$$
=\sigma(g_1 t(\sigma) g_1^{-1}) \in S_1(K^{\mathrm{sep}}),
$$
as required.
\end{proof}

Beginning the proof of Theorem \ref{T:unram-split-field}, we pick a maximal torus $\mathscr{T}$ of $\mathscr{G}$ (cf. \cite[Exp. IX, 7.3]{SGA3}); then its generic fiber $T$ is a maximal $\mathcal{K}$-torus of $G$ whose splitting field $\mathcal{K}_T$ is unramified over $\mathcal{K}$. Let $N = N_G(T)$ and $\mathscr{N} = N_{\mathscr{G}}(\mathscr{T})$ be the corresponding normalizers. We denote by $\mathcal{O}^{\mathrm{ur}}$ the valuation ring of the maximal unramified extension $\mathcal{K}^{\mathrm{ur}}$. Then the Weyl group $W = N/T$ can be identified with
\begin{equation}\label{E:ident}
N(\mathcal{K}^{\mathrm{ur}})/T(\mathcal{K}^{\mathrm{ur}}) = \mathscr{N}(\mathcal{O}^{\mathrm{ur}})/\mathscr{T}(\mathcal{O}^{\mathrm{ur}}).
\end{equation}
Since by assumption the torus $S$ splits over $\mathcal{K}^{\mathrm{ur}}$, it corresponds to some class $[\xi] \in \ker(H^1(\mathcal{K}^{\mathrm{ur}}/\mathcal{K} , N) \to
H^1(\mathcal{K}^{\mathrm{ur}}/\mathcal{K} , G))$. Since the elements of $\ker(H^1(\mathcal{K}^{\mathrm{ur}}/\mathcal{K} ,
\mathscr{N}(\mathcal{O}^{\mathrm{ur}})) \to H^1(\mathcal{K}^{\mathrm{ur}}/\mathcal{K} , \mathscr{G}(\mathcal{O}^{\mathrm{ur}}))$ correspond to the maximal tori
of $\mathscr{G}$, it follows from Lemma \ref{L:sameW} that it is enough to construct a class $[\xi']$ in this set that satisfies $\theta^1([\xi]) = \theta^1([\xi'])$.

\begin{lemma}\label{L:xi'}
There exists a cocycle $\xi' \in Z^1(\mathcal{K}^{\mathrm{ur}}/\mathcal{K} , \mathscr{N}(\mathcal{O}^{\mathrm{ur}}))$ such that

\vskip1mm

\noindent {\rm (1)} $\theta^1([\xi]) = \theta^1([\xi'])$;

\vskip1mm

\noindent {\rm (2)} \parbox[t]{15cm}{there exists $n \geq 1$ such that for $\mathcal{K}' = \mathcal{K}(\sqrt[n]{\pi})$, the image of $[\xi']$ in
$H^1((\mathcal{K}')^{\mathrm{ur}}/\mathcal{K}' , G)$ is trivial.}
\end{lemma}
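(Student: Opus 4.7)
The plan is to exploit the decomposition $N(\mathcal{K}^{\mathrm{ur}}) = \mathscr{N}(\mathcal{O}^{\mathrm{ur}}) \cdot T(\mathcal{K}^{\mathrm{ur}})$ afforded by the identification (\ref{E:ident}), together with the $\mathrm{Gal}(\mathcal{K}^{\mathrm{ur}}/\mathcal{K})$-equivariant splitting $T(\mathcal{K}^{\mathrm{ur}}) \simeq \mathscr{T}(\mathcal{O}^{\mathrm{ur}}) \times X_*(\mathscr{T})$ coming from $\chi \mapsto \chi(\pi)$.  The equivariance here rests crucially on $\pi \in \mathcal{K}$ being Galois-fixed, which gives $\sigma(\chi(\pi)) = (\sigma\chi)(\pi)$; by the same computation, the splitting also respects the $\omega$-twisted actions needed below, where $\omega := \theta^1(\xi)$.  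Since $\omega$ is $W$-valued and $W$ is finite, $\omega$ factors through $\mathrm{Gal}(\mathcal{L}/\mathcal{K})$ for some finite unramified extension $\mathcal{L}/\mathcal{K}$, so all the cohomology in play reduces to that of a finite group.

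For part~(1), I would lift $\omega$ through the short exact sequence
\[
1 \to \mathscr{T}(\mathcal{O}_{\mathcal{L}}) \to \mathscr{N}(\mathcal{O}_{\mathcal{L}}) \to W \to 1.
\]
The obstruction lies in $H^2(\mathcal{L}/\mathcal{K}, \mathscr{T}(\mathcal{O}_{\mathcal{L}})^\omega)$ (twisted by $\omega$), and the existence of the lift $\xi$ to $N(\mathcal{L})$ forces its image in $H^2(\mathcal{L}/\mathcal{K}, T(\mathcal{L})^\omega)$ to vanish.  The long exact sequence attached to $0 \to \mathscr{T}(\mathcal{O}_{\mathcal{L}}) \to T(\mathcal{L}) \to X_*(\mathscr{T}) \to 0$, combined with the observation that the $\omega$-twisted version still splits equivariantly via $\pi$, forces the connecting map $H^1(X_*(\mathscr{T})^\omega) \to H^2(\mathscr{T}(\mathcal{O}_{\mathcal{L}})^\omega)$ to vanish.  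Hence $H^2(\mathscr{T}(\mathcal{O}_{\mathcal{L}})^\omega) \hookrightarrow H^2(T(\mathcal{L})^\omega)$ is injective and the obstruction itself is zero, yielding a cocycle $\xi' \in Z^1(\mathcal{K}^{\mathrm{ur}}/\mathcal{K}, \mathscr{N}(\mathcal{O}^{\mathrm{ur}}))$ with $\theta^1(\xi') = \omega$.

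For part~(2), I would compare $\xi$ and $\xi'$ directly.  Since both project to $\omega$, the ratio $t(\sigma) := \xi(\sigma)^{-1}\xi'(\sigma)$ takes values in $T(\mathcal{K}^{\mathrm{ur}})$, and the cocycle conditions force it (after the standard reindexing by $\omega$) to define a class in $H^1(\mathcal{L}/\mathcal{K}, T(\mathcal{L})^\omega)$.  Its $X_*$-component $\bar{t}$ lives in the finite abelian group $H^1(\mathcal{L}/\mathcal{K}, X_*(\mathscr{T})^\omega)$, so one can choose a positive integer $n$ annihilating it and set $\mathcal{K}' = \mathcal{K}(\sqrt[n]{\pi})$.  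Because $\chi(\pi) = (n\chi)(\pi')$ for $\pi' = \sqrt[n]{\pi}$, the identification of $X_*(\mathscr{T})$ with $T/\mathscr{T}(\mathcal{O})$ rescales by a factor of $n$ upon base change, so $[\bar{t}]$ becomes $n \cdot [\bar{t}] = 0$ over $\mathcal{K}'$.  The remaining $\mathscr{T}(\mathcal{O}^{\mathrm{ur}})$-valued part of $t$ is cohomologically trivial by Hilbert~90 applied to the split torus $\mathscr{T}$.  Hence $t$ becomes a (twisted) coboundary over $(\mathcal{K}')^{\mathrm{ur}}$, and writing $\xi = g^{-1}\sigma(g)$ with $g \in G(\mathcal{K}^{\mathrm{ur}}) \subset G((\mathcal{K}')^{\mathrm{ur}})$ together with $h := gs$ for an appropriate $s \in T((\mathcal{K}')^{\mathrm{ur}})$ produces the desired trivialization $\xi'(\sigma) = h^{-1}\sigma(h)$, showing $\xi'$ is trivial in $H^1((\mathcal{K}')^{\mathrm{ur}}/\mathcal{K}', G)$.

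The main obstacle I anticipate is the careful bookkeeping of the $\omega$-twisted Galois actions on $T(\mathcal{L})$, $\mathscr{T}(\mathcal{O}_{\mathcal{L}})$, and $X_*(\mathscr{T})$, and verifying that all the splittings and long exact sequences behave well under the twist.  The structural fact that rescues the argument at each step is the $\mathrm{Gal}$-equivariance of $\chi \mapsto \chi(\pi)$ stemming from $\sigma(\pi) = \pi$, which both trivializes the connecting homomorphism in step~(1) and produces the clean multiplication-by-$n$ rescaling after passing to $\mathcal{K}'$ in step~(2).
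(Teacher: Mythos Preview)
Your overall strategy is sound and close in spirit to the paper's, but there is a genuine gap in part~(2).

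Your obstruction-theoretic argument for part~(1) is correct: the $\Gamma$-equivariant (and $\omega$-twisted-equivariant) splitting $T(\mathcal{L}) \simeq \mathscr{T}(\mathcal{O}_{\mathcal{L}}) \times X_*(\mathscr{T})$ forces $H^2(\mathcal{L}/\mathcal{K}, {}_\omega\mathscr{T}(\mathcal{O}_{\mathcal{L}})) \hookrightarrow H^2(\mathcal{L}/\mathcal{K}, {}_\omega T(\mathcal{L}))$, so the vanishing of the obstruction for $N$ implies vanishing for $\mathscr{N}(\mathcal{O})$. The paper instead writes $\xi(\sigma) = n(\sigma) a(\sigma) b(\sigma)$ with $n(\sigma) \in \mathscr{N}(\mathcal{O}^{\mathrm{ur}})$, $a(\sigma) \in A := \mathscr{T}(\mathcal{O}^{\mathrm{ur}})$, $b(\sigma) \in B := X_*(T) \otimes \langle \pi \rangle$, and sets $\xi'(\sigma) := n(\sigma) a(\sigma)$ explicitly, verifying the cocycle condition by checking that the defect lands in $A \cap B = \{1\}$.

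The gap is your sentence ``the remaining $\mathscr{T}(\mathcal{O}^{\mathrm{ur}})$-valued part of $t$ is cohomologically trivial by Hilbert~90 applied to the split torus $\mathscr{T}$.'' After twisting by $\omega$, the Galois module ${}_\omega\mathscr{T}(\mathcal{O}^{\mathrm{ur}})$ is the group of integral points of the \emph{non-split} torus ${}_\omega\mathscr{T}$, and $H^1(\mathcal{L}/\mathcal{K}, {}_\omega\mathscr{T}(\mathcal{O}_{\mathcal{L}}))$ has no reason to vanish; Hilbert~90 does not apply. Since your $\xi'$ is produced abstractly, the ratio $t(\sigma) = \xi(\sigma)^{-1}\xi'(\sigma)$ genuinely has an uncontrolled $A$-component, and your argument does not kill it after base change to $\mathcal{K}'$.

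The fix is simple and recovers exactly the paper's construction: having produced some lift $\xi'$, replace it by $\xi''(\sigma) := \xi'(\sigma)\, t_A(\sigma)^{-1}$ where $t_A$ is the $A$-component of $t$ (one checks, using that conjugation by $\mathscr{N}(\mathcal{O}^{\mathrm{ur}})$ and the Galois action both preserve the decomposition $A \times B$, that $\xi''$ is again a cocycle in $\mathscr{N}(\mathcal{O}^{\mathrm{ur}})$). Then $\xi(\sigma) = \xi''(\sigma)\, t_B(\sigma)$ with $t_B \in B$, and your divisibility-by-$n$ argument over $\mathcal{K}' = \mathcal{K}(\sqrt[n]{\pi})$ finishes the proof exactly as you describe. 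This adjustment is precisely what the paper's explicit choice of $\xi'$ accomplishes from the outset.
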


We will now assume the lemma and complete the proof of Theorem \ref{T:unram-split-field}. In view of the validity of the Grothendieck-Serre conjecture over discrete valuation rings (cf. \S2.2), the image of $[\xi']$ is trivial in $H^1((\mathcal{K}')^{\mathrm{ur}}/\mathcal{K}' , \mathscr{G}({\mathcal{O}'}^{\mathrm{ur}}))$, where ${\mathcal{O}'}^{\mathrm{ur}}$ is the valuation ring of
$(\mathcal{K}')^{\mathrm{ur}}$. We note that $\mathcal{K}$ and $\mathcal{K}'$ have the same
residue field $k$, and that the residue of $\xi'$ is the trivial cocycle with values in $\underline{\mathscr{G}}(k^{\mathrm{sep}})$, where $\underline{\mathscr{G}}$ is the reduction of $\mathscr{G}$.  Applying Hensel's Lemma, we conclude that the class $[\xi']$ is trivial in $H^1(\mathcal{K}^{\mathrm{ur}}/\mathcal{K} , \mathscr{G}(\mathcal{O}^{\mathrm{ur}}))$, as required.

\noindent {\it Proof of Lemma \ref{L:xi'}.} Using (\ref{E:ident}), for each $\sigma \in \mathrm{Gal}(\mathcal{K}^{\mathrm{ur}}/\mathcal{K})$ we can pick $n(\sigma) \in \mathscr{N}(\mathcal{O}^{\mathrm{ur}})$ so that $\theta(\xi(\sigma)) = \theta(n(\sigma))$, i.e.
\begin{equation}\label{E:xi2}
\xi(\sigma) = n(\sigma) t(\sigma) \ \ \text{with} \ \ t(\sigma) \in T(\mathcal{K}^{\mathrm{ur}}).
\end{equation}
As in the proof of Proposition \ref{P:all-unram}, we have a canonical isomorphism of modules over $\Gamma = \mathrm{Gal}(\mathcal{K}^{\mathrm{ur}}/\mathcal{K})$: $$
X_*(T) \otimes_{\mathbb{Z}} (\mathcal{K}^{\mathrm{ur}})^{\times} \to T(\mathcal{K}^{\mathrm{ur}}), \ \ \chi \otimes a \mapsto \chi(a).
$$
Furthermore, we have the following direct product decomposition of $\Gamma$-modules:
$$
(\mathcal{K}^{\mathrm{ur}})^{\times} = \mathcal{U} \times \langle \pi \rangle,
$$
where $\mathcal{U} = (\mathcal{O}^{\mathrm{ur}})^{\times}$ is the group of units in $\mathcal{K}^{\mathrm{ur}}$. Now, set
$$
A := X_*(T) \otimes_{\mathbb{Z}} \mathcal{U} \simeq T(\mathcal{O}^{\mathrm{ur}}) \subset T(\mathcal{K}^{\mathrm{ur}}) \ \ \text{and} \ \
B := X_*(T) \otimes_{\mathbb{Z}} \langle \pi \rangle \subset T(\mathcal{K}^{\mathrm{ur}}),
$$
noting that $A$ and $B$ are invariant under the action of $\Gamma$ as well as under conjugation by elements of $N(\mathcal{K}^{\mathrm{ur}})$ and that
$$
T(\mathcal{K}^{\mathrm{ur}}) = A \times B \ \ \text{as} \ \ \Gamma\text{-modules}.
$$
So, we can write $t(\sigma)$ in (\ref{E:xi2}) as
$$
t(\sigma) = a(\sigma) b(\sigma) \ \ \text{with} \ \ a(\sigma) \in A, \ b(\sigma) \in B.
$$
Set $\xi'(\sigma) = n(\sigma) a(\sigma) \in \mathscr{N}(\mathcal{O}^{\mathrm{ur}})$. Using the cocycle condition for $\xi$ in conjunction with the fact that
$\xi(\sigma) = \xi'(\sigma) b(\sigma)$, we obtain the following relation:
$$
(\xi'(\sigma) \cdot \sigma(\xi'(\tau)))^{-1} \cdot \xi'(\sigma\tau) = (\sigma(n(\tau))^{-1} \cdot b(\sigma) \cdot \sigma(n(\tau))) \cdot \sigma(b(\tau)) \cdot b(\sigma\tau)^{-1}.
$$
Clearly, the left-hand side belongs to $\mathscr{N}(\mathcal{O}^{\mathrm{ur}})$, and the right-hand side to $B$. It follows that the left-hand side is actually
in $A$, hence both sides are equal to $1$. In other words, $\xi'$ is a cocycle that  satisfies $\theta^1([\xi]) = \theta^1([\xi'])$. Furthermore, we have
$$
b(\sigma\tau) = ((\sigma(n(\tau)))^{-1} b(\sigma) \sigma(n(\tau))) \cdot \sigma(b(\tau)).
$$
Conjugating this relation by $\xi(\sigma\tau)$ and using the fact that $\xi$ is a cocycle and that $\xi(\sigma) t \xi(\sigma)^{-1} = n(\sigma) t n(\sigma)^{-1}$ for $t \in T(\mathcal{K}^{\mathrm{ur}})$, we see that $\nu(\sigma) = n(\sigma) b(\sigma) n(\sigma)^{-1}$ defines a Galois cocycle with values in ${}_{\xi}T(\mathcal{K}^{\mathrm{ur}})$, where ${}_{\xi}T$ denotes the twist of $T$ by $\xi$. Let $\mathcal{L}$ be the minimal splitting field of ${}_{\xi}T$ (which by construction is unramified over $\mathcal{K}$), and let $n = [\mathcal{L} : \mathcal{K}]$. Set $K' = K(\sqrt[n]{\pi})$. We claim that the image of $[\nu] \in H^1(\mathcal{K}^{\mathrm{ur}}/\mathcal{K} , {}_{\xi}T)$ in $H^1(\mathcal{K}^{\mathrm{ur}}\mathcal{K}'/\mathcal{K}' , {}_{\xi}T)$ is trivial. Indeed, it follows from Hilbert's Theorem 90 that every element in the latter group is annihilated by multiplication by $n$. Now, the cocycle $\nu$ has values in
$$
{}_{\xi}B := X_*({}_{\xi}T) \otimes_{\mathbb{Z}} \langle \pi \rangle \subset {}_{\xi}T(\mathcal{L}).
$$
After base change from $\mathcal{K}$ to $\mathcal{K}'$, we can consider a similar subgroup
$$
{}_{\xi}B' = X_*({}_{\xi}T) \otimes_{\mathbb{Z}} \langle \pi^{1/n} \rangle \subset {}_{\xi}T(\mathcal{L}\mathcal{K}').
$$
Since every element of ${}_{\xi}B$ can be uniquely divided by $n$ in ${}_{\xi}B'$, there is a cocycle $\nu'$ with values in ${}_{\xi}B' \subset {}_{\xi}T(\mathcal{L}\mathcal{K}')$ such that $\nu = n \cdot  \nu'$. But then it follows from the remark above that the image of the class $[\nu]$ in $H^1(\mathcal{K}^{\mathrm{ur}}\mathcal{K}'/\mathcal{K}' , {}_{\xi}T)$ is trivial. This means that there exists $s \in {}_{\xi}T(\mathcal{L}\mathcal{K}')$
such that
$$
n(\sigma) b(\sigma) n(\sigma)^{-1} = s^{-1} \cdot (n(\sigma) \sigma(s) n(\sigma)^{-1}).
$$
Then $b(\sigma) = (n(\sigma)^{-1} s^{-1} n(\sigma)) \cdot \sigma(s)$, which implies that
$$
\xi(\sigma) = s^{-1} \cdot \xi'(\sigma) \cdot \sigma(s),
$$
for all $\sigma \in \mathrm{Gal}(K^{\mathrm{ur}} \mathcal{K}'/\mathcal{K}')$. This means that the classes $[\xi]$ and $[\xi']$ are mapped to the same element
in $H^1(K^{\mathrm{ur}}\mathcal{K}'/\mathcal{K}' , G)$, and therefore the image of $[\xi']$ is in fact trivial, as required. \hfill $\Box$

\section{Proof of Theorem \ref{T:GoodReduction1}}

Theorem \ref{T:GoodReduction1} is an easy consequence of the following result.

\begin{thm}\label{T:GR-all}
Let $\mathcal{K}$ be a field complete with respect to a discrete valuation $v$ such that the residue field $k$ is finitely generated, and let $G$ be an absolutely almost simple $\mathcal{K}$-group that has good reduction at $v$. Assume that $\mathrm{char}\: k \neq 2$ if $G$ is of type $\textsf{B}_{\ell}$
$(\ell \geq 2)$. Then any  $G' \in \gen_{\mathcal{K}}(G)$  also has good reduction at $v$.
\end{thm}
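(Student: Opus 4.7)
The plan is to build a ``good'' maximal torus on both sides and use it to show that the cohomology class which twists $G$ into $G'$ is unramified. More precisely, I would construct a maximal $\mathcal{K}$-torus $T \subset G$ whose splitting field is unramified and whose reduction is $k$-generic; transfer this via the genus hypothesis to a $\mathcal{K}$-isomorphic maximal torus $T' \subset G'$; then express $G'$ as an inner twist of $G$ by a cocycle supported on the image $\overline{T}$ of $T$ in the adjoint group $\overline{G}$; and finally invoke Proposition~\ref{P:all-unram} to conclude that this cocycle is unramified, which yields a reductive $\mathcal{O}$-model for $G'$ by twisting the given model $\mathscr{G}$ of $G$.

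\textbf{Step 1: Construction of $T$.} Let $\mathscr{G}/\mathcal{O}$ be the reductive model of $G$, with reduction $\uG^{(v)}/k$. Because $k$ is finitely generated, Theorem~\ref{T:Exist} furnishes a $k$-generic maximal torus of $\uG^{(v)}$. Using the smoothness of the scheme of maximal tori of $\mathscr{G}$ together with Hensel's lemma over the complete local ring $\mathcal{O}$, I would lift this torus to a maximal $\mathcal{O}$-torus $\mathscr{T}$ of $\mathscr{G}$, and take $T$ to be its generic fiber. Then the splitting field $\mathcal{K}_T$ is unramified over $\mathcal{K}$ (it is obtained by extension of scalars from the separable splitting field of the reduction), and since the Galois action on $X^*(T)$ coincides with that on $X^*(\underline{T})$ under the unramified identification, the genericity of $\underline{T}$ implies that $T$ is $\mathcal{K}$-generic as well.

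\textbf{Step 2: Transfer and cohomological reduction.} By definition of the genus, there is a maximal $\mathcal{K}$-torus $T' \subset G'$ with a $\mathcal{K}$-iso\-mor\-phism $\varphi\colon T \to T'$; in particular, $T'$ also has unramified minimal splitting field and is $\mathcal{K}$-generic. Since $G' \in \gen_{\mathcal{K}}(G)$ is by definition an inner form, write $G' = {}_{\xi}G$ with $\xi \in H^1(\mathcal{K}, \overline{G})$. Conjugation carrying $T$ to $T'$ realizes $\xi$ by a cocycle in $N_{\overline{G}}(\overline{T})$, and the fact that $\varphi$ is $\mathcal{K}$-defined means by Lemma~\ref{L:sameW} that the image in the Weyl group is trivial. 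Hence $\xi$ can be taken with values in $\overline{T}$, so it defines a class in $H^1(\mathcal{K}_T/\mathcal{K}, \overline{T})$.

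\textbf{Step 3: Unramifiedness of $\xi$ and conclusion.} For types different from $\textsf{A}_1$ and $\textsf{B}_{\ell}$, Proposition~\ref{P:all-unram} applies to $\overline{T}$ as a generic maximal torus of the adjoint group $\overline{G}$ with unramified splitting field, giving
\[
H^1(\mathcal{K}_T/\mathcal{K}, \overline{T}) \;=\; H^1(\mathcal{K}_T/\mathcal{K}, \overline{T})_{\{v\}}.
\]
Consequently $\xi$ is represented by a cocycle taking values in $\overline{T}(\mathcal{O}^{\mathrm{ur}}) = \overline{\mathscr{T}}(\mathcal{O}^{\mathrm{ur}})$, where $\overline{\mathscr{T}}$ is the image of $\mathscr{T}$ in the adjoint $\mathcal{O}$-group scheme $\overline{\mathscr{G}} := \mathscr{G}/Z(\mathscr{G})$. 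Viewing this cocycle as taking values in $\overline{\mathscr{G}}(\mathcal{O}^{\mathrm{ur}})$, the twist $\mathscr{G}' := {}_{\xi}\mathscr{G}$ is a reductive $\mathcal{O}$-group scheme whose generic fiber is $G'$, proving that $G'$ has good reduction. The two remaining cases ($\textsf{A}_1$ and $\textsf{B}_{\ell}$ with $\mathrm{char}\: k \ne 2$) must be treated separately: for $\textsf{A}_1$ one uses the classical fact that quaternion algebras are determined by their maximal subfield patterns, so the genus is already trivial; for $\textsf{B}_{\ell}$ with $\mathrm{char}\: k \neq 2$ one appeals to the criterion of Example~2.3, deducing good reduction of $G' = \mathrm{Spin}_n(q')$ from a direct comparison of the quadratic forms $q$ and $q'$ forced by the common behavior of their maximal tori.

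\textbf{Main obstacle.} The critical technical point is Step~3: extracting unramifiedness of $\xi$ from the single datum of a $\mathcal{K}$-isomorphism $T \cong T'$. It rests on the vanishing of $H^1(\mathrm{Gal}(\mathcal{K}_T/\mathcal{K}), X_*(\overline{T}))$ provided by Corollary~\ref{C:triv}, whose deeper source is Klyachko's computation (Theorem~\ref{T:Klyachko10}); the exceptional types $\textsf{A}_1$ and $\textsf{B}_{\ell}$ correspond exactly to the types in which that vanishing fails, which is why they genuinely require separate treatment rather than being absorbed into the general argument.
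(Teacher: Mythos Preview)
The central gap is in Step~2. You assert that because $T' \cong T$ over $\mathcal{K}$, the twisting cocycle $\xi$ can be taken with values in $\overline{T}$, and you justify this via Lemma~\ref{L:sameW}. But that lemma compares two maximal tori inside the \emph{same} group, whereas here $T \subset G$ and $T' \subset G'$ live in different groups. Concretely: writing $T' = gTg^{-1}$ over $\mathcal{K}^{\mathrm{sep}}$ (under $G'_{\mathcal{K}^{\mathrm{sep}}} = G_{\mathcal{K}^{\mathrm{sep}}}$) and $\eta(\sigma) = \bar g^{-1}\xi(\sigma)\sigma(\bar g) \in N_{\overline{G}}(\overline{T})$, the abstract isomorphism $T' \cong T$ only says that the image of $\theta(\eta)$ in $H^1(\mathcal{K}, \mathrm{Aut}(T))$ is trivial, not that $\theta(\eta) \in H^1(\mathcal{K}, W)$ itself vanishes. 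Since $H^1(\mathcal{K}, W) \to H^1(\mathcal{K}, \mathrm{Aut}(T))$ need not have trivial kernel, you cannot conclude that $\eta$ is cohomologous to a cocycle in $\overline{T}$. The paper circumvents this entirely: rather than comparing $G$ with $G'$, it proves (Theorem~\ref{T:GR-not}) that \emph{any} group containing a generic torus with unramified splitting field has good reduction, by passing to the quasi-split inner form $G_0$ (which has good reduction for free), invoking Steinberg's theorem to obtain the twist from a torus embedded in $\overline{G}_0$, and then using Theorem~\ref{T:unram-split-field} to replace this torus by one living in the integral model of $G_0$. Corollary~\ref{C:notA1Bl} then simply transfers the generic unramified torus to $G'$ via the genus hypothesis and applies Theorem~\ref{T:GR-not} to $G'$ directly.

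Your handling of the exceptional types is also inadequate. For $\textsf{A}_1$ the assertion that ``the genus is already trivial'' is false in general: quaternion algebras can have nontrivial genus over suitable fields. The paper's argument (Theorem~\ref{T:AB}) instead constructs \emph{two} generic tori $T_1, T_2$ in $G$ with unramified and linearly disjoint splitting fields, transfers both to $G'$, and shows that a ramified quaternion algebra cannot accommodate two such tori. The $\textsf{B}_\ell$ case likewise needs the two-torus trick, combined with Springer's residue theory for quadratic forms; ``direct comparison of $q$ and $q'$'' is not enough, since the genus hypothesis controls tori, not the forms themselves. Finally, your Step~1 tacitly assumes $k$ is infinite (Theorem~\ref{T:Exist} requires this); the finite residue field case must be handled separately via the observation that $G$ and $G'$ are then both quasi-split and inner to each other, hence isomorphic.
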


\vskip1mm

For the proof, we will consider separately the two cases where the type of $G$ is different from $\textsf{A}_1$ and $\textsf{B}_{\ell}$ $(\ell \geq 2)$ and where it is one of those types. In each case, we will characterize the existence of good reduction in terms of the presence of maximal tori with very specific properties --- see Theorems \ref{T:GR-not} and \ref{T:AB}. These characterizations will also be used in \S \ref{S:WC} for the analysis of weakly commensurable Zariski-dense subgroups. We begin with the following sufficient condition for good reduction for types different from $\textsf{A}_1$ and $\textsf{B}_{\ell}$ $(\ell \geq 2)$.
\begin{thm}\label{T:GR-not}
Let $\mathcal{K}$ be a field complete with respect to a discrete valuation $v$, and let $G$ be an absolutely almost simple algebraic
$\mathcal{K}$-group of type different from $\textsf{A}_1$ and $\textsf{B}_{\ell}$ $(\ell \geq 2)$. Assume that $G$ contains a maximal $\mathcal{K}$-torus $T$ which is $\mathcal{K}$-generic and whose minimal splitting field $\mathcal{K}_T$ is unramified over $\mathcal{K}$. Then $G$ has good reduction at $v$.
\end{thm}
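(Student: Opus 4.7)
My approach is to build a reductive $\mathcal{O}$-model of $G$ in three stages, using the maximal torus $T$ as the bridge between the Galois-cohomological description of $G$ and its good reduction.

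\medskip

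\noindent \emph{Stage 1: reducing to inner-form cohomology.} First I would pass to the adjoint group $\overline{G}$ and its maximal $\mathcal{K}$-torus $\overline{T}$, which inherits $\mathcal{K}$-genericity and the unramified splitting field $\mathcal{K}_{\overline{T}} = \mathcal{K}_T$. The key observation is that the minimal Galois extension $\ell/\mathcal{K}$ over which $\overline{G}$ becomes an inner form of its quasi-split form is contained in $\mathcal{K}_{\overline{T}}$: indeed, $\ell$ is precisely the fixed field inside $\mathcal{K}_{\overline{T}}$ of $\theta_{\overline{T}}^{-1}(W(\overline{G}, \overline{T}))$, and the $\mathcal{K}$-genericity of $\overline{T}$ ensures this preimage is a well-defined subgroup of $\mathrm{Gal}(\mathcal{K}_{\overline{T}}/\mathcal{K})$. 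Hence $\ell/\mathcal{K}$ is unramified, and by the Galois descent construction recalled in \S 2.3, the adjoint quasi-split inner form $\overline{G}_1$ of $\overline{G}$ acquires a reductive $\mathcal{O}$-model $\overline{\mathscr{G}}_1$.

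\medskip

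\noindent \emph{Stage 2: representing the twist class in a maximal torus.} Next I would write $\overline{G} = {}_\xi \overline{G}_1$ for some $\xi \in H^1(\mathcal{K}, \overline{G}_1)$ and aim to realize $\xi$ as the image of an unramified class. For this I would exhibit a maximal $\mathcal{K}$-torus $\overline{T}_1 \subset \overline{G}_1$ that is $\mathcal{K}$-isomorphic to $\overline{T}$---such an embedding exists because $\overline{G}_1$ is quasi-split and the Galois action on $X(\overline{T})$ matches the root datum of $\overline{G}_1$ after the inner twist. A version of Lemma \ref{L:sameW} adapted to this inner twist situation then allows me to conclude that $\xi$ is the image of some class $\eta \in H^1(\mathcal{K}, \overline{T}_1)$ under the map induced by the inclusion $\overline{T}_1 \hookrightarrow \overline{G}_1$.

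\medskip

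\noindent \emph{Stage 3: conclusion via Proposition \ref{P:all-unram}.} Since $\overline{T}_1$ is $\mathcal{K}$-generic with unramified splitting field and $\overline{G}_1$ is adjoint of type different from $\textsf{A}_1$ and $\textsf{B}_{\ell}$, Proposition \ref{P:all-unram} gives $H^1(\mathcal{K}_{\overline{T}_1}/\mathcal{K}, \overline{T}_1) = H^1(\mathcal{K}_{\overline{T}_1}/\mathcal{K}, \overline{T}_1)_{\{v\}}$, so $\eta$ is automatically unramified. Applying Theorem \ref{T:unram-split-field} to $\overline{\mathscr{G}}_1$, there is a maximal torus $\overline{\mathscr{T}}_1 \subset \overline{\mathscr{G}}_1$ with generic fiber $\overline{T}_1$, so $\eta$ lifts to $\he^1(\mathcal{O}, \overline{\mathscr{T}}_1)$ and therefore $\xi$ lifts to $\he^1(\mathcal{O}, \overline{\mathscr{G}}_1)$. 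Twisting $\overline{\mathscr{G}}_1$ by this integral class produces a reductive $\mathcal{O}$-model of $\overline{G}$. To pass from $\overline{G}$ to $G$, I note that the center $Z(G) \subseteq T$ splits over $\mathcal{K}_T$ and hence has good reduction; combining its integral model with that of $\overline{G}$ along the central isogeny $G \to \overline{G}$ yields the required reductive $\mathcal{O}$-model of $G$.

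\medskip

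\noindent \emph{Main obstacle.} The delicate step is Stage 2: producing $\overline{T}_1 \subset \overline{G}_1$ $\mathcal{K}$-isomorphic to $\overline{T}$ and showing $\xi$ lifts to $H^1(\mathcal{K}, \overline{T}_1)$ requires an embedding-of-tori statement for quasi-split groups (classical for absolutely almost simple groups) together with careful cohomological bookkeeping to descend $\xi$ from the normalizer cohomology to torus cohomology. The type restriction excluding $\textsf{A}_1$ and $\textsf{B}_{\ell}$ enters only in Stage 3 via Proposition \ref{P:all-unram}---these are precisely the types for which $H^1$ with values in the cocharacter lattice can be nonzero (Theorem \ref{T:Klyachko10})---and the excluded cases are presumably handled separately by Theorem \ref{T:AB} under stronger hypotheses on the tori.
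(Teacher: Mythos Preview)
Your three-stage plan is the paper's argument: the quasi-split inner form $G_0$ (and its adjoint $\overline{G}_0$) has good reduction because $\ell\subset\mathcal{K}_T$ is unramified; one embeds $\overline{T}$ into $\overline{G}_0$ and lifts the twisting class to torus cohomology; Proposition~\ref{P:all-unram} forces that class to be unramified, so twisting the integral model $\mathscr{G}_0$ yields a model of $G$.

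Two points where the paper is sharper than your sketch. First, your Stage~2 is not handled by ``a version of Lemma~\ref{L:sameW}'' --- that lemma compares two tori inside a single group via their Weyl-group images and says nothing about lifting a class in $H^1(\mathcal{K},\overline{G}_1)$ to torus cohomology. The correct tool (which you allude to as ``an embedding-of-tori statement for quasi-split groups'') is Steinberg's Theorem \cite[Prop.~6.19]{PR}, \cite[8.6]{BorSpr}: it simultaneously furnishes an embedding $\iota\colon\overline{T}\hookrightarrow\overline{G}_0$ and a class $\xi\in H^1(\mathcal{K},\overline{T})$ whose image twists $G_0$ into $G$. Theorem~\ref{T:unram-split-field} is then used not to realize $\overline{T}_1$ itself as the generic fiber of an integral torus, but to replace $\iota(\overline{T})$ by a $\mathcal{K}$-isomorphic conjugate $\overline{S}$ that is such a generic fiber; an explicit cocycle computation transports $\xi$ to a class $\zeta$ with values in $\overline{S}$. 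Second, the paper avoids your final center/isogeny step entirely: since the unramified cocycle $\zeta$ takes values in $\overline{S}(\mathcal{O}_{\mathcal{K}_{\overline{S}}})\subset\overline{\mathscr{G}}_0(\mathcal{O}_{\mathcal{K}_{\overline{S}}})$, it acts by inner automorphisms on $\mathscr{G}_0$ itself, and ${}_{\zeta}\mathscr{G}_0$ is directly a model of $G$.
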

\begin{proof}
Let $G_0$ be the quasi-split inner form of $G$, and let $\mathcal{L}$ be the minimal Galois extension of $\mathcal{K}$ over which $G_0$ becomes split. Being a subextension of $\mathcal{K}_T/\mathcal{K}$, the extension $\mathcal{L}/\mathcal{K}$ is unramified, and therefore $G_0$ and the corresponding adjoint group
$\overline{G}_0$ have good reduction (cf. \cite[Corollary 7.9.4]{KalPr}). Let $\overline{\mathscr{G}}_0$ be the corresponding model for $\overline{G}_0$ over the valuation ring $\mathcal{O}$ of $\mathcal{K}$. Now, let $\rho \colon G \to \overline{G}$ be the isogeny onto the adjoint group, and $\overline{T} = \rho(T)$. It follows from Steinberg's Theorem (cf. \cite[Proposition 6.19]{PR}, \cite[8.6]{BorSpr}) that there exist an embedding $\iota \colon \overline{T} \hookrightarrow \overline{G}_0$ and a 1-cocycle $\xi \in Z^1(\mathcal{K} , \overline{T})$ such that for the image $\bar{\xi}$ of $\xi$ under the natural map $Z^1(\mathcal{K} , \overline{T}) \to Z^1(\mathcal{K} , \overline{G}_0)$, the twisted group ${}_{\bar{\xi}} G_0$ is $\mathcal{K}$-isomorphic to $G$. Since $\mathcal{K}_T/\mathcal{K}$ is unramified, according to Theorem \ref{T:unram-split-field}, there exist a maximal torus $\overline{\mathscr{S}}$ of $\overline{\mathscr{G}}_0$ and an element $h \in \overline{G}_0(\mathcal{K}^{\mathrm{sep}})$ such that the generic fiber $\overline{S}$ satisfies $\overline{S} = h \overline{T} h^{-1}$ and the morphism $\varphi \colon \overline{T} \to \overline{S}$, $x \mapsto hxh^{-1}$, is defined over $\mathcal{K}$; recall that the latter is equivalent to the fact that $s(\sigma) := h^{-1} \cdot \sigma(h)$ lies in $\overline{T}(\mathcal{K}^{\mathrm{sep}})$ for all $\sigma \in \mathrm{Gal}(\mathcal{K}^{\mathrm{sep}}/\mathcal{K})$. Let $\xi'$ be the image of $\xi$ under the map $Z^1(\mathcal{K} , \overline{T}) \to Z^1(\mathcal{K} , \overline{S})$ induced by $\varphi$. We have
\begin{equation}\label{E:Scocycle}
\xi(\sigma) = h^{-1} \xi'(\sigma) h = h^{-1} (\xi'(\sigma) \cdot  (h \sigma(h)^{-1}) ) \sigma(h).
\end{equation}
Next, the equation
$$
\zeta(\sigma) := \xi'(\sigma) \cdot (h \sigma(h)^{-1}) = \xi'(\sigma) \cdot (h s(\sigma)^{-1} h^{-1}) = h( \xi(\sigma) \cdot s(\sigma)^{-1}) h^{-1}.
$$
defines a cocycle $\zeta \in Z^1(\mathcal{K} , \overline{S})$,  and we let $\bar{\zeta}$ denote its image under the map $Z^1(\mathcal{K} ,  \overline{S}) \to Z^1(\mathcal{K} , \overline{G}_0)$. It follows from (\ref{E:Scocycle}) that $\bar{\xi} = \bar{\zeta}$, and hence $G \simeq {}_{\bar{\xi}} G_0 \simeq {}_{\bar{\zeta}} G_0$.  So, it remains to show that ${}_{\bar{\zeta}} G_0$ has a reductive model
over $\mathcal{O}$.  Since $\varphi$ is defined over $\mathcal{K}$, we have
$$
\mathcal{K}_{\overline{S}} = \mathcal{K}_{\overline{T}} = \mathcal{K}_T,
$$
which is unramified over $\mathcal{K}$. In addition, $\overline{S}$ is generic over $\mathcal{K}$, so by Proposition \ref{P:all-unram}
$$
H^1(\mathcal{K} , \overline{S}) = H^1(\mathcal{K}_{\overline{S}}/\mathcal{K} , \overline{S}) = H^1(\mathcal{K}_{\overline{S}}/\mathcal{K} , \mathcal{U}),
$$
where $\mathcal{U} = \overline{S}(\mathcal{O}_{\mathcal{K}_{\overline{S}}})$ and $\mathcal{O}_{\mathcal{K}_{\overline{S}}}$ is the valuation ring of $\mathcal{K}_{\overline{S}}$. Thus, replacing $\zeta$ with an equivalent cocycle, we may assume that it has values in $\mathcal{U}$. Obviously, the inner automorphisms corresponding to the elements of $\mathcal{U}$ act on $\mathscr{G}_0 \times_{\mathcal{O}} \mathcal{O}_{\mathcal{K}_{\overline{S}}}$, and then the corresponding  twisted $\mathcal{O}$-scheme $\mathscr{G} = {}_{\bar{\zeta}} \mathscr{G}_0$ is a required reductive model for  ${}_{\bar{\zeta}} G_0 \simeq G$.
\end{proof}

\vskip5mm

\begin{prop}\label{P:tor10}
Let $\mathcal{K}$ be a field complete with respect to a discrete valuation $v$ and assume that the residue field $k = \mathcal{K}^{(v)}$ is infinite and finitely
generated. If $G$ is an absolutely almost simple algebraic $\mathcal{K}$-group that has good reduction at $v$, then $G$ possesses a maximal $\mathcal{K}$-torus $T$ that is $\mathcal{K}$-generic and whose minimal splitting field $\mathcal{K}_T$ is unramified over $\mathcal{K}$.
\end{prop}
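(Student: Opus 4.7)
\medskip

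\noindent \textbf{Proof plan.} Let $\mathscr{G}$ be a reductive $\mathcal{O}$-scheme with generic fiber $G$, and let $\underline{G} = \mathscr{G} \times_{\mathcal{O}} k$ be the reduction. The plan is to construct $T$ by first producing a $k$-generic maximal $k$-torus in $\underline{G}$, then lifting it to a maximal $\mathcal{O}$-torus of $\mathscr{G}$, and finally transferring the two desired properties to its generic fiber.

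\smallskip

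\noindent \emph{Step 1 (generic torus in the reduction).} Since $\mathscr{G}$ is reductive over $\mathcal{O}$, the closed fiber $\underline{G}$ is an absolutely almost simple algebraic $k$-group of the same type as $G$. Because $k$ is infinite and finitely generated, Theorem~\ref{T:Exist} provides a maximal $k$-torus $\underline{T}$ of $\underline{G}$ that is $k$-generic, i.e.\ the image of $\theta_{\underline{T}} \colon \mathrm{Gal}(k^{\mathrm{sep}}/k) \to \mathrm{Aut}(\Phi(\underline{G}, \underline{T}))$ contains the Weyl group $W(\underline{G}, \underline{T})$.

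\smallskip

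\noindent \emph{Step 2 (lifting to an $\mathcal{O}$-torus).} By the classical results of SGA3 (cf.\ \cite[Exp.~XII]{SGA3}), the functor $\mathrm{Tor}(\mathscr{G})$ of maximal tori of $\mathscr{G}$ is represented by a smooth $\mathcal{O}$-scheme. The $k$-torus $\underline{T}$ defines a $k$-point of this scheme; because $\mathcal{O}$ is a complete (hence Henselian) discrete valuation ring and $\mathrm{Tor}(\mathscr{G})$ is smooth, this $k$-point lifts to an $\mathcal{O}$-point, yielding a maximal $\mathcal{O}$-subtorus $\mathscr{T} \subset \mathscr{G}$ with special fiber $\underline{T}$. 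Denote its generic fiber by $T \subset G$.

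\smallskip

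\noindent \emph{Step 3 (unramified splitting field and genericity).} As an $\mathcal{O}$-torus, $\mathscr{T}$ is split by some finite étale cover of $\mathrm{Spec}\:\mathcal{O}$; the minimal such cover corresponds to an unramified finite extension $\mathcal{K}'/\mathcal{K}$ that is exactly the minimal splitting field $\mathcal{K}_T$ of the generic fiber $T$. Equivalently, the character sheaf of $\mathscr{T}$ is a locally constant étale sheaf on $\mathrm{Spec}\:\mathcal{O}$, so the Galois action on $X(T)$ factors through $\mathrm{Gal}(\mathcal{K}^{\mathrm{ur}}/\mathcal{K})$, proving that $\mathcal{K}_T/\mathcal{K}$ is unramified. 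Moreover, the natural reduction map identifies the character modules $X(\mathscr{T}) = X(T) = X(\underline{T})$ compatibly with the identification $\Phi(G, T) = \Phi(\underline{G}, \underline{T})$, and identifies $\mathrm{Gal}(\mathcal{K}_T/\mathcal{K})$ with $\mathrm{Gal}(k_{\underline{T}}/k)$ via reduction on residue fields (this is precisely the unramifiedness of the splitting extension). Under these identifications, the representation $\theta_T$ coincides with $\theta_{\underline{T}}$. Thus the fact that $\underline{T}$ is $k$-generic implies that $T$ is $\mathcal{K}$-generic.

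\smallskip

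\noindent \emph{Expected main obstacle.} The substantive input is the lifting of $\underline{T}$ in Step~2, which rests on smoothness of the scheme of maximal tori together with Henselianity of $\mathcal{O}$; once this lifting is in hand, the verifications in Step~3 are formal consequences of the étale/torus formalism. The existence part in Step~1 is supplied directly by Theorem~\ref{T:Exist}, using that the residue field is infinite and finitely generated (hence Hilbertian).
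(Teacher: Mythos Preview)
Your proof is correct and follows essentially the same approach as the paper: find a $k$-generic maximal torus in the reduction via Theorem~\ref{T:Exist}, lift it to $\mathscr{G}$ (the paper cites \cite[Corollary~B.3.5]{Conrad} for this, which is equivalent to your smoothness-of-$\mathrm{Tor}(\mathscr{G})$ plus Henselianity argument), and then observe that the generic fiber inherits unramified splitting and genericity. Your Step~3 spells out more carefully than the paper does why the Galois action on $X(T)$ matches that on $X(\underline{T})$, but the underlying strategy is identical.
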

\begin{proof}
Let $\mathscr{G}$ be a model of $G$ over $\mathcal{O}$. Then the reduction $\underline{\mathscr{G}}$ is an absolutely almost simple algebraic $k$-group of the same type as $G$. Since $k$ is infinite and finitely generated, one can find a maximal $k$-torus $\overline{\mathscr{T}}$ of $\underline{\mathscr{G}}$ that is generic over $k$ (cf. Theorem \ref{T:Exist}). Let $\mathscr{T}$ be a lift of $\overline{\mathscr{T}}$ to $\mathscr{G}$ (cf. \cite[Corollary B.3.5]{Conrad}). Then the generic fiber $T$ is a maximal $\mathcal{K}$-torus of $G$ that is $\mathcal{K}$-generic and whose splitting field $\mathcal{K}_T$ is unramified over $\mathcal{K}$.
\end{proof}

\vskip1mm

\begin{cor}\label{C:notA1Bl}
Let $G$ be an absolutely almost simple algebraic $\mathcal{K}$-group of type different from $\textsf{A}_1$ and $\textsf{B}_{\ell}$. Assume that the residue field $k$ is finitely generated and $G$ has good reduction at $v$. Then any $G' \in \gen_{\mathcal{K}}(G)$ has good reduction at $v$.
\end{cor}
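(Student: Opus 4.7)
The plan is to combine the sufficient criterion for good reduction established in Theorem \ref{T:GR-not} with the existence result Proposition \ref{P:tor10}, using the definition of the genus to transfer a well-chosen torus from $G$ to $G'$. In outline, I would produce in $G$ a maximal $\mathcal{K}$-torus that is $\mathcal{K}$-generic with unramified splitting field, use the genus hypothesis to obtain a $\mathcal{K}$-isomorphic torus $T'$ in $G'$, verify that $T'$ retains these two properties, and then invoke Theorem \ref{T:GR-not}.

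More precisely, Proposition \ref{P:tor10} applied to $G$ (reducing to the case of an infinite residue field, since $k$ is finitely generated) produces a maximal $\mathcal{K}$-torus $T$ of $G$ that is $\mathcal{K}$-generic with $\mathcal{K}_T/\mathcal{K}$ unramified. Since $G' \in \gen_{\mathcal{K}}(G)$, by definition there exists a maximal $\mathcal{K}$-torus $T'$ of $G'$ and a $\mathcal{K}$-isomorphism $\varphi \colon T \to T'$. Any such isomorphism preserves the minimal splitting field, so $\mathcal{K}_{T'} = \mathcal{K}_T$ is still unramified over $\mathcal{K}$.

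The main step, and the one requiring the most care, is to transfer the genericity of $T$ to $T'$. By Proposition \ref{P:Prop-gen1} the groups $G$ and $G'$ are of the same Killing--Cartan type (in particular, different from $\textsf{A}_1$ and $\textsf{B}_{\ell}$) and are inner twists of each other, so their absolute Weyl groups have the same order. The $\mathcal{K}$-isomorphism $\varphi$ induces a Galois-equivariant isomorphism $X(T') \xrightarrow{\sim} X(T)$, and arguing exactly as in the proof of Corollary \ref{C:Isog} (or as in the proof of Proposition \ref{P:Prop-gen1}, using \cite[Lemma 4.1]{PR-WC} to bound $\theta_{T'}(\mathrm{Gal}(\mathcal{K}^{\mathrm{sep}}/\ell))$ from above by $W(G',T')$ and the transferred image of $\theta_T$ to bound it from below) shows that $\theta_{T'}(\mathrm{Gal}(\mathcal{K}^{\mathrm{sep}}/\mathcal{K})) \supset W(G',T')$, i.e.\ $T'$ is $\mathcal{K}$-generic in $G'$.

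Once these properties of $T'$ are in hand, Theorem \ref{T:GR-not} applies directly to $G'$: it is absolutely almost simple of type different from $\textsf{A}_1$ and $\textsf{B}_{\ell}$, and it contains a $\mathcal{K}$-generic maximal torus with unramified minimal splitting field. The theorem then yields good reduction of $G'$ at $v$, completing the proof. The only potential obstacle worth flagging is the case of a finite residue field, where Proposition \ref{P:tor10} does not literally apply; here one can pass to an unramified extension with infinite residue field, carry out the argument there, and use uniqueness of the reductive model (Proposition \ref{P:UnMod}) together with Galois descent to conclude.
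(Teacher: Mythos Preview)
Your argument for the case of an \emph{infinite} finitely generated residue field is essentially the paper's: produce a $\mathcal{K}$-generic torus $T$ in $G$ with unramified splitting field via Proposition \ref{P:tor10}, transfer it to $T'\subset G'$ via the genus hypothesis, check that $T'$ is again generic with unramified splitting field, and invoke Theorem \ref{T:GR-not}. One minor point: you appeal to Proposition \ref{P:Prop-gen1} to conclude that $G$ and $G'$ have the same type and are inner twists, but that proposition is stated for \emph{finitely generated} base fields, and $\mathcal{K}$ (a complete discretely valued field) is typically not. This is harmless, though, because $G'\in\gen_{\mathcal{K}}(G)$ already means, by definition, that $G'$ is an inner $\mathcal{K}$-form of $G$; no extra argument is needed. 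The genericity transfer then follows exactly as in the paper: since $T\simeq T'$ over $\mathcal{K}$, the images $\theta_T(\mathrm{Gal})$ and $\theta_{T'}(\mathrm{Gal})$ have the same order, and since $G'$ is an inner twist of $G$, their images in the group of symmetries of the Dynkin diagram coincide; a count then forces $\theta_{T'}(\mathrm{Gal})\supset W(G',T')$.

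The genuine gap is your treatment of the \emph{finite} residue field case. You propose to ``pass to an unramified extension with infinite residue field'' and then descend. But if $k$ is finite, every finite unramified extension of $\mathcal{K}$ still has finite residue field, while an infinite unramified extension (e.g.\ $\mathcal{K}^{\mathrm{ur}}$) has residue field $\bar{k}$, which is infinite but \emph{not finitely generated}. Proposition \ref{P:tor10} therefore does not apply upstairs either, and the descent idea never gets off the ground. The paper handles this case by a completely different, direct argument: when $k$ is finite, good reduction forces $G$ to be quasi-split over $\mathcal{K}$; since $G'\in\gen_{\mathcal{K}}(G)$ one has $\mathrm{rk}_{\mathcal{K}}G'=\mathrm{rk}_{\mathcal{K}}G$, and because $G'$ is an inner twist of $G$ this rank equality (via Tits' classification) forces $G'$ to be quasi-split as well, hence $G'\simeq G$ over $\mathcal{K}$ and in particular $G'$ has good reduction. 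You should replace your descent sketch with this argument.
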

\begin{proof}
We first consider the case where $k$ is a {\it finite}  field. In this case, it is well-known that the fact that $G$ has good reduction implies that $G$ is quasi-split over $\mathcal{K}$.
Let $G' \in \gen_{\mathcal{K}}(G)$. Then clearly
\begin{equation}\label{E:rank=}
\mathrm{rk}_{\mathcal{K}}\: G' = \mathrm{rk}_{\mathcal{K}}\: G.
\end{equation}
On the other hand, according to Tits' classification \cite{Tits},  the group $G'$ is quasi-split if and only if all vertices in the Tits index are distinguished. Since $G'$ is an inner twist of $G$, this is equivalent to (\ref{E:rank=}). Thus, $G'$ is $\mathcal{K}$-quasi-split, hence $\mathcal{K}$-isomorphic to $G$. In particular, it has good reduction at $v$.

Next, suppose that $k$ is {\it infinite} and finitely generated. In this case, our claim follows directly from the above results. Indeed, according to Proposition \ref{P:tor10}, the group $G$ contains a maximal $\mathcal{K}$-torus $T$ that is generic over $\mathcal{K}$ and whose splitting field $\mathcal{K}_T$ is unramified over $\mathcal{K}$. Since $G' \in \gen_{\mathcal{K}}(G)$, it contains a maximal $\mathcal{K}$-torus $T'$ isomorphic to $T$. Then of course $\mathcal{K}_{T'} = \mathcal{K}_{T}$ is unramified over $\mathcal{K}$. Furthermore, the assumption that $G'$ is an inner twist of $G$ implies that the subgroups $\theta_T(\mathrm{Gal}(\mathcal{K}_T/\mathcal{K})) \subset \mathrm{Aut}(\Phi(G , T))$ and $\theta_{T'}(\mathrm{Gal}(\mathcal{K}_{T'}/\mathcal{K})) \subset \mathrm{Aut}(\Phi(G , T'))$ have isomorphic images in the groups of symmetries of the corresponding Dynkin diagrams. Then the fact that $T$ is $\mathcal{K}$-generic implies the same for $T'$. 
Hence, $G'$ has good reduction by Theorem \ref{T:GR-not}.
\end{proof}

We now turn to the second case where the type of $G$ is either $\textsf{A}_1$ or $\textsf{B}_{\ell}$ $(\ell \geq 2)$. Let us first show that  Theorem \ref{T:GR-not} may be false in this case.

\vskip1mm

\addtocounter{thm}{1}

\noindent {\bf Example 6.5.} Let $\mathcal{K} = \mathbb{Q}(\!(x)\!)$, equipped with the standard valuation $v$.

\vskip1mm

 (a) Let $D$ be the quaternion algebra $\displaystyle \left( \frac{-1 , x}{\mathcal{K}}  \right)$ and $G = \mathrm{SL}_{1 , D}$. Set $\mathcal{L} =
\mathbb{Q}(\sqrt{-1})(\!(x)\!)$, and let $T = \mathrm{R}^{(1)}_{\mathcal{L}/\mathcal{K}}(\mathbb{G}_m)$ be the corresponding maximal $\mathcal{K}$-torus of $G$. Then $T$ is $\mathcal{K}$-generic and splits over the unramified extension $\mathcal{L}/\mathcal{K}$, but the quaternion algebra $D$ ramifies at $v$, hence $G$ does not have good reduction according to Example 2.2.

\vskip1mm

(b) Let $\ell \geq 2$ and $\mathcal{K}$ be as above. Set
$$
q_0 = x_1^2 + \cdots + x_{2\ell - 1}^2 + 2x_{2\ell}^2 \ \ \text{and} \ \ q = q_0 + x x_{2\ell + 1}^2,
$$
and let
$$
H = \mathrm{Spin}_{2\ell}(q_0) \ \subset \ \mathrm{Spin}_{2\ell+1}(q) = G.
$$
Since $H$ corresponds to a quadratic form defined over $\mathbb{Q}$, it has good reduction and contains a $\mathcal{K}$-generic maximal torus $T$ that splits over an extension of the form $L\mathcal{K}$ for some finite extension $L$ of $\mathbb{Q}$, which is obviously unramified. Now, $T$ is also a maximal torus in $G$ (because $G$ and $H$ have the same rank $\ell$), and we claim that it remains $\mathcal{K}$-generic in $G$. Indeed, the group $H$ is of type $\textsf{D}_{\ell}$, the group $G$ is of type $\textsf{B}_{\ell}$, hence $\vert W(G , T) \vert = 2 \cdot \vert W(H , T) \vert$. Since $T$ is generic in $H$, the image of $\mathrm{Gal}(\mathcal{K}_T/\mathcal{K})$ in $\mathrm{Aut}(\Phi(H , T))$ contains $W(H , T)$. Furthermore, the (signed) discriminant of $q_0$ is not
a square, so the image is not entirely contained in $W(H , T)$ (cf. \cite[Lemma 4.1]{PR-WC}). It follows that the image is $W(G , T)$, making $T$ generic in $G$. On the other hand, both the first and the second residues (cf. \cite[Ch. VI]{Lam}) of $q$ are nontrivial, and therefore no scalar multiple of $q$ can be equivalent to a diagonal quadratic form with all coefficients being units. So, $G$ does not have good reduction at $v$ by Example 2.3.

\vskip1mm

For types $\textsf{A}_1$ and $\textsf{B}_{\ell}$ we have the following modified condition for good reduction.
\begin{thm}\label{T:AB}
Let $\mathcal{K}$ be a field that is complete with respect to a discrete valuation $v$ with residue field $k$.
Suppose $G$ is an absolutely almost simple algebraic $\mathcal{K}$-group of type either $\textsf{A}_1$ or $\textsf{B}_{\ell}$ $(\ell \geq 2)$, and assume that $\mathrm{char}\: k \neq 2$ if $G$ is of type $\textsf{B}$.

\vskip2mm

\noindent {\rm (1)} \parbox[t]{16.2cm}{If $G$ contains two maximal $\mathcal{K}$-tori $T_1$ and $T_2$ that are $\mathcal{K}$-generic and whose splitting fields
$\mathcal{K}_{T_i}$ are unramified over $\mathcal{K}$ and satisfy $\mathcal{K}_{T_1} \cap \mathcal{K}_{T_2} = \mathcal{K}$, then $G$ has good reduction at $v$.}

\vskip1mm

\noindent {\rm (2)} \parbox[t]{16.2cm}{Conversely, if $k$ is infinite and finitely generated, and $G$ has good reduction at $v$, then $G$ contains two maximal $\mathcal{K}$-tori $T_1$ and $T_2$ that are $\mathcal{K}$-generic and whose splitting fields $\mathcal{K}_{T_1}$ and $\mathcal{K}_{T_2}$ are unramified over $\mathcal{K}$ and satisfy $\mathcal{K}_{T_1} \cap \mathcal{K}_{T_2} = \mathcal{K}$.}
\end{thm}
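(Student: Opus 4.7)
The plan is to adapt the cohomological strategy of Theorem \ref{T:GR-not}: the essential new feature in types $\textsf{A}_1$ and $\textsf{B}_{\ell}$ is that Klyachko's Theorem \ref{T:Klyachko10} leaves a $\mathbb{Z}/2\mathbb{Z}$-obstruction in the $H^1$ of the cocharacter lattice, and this obstruction has to be shown to vanish for at least one of the two given tori. Passing to the adjoint group $\overline{G} = G/Z(G)$, which has good reduction if and only if $G$ does (by the standard isogeny argument, using $\mathrm{char}\: k \neq 2$ in type $\textsf{B}_{\ell}$ so that $Z(G) = \mu_2$ is \'etale over $\mathcal{O}$), and using the fact that both types have no outer automorphisms (so the quasi-split inner form $\overline{G}_0$ is the split adjoint group with its obvious split model $\overline{\mathscr{G}}_0$ over $\mathcal{O}$), I would apply Steinberg's theorem and Theorem \ref{T:unram-split-field} twice to transfer the class of $\overline{G}$ to cohomology classes $\xi_i \in H^1(\mathcal{K}, \overline{S}_i)$, where $\overline{S}_i$ is the generic fiber of a maximal torus of $\overline{\mathscr{G}}_0$ with splitting field $\mathcal{L}_i := \mathcal{K}_{T_i}$; both $\xi_i$ map to the same class $\bar\xi \in H^1(\mathcal{K}, \overline{G}_0)$ representing $\overline{G}$.

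The decomposition $\overline{S}_i(\mathcal{L}_i) \simeq X_*(\overline{S}_i) \times \overline{S}_i(\mathcal{O}_{\mathcal{L}_i})$ from the proof of Proposition \ref{P:all-unram} then yields residue maps
\[
r_v^{(i)} \colon H^1(\mathcal{K}, \overline{S}_i) \longrightarrow H^1(\mathcal{L}_i/\mathcal{K}, X_*(\overline{S}_i)) \cong \mathbb{Z}/2\mathbb{Z},
\]
whose kernels are the unramified parts. If $r_v^{(i)}(\xi_i) = 0$ for either $i$, then as in the last paragraph of the proof of Theorem \ref{T:GR-not} one may represent $\xi_i$ by a cocycle with values in $\overline{S}_i(\mathcal{O}_{\mathcal{L}_i})$, and twisting $\overline{\mathscr{G}}_0$ by it produces a reductive $\mathcal{O}$-model of $\overline{G}$, finishing the proof. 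The real work is therefore to exclude the possibility that $r_v^{(1)}(\xi_1)$ and $r_v^{(2)}(\xi_2)$ are both nonzero under the hypothesis $\mathcal{L}_1 \cap \mathcal{L}_2 = \mathcal{K}$. In type $\textsf{A}_1$, writing $G = \mathrm{SL}_{1,D}$ for a quaternion algebra $D$ and presenting $\xi_i = [D]$ as a cyclic algebra $(\mathcal{L}_i, b_i)$, the identification $H^1(\mathcal{K}, \overline{S}_i) \simeq \mathcal{K}^{\times}/N_{\mathcal{L}_i/\mathcal{K}}(\mathcal{L}_i^{\times})$ converts $r_v^{(i)}$ into $v(\cdot) \bmod 2$, so nonvanishing amounts to $v(b_i)$ being odd, which forces the residue $\partial_v([D]) \in k^{\times}/k^{\times 2}$ to coincide with the class of the unit defining $\mathcal{L}_i/\mathcal{K}$. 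If this holds for both $i$, then $k_{\mathcal{L}_1} = k_{\mathcal{L}_2}$, and since an unramified quadratic extension of $\mathcal{K}$ is pinned down by its residue field, $\mathcal{L}_1 = \mathcal{L}_2$, contradicting disjointness. In type $\textsf{B}_{\ell}$ the same contradiction is obtained by translating $r_v^{(i)}(\xi_i)$ into residue data of the quadratic form $q$ via Springer's decomposition $q \sim q_1 \perp \pi q_2$ at $v$, so that nonvanishing of both residues pins down a common nontrivial unramified quadratic extension inside $\mathcal{L}_1$ and $\mathcal{L}_2$. I expect this type-by-type identification of $r_v^{(i)}(\xi_i)$ with an intrinsic ramification invariant of $G$ to be the main technical hurdle.

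For part (2), let $\mathscr{G}$ be a reductive $\mathcal{O}$-model of $G$ and let $\underline{\mathscr{G}} = \mathscr{G} \times_{\mathcal{O}} k$ be its reduction, which is absolutely almost simple over the infinite finitely generated field $k$. Applying Theorem \ref{T:Exist} twice, I would first choose a $k$-generic maximal torus $\overline{\mathscr{T}}_1 \subset \underline{\mathscr{G}}$, then, putting $k_1 := k_{\overline{\mathscr{T}}_1}$, choose a maximal $k$-torus $\overline{\mathscr{T}}_2 \subset \underline{\mathscr{G}}$ that is generic over $k_1$; a Weyl-group order comparison yields $k_{\overline{\mathscr{T}}_1} \cap k_{\overline{\mathscr{T}}_2} = k$. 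Lifting each $\overline{\mathscr{T}}_i$ to a maximal $\mathcal{O}$-torus $\mathscr{T}_i$ of $\mathscr{G}$ via \cite[Corollary B.3.5]{Conrad} and passing to the generic fibers $T_i$, one obtains maximal $\mathcal{K}$-tori of $G$ that are $\mathcal{K}$-generic (character lattice and Galois action being inherited from the reduction), whose splitting fields are the unramified extensions of $\mathcal{K}$ with residue fields $k_{\overline{\mathscr{T}}_i}$, and which therefore satisfy $\mathcal{K}_{T_1} \cap \mathcal{K}_{T_2} = \mathcal{K}$.
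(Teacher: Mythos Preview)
Your argument for part (2) is exactly the paper's: construct two ``disjoint'' generic tori in the reduction using Theorem \ref{T:Exist} and lift. Your cohomological treatment of type $\textsf{A}_1$ in part (1) is also a reformulation of the paper's argument, which reads off directly that if $D$ is ramified then the residue division algebra $\overline{D}$ forces $\overline{\mathcal{L}}_1 = \overline{\mathcal{L}}_2$.

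The divergence is in type $\textsf{B}_\ell$. The paper does \emph{not} continue the cohomological line of Theorem \ref{T:GR-not}. Instead it observes that each $T_i$ fixes an anisotropic vector $a_i$ and therefore sits inside a subgroup $H_i = \mathrm{Spin}_{2\ell}(q_i)$ of type $\textsf{D}_\ell$, with $q_i$ the restriction of $q$ to $\langle a_i\rangle^\perp$. Since $T_i$ is generic in $H_i$ with unramified splitting field, Theorem \ref{T:GR-not} (or Lemma \ref{L:deg8} when $\ell=2$) gives good reduction of $H_i$, so $\lambda_i q_i$ diagonalizes with unit entries. If neither $\lambda_i q(a_i)$ is a unit, a determinant computation forces $\lambda_1/\lambda_2$ to be a unit, whence the two diagonal unit forms are both the \emph{first residue} of the same $q$ and hence equivalent. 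Their common signed discriminant $d$ then yields a single quadratic extension $\mathcal{K}(\sqrt{d})$, which is contained in $\mathcal{K}_{T_i}$ because it is the field over which $H_i$ becomes inner. This contradicts $\mathcal{K}_{T_1}\cap\mathcal{K}_{T_2}=\mathcal{K}$.

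Your proposed route has a gap precisely at the point you flag. The residue $r_v^{(i)}(\xi_i)$ lives in $H^1(\mathrm{Gal}(\mathcal{L}_i/\mathcal{K}),X_*(\overline{S}_i))\cong\mathbb{Z}/2\mathbb{Z}$, but this $\mathbb{Z}/2\mathbb{Z}$ does \emph{not} arise by inflation from the sign quotient $W(\textsf{B}_\ell)\to W(\textsf{B}_\ell)/W(\textsf{D}_\ell)$ (indeed $X_*(\overline{S}_i)^{W(\textsf{D}_\ell)}=0$), so a nonzero value does not by itself single out a canonical quadratic subextension of $\mathcal{L}_i$. Unlike the $\textsf{A}_1$ case, where $\mathcal{L}_i$ is already quadratic and the Brauer residue $\partial_v([D])$ literally equals the residue character of $\mathcal{L}_i$, here you would still need to produce an \emph{intrinsic} invariant of $q$ at $v$ that both (a) is determined by $r_v^{(i)}(\xi_i)\neq 0$ and (b) takes values in something like $k^\times/(k^\times)^2$, pinning down a common unramified quadratic extension. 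The paper's use of the $\textsf{D}_\ell$-subgroup supplies exactly that missing link: the invariant is the discriminant of the first residue form, and its coincidence for $i=1,2$ comes for free from Springer's theory once one knows the $H_i$ have good reduction. If you wish to push your cohomological approach through, you will have to reproduce this discriminant identification by hand; as written, the step ``nonvanishing of both residues pins down a common nontrivial unramified quadratic extension'' is not justified.
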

\begin{proof} Without loss of generality, we may assume that $G$ is simply connected.

\vskip1mm

(1): The argument for type $\textsf{A}_1$ is rather simple. Indeed, here $G = \mathrm{SL}_{1 , D}$, where $D$ is a quaternion algebra over $\mathcal{K}$, and we
need to show that $D$ is unramified at $v$. Furthermore, we have $T_i = \mathrm{R}^{(1)}_{\mathcal{L}_i/\mathcal{K}}(\mathbb{G}_m)$, where $\mathcal{L}_i = \mathcal{K}_{T_i}$ is a quadratic subfield of $D$, that, by assumption, is unramified. Now, if $D$ were ramified at $v$, then the residue algebra $\overline{D}$ would be a quadratic extension of $k$. Then
$$
\overline{D} = \overline{\mathcal{L}_1} = \overline{\mathcal{L}_2}.
$$
This would imply that $\mathcal{L}_1 = \mathcal{L}_2$, contradicting the fact that $\mathcal{L}_1 \cap \mathcal{L}_2 = \mathcal{K}$. Thus, $D$ is unramified, as required.

\vskip1mm

The argument for type $\textsf{B}_{\ell}$ is similar, but more technical. Here $G = \mathrm{Spin}_n(q)$, where $n = 2\ell + 1$ and $q$ is a nondegenerate quadratic form on $\mathcal{K}^n$. We will use the standard action of $G$ on the $n$-dimensional vector space. It is well-known that every maximal $\mathcal{K}$-torus $T$ of $G$ fixes an anisotropic vector $a \in \mathcal{K}^n$, and hence lies in the stabilizer $G(a)$, which can be identified with $H = \mathrm{Spin}_{n-1}(q')$, where $q'$ is the restriction of $q$ to the orthogonal complement $W = \langle a \rangle^{\perp}$; note that $H$ is a group of type $\textsf{D}_{\ell}$.

So, in our set-up, for each $i = 1, 2$, we can choose an anisotropic vector $a_i \in \mathcal{K}^n$ fixed by $T_i$, and let $H_i = \mathrm{Spin}_{n-1}(q_i)$, where $q_i$ is the restriction of $q$ to the orthogonal complement $W_i = \langle a_i \rangle^{\perp}$. Then $T_i$ is a maximal $\mathcal{K}$-torus in $H_i$, which is $\mathcal{K}$-generic in $H_i$ and has unramified splitting field. So, it follows from Theorem \ref{T:GR-not} for $\ell \geq 3$ and from Lemma
\ref{L:deg8} below  for $\ell = 2$ (we note that the order of the Weyl group for type $\textsf{B}_2$ is 8) that $H_i$ has good reduction at $v$. According to Example 2.3, this means that there exist an element $\lambda_i \in \mathcal{K}^{\times}$ and a basis $e^{(i)}_1, \ldots , e^{(i)}_{n-1}$ of $W_i$ such that in this basis, the quadratic form $\lambda_i q_i$ has the following presentation
$$
u^{(i)}_1 x_1^2 + \cdots + u^{(i)}_{n-1} x_{n-1}^2,
$$
where $u^{(i)}_j$ are units in $\mathcal{K}$ for $i = 1, 2$ and $j = 1, \ldots , n-1$. Also, by scaling $a_i$, we may assume that the values of $v(\lambda_i q(a_i))$ are either $0$ or $1$. If it is $0$ for at least one $i \in \{ 0 , 1\}$, then $G$ has good reduction at $v$ (see Example 2.3). So, let us assume that the value is $1$ for both $i = 1, 2$, i.e. $\lambda_iq(a_i) = \pi u^{(i)}_n$ where $u^{(i)}_n$ is a unit. Then
$$
\lambda_i q = u^{(i)}_1 x_1^2 + \cdots + u^{(i)}_{n-1} x_{n-1}^2 + \pi u^{(i)}_n x_{n}^2.
$$
We have $\lambda_1 q = (\lambda_1 \lambda_2^{-1}) (\lambda_2 q)$. So, taking determinants and evaluating $v$, we obtain
$$
1 \equiv nv(\lambda_1\lambda_2^{-1}) + 1(\mathrm{mod}\: 2),
$$
which, in view of the fact that $n$ is odd, implies that $v(\lambda_1 \lambda_2^{-1})$ is even. So, after scaling, we can actually assume that $\lambda_1 \lambda_2^{-1}$ is a unit. Consequently, setting $\lambda = \lambda_1$ and replacing $q$ by $\lambda q$, we may assume that
$$
q = u^{(i)}_1 x_1^2 + \cdots + u^{(i)}_{n-1} x_{n-1}^2 + \pi u^{(i)}_n x_n^2,
$$
where $u^{(i)}_j$ are all units. Then the diagonal quadratic forms $\langle \overline{u^{(i)}_1}, \ldots \overline{u^{(i)}_{n-1}} \rangle$ over the residue field $k$ (where the bar denotes taking the residue in $k$) are both the so-called first residues of $q$ (we note that the first and second residues were constructed by Springer assuming that the residue characteristic is $\neq 2$). Since the first-residue is well-defined (see \cite[Ch. VI]{Lam}),
we conclude that these residues are equivalent, and then by Hensel's Lemma, the quadratic forms $q_i = u^{(i)}_1 x_1^2 + \cdots + u^{(i)}_{n-1} x_{n-1}^2$ for $i = 1, 2$ themselves are equivalent.

Let $d_i$ be the (signed) discriminant of $q_i$. Since $T_i$ is generic in $G$, and the Weyl group of $G$ contains the Weyl group of $H_i$ (with respect to $T_i$) as a subgroup of index 2, we conclude that $H_i$ is an outer form of a split group over $\mathcal{K}$. Furthermore, the minimal Galois extension of $\mathcal{K}$ over which it becomes an inner form is $\mathcal{K}(\sqrt{d_i})$. Since the forms $q_1$ and $q_2$ are equivalent, we conclude that
$$
\mathcal{L} := \mathcal{K}(\sqrt{d_1}) = \mathcal{K}(\sqrt{d_2})
$$
is a quadratic extension of $\mathcal{K}$. However, $\mathcal{L} \subset \mathcal{K}_{T_i}$ for both $i = 1, 2$, contradicting the assumption that $\mathcal{K}_{T_1}$ and $\mathcal{K}_{T_2}$ are disjoint over $\mathcal{K}$.

\vskip2mm

(2): Let $\mathscr{G}$ be the model of $G$ over $\mathcal{O}$ with reduction $\underline{\mathscr{G}}$, which is an absolutely almost simple algebraic $k$-group of the same type as $G$. Since $k$ is infinite and finitely generated, we can find a maximal $k$-torus $\overline{\mathscr{T}}_1$ of $\underline{\mathscr{G}}$ that is generic over $k$. Next, let $\overline{\mathscr{T}}_2$ be a maximal $k$-torus of $\underline{\mathscr{G}}$ that is generic over the splitting field $k_{\overline{\mathscr{T}}_1}$ of $\overline{\mathscr{T}}_1$. Since the Dynkin diagrams of the types $\textsf{A}_1$ and $\textsf{B}_{\ell}$ do not have nontrivial automorphisms, the degrees $[k_{\overline{\mathscr{T}}_i} : k]$ for $i = 1, 2$ are equal to the order $w$ of the Weyl group. Besides, the degree $[k_{\overline{\mathscr{T}}_1} k_{\overline{\mathscr{T}}_2} : k_{\overline{\mathscr{T}}_1}]$ also equals $w$. This implies that
\begin{equation}\label{E:k-disj}
k_{\overline{\mathscr{T}}_1} \cap k_{\overline{\mathscr{T}}_2} = k.
\end{equation}
Let $\mathscr{T}_1$ and $\mathscr{T}_2$ be the lifts of $\overline{\mathscr{T}}_1$ and $\overline{\mathscr{T}}_2$ to $\mathscr{G}$, and let $T_1$ and $T_2$ be the corresponding generic fibers. Then $T_1$ and $T_2$ are maximal $\mathcal{K}$-tori of $G$ that are generic over $\mathcal{K}$ and whose splitting fields $K_{T_i}$ are unramified extensions of $\mathcal{K}$ with the residue fields $k_{\overline{\mathscr{T}}_i}$ for $i = 1, 2$. Then (\ref{E:k-disj}) implies that
$\mathcal{K}_{T_1} \cap \mathcal{K}_{T_2} = \mathcal{K}$, as required.
\end{proof}

\vskip.5mm

We will now prove the statement about good reduction of spinor groups of 4-dimensional quadratic forms that was used in the above argument. We recall that given a nondegenerate quadratic form $q$ over $\mathcal{K}$ of dimension four, the spinor group $\mathrm{Spin}_4(q)$ is isomorphic to $G = \mathrm{R}_{\mathcal{L}/\mathcal{K}}(H)$, where $\mathcal{L}$ is a 2-dimensional \'{e}tale $\mathcal{K}$-algebra and $H = \mathrm{SL}_{1 , \mathcal{D}}$, where $\mathcal{D}$ is a central quaternion $\mathcal{K}$-algebra.
\begin{lemma}\label{L:deg8}
Let $G$ be as above. If $G$ possesses a maximal $\mathcal{K}$-torus such that $\mathcal{K}_T/\mathcal{K}$ is an unramified extension of degree 8, then $G$ has good reduction.
\end{lemma}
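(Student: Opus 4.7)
The plan is to establish three claims:
(1) $\mathcal{L}/\mathcal{K}$ is a nontrivial unramified quadratic field extension;
(2) the quadratic étale $\mathcal{L}$-subalgebra $M \subset \mathcal{D}$ corresponding to $T$ is a field and is unramified over $\mathcal{L}$; and
(3) $\mathcal{D}$ is unramified over $\mathcal{L}$.
Once these are in hand, the semi-simple $\mathcal{O}_{\mathcal{L}}$-scheme $\mathrm{SL}_{1,\mathscr{D}}$, where $\mathscr{D}$ is the Azumaya $\mathcal{O}_{\mathcal{L}}$-algebra representing $\mathcal{D}$, Weil-restricts along the étale extension $\mathcal{O}_{\mathcal{L}}/\mathcal{O}_{\mathcal{K}}$ to a reductive $\mathcal{O}_{\mathcal{K}}$-model of $G$.

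For (1) and (2), every maximal $\mathcal{K}$-torus of $G$ has the form $T = \mathrm{R}_{\mathcal{L}/\mathcal{K}}(\mathrm{R}^{(1)}_{M/\mathcal{L}}(G_m))$ for a uniquely determined quadratic étale $\mathcal{L}$-subalgebra $M \subset \mathcal{D}$, and a direct computation with the character lattice identifies $\mathcal{K}_T$ with the Galois closure of $M/\mathcal{K}$ inside $\mathcal{K}^{\mathrm{sep}}$. If $\mathcal{L} = \mathcal{K} \times \mathcal{K}$ is split, then $M$ is a product $M_1 \times M_2$ of at-most-quadratic fields and this Galois closure has degree $\leq 4$ over $\mathcal{K}$; and if $\mathcal{L}$ is a field but $M/\mathcal{K}$ is Galois, the closure equals $M$, again of degree $4$. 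Thus $[\mathcal{K}_T : \mathcal{K}] = 8$ forces $\mathcal{L}$ and $M$ to be fields and $M/\mathcal{K}$ to be non-Galois, and then $\mathcal{K}_T = M \cdot \sigma(M)$ for $\sigma$ (a lift of) the nontrivial element of $\mathrm{Gal}(\mathcal{L}/\mathcal{K})$. Since every subextension of the unramified extension $\mathcal{K}_T/\mathcal{K}$ is unramified, this gives (1) and (2).

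For (3), the decisive input is the identification $G \cong \mathrm{Spin}_4(q)$, under which $\mathcal{D}$ is Brauer-equivalent to the even Clifford algebra $C_+(q)$. The canonical Clifford involution on $C_+(q)$ restricts to $\sigma$ on the center $\mathcal{L}$ and is multiplication-reversing, so it yields an $\mathcal{L}$-algebra isomorphism $\sigma^*\mathcal{D} \cong \mathcal{D}^{\mathrm{op}}$; in particular $[\mathcal{D}] + [\sigma^*\mathcal{D}] = 0$ in $\mathrm{Br}(\mathcal{L})$. Applying the residue map $\partial \colon \mathrm{Br}(\mathcal{L}) \to H^1(k_{\mathcal{L}}, \mathbb{Q}/\mathbb{Z})$, which commutes with $\sigma^*$ because $\mathcal{L}/\mathcal{K}$ is unramified, one deduces that $\partial[\mathcal{D}]$ is invariant under $\bar\sigma \in \mathrm{Gal}(k_{\mathcal{L}}/k)$. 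Since $M$ splits $\mathcal{D}$, however, $\partial[\mathcal{D}]$ also lies in the kernel of restriction $H^1(k_{\mathcal{L}}, \mathbb{Z}/2) \to H^1(k_M, \mathbb{Z}/2)$, an order-$2$ subgroup whose nontrivial element is the Kummer class $\bar d \in k_{\mathcal{L}}^\times/k_{\mathcal{L}}^{\times 2}$ with $k_M = k_{\mathcal{L}}(\sqrt{\bar d})$.

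Suppose for contradiction that $\partial[\mathcal{D}] = \bar d$. The $\bar\sigma$-invariance then reads $\bar\sigma(\bar d)/\bar d \in k_{\mathcal{L}}^{\times 2}$, which is precisely the condition for $k_M/k$ to be Galois; since $M/\mathcal{K}$ is unramified, this would force $M/\mathcal{K}$ itself to be Galois, contradicting the conclusion of the previous step. Hence $\partial[\mathcal{D}] = 0$, so $\mathcal{D}$ is unramified over $\mathcal{L}$, and (3) holds. I expect the main technical obstacle to be the clean derivation of the identity $\sigma^*\mathcal{D} \cong \mathcal{D}^{\mathrm{op}}$ from the isomorphism $G \cong \mathrm{Spin}_4(q)$, which one extracts through the canonical involution on the even Clifford algebra of a $4$-dimensional quadratic form.
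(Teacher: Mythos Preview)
Your overall strategy is sound and leads to a correct proof, but there is a slip in step (3): for a $4$-dimensional quadratic form $q$, the canonical (reversal) involution on $C_+(q)$ is of the \emph{first} kind --- it fixes the center $\mathcal{L}$ pointwise (one checks directly that $z=e_1e_2e_3e_4$ is sent to itself under $v_1\cdots v_r \mapsto v_r\cdots v_1$). So this involution does not produce the claimed $\sigma$-semilinearity. The correct source is conjugation by any anisotropic vector $e\in V$: this is an automorphism of $C_+(q)$ sending $z\mapsto -z=\sigma(z)$, hence restricting to $\sigma$ on $\mathcal{L}$, and it yields an $\mathcal{L}$-algebra isomorphism $\sigma^*\mathcal{D}\cong\mathcal{D}$ (equivalently $\sigma^*\mathcal{D}\cong\mathcal{D}^{\mathrm{op}}$, since $[\mathcal{D}]$ is $2$-torsion). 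With this correction your argument goes through unchanged: $\partial[\mathcal{D}]$ is $\bar\sigma$-invariant, lies in $\{0,\bar d\}$, and equality with $\bar d$ would force $k_M/k$ (hence $M/\mathcal{K}$) to be Galois, contradicting $[\mathcal{K}_T:\mathcal{K}]=8$.

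The paper's proof is shorter and avoids the residue computation entirely. It also uses $\sigma^* H\cong H$ (implicitly, in writing $G\times_{\mathcal{K}}\mathcal{L}\simeq H\times H$), but then simply observes that the two factors $T_1,T_2$ of $T\times_{\mathcal{K}}\mathcal{L}$ are \emph{non-isomorphic} maximal $\mathcal{L}$-tori of $H$ with unramified splitting fields (non-isomorphic because $[\mathcal{L}_T:\mathcal{L}]=4$, not $2$), and invokes the already-established $\textsf{A}_1$ case of Theorem~\ref{T:AB}(1): a quaternion algebra containing two distinct unramified quadratic subfields is unramified. Your route replaces this reuse with an explicit Brauer-group residue argument; both are valid, but the paper's is more economical given what has already been proved in the surrounding text.
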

\begin{proof}
If $\mathcal{L} = \mathcal{K} \times \mathcal{K}$, then $[\mathcal{K}_T : \mathcal{K}] \leq 4$ for any $\mathcal{K}$-torus $T$ of $G$. Thus, in our situation,
$\mathcal{L}/\mathcal{K}$ is a quadratic field extension. It is enough to show that $\mathcal{L}/\mathcal{K}$ is unramified and $H_{\mathcal{L}} = H \times_{\mathcal{K}} \mathcal{L}$ has good reduction. Indeed, if $\mathscr{H}$ is a reductive $\mathcal{O}_{\mathcal{L}}$-model for $H_{\mathcal{L}}$, then
$\mathscr{G} = \mathrm{R}_{\mathcal{O}_{\mathcal{L}}/\mathcal{O}_{\mathcal{K}}}(\mathscr{H})$ would be a reductive $\mathcal{O}_{\mathcal{K}}$-model for $G$. Since $\mathcal{L} \subset \mathcal{K}_T$, we immediately obtain that $\mathcal{L}/\mathcal{K}$ is unramified. Furthermore,
$$
G \times_{\mathcal{K}} \mathcal{L} \simeq H_{\mathcal{L}} \times H_{\mathcal{L}}.
$$
In terms of this $\mathcal{L}$-isomorphism, let $T \times_{\mathcal{K}} \mathcal{L} \simeq T_1 \times T_2$. We have $[\mathcal{L}_T : \mathcal{L}] = 4$, which means that $T_1$ and $T_2$ are nonisomorphic $\mathcal{L}$-tori of $H_{\mathcal{L}}$ having unramified splitting fields. So, the fact that $H_{\mathcal{L}}$ has good reduction follows from the first part of the proof of Theorem \ref{T:AB}.
\end{proof}

\begin{cor}\label{C:GR-AB}
Let $G$ be an absolutely almost simple algebraic $\mathcal{K}$-group of type either $\textsf{A}_1$ or $\textsf{B}_{\ell}$ that has good reduction at $v$. Assume that $k$ is finitely generated and $\mathrm{char}\: k \neq 2$ if $G$ is of type $\textsf{B}$. Then any  $G' \in \gen_{\mathcal{K}}(G)$ also has good reduction at $v$.
\end{cor}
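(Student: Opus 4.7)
The plan is to follow the template of Corollary~\ref{C:notA1Bl}, replacing the single-torus sufficient condition for good reduction (Theorem~\ref{T:GR-not}) with the two-torus criterion provided by Theorem~\ref{T:AB}. As in that corollary, I would split the argument according to whether the residue field $k$ is finite or infinite.

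When $k$ is finite, I would argue exactly as in the finite-residue-field case of Corollary~\ref{C:notA1Bl}: good reduction at $v$ forces $G$ to be quasi-split over $\mathcal{K}$, and since the Dynkin diagrams of types $\textsf{A}_1$ and $\textsf{B}_{\ell}$ are rigid, quasi-split means split. Any $G' \in \gen_{\mathcal{K}}(G)$ is an inner $\mathcal{K}$-form of $G$ satisfying $\mathrm{rk}_{\mathcal{K}}\: G' = \mathrm{rk}_{\mathcal{K}}\: G$; this equality is obtained by transporting across a $\mathcal{K}$-torus isomorphism a maximal $\mathcal{K}$-torus of $G$ containing a maximal $\mathcal{K}$-split torus. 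By Tits' classification, this forces every vertex of the Tits index of $G'$ to be distinguished, so $G'$ is itself split, hence $\mathcal{K}$-isomorphic to $G$, and therefore has good reduction at $v$.

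The substantive case is $k$ infinite and finitely generated. Here I would invoke Theorem~\ref{T:AB}(2) to produce two maximal $\mathcal{K}$-tori $T_1, T_2$ of $G$ that are $\mathcal{K}$-generic, whose splitting fields $\mathcal{K}_{T_i}$ are unramified over $\mathcal{K}$, and which satisfy $\mathcal{K}_{T_1} \cap \mathcal{K}_{T_2} = \mathcal{K}$. Since $G' \in \gen_{\mathcal{K}}(G)$, for each $i$ there is a maximal $\mathcal{K}$-torus $T'_i$ of $G'$ that is $\mathcal{K}$-isomorphic to $T_i$. A $\mathcal{K}$-isomorphism of tori preserves the minimal splitting field, so $\mathcal{K}_{T'_i} = \mathcal{K}_{T_i}$; in particular the extensions $\mathcal{K}_{T'_i}/\mathcal{K}$ are unramified and $\mathcal{K}_{T'_1} \cap \mathcal{K}_{T'_2} = \mathcal{K}$. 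It then remains to verify that each $T'_i$ is $\mathcal{K}$-generic in $G'$, after which Theorem~\ref{T:AB}(1) applied to $G'$ with the pair $T'_1, T'_2$ will yield the desired good reduction.

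To verify this remaining genericity, I would exploit the fact that the Dynkin diagrams of types $\textsf{A}_1$ and $\textsf{B}_{\ell}$ carry no nontrivial automorphisms, so $\mathrm{Aut}(\Phi) = W(\Phi)$ in both cases. Combined with the injectivity of the map $\theta_{T'_i}$, this reduces $\mathcal{K}$-genericity of $T'_i$ in $G'$ to the numerical equality $[\mathcal{K}_{T'_i} : \mathcal{K}] = |W(G', T'_i)|$. Since $G$ and $G'$ share the same Cartan--Killing type (as $G' \in \gen_{\mathcal{K}}(G)$), the Weyl group orders match; and since $\mathcal{K}_{T'_i} = \mathcal{K}_{T_i}$, the degree $[\mathcal{K}_{T'_i} : \mathcal{K}]$ equals $[\mathcal{K}_{T_i} : \mathcal{K}] = |W(G, T_i)|$ by the $\mathcal{K}$-genericity of $T_i$ in $G$. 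Genericity thus transfers from $T_i$ to $T'_i$, and the proof is complete. The one mildly subtle point is precisely this use of Dynkin-diagram rigidity to decouple genericity from the specific embedding of a torus into its ambient group --- this is exactly the feature that makes the two-torus criterion of Theorem~\ref{T:AB} effective for types $\textsf{A}_1$ and $\textsf{B}_{\ell}$.
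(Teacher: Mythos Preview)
Your proposal is correct and follows essentially the same approach as the paper: split into the finite and infinite residue field cases, handle the former via splitness, and in the latter invoke Theorem~\ref{T:AB}(2) to obtain the pair $T_1,T_2$, transport them to $G'$ via the genus hypothesis, and apply Theorem~\ref{T:AB}(1). The paper compresses your genericity-transfer step into a single ``clearly,'' whereas you spell out the role of $\mathrm{Aut}(\Phi)=W(\Phi)$ for types $\textsf{A}_1$ and $\textsf{B}_\ell$; this elaboration is correct and is exactly what underlies the paper's claim.
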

\begin{proof}
As in the proof of Corollary \ref{C:notA1Bl}, we consider two cases. First, if the residue field is finite, the fact that an absolutely almost $\mathcal{K}$-group $G$ of type $\textsf{A}_1$ or $\textsf{B}_{\ell}$ has good reduction means that it actually splits over $\mathcal{K}$. Then $G'$ also splits, and hence has good reduction. Next, suppose the residue field is infinite and finitely generated. Then
by Theorem \ref{T:AB}(2), the group $G$ possesses two maximal $\mathcal{K}$-tori $T_1$ and $T_2$ that are generic over $\mathcal{K}$ and whose splitting fields are unramified over $\mathcal{K}$ and satisfy $\mathcal{K}_{T_1} \cap \mathcal{K}_{T_2} = \mathcal{K}$. On the other hand, $G'$ contains maximal $\mathcal{K}$-tori $T'_1$ and $T'_2$ that are $\mathcal{K}$-isomorphic to $T_1$ and $T_2$, respectively. Clearly, $T'_1$ and $T'_2$ have properties analogous to those of $T_1$ and $T_2$, so $G'$ has good reduction by Theorem \ref{T:AB}(1).
\end{proof}
\vskip5mm

Now, Theorem \ref{T:GR-all}  follows from Corollaries \ref{C:notA1Bl} and \ref{C:GR-AB}. Furthermore, to prove the first assertion of Theorem \ref{T:GoodReduction1}, one needs to use Corollary \ref{C:gen}  and then apply Theorem \ref{T:GR-all}. To prove the second assertion, we let $\overline{\mathscr{T}}$ be a maximal $k^{(v)}$-torus of the reduction $\underline{G}^{(v)}$. Let $\mathscr{G}$ be the model of $G \times_k k_v$ over $\mathcal{O}_v$ that yields the reduction $\underline{G}^{(v)}$. Then according to \cite[Corollary B.3.5]{Conrad}, the torus $\overline{\mathscr{T}}$ lifts to a maximal torus $\mathscr{T}$ of $\mathscr{G}$; let $T$ be the corresponding generic fiber, which is a maximal $k_v$-torus of $G \times_k k_v$. As we already mentioned, $G' \in \gen_{k_v}(G)$, so there exists a maximal $k_v$-torus $T'$ of $G'$ that is isomorphic to $T$. We have already established that $G' \times_k k_v$ has a model $\mathscr{G}'$ over $\mathcal{O}_v$, and using Theorem \ref{T:unram-split-field} we may assume that $T'$ is the generic fiber of a torus $\mathscr{T}'$ of $\mathscr{G}'$.
According to Proposition~\ref{P:UnMod}, the fact that $T \simeq T'$ over $k_v$ implies that $\mathscr{T} \simeq \mathscr{T}'$ over $\mathcal{O}_v$. Then the reduction $\overline{\mathscr{T}'}$ is a maximal $k_v$-torus of the reduction $(\underline{G}')^{(v)}$ that is isomorphic to $\overline{T}$. A symmetric argument shows that every maximal $k_v$-torus of $(\underline{G}')^{(v)}$ is isomorphic to a maximal $k_v$-torus of $\underline{G}^{(v)}$. Thus, $(\underline{G}')^{(v)} \in \gen_{k_v}(\underline{G}^{(v)})$.

\vskip2mm

\begin{cor}
Let $G$ be an absolutely almost simple algebraic group over an infinite finitely generated field $k$, and let $V$ be a divisorial set of places of $k$. Assume that $\mathrm{char}\: k \neq 2$ if $G$ is of type $\textsf{B}$. Then there exists a finite subset $S \subset V$ such that every $G' \in \gen_k(G)$ has good reduction at all $v \in V \setminus S$.
\end{cor}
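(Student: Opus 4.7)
The plan is to reduce the statement to Theorem~\ref{T:GoodReduction1} by showing that $G$ itself has good reduction at all but finitely many $v \in V$, and by ensuring that the residue-characteristic hypothesis of that theorem can be arranged to hold after discarding a further finite set of valuations.

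First I would fix a model $\mathfrak{X}$ of $k$ (an irreducible, separated, normal scheme of finite type over $\mathbb{Z}$ with function field $k$) so that $V$ is identified with the set of discrete valuations corresponding to the prime divisors of $\mathfrak{X}$. Being an affine algebraic $k$-group, $G$ spreads out to an affine flat group scheme $\widetilde{\mathscr{G}}$ over some nonempty open $U_0 \subseteq \mathfrak{X}$ (the comultiplication, coinverse, and counit all extend after possibly shrinking $U_0$). Since the reductive locus of such a group scheme of finite presentation is open (cf.\ \cite[Exp.~XIX, 2.6]{SGA3}), after further shrinking we may assume that $\widetilde{\mathscr{G}}$ is a reductive group scheme over some open $U \subseteq \mathfrak{X}$. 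The closed complement $\mathfrak{X} \setminus U$ meets only finitely many prime divisors of $\mathfrak{X}$, giving a finite subset $S_0 \subset V$ such that $G$ has good reduction at every $v \in V \setminus S_0$. In the type-$\textsf{B}_{\ell}$ case with $\mathrm{char}\,k = 0$, I would further enlarge $S_0$ by adjoining the valuations of residue characteristic $2$; these correspond to the irreducible components of the fiber $\mathfrak{X}_{\mathbb{F}_2}$, which are finite in number since $\mathfrak{X}_{\mathbb{F}_2}$ is of finite type over $\mathbb{F}_2$. Call the resulting finite set $S$.

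For any $v \in V \setminus S$, the residue field $k^{(v)}$ is the function field of a prime divisor of $\mathfrak{X}$, hence is finitely generated, and by construction $\mathrm{char}\,k^{(v)} \neq 2$ whenever $G$ is of type $\textsf{B}_{\ell}$. All hypotheses of Theorem~\ref{T:GoodReduction1} are therefore in force at $v$, and we conclude that every $G' \in \gen_k(G)$ has good reduction at $v$, as required. The substantive content has already been carried out in the torus-theoretic characterizations of good reduction (Theorems~\ref{T:GR-not} and~\ref{T:AB}) underlying Theorem~\ref{T:GoodReduction1}; what remains here is only the standard spread-out argument together with a finite excision of bad-residue-characteristic places, and no genuine obstacle is encountered.
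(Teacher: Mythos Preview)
Your argument is correct and matches the paper's own proof almost exactly: find a finite $S$ where $G$ fails to have good reduction (the paper just says ``Clearly''; you supply the spread-out details), enlarge $S$ by the finitely many places of residue characteristic $2$ in the type-$\textsf{B}$ case, and then invoke Theorem~\ref{T:GoodReduction1}. No differences worth noting.
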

\begin{proof}
First, we note that the residue field $k^{(v)}$ is finitely generated for all $v \in V$, so we can apply our previous results. Clearly, we can find a finite subset $S \subset V$ such that $G$ has good reduction at all $v \in V \setminus S$. Besides, if $G$ is of type $\textsf{B}$, then by our assumption $\mathrm{char}\: k \neq 2$ and we can include in $S$ all $v \in V$ such that $v(2) \neq 0$. But then according to Theorem \ref{T:GoodReduction1}, every $G' \in \gen_k(G)$ also has good reduction at all $v \in V$.
\end{proof}

This corollary shows that the truth of the Finiteness Conjecture for forms $G$ with good reduction and all divisorial sets of places of the given finitely generated field $k$ would imply the finiteness of $\gen_k(G)$.

\section{The behavior of the genus under a purely transcendental base change: proof of Theorem \ref{T:transc}}\label{S:PureTr}

In order to set the stage for the proof of Theorem \ref{T:transc}, we would first like to present an analogous result for division algebras.

\vskip1mm

\noindent {\bf \ref{S:PureTr}.1. The genus of a division algebra.} We recall that two finite-dimensional central division algebras $D_1$ and $D_2$ over a field $K$ are said to have the {\it same maximal subfields} if they have the same degree $n$ and satisfy the following property: a degree $n$ extension $P/K$ admits a $K$-embedding $P \hookrightarrow D_1$ if and only if it admits a $K$-embedding $P \hookrightarrow D_2$. Given a finite-dimensional central division algebra $D$ over $K$, one defines its genus $\gen(D)$ as the set of classes $[D'] \in \Br(K)$ in the Brauer group corresponding to central division $K$-algebras $D'$ that have the same maximal subfields as $D$. This concept has been analyzed in detail in \cite{CRR-Bull}, \cite{CRR3}, \cite{CRR-Isr}, \cite{RR-manuscr}, and other publications. Our goal in the present subsection is to prove the following.

\begin{prop}\label{P:k(x)}
Let $D$ be a central division algebra of degree $n$ over a field $k$, and assume that $n$ is prime to $\mathrm{char}\: k$. Set $K = k(x)$. Then
every element of $\gen(D \otimes_k K)$ is represented by a division algebra of the form $D' \otimes_k K$, where $D'$ is a central division algebra over $k$ with
$[D'] \in \gen(D)$.
\end{prop}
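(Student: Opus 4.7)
The plan is to mimic the strategy of Theorem \ref{T:transc} but to operate directly at the level of Brauer classes, which, for type $\textsf{A}_{n-1}$, allows one to bypass the finitely-generated-field hypothesis. Given a central division $K$-algebra $D_1$ representing a class in $\gen(D\otimes_k K)$, the goal is to show that $[D_1]$ lies in the image of the natural map $\Br(k)\to\Br(K)$, with preimage lying in $\gen(D)$. First, for any closed point $v$ of $\A^1_k$, the corresponding discrete valuation of $K$ is trivial on $k$, so $D\otimes_k\mathcal{O}_v$ is an Azumaya $\mathcal{O}_v$-algebra and $G:=\mathrm{SL}_{1,\,D\otimes_k K}$ has good reduction at $v$ by Example 2.2. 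Under the standard bijection between maximal commutative étale subalgebras of a central simple algebra $A$ of degree $n$ and maximal $K$-tori of $\mathrm{SL}_{1,A}$, the ``same maximal subfields'' hypothesis translates into $G_1:=\mathrm{SL}_{1,D_1}\in\gen_K(G)$, so Theorem \ref{T:GoodReduction1} yields that $G_1$ has good reduction at $v$, hence $D_1$ is unramified at $v$.

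Since $v$ was arbitrary and $n$ is prime to $\mathrm{char}\,k$, the Faddeev exact sequence
\[
0\longrightarrow\Br(k)[n]\longrightarrow\Br(K)[n]\xrightarrow{\;\oplus\partial_v\;}\bigoplus_{v\in(\A^1_k)^{(1)}} H^1(k(v),\Z/n\Z)
\]
shows that $[D_1]=[D'\otimes_k K]$ for some central simple $k$-algebra $D'$. Injectivity of $\Br(k)\to\Br(K)$ together with index-preservation (by specializing $x$ at a $k$-rational point, for instance) forces $D'$ to be a central division $k$-algebra of degree $n$. To complete the proof, I verify $[D']\in\gen(D)$: for any degree $n$ field extension $P/k$, the splitting criterion and the injectivity of $\Br(P)\to\Br(P(x))$ give
\[
P\hookrightarrow D\iff[D\otimes_k P]=0\iff[(D\otimes_k K)\otimes_K P(x)]=0\iff P(x)\hookrightarrow D\otimes_k K,
\]
and likewise with $D'$ in place of $D$; since $P(x)=P\otimes_k K$ is a degree $n$ field extension of $K$, the hypothesis that $D_1=D'\otimes_k K$ and $D\otimes_k K$ have the same maximal subfields immediately yields $P\hookrightarrow D\iff P\hookrightarrow D'$.

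The principal obstacle lies in the first step: Theorem \ref{T:GoodReduction1} requires the residue field $k(v)$ to be finitely generated, which is automatic when $k$ is finitely generated but is not granted by the hypotheses of the proposition. For type $\textsf{A}_{n-1}$ this can be circumvented by a direct local argument: two central simple algebras of degree $n$ over a complete discretely valued field, with $n$ prime to the residue characteristic, that share the same maximal étale subalgebras are simultaneously unramified. A proof of this fact, based on the description of the ramification of a Brauer class as a character of the absolute Galois group of the residue field together with the existence of sufficiently many unramified cyclic extensions, appears (in slightly different formulations) in the Chernousov--Rapinchuk--Rapinchuk series on the genus of a division algebra, and suffices to replace the appeal to Theorem \ref{T:GoodReduction1} in the general case.
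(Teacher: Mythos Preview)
Your proof is correct and, after your self-correction, follows essentially the same route as the paper: use that algebras in the same genus are simultaneously unramified (the paper invokes \cite{CRR-Bull}, \cite{RR-manuscr} directly, without the detour through Theorem~\ref{T:GoodReduction1}), then apply the Faddeev exact sequence to descend $[D_1]$ to $\Br(k)$.

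The one genuine difference is in the verification that $[D']\in\gen(D)$. The paper localizes at the place $v$ corresponding to $x$, passes to the completion $K_v=k((x))$, observes that $D\otimes_k K_v$ and $D'\otimes_k K_v$ still have the same maximal subfields, and then compares their residue algebras, which are $D$ and $D'$ themselves. Your argument is more direct: for a degree~$n$ extension $P/k$ you use the injectivity of $\Br(P)\to\Br(P(x))$ and the splitting criterion to reduce the question to whether $P(x)$ embeds in $D\otimes_k K$ versus $D'\otimes_k K$, which is exactly the genus hypothesis over $K$. This avoids the passage to completions and residue algebras entirely; the paper's approach, on the other hand, fits into the general philosophy (used throughout the article) that properties of the genus are inherited by reductions. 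Both are short; yours is the more elementary one.

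Your initial appeal to Theorem~\ref{T:GoodReduction1} is indeed blocked by the finitely-generated hypothesis on the residue field, and you are right that the direct local unramifiedness statement from the CRR papers is the correct substitute --- that is precisely what the paper does from the outset.
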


\begin{proof}
For any $n$ that is prime to $\mathrm{char}\: k$, we have the following exact sequence that goes back to Faddeev (cf. \cite[Example 9.2]{GMS}):
$$
0 \to {}_n\Br(k) \longrightarrow {}_n\Br(K) \stackrel{\rho}{\longrightarrow} \bigoplus_p \mathrm{Hom}(\mathrm{Gal}((K^{(p)})^{\mathrm{sep}}/K^{(p)}) ,
\mathbb{Z}/n\mathbb{Z}),
$$
where $p$ runs through all monic irreducible polynomials in $k[x]$, $\rho$ is the direct sum of the corresponding residue maps
$$
\rho^{(p)} \colon {}_n\mathrm{Br}(K) \longrightarrow \mathrm{Hom}(\mathrm{Gal}((K^{(p)})^{\mathrm{sep}}/K^{(p)}) ,
\mathbb{Z}/n\mathbb{Z}),
$$
and $K^{(p)} = k[x]/(p(x))$ is the residue field at $p$. Let $\Delta \in \gen(D \otimes_k K)$. Since the algebra $D \otimes_k K$ is unramified at all $p$, the latter implies that $\Delta$ is also unramified at all $p$ (cf. \cite{CRR-Bull}, \cite{RR-manuscr}), i.e. $\rho([\Delta]) = 0$. So, it follows from the above exact sequence that $\Delta$ is of the form $\Delta = D' \otimes_k K$ for some central division $k$-algebra $D'$ of degree $n$. It remains to show that $D' \in \gen(D)$. For this, we let $v$ denote the discrete valuation of $K$ corresponding to the polynomial $p(x) = x$; then the completion $K_v$ is $k(\!(x)\!)$ and the residue field $K^{(v)}$ is $k$. Since $D' \otimes_k K \in \gen(D\otimes_k K)$, it follows from \cite[Lemma 2.1]{RR-manuscr} that the degree $n$ division algebras $\mathcal{D} = D \otimes_k K_v$ and $\mathcal{D}' = D' \otimes_k K_v$ have the same maximal subfields. Since $\mathcal{D}$ and $\mathcal{D}'$ are division algebras, the valuation $v$ extends to valuations $w$ and $w'$ of these algebras, and we let $\overline{\mathcal{D}}$ and $\overline{\mathcal{D}'}$ denote the corresponding residue algebras. A standard argument shows that the fact that $\mathcal{D}$ and $\mathcal{D}'$ have the same maximal subfields implies that the residue algebras also have the same maximal subfields. Since
$\overline{\mathcal{D}} \simeq D$ and $\overline{\mathcal{D}'} \simeq D'$, we see that $D' \in \gen(D)$, as required.
\end{proof}

Using the proposition repeatedly, we obtain a similar statement for the field of rational functions $K = k(x_1, \ldots , x_m)$ in any number of variables. Our next goal is to prove Theorem \ref{T:transc} that extends the proposition to absolutely almost simple algebraic groups.

\vskip2mm

\noindent {\bf \ref{S:PureTr}.2. Proof of Theorem \ref{T:transc}.}
The argument relies on the following fundamental fact.

\begin{thm}\label{T:RagRam}
{\rm (Raghunathan, Ramanathan \cite{RagRam})} Let $G$ be a connected reductive algebraic group over a field $k$, and let $\mathbb{A}^1_k = \mathrm{Spec}\: k[x]$ be the affine line over $k$. Let  $B \to \mathbb{A}^1_k$  be a principal $G$-bundle on $\mathbb{A}^1_k$ such that the bundle $B \times_{\mathbb{A}^1_k} \mathbb{A}^1_{k^{\mathrm{sep}}}$ on $\mathbb{A}^1_{k^{\mathrm{sep}}} = \mathrm{Spec}\: k^{\mathrm{sep}}[x]$, where $k^{\mathrm{sep}}$ is a separable closure of $k$, is trivial. Then $B$ is constant, i.e. there exists a principal $G$-bundle $B_0 \to \mathrm{Spec}\: k$ such that $B = B_0 \times_{\mathrm{Spec}\: k} \mathbb{A}^1_k$.
\end{thm}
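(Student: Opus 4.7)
\textbf{Proof plan for Theorem \ref{T:RagRam}.} The goal is to show that the pullback $\pi^*\colon H^1(k,G) \to H^1_{\mathrm{et}}(\mathbb{A}^1_k,G)$ along the structure map $\pi\colon \mathbb{A}^1_k \to \mathrm{Spec}\: k$ hits every class that dies over $\mathbb{A}^1_{k^{\mathrm{sep}}}$. The plan is to pass to Galois cohomology and then reduce the question to an $\mathbb{A}^1$-homotopy invariance statement for the value group. First, I would invoke the Hochschild--Serre low-degree exact sequence of pointed sets associated with the Galois pro-cover $\mathbb{A}^1_{k^{\mathrm{sep}}} \to \mathbb{A}^1_k$:
\[
1 \to H^1(\mathrm{Gal}(k^{\mathrm{sep}}/k),\, G(k^{\mathrm{sep}}[t])) \to H^1_{\mathrm{et}}(\mathbb{A}^1_k, G) \to H^1_{\mathrm{et}}(\mathbb{A}^1_{k^{\mathrm{sep}}}, G).
\]
By hypothesis, the class $[B]$ is killed by the rightmost map and therefore comes from a class $\bar c \in H^1(\mathrm{Gal}(k^{\mathrm{sep}}/k), G(k^{\mathrm{sep}}[t]))$.

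Second, I would exploit a splitting. The constant inclusion $\iota\colon G(k^{\mathrm{sep}}) \hookrightarrow G(k^{\mathrm{sep}}[t])$ and the Galois-equivariant evaluation $\mathrm{ev}_0\colon G(k^{\mathrm{sep}}[t]) \to G(k^{\mathrm{sep}})$, $f \mapsto f(0)$, satisfy $\mathrm{ev}_0 \circ \iota = \mathrm{id}$. Consequently the induced map $\iota^1\colon H^1(k,G) \to H^1(\mathrm{Gal}(k^{\mathrm{sep}}/k), G(k^{\mathrm{sep}}[t]))$ is split injective, with left inverse $\mathrm{ev}_0^1$. The theorem thus reduces to showing that $\iota^1$ is surjective; granted this, the required bundle $B_0$ is the one over $\mathrm{Spec}\: k$ represented by $\mathrm{ev}_0^1(\bar c) \in H^1(k,G)$.

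The heart of the argument is therefore to establish that every Galois cocycle $c\colon \sigma \mapsto c_\sigma(t) \in G(k^{\mathrm{sep}}[t])$ is cohomologous, in $G(k^{\mathrm{sep}}[t])$, to the constant cocycle $\sigma \mapsto c_\sigma(0)$. My approach would be to twist $G$ over $\mathbb{A}^1_k$ by the $k$-bundle corresponding to the constant cocycle, reducing at once to the case $c_\sigma(0) = 1$ for all $\sigma$. Then $c$ takes values in $N := \ker(\mathrm{ev}_0) \subset G(k^{\mathrm{sep}}[t])$ and the task is to show that $c$ is a coboundary. For a split simply connected semi-simple $G$ one can exploit the Bruhat decomposition together with the Chevalley generators to describe $N$ in terms of the one-parameter root subgroups $u_\alpha(f)$ with $f \in k^{\mathrm{sep}}[t]$ and $f(0)=0$; on each such copy of $\mathbb{G}_a$ the cocycle is a coboundary by a direct additive-group calculation. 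To pass to an arbitrary reductive $G$ one handles the central torus separately via Hilbert 90 applied to $(k^{\mathrm{sep}}[t])^\times$, and then transports the result from the quasi-split inner form to an arbitrary $k$-form by Galois descent from the minimal splitting extension.

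The main obstacle is precisely this third step: since $N$ is highly non-abelian, the vanishing of the image of $H^1(\mathrm{Gal}(k^{\mathrm{sep}}/k), N)$ in $H^1(\mathrm{Gal}(k^{\mathrm{sep}}/k), G(k^{\mathrm{sep}}[t]))$ is not formal but structural, depending on the root-datum combinatorics of $G$. This is where the original argument of Raghunathan and Ramanathan does the genuine work, via a careful induction on the semi-simple rank running over the root subgroups. The reductive hypothesis on $G$ is essential here, as the analogous statement is known to fail for general affine group schemes.
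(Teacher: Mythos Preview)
The paper does not supply its own proof of this theorem; it is quoted as an external result of Raghunathan--Ramanathan \cite{RagRam}, with the remark that alternative proofs appear in \cite{GilleA1} and an extension to disconnected groups in \cite{CGP}. The paper only records the cohomological reformulation --- that $H^1(k,G) \to \ker\bigl(\he^1(\mathbb{A}^1_k,G) \to \he^1(\mathbb{A}^1_{k^{\mathrm{sep}}},G)\bigr)$ is a bijection --- and then uses it as a black box in the proof of Proposition~\ref{P:A1}. So there is nothing in the paper itself to compare your proposal against.

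For what it is worth, your outline is the standard skeleton of the Raghunathan--Ramanathan argument: Galois descent along $\mathbb{A}^1_{k^{\mathrm{sep}}}\to\mathbb{A}^1_k$ identifies the relevant kernel with $H^1(\mathrm{Gal}(k^{\mathrm{sep}}/k),G(k^{\mathrm{sep}}[t]))$, the evaluation-at-$0$ splitting reduces everything to surjectivity of $\iota^1$, and the content lies in the root-subgroup analysis that you explicitly defer to the original source. You are right that this last step is not formal. One small correction to your phrasing: over $k^{\mathrm{sep}}$ the group $G$ is already split, so the difficulty is not ``transporting the result from the quasi-split inner form to an arbitrary $k$-form'' --- rather, it is that the Galois action on $G(k^{\mathrm{sep}}[t])$ is twisted by the $k$-structure of $G$, and one must run the root-group induction in a manner compatible with that action. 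Your sketch of the $\mathbb{G}_a$ case is correct as far as it goes, but assembling those pieces inside the non-abelian $N$ is precisely the work you have not done, as you yourself acknowledge.
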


An alternative proof of this theorem was given in \cite{GilleA1}. Later in \cite{CGP}, the theorem was extended to reductive, but not necessarily connected, groups; see also \cite{ARR} for a new proof over fields of characteristic zero that uses buildings.  In cohomological language, the theorem means that the natural map from $H^1(k , G)$ to $\mathrm{Ker}\left(\he(\mathbb{A}^1_k , G) \to \he(\mathbb{A}^1_{k^{\mathrm{sep}}} , G)\right)$ is a surjection (in fact, a bijection). We will use this interpretation to prove the following.
\begin{prop}\label{P:A1}
Let $G$ be a semisimple algebraic group over a field $k$, let $K = k(x)$, and let $V$ be the set of discrete valuations of $K$ associated with the monic irreducible polynomials $p(x) \in k[x]$. If $H$  is an inner $K$-form of $G \times_k K$ that has good reduction at all $v \in V$ and satisfies $H \times_K k^{\mathrm{sep}}(x) \simeq G \times_k k^{\mathrm{sep}}(x)$, then $H \simeq H_0 \times_k K$ for some $k$-form $H_0$ of $G$.
\end{prop}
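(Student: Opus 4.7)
The plan is to extract, from the class in $H^1(K,\overline{G})$ defining $H$ as an inner twist, a lift to $H^1(k,\overline{G})$; the argument combines Theorem \ref{T:RagRam} with a Harder-type local-global principle (Proposition \ref{P:Harder}). Let $\overline{G}=G/Z(G)$ denote the adjoint group. Since $H$ is an inner $K$-form of $G_K := G\times_k K$, there exists a class $\xi\in H^1(K,\overline{G})$ with $H\simeq{}_{\xi}G_K$, and it is enough to show that $\xi$ lies in the image of $H^1(k,\overline{G})\to H^1(K,\overline{G})$: any lift $\xi_0$ then gives $H_0:={}_{\xi_0}G$ with $H_0\times_k K\simeq H$.

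\emph{Step 1 (spreading $\xi$ out to $\mathbb{A}^1_k$).} The ring $A=k[t]$ is a Dedekind domain whose nonzero primes are exactly the valuations $v\in V$. Apply Proposition \ref{P:Harder} to the smooth group scheme $\mathbb{G}=\overline{G}\times_k A$: the class $\xi$ extends to some $\tilde\xi\in \he^1(\mathbb{A}^1_k,\overline{G})$ provided that, for every $v\in V$, the image $\xi_v\in H^1(K_v,\overline{G})$ comes from $\he^1(\mathcal{O}_v,\overline{G})$. This is supplied by the good reduction hypothesis: exactly as in the argument closing \S 2.3 (which uses the cartesian diagram of Corollary \ref{C-CorHarder} combined with the uniqueness of the reductive model, Proposition \ref{P:UnMod}), a reductive $\mathcal{O}_v$-model of $H\times_K K_v$ produces a class in $\he^1(\mathcal{O}_v,\overline{G})$ whose image in $H^1(K_v,\overline{G})$ is $\xi_v$.

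\emph{Step 2 (descending from $\mathbb{A}^1_k$ to $k$).} The cohomological reformulation of Theorem \ref{T:RagRam} asserts that $H^1(k,\overline{G})$ surjects onto the kernel of the pullback $\he^1(\mathbb{A}^1_k,\overline{G})\to\he^1(\mathbb{A}^1_{k^{\mathrm{sep}}},\overline{G})$, so it suffices to verify $\tilde\xi|_{\mathbb{A}^1_{k^{\mathrm{sep}}}}=0$. In fact the entire target is trivial: applying the same theorem with base field $k^{\mathrm{sep}}$, whose separable closure is itself (so the kernel condition is vacuous), shows that every class in $\he^1(\mathbb{A}^1_{k^{\mathrm{sep}}},\overline{G})$ is the pullback of a class in $H^1(k^{\mathrm{sep}},\overline{G})$; and the latter group is trivial because the absolute Galois group of $k^{\mathrm{sep}}$ is trivial. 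Picking any lift $\xi_0\in H^1(k,\overline{G})$ of $\tilde\xi$ completes the proof.

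The principal obstacle lies in Step 1: one must convert the geometric input ``$H\times_K K_v$ has a reductive $\mathcal{O}_v$-model'' into the cohomological statement ``$\xi_v$ lies in the image of $\he^1(\mathcal{O}_v,\overline{G})\to H^1(K_v,\overline{G})$,'' i.e.\ show that the model can be taken to be an inner twist of the constant model $\overline{G}\times_k\mathcal{O}_v$. This is the inner-form analogue of the argument in \S 2.3 and ultimately rests on Nisnevich's form of the Grothendieck-Serre conjecture (Theorem \ref{T:Nisn}). Step 2 is then purely formal.
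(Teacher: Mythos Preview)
Your Step 2 contains a genuine error. Applying Theorem \ref{T:RagRam} over $k^{\mathrm{sep}}$ does \emph{not} show that $\he^1(\mathbb{A}^1_{k^{\mathrm{sep}}},\overline{G})$ is trivial. The theorem says that $H^1(k^{\mathrm{sep}},\overline{G})$ maps bijectively onto the kernel of $\he^1(\mathbb{A}^1_{k^{\mathrm{sep}}},\overline{G})\to \he^1(\mathbb{A}^1_{(k^{\mathrm{sep}})^{\mathrm{sep}}},\overline{G})$; since $(k^{\mathrm{sep}})^{\mathrm{sep}}=k^{\mathrm{sep}}$, this kernel is the kernel of the \emph{identity} map, i.e.\ the single trivial class --- not the whole set. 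The ``kernel condition'' is not vacuous; it is maximally restrictive, and the statement over $k^{\mathrm{sep}}$ is a tautology. Note also that your argument never uses the hypothesis $H\times_K k^{\mathrm{sep}}(t)\simeq G\times_k k^{\mathrm{sep}}(t)$, which should be a warning sign. The paper instead uses this hypothesis to see that $\gamma_1(\tilde\xi)$ lies in $\mathrm{Ker}\,\beta_{k^{\mathrm{sep}}}$, and then proves that $\beta_{k^{\mathrm{sep}}}\colon \he^1(\mathbb{A}^1_{k^{\mathrm{sep}}},\overline{G})\to H^1(k^{\mathrm{sep}}(t),\overline{G})$ has trivial kernel by identifying that kernel with an adelic double coset space $\mathrm{cl}(G,k^{\mathrm{sep}}(t),V^s)$ (Nisnevich) and showing the latter is trivial via strong approximation in the unipotent radicals and triviality of $\mathrm{Pic}\,\mathbb{A}^1_{k^{\mathrm{sep}}}$.

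Your Step 1 also has a real gap that you flag but do not close. From the reductive $\mathcal{O}_v$-model $\mathscr{H}$ one obtains a class $\xi'\in \he^1(\mathcal{O}_v,\overline{\mathscr{G}})$ with ${}_{\xi'}G_{K_v}\simeq H_{K_v}\simeq {}_{\xi_v}G_{K_v}$, but this only says $\xi'$ and $\xi_v$ have the same image in $H^1(K_v,\mathrm{Aut}(G))$; it does \emph{not} force $\xi'=\xi_v$ in $H^1(K_v,\overline{G})$, since that map is not injective in general. The paper resolves this by first replacing $G$ with its quasi-split inner form, so that $\mathrm{Aut}(G)=\overline{G}\rtimes\Sigma$ with $\Sigma$ a finite $k$-subgroup; the discrepancy between $\xi'$ and $\xi_v$ is then absorbed by an element $s\in\Sigma(k^{\mathrm{sep}})$, and since $s$ has constant coefficients the conjugated cocycle $s\xi'\sigma(s)^{-1}$ still comes from $\he^1(\mathcal{O}_v,\overline{\mathscr{G}})$. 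This is not the argument of \S2.3 (which takes ``$r_v(\xi)\in\mathrm{im}\,\varphi_v$'' essentially as given), and it does not reduce to Grothendieck--Serre alone.
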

\begin{proof}
The $k$-group $G$ is an inner twist of a quasi-split $k$-group $G_0$. In terms of proving the proposition, we can replace $G$ by $G_0$, and hence assume
that $G$ itself is quasi-split. Let $\overline{G}$ be the corresponding adjoint group. Since $G$ is quasi-split, there exists a $k$-defined finite subgroup $\Sigma \subset \mathrm{Aut}(G)$ such that $\mathrm{Aut}(G) = \overline{G} \rtimes \Sigma$. Let $v \in V$, and denote by $\mathcal{O}_v$ the valuation ring of the completion $K_v$. Set $\mathscr{G} = G \times_k \mathcal{O}_v$ and $\overline{\mathscr{G}} = \overline{G} \times_k \mathcal{O}_v$. We say that a cohomology class in $H^1(K , \overline{G})$ is {\it unramified} at $v \in V$ if its image  under the restriction map $H^1(K , \overline{G}) \to H^1(K_v , \overline{G})$ belongs to $\mathrm{Im}(\he^1(\mathcal{O}_v , \overline{\mathscr{G}}) \to H^1(K_v , \overline{G}))$.

Suppose now that $H$ is an inner twist of $G \times_k K$ that has good reduction at all $v \in V$. Fix  a cocycle $\xi \in Z^1(K , \overline{G})$ such that $H = {}_{\xi}(G \times_k K).$ We will first show that
\begin{equation}\label{E:etale}
[\xi] \in \mathrm{Im}\: \beta_k, \ \ \text{where} \ \  \beta_k \colon \he^1(\mathbb{A}^1_k , \overline{G}) \to H^1(K  , \overline{G})
\end{equation}
is the map induced by passage to the generic point.
According to Proposition \ref{P:Harder}, it is enough to show that $[\xi]$ is unramified at all $v \in V$. So, fix $v \in V$. By our assumption, there exists a reductive group $\mathcal{O}_v$-scheme $\mathscr{H}$ with generic fiber $H$. This scheme is necessarily an inner form of $\mathscr{G}$, so we can find a cocycle $\xi' \in Z_{\mathrm{\acute{e}t}}^1(\mathcal{O}_v , \overline{\mathscr{G}})$ such that $\mathscr{H} = {}_{\xi'} \mathscr{G}$. Passing to the generic point, we obtain $H \times_K K_v \simeq
{}_{\xi'} (G \times_k K_v)$. This means that for the image $\xi_v$ of $\xi$ under the restriction map $Z^1(K , \overline{G}) \to Z^1(K_v , \overline{G})$, the cohomology classes $[\xi_v]$ and $[\xi']$ have the same image in $H^1(K_v , \mathrm{Aut}(G))$. Thus, there exists $g \in \mathrm{Aut}(G)(K_v^{\mathrm{sep}})$ such that $\xi_v(\sigma) = g \xi'(\sigma) \sigma(g)^{-1}$ for all $\sigma \in \mathrm{Gal}(K_v^{\mathrm{sep}}/K_v)$. We can write $g = hs$ with $h \in \overline{G}(K_v^{\mathrm{sep}})$ and $s \in \Sigma(K_v^{\mathrm{sep}}) = \Sigma(k^{\mathrm{sep}})$, and then  define $\xi'' \in Z^1(K_v , \overline{G})$ by
$\xi''(\sigma) := s \xi'(\sigma) \sigma(s)^{-1}$. Clearly, $[\xi'']
= [\xi_v]$ in $H^1(K_v , \overline{G})$, and by construction $[\xi'']$ lies in the image of $\he^1(\mathcal{O}_v , \overline{\mathscr{G}}) \to H^1(K_v , \overline{G})$. Thus, $[\xi]$ is unramified at $v$.

Now, using (\ref{E:etale}), pick $[\zeta] \in \he^1(\mathbb{A}^1_k , \overline{G})$ such that $[\xi] = \beta_k([\zeta])$. To prove the proposition, it is enough to show that $[\xi]$ is the image of some $[\xi_0] \in H^1(k , \overline{G})$, as then one can take $H_0 = {}_{\xi_0} G$. As we discussed above, this would follow from Theorem \ref{T:RagRam} if we could show that $[\zeta] \in \mathrm{Ker}\left(\he^1(\mathbb{A}^1_k , \overline{G}) \to \he^1(\mathbb{A}^1_{k^{\mathrm{sep}}} , \overline{G}) \right)$. We have the following commutative diagram
$$
\begin{CD}
\he^1(\mathbb{A}^1_k , \overline{G}) @ > \beta_k >>  H^1(K , \overline{G}) \\
@ V \gamma_1 VV @ VV \gamma_2 V \\
\he^1(\mathbb{A}^1_{k^{\mathrm{sep}}} , \overline{G}) @ > \beta_{k^{\mathrm{sep}}} >> H^1(k^{\mathrm{sep}}(x) , \overline{G})
\end{CD}.
$$
The fact that $H \times_K k^{\mathrm{sep}}(x) \simeq G \times_K k^{\mathrm{sep}}(x)$ means that $[\xi] \in \mathrm{Ker}(H^1(K , \overline{G}) \to H^1(k^{\mathrm{sep}}(x) , \mathrm{Aut}(\overline{G}))$. Since $\overline{G}$ splits over $k^{\mathrm{sep}}$, the map $H^1(k^{\mathrm{sep}}(x) , \overline{G}) \to H^1(k^{\mathrm{sep}}(x) , \mathrm{Aut}(\overline{G}))$ has trivial kernel, and therfore we conclude that actually  $[\xi] \in \mathrm{Ker}\: \gamma_2$. So, it follows from the diagram that in order to show that $[\zeta] \in \mathrm{Ker}\: \gamma_1$, it suffices to prove that $\mathrm{Ker}\: \beta_{k^{\mathrm{sep}}}$ is trivial. According to \cite{Nisn2}, there is a bijection between $\mathrm{Ker}\: \beta_{k^{\mathrm{sep}}}$ and the  double coset space $$\mathrm{cl}(\overline{G}, k^{\mathrm{sep}}(x), V^s) := \overline{G}(\mathbf{A}^{\infty}(V^s)) \backslash \overline{G}(\mathbf{A}(V^s)) / \overline{G}(k^{\mathrm{sep}}(x))$$ where $V^s$ is the set of discrete valuations of $k^{\mathrm{sep}}(x)$ associated with the closed points of $\mathbb{A}^1_{k^{\mathrm{sep}}}$, with $\overline{G}(\mathbf{A}(V^s))$   and $\overline{G}(\mathbf{A}^{\infty}(V^s))$ denoting the group of rational ad\`eles of $\overline{G}$ associated with $V^s$ and its subgroup of integral ad\`eles (cf. \cite[\S 4]{CRR-Isr}). Fix a maximal $k$-defined torus $T$ of $\overline{G}$. Then $T$ splits over $k^{\mathrm{sep}}$ and a standard argument using strong approximation for the opposite maximal unipotent subgroups associated with $T$ (cf. {\it loc. cit.}) shows that every double coset in $\mathrm{cl}(\overline{G}, k^{\mathrm{sep}}(x), V^s)$ has a representative in $T(\mathbf{A}(V^s))$. On the other hand, for the multiplicative group $S = \mathbb{G}_m$, the double coset space $\mathrm{cl}(T, k^{\mathrm{sep}}(x), V^s)$ can be identified with the Picard group of $\mathbb{A}^1_{k^{\mathrm{sep}}}$, which is trivial. Since $T$ is $k^{\mathrm{sep}}$-split, we obtain that $\mathrm{cl}(T, k^{\mathrm{sep}}(x), V^s)$ reduces to a single element. Thus, $\mathrm{cl}(\overline{G}, k^{\mathrm{sep}}(x), V^s)$ also reduces to a single element, and the injectivity of $\beta_{k^{\mathrm{sep}}}$ follows. This completes the proof of the proposition.
\end{proof}

\vskip.1mm

It is now easy to complete the proof of Theorem \ref{T:transc}. Let $H \in \gen_K(G \times_k K)$, where $K = k(x)$.  Since $G \times_k K$ has good reduction at all $v \in V$, we see from Theorem \ref{T:GoodReduction1} that the same is true for $H$. Let $T$ be any maximal $k$-torus of $G$. Then $H$ has a maximal $K$-torus isomorphic to $T \times_k K$, which splits over $k^{\mathrm{sep}}(x)$. Thus, both $G \times_k K$ and $H$ split over $k^{\mathrm{sep}}(x)$, hence
$G \times_k k^{\mathrm{sep}}(x) \simeq H \times_K k^{\mathrm{sep}}(x)$. Since $H$ is an inner twist of $G \times_k K$, we can apply Proposition \ref{P:A1} to conclude that $H = H_0 \times_k K$ for some inner $k$-form $H_0$ of $G$. Let $v$ be the valuation of $K$ associated with $x$. Then the reductions of $G \times_k K$ and $H$ at $v$ coincide with $G$ and $H_0$, respectively. Consequently, Theorem \ref{T:GoodReduction1} yields $H_0 \in \gen_k(G)$. (In fact, applying this argument to all valuations, we see that $H_0 \times_k \ell \in \gen_{\ell}(G \times_k \ell)$ for {\it every} finite simple extension $\ell/k$.)

\section{Killing the genus by a purely transcendental extension}\label{S:killing}

\noindent As in the previous section, we will first explain the phenomenon of ``killing the genus" in the case of division algebras.

\vskip1mm

\noindent {\bf \ref{S:killing}.1. Killing the genus of a division algebra.} It turns out that Proposition \ref{P:k(x)} can be significantly strengthened as follows.
\begin{thm}\label{T:GenDA}
Let $D$ be a central division algebra of degree $n$ over a field $k$, and assume that $n$ is prime to $\mathrm{char}\: k$. Set $K = k(x_1, \ldots , x_{n-1})$.
Then $\gen(D \otimes_k K)$ consists of (the Brauer classes of) central division $K$-algebras of the form $D' \otimes_k K$, where $D'$ is a central division $k$-algebra of degree $n$ such that the classes $[D]$ and $[D']$ generate the same subgroup of $\Br(k)$.
\end{thm}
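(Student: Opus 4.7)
The proof proceeds in two main stages. The first reduces the form of an element of $\gen(D \otimes_k K)$ via iterated application of Proposition~\ref{P:k(x)}; the second strengthens the conclusion from ``$D' \in \gen(D)$'' to ``$\langle [D'] \rangle = \langle [D] \rangle$'' by combining Amitsur's generic splitting theorem with Saltman's work on function fields of Severi--Brauer varieties.

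For the first stage, one applies Proposition~\ref{P:k(x)} repeatedly along the tower
\[
k \ \subset \ k(x_1) \ \subset \ k(x_1, x_2) \ \subset \ \cdots \ \subset \ K = k(x_1, \ldots, x_{n-1}),
\]
viewing each step $k(x_1, \ldots, x_i) \subset k(x_1, \ldots, x_{i+1})$ as a one-variable extension of its immediate predecessor. At each stage, Proposition~\ref{P:k(x)} expresses an algebra in the corresponding genus as a base change from a central division algebra over the preceding field in the tower lying in the genus there. After $n-1$ iterations one obtains $\Delta \cong D' \otimes_k K$ for some central division $k$-algebra $D'$ of degree $n$ with $D' \in \gen(D)$.

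For the second stage, it suffices to prove that $[D'] \in \langle [D] \rangle$ in $\Br(k)$, since the reverse inclusion follows by the symmetric argument obtained by swapping the roles of $D$ and $D'$. Let $X_D$ denote the Severi--Brauer variety of $D$ over $k$. By Amitsur's theorem, $\ker(\Br(k) \to \Br(k(X_D))) = \langle [D] \rangle$, so it is enough to show that $D' \otimes_k k(X_D)$ is split. To establish this, one invokes Saltman's results on function fields of Severi--Brauer varieties, which produce a $k$-algebra embedding $K \hookrightarrow k(X_D)$ realizing $k(X_D)$ as a field extension of $K$ of degree $n$ (geometrically, a dominant rational $k$-map $X_D \dashrightarrow \mathbb{P}^{n-1}_k$ of degree $n$). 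Combined with the fact that $k(X_D)$ splits $D$ (Ch\^atelet's theorem), this embedding automatically upgrades to an embedding of $K$-algebras $k(X_D) \hookrightarrow D \otimes_k K$ in which $k(X_D)$ appears as a \emph{maximal} $K$-subfield: indeed $(D \otimes_k K) \otimes_K k(X_D) = D \otimes_k k(X_D)$ is split, so $k(X_D)$ is a degree $n$ splitting field of $D \otimes_k K$ and hence embeds as a maximal $K$-subfield. The hypothesis that $D' \otimes_k K \in \gen(D \otimes_k K)$ now ensures that $k(X_D)$ also embeds as a maximal $K$-subfield of $D' \otimes_k K$; therefore $D' \otimes_k k(X_D) = (D' \otimes_k K) \otimes_K k(X_D)$ is split, and Amitsur gives $[D'] \in \langle [D] \rangle$, as required.

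The principal obstacle is Saltman's realization of $k(X_D)$ as a degree $n$ extension of $K = k(x_1, \ldots, x_{n-1})$: the match between the transcendence degree $n-1$ of $K$ and the dimension $n-1$ of $X_D$ is essential, and without this geometric input one would only obtain $D' \in \gen(D)$ with no control over the Brauer class of $D'$. Granted this input, the remaining arguments --- Amitsur's theorem, the genus condition over $K$, and the symmetry between $D$ and $D'$ --- are routine.
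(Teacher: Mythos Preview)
Your proposal is correct and follows essentially the same route as the paper: iterate Proposition~\ref{P:k(x)} to reduce to $D'\otimes_k K$ with $D'\in\gen(D)$, then realize the function field of $SB(D)$ as a degree-$n$ extension of $K$, embed it as a maximal subfield of $D\otimes_k K$, transfer via the genus condition to $D'\otimes_k K$, and apply Amitsur's theorem (plus symmetry). The only cosmetic difference is that the paper records the degree-$n$ presentation of $k(SB(D))$ over $K$ as a separate lemma (Lemma~\ref{L:SB1}), proved via Matzri's birational description of $SB(D)$ rather than Saltman's, but the structure and logic of the argument are identical.
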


We already know from Proposition \ref{P:k(x)} and the subsequent remark that every element of $\gen(D \otimes_k K)$ is represented by a division algebra of the form $D' \otimes_k K$ for some central division algebra $D'$ over $k$ of degree $n$. In order to show that the classes $[D]$ and $[D']$ generate the same subgroup of $\Br(k)$, we will eventually use Amitsur's Theorem \cite{Amit}. However, its application requires some preparation. We refer the reader to \cite[Ch. 13]{Salt2} for basic facts about Severi-Brauer varieties and their function fields.
\begin{lemma}\label{L:SB1}
Let $D$ be a central division algebra of degree $n$ over a field $k$, and let $F_D$ be the function field of the corresponding Severi-Brauer variety $SB(D)$. Then there exist elements $x_1, \ldots , x_{n-1} \in F_D$ that are algebraically independent over $k$ and such that $F_D/k(x_1, \ldots , x_{n-1})$ is an extension of degree $n$. \end{lemma}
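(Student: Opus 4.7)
My plan is to exploit the existence of a maximal separable subfield $L \subseteq D$ of degree $n$ over $k$, which always exists for a central simple algebra of degree $n$ and automatically splits $D$. Base change therefore gives an $L$-isomorphism $SB(D) \times_k L \simeq \mathbb{P}^{n-1}_L$, and at the level of function fields
\[
 F_D \otimes_k L \ \simeq\  L(y_1,\ldots,y_{n-1}),
\]
where $y_1,\ldots,y_{n-1}$ are affine coordinates on $\mathbb{P}^{n-1}_L$. Since $SB(D)$ is geometrically integral (being a twisted form of $\mathbb{P}^{n-1}$), the tensor product is in fact a field, and we have the crucial equality $[L(\underline{y}):F_D] = [L:k] = n$. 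The strategy is then to construct $x_1,\ldots,x_{n-1}\in F_D$ by expanding the $y_i$'s in a suitable basis of this degree-$n$ extension and extracting $k$-rational coefficients.

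Concretely, fix a primitive element $\alpha\in L$ with $L=k(\alpha)$. Then $\{1,\alpha,\ldots,\alpha^{n-1}\}$ is simultaneously a $k$-basis of $L$ and (by linear disjointness of $L$ and $F_D$ over $k$ inside $L(\underline{y})$, which holds because the algebraic closure of $k$ in $F_D$ is $k$ itself, a consequence of geometric integrality of $SB(D)$) an $F_D$-basis of $L(\underline{y}) = F_D(\alpha)$. Writing
\[
 y_i \ =\  g_{i,0}+g_{i,1}\alpha+\cdots+g_{i,n-1}\alpha^{n-1}, \qquad g_{i,j}\in F_D,
\]
produces elements $g_{i,j}$ which, together with $\alpha$, generate $L(\underline{y})$ over $k$, so that the $g_{i,j}$ alone generate $F_D$ over $k$; since $\mathrm{tr.deg.}_k F_D = n-1$, a suitable choice of $n-1$ of them (possibly after a $k$-linear change of variables) will be algebraically independent. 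As a sanity check, for $n=2$ and a quaternion algebra $D=(L/k,\sigma,b)$ with $\alpha^2 = a\in k^{\times}$, the single coordinate $y = g_0+g_1\alpha$ satisfies the reduced-norm relation $g_0^2 - ag_1^2 = b$ (cutting out $SB(D)$ as the expected conic), and the choice $x_1 := g_0$ yields $[F_D:k(x_1)] = 2$ immediately, since $y$ satisfies $y^2-2x_1 y+b=0$ over $k(x_1)$.

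The degree $[F_D:k(\underline{x})]=n$ will be verified via the tower
\[
 k(\underline{x})\ \subseteq\ F_D\ \subseteq\ L(\underline{y}) \quad\text{alongside}\quad k(\underline{x})\ \subseteq\ L(\underline{x})\ \subseteq\ L(\underline{y}),
\]
where $[L(\underline{y}):F_D]=n$ and $[L(\underline{x}):k(\underline{x})]=n$ (the latter from linear disjointness of $L$ and the transcendentals $\underline{x}$ over $k$). Multiplicativity of degrees in both towers gives $[F_D:k(\underline{x})] = [L(\underline{y}):L(\underline{x})]$, so the lemma reduces to arranging that $[L(\underline{y}):L(\underline{x})] = n$. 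The main obstacle I expect is precisely this last step for general $D$, since the choice of coefficients $g_{i,j}$ to take as $x_i$ must be compatible with the remaining coefficients being algebraic of total degree exactly $n$ over $L(\underline{x})$. In the cyclic case $D=(L/k,\sigma,b)$ this is transparent, because $SB(D)$ then admits a canonical model as the norm hypersurface $\{N_{L/k}(g_0+g_1\alpha+\cdots+g_{n-1}\alpha^{n-1})=b\}$ in $\mathbb{A}^n_k$, and the first $n-1$ coordinates do the job (the remaining coordinate satisfies a relation of degree $n$). For the general (possibly non-crossed-product) case one passes to the Galois closure $\tilde L$, realizes $F_D$ as the Galois-invariant subfield of $\tilde L(z_1,\ldots,z_{n-1})=F_D\otimes_k\tilde L$ under the cocycle-twisted action of $\mathrm{Gal}(\tilde L/k)$, and uses Galois descent to transport the cyclic-case construction to the desired $x_i\in F_D$.
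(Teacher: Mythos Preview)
Your setup is sound: the identification $F_D \otimes_k L \simeq L(y_1,\ldots,y_{n-1})$ for a maximal separable subfield $L\subset D$, the equality $[L(\underline y):F_D]=n$, the fact that the coefficients $g_{i,j}\in F_D$ generate $F_D$ over $k$, and the tower reduction $[F_D:k(\underline x)]=[L(\underline y):L(\underline x)]$ are all correct. The gap is exactly where you flag it yourself: you never establish $[L(\underline y):L(\underline x)]=n$ for any concrete choice of $\underline x$. Choosing $n-1$ of the $g_{i,j}$ to form a transcendence basis guarantees only that $F_D/k(\underline x)$ is finite, not that the degree is $n$; nothing prevents the remaining $g_{i,j}$ from satisfying relations of larger total degree.

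Your proposed fix in the cyclic case rests on the assertion that for $D=(L/k,\sigma,b)$ the Severi--Brauer variety is birational over $k$ to the affine norm hypersurface $\{N_{L/k}(\sum g_i\alpha^i)=b\}$. This is correct for $n=2$ (the conic), but for $n\ge 3$ it is neither standard nor proved here; the norm hypersurface is a torsor under the norm-one torus $\mathrm{R}^{(1)}_{L/k}\mathbb{G}_m$, and identifying its function field with $F_D$ is essentially as hard as the lemma itself. The passage to the non-cyclic case via ``Galois descent from the cyclic construction'' is not a construction at all: if $L/k$ is not cyclic there is no cyclic model to descend, and working over the Galois closure $\tilde L$ replaces the target degree $n$ by $[\tilde L:k]$, which is the wrong number.

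The paper proceeds quite differently. It invokes a result of Matzri (building on Saltman): for a Zariski-dense set of $(n{+}1)$-dimensional subspaces $W\subset D$, the projective hypersurface $Z_W\subset\mathbb{P}(W)\cong\mathbb{P}^n$ cut out by the reduced norm $\nu_W$ is absolutely irreducible and birational to $SB(D)$ over $k$. Since $\nu_W$ is homogeneous of degree $n$, dehomogenizing with respect to two chosen coordinates exhibits $F_D$ as the extension of $k(x_1,\ldots,x_{n-1})$ obtained by adjoining a root of a degree-$n$ polynomial whose leading coefficient is $\mathrm{Nrd}_{D/k}(w_i)\ne 0$. This gives the degree bound for free, at the cost of importing the Matzri/Saltman birationality statement. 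If you want to salvage your approach, the honest route is to prove directly that $SB(D)$ is birational to a degree-$n$ hypersurface in $\mathbb{P}^n$, which is precisely what that external input supplies.
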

\begin{proof}
Let $W \subset D$ be a $k$-subspace of dimension $m$. If we fix a $k$-basis $w_1, \ldots , w_m \in W$, then there exists a homogeneous polynomial $\nu_W \in k[t_1, \ldots , t_m]$ such that
$$
\nu_W(\alpha_1, \ldots , \alpha_m) = \mathrm{Nrd}_{D/K}(\alpha_1 w_1 + \cdots + \alpha_m w_m) \ \ \text{for all} \ \ \alpha_1, \ldots , \alpha_m \in k.
$$
Set $Z_W$ to be the subvariety of the projective space $\mathbb{P}(W)$ defined by the equation $\nu_W = 0$. It was shown by E.~Matzri \cite{Matzri} that for a Zariski-dense set of subspaces $W$ in the Grassmannian $\mathrm{Gr}(n+1 , D)$, the variety $Z_W$ is absolutely irreducible and birationally $k$-isomorphic to $SB(D)$. For the purpose of proving our lemma, we pick one such $(n+1)$-dimensional subspace $W \subset D$ and fix a basis $w_1, \ldots , w_{n+1}$. Pick two distinct indices $i,j \in \{1, \ldots , n+1 \}$, set
$$
p(T) = \nu_W(t_1, \ldots , t_{i-1}, T, t_{i+1}, \ldots , t_{j-1}, 1, t_{j+1}, \ldots , t_{n+1}),
$$
and then re-denote the variables $t_1, \ldots , t_{i-1}, t_{i+1}, \ldots , t_{j-1}, t_{j+1}, \ldots , t_{n+1}$ as $x_1, \ldots , x_{n-1}$. Then $F_D$ is isomorphic to the extension of $k(x_1, \ldots , x_{n-1})$ obtained by adjoining a root of $p(T)$. On the other hand, $p(T)$ is irreducible over $k(x_1, \ldots , x_{n-1})$ and its leading term is $\mathrm{Nrd}_{D/K}(w_i)T^n$, demonstrating that $\deg p = n$ and completing the argument.
\end{proof}

\vskip2mm
\noindent {\it Proof of Theorem \ref{T:GenDA}.} As in the lemma, we denote by $F_D$ the function field of the Severi-Brauer variety $SB(D)$, and pick algebraically independent elements $x_1, \ldots , x_{n-1} \in F_D$ so that $F_D$ is a degree $n$ extension of $K = k(x_1, \ldots , x_{n-1})$.  According to Amitsur's theorem \cite{Amit}, the kernel of the base change map
$\Br(k) \to \Br(F_D)$ coincides with the cyclic subgroup $\langle [D] \rangle \subset \Br(k)$. In particular,
$$
D \otimes_k F_D \simeq M_n(F_D) \simeq (D \otimes_k K) \otimes_K F_D.
$$
Since $[F_D : K] = n$, the latter means that $F_D$ is $K$-isomorphic to a maximal subfield of $D \otimes_k K$. By our assumption, $D \otimes_k K$ and $D'
\otimes_k K$ have the same maximal subfields, so $F_D$ admits a $K$-embedding as a maximal subfield into $D' \otimes_k K$. It follows that
$$
D' \otimes_k F_D \simeq (D' \otimes_k K) \otimes_K F_D \simeq M_n(F_D).
$$
Then  Amitsur's theorem yields the inclusion $[D'] \in \langle [D] \rangle$. A symmetric argument shows that $[D] \in \langle [D'] \rangle$, which completes
the argument. \hfill $\Box$

\vskip3mm

Thus, no matter what the genus $\gen(D)$ is originally, after a suitable purely transcendental base change $K/k$, the genus $\gen(D \otimes_k K)$ becomes finite, and in fact minimal possible. We call this phenomenon ``killing the genus by a purely transcendental extension." Later in this section, we will prove Theorems \ref{T:Kill1} and \ref{T:Kill2} that reveal a similar phenomenon for the norm one groups $\mathrm{SL}_{1 , A}$ of central simple algebras $A$ and groups of type $\textsf{G}_2$, after which we will discuss possible generalizations. But first, we would like to continue our discussion of this phenomenon in the context of division algebras. As an immediate consequence of Theorem \ref{T:GenDA}, we have
\begin{cor}\label{C:Quat}
Let $D$ be a quaternion division algebra over a field $k$ of characteristic $\neq 2$, and let $K = k(x)$. Then $$\gen(D \otimes_k K) = \{ [D \otimes_k K] \}.$$
\end{cor}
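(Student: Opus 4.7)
The plan is to derive Corollary \ref{C:Quat} as essentially an immediate specialization of Theorem \ref{T:GenDA}. Setting $n = 2$ in that theorem, we have $K = k(x_1) = k(x)$, and since $\mathrm{char}\: k \neq 2$ the coprimality hypothesis is satisfied. Theorem \ref{T:GenDA} then asserts that every element of $\gen(D \otimes_k K)$ is of the form $[D' \otimes_k K]$ for some central division $k$-algebra $D'$ of degree $2$ such that $\langle [D'] \rangle = \langle [D] \rangle$ in $\Br(k)$.

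The key observation is that $D$ is a quaternion \emph{division} algebra, so its Brauer class $[D]$ has exponent exactly $2$; consequently $\langle [D] \rangle = \{0, [D]\}$ has order $2$. Since $D'$ is a division algebra of degree $2$, its class $[D']$ is nonzero. The condition $\langle [D'] \rangle = \langle [D] \rangle$ therefore forces $[D'] = [D]$ in $\Br(k)$. Two central simple $k$-algebras of the same degree that are Brauer-equivalent are isomorphic, so $D' \simeq D$, and hence $D' \otimes_k K \simeq D \otimes_k K$. Combining these observations yields $\gen(D \otimes_k K) = \{[D \otimes_k K]\}$, as required. There is no real obstacle here; the entire corollary is a bookkeeping exercise once Theorem \ref{T:GenDA} is in hand, with the only substantive input being the standard fact that quaternion algebras have exponent dividing $2$ in the Brauer group.
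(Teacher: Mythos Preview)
Your proposal is correct and matches the paper's approach exactly: the paper introduces Corollary \ref{C:Quat} with the phrase ``As an immediate consequence of Theorem \ref{T:GenDA}, we have'', and your argument simply unpacks this immediate consequence by specializing to $n=2$ and using that a quaternion division algebra has exponent $2$.
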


The above proof of Theorem \ref{T:GenDA} for quaternions yields the following statement:

\vskip2mm

\noindent $(\bullet)$ \parbox[t]{16cm}{\it Let $D_1$ and $D_2$ be two central quaternion division algebras over a field $k$ of characteristic $\neq 2$, and
let $K = k(x)$. If $D_1 \otimes_k K$ and $D_2 \otimes_k K$ are in the same genus, then $D_1 \simeq D_2$ over $k$.}

\vskip2mm

\noindent It turns out that $(\bullet)$ remains valid if the field of rational functions $K = k(x)$  is replaced by the function field of any
absolutely irreducible curve over $k$ having a $k$-rational point.
\begin{prop}\label{P:curve}
Let $D_1$ and $D_2$ be two central quaternion division algebras over a field $k$ of characteristic $\neq 2$, and let $C$ be a smooth geometrically integral curve over $k$ with $C(k) \neq \emptyset$. If for the function field $K = k(C)$, the algebras $D_1 \otimes_k K$ and $D_2 \otimes_k K$ are in the same genus (as $K$-algebras), then $D_1 \simeq D_2$ over $k$.
\end{prop}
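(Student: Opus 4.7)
The plan is to reduce the statement to the case $K = k(x)$ already handled by the statement $(\bullet)$ preceding the proposition, using the $k$-rational point on $C$ to construct an auxiliary purely transcendental subfield of $K$ of odd index.

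First, we would use Riemann--Roch to produce a non-constant $k$-morphism $\pi\colon C \to \mathbb{P}^1_k$ of odd degree. Letting $g$ denote the genus of the smooth projective completion of $C$ and $P \in C(k)$ a rational point, the line bundle $\cO((2g+1)P)$ is very ample; a generic $k$-rational projection of the resulting embedding $C \hookrightarrow \mathbb{P}^{g+1}_k$ onto $\mathbb{P}^1_k$ yields a finite $k$-morphism of degree $2g+1$. (In the case $g=0$, we have $C \simeq \mathbb{P}^1_k$, so $K = k(x)$ and $(\bullet)$ applies directly.) This produces an inclusion $k(x) = k(\mathbb{P}^1) \hookrightarrow K = k(C)$ with $[K : k(x)] = 2g+1$ odd.

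Next, we would show that $D_1 \otimes_k k(x)$ and $D_2 \otimes_k k(x)$ lie in the same genus over $k(x)$, at which point $(\bullet)$ yields $D_1 \simeq D_2$ over $k$. For any quadratic extension $L = k(x)(\sqrt{f})$ splitting $D_1$, the oddness of $[K : k(x)]$ prevents $f$ from becoming a square in $K$ (else $2 = [k(x)(\sqrt{f}):k(x)]$ would divide $2g+1$), so $K(\sqrt{f}) = L \otimes_{k(x)} K$ is a genuine quadratic extension of $K$ that splits $D_1 \otimes_k K$. By the genus hypothesis over $K$, it also splits $D_2 \otimes_k K$, so $[D_2 \otimes_k K(\sqrt{f})] = 0$ in $\Br(K(\sqrt{f}))$. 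The extension $K(\sqrt{f})/k(x)(\sqrt{f})$ has degree $[K : k(x)] = 2g+1$, and the restriction--corestriction identity $\Cor \circ \Res = (2g+1)\cdot \id$ shows that the base-change map $\Br(k(x)(\sqrt{f})) \to \Br(K(\sqrt{f}))$ is injective on $2$-torsion. Since $[D_2 \otimes_k k(x)(\sqrt{f})]$ is $2$-torsion and dies upstairs, it vanishes already in $\Br(k(x)(\sqrt{f}))$, i.e., $L$ splits $D_2$. By symmetry, $D_1 \otimes_k k(x)$ and $D_2 \otimes_k k(x)$ share maximal subfields over $k(x)$, and $(\bullet)$ delivers $D_1 \simeq D_2$.

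The main obstacle is the geometric step of producing an odd-degree $k$-morphism $C \to \mathbb{P}^1_k$: over an infinite base field this is a standard generic-projection argument, while over a finite field one may need to enlarge the linear system (using $(2g+1+2m)P$ for suitable $m$) to ensure the required projection center is $k$-rational. With this geometric input secured, the rest of the argument rests on the classical principle that odd-degree finite extensions are invisible to $2$-torsion in the Brauer group, allowing the genus datum over $K$ to be transferred down to $k(x)$ where $(\bullet)$ closes the proof.
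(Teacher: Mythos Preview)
Your proposal is correct and follows the same overall strategy as the paper: use the $k$-rational point and Riemann--Roch to present $K$ as an odd-degree extension of a rational function field $k(x)$, then exploit that odd-degree extensions are invisible to $2$-torsion in the Brauer group.

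Two small differences are worth noting. First, the geometric step you flag as ``the main obstacle'' is handled more simply in the paper: rather than invoking very-ampleness and a generic projection (which, as you observe, is delicate over finite fields), the paper just takes any $f \in L(nP) \setminus L((n-1)P)$ for a sufficiently large odd $n$; such an $f$ exists by Riemann--Roch as soon as $n > 2g-1$, and the associated morphism $C \to \mathbb{P}^1_k$ has degree exactly $n$ since the pole divisor of $f$ is $nP$. This works uniformly over all base fields. Second, rather than transferring the full genus condition down to $k(x)$ and invoking $(\bullet)$, the paper works directly with the single quadratic extension $F_{D_1}/k(x)$ given by the Severi--Brauer conic of $D_1$: one pushes $F_{D_1}$ up to a maximal subfield of $D_1 \otimes_k K$, uses the genus hypothesis to embed it in $D_2 \otimes_k K$, and then the odd-degree descent (your restriction--corestriction argument) plus Amitsur's theorem finish. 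Your route through $(\bullet)$ is a clean repackaging of the same idea; the paper's route is marginally more direct but requires re-running the Severi--Brauer argument rather than citing $(\bullet)$ as a black box.
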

(We note that since $C(k) \neq \emptyset$, the algebras $D_1 \otimes_k K$ and $D_2 \otimes_k K$ are division algebras over $K$. Indeed, let $P \in C(k)$.
Since $P$ is nonsingular, we can consider the corresponding valuation $v$ of $K$, and then the completion $K_v$ can be identified with the field $k(\!(t)\!)$ of
formal Laurent series. Then the algebras $D_1 \otimes_k K_v$ and $D_2 \otimes_k K_v$ are obviously division algebras, so the algebras $D_1 \otimes_k K$ and $D_2 \otimes_k K$ are also division algebras.)
\begin{proof}
Without loss of generality, we may assume that $C$ is projective. Fix a rational point $P \in C(k)$, and consider the divisor $\Delta_n = nP$ on $C$ for $n > 0$. It follows from the Riemann-Roch Theorem (see, for example, \cite{Serre-AG} for the statement and  relevant notations) that the dimension $\ell(\Delta_n) = \dim_k L(\Delta_n)$ of the space $L(\Delta_n)$ associated with $\Delta_n$ for $n \gg 0$ is given by
$$
\ell(\Delta_n) = n + 1 - g,
$$
where $g$ is the genus of $C$. Thus, we can find an {\it odd} $n \geq 1$ prime to $\mathrm{char}\: k$  such that there exists $f \in L(\Delta_n) \setminus L(\Delta_{n-1})$. Then the divisor of poles of the principal divisor $(f)$ is $\Delta_n$, hence has degree precisely $n$. Thinking of $f$ as a morphism $C \to \mathbb{P}^1_k$, we conclude that the degree of this map is $n$, which means that $K$ is a degree $n$ extension of the field of rational functions $k(x)$. On the other hand, the function field $F_{D_1}$ of the Severi-Brauer variety of $D_1$ can be viewed as a quadratic extension of $k(x)$. (More precisely, $K$ and $F_{D_1}$ can be embedded into an algebraic closure of the field $k(x)$ so that the images of these embeddings, for which we keep the same notations, have degrees $n$ and $2$ over $k(x)$, respectively.) We have
$$
D_1 \otimes_k F_{D_1} \simeq M_2(F_{D_1}) \simeq (D_1 \otimes_k k(x)) \otimes_{k(x)} F_{D_1},
$$
implying that $F_{D_1}$ admits a $k(x)$-embedding into $D_1 \otimes_k k(x)$ as a maximal subfield (just as in the proof of Theorem \ref{T:GenDA}). Then the composition $F_{D_1}K \simeq F_{D_1} \otimes_{k(x)} K$ admits a $K$-embedding into $(D_1 \otimes_{k} k(x)) \otimes_{k(x)} K \simeq D_1 \otimes_k K$ as a maximal subfield. By our assumption, the algebras $D_1 \otimes_k K$ and $D_2 \otimes_k K$ are in the same genus, so there is a $K$-embedding $F_{D_1}K \hookrightarrow D_2 \otimes_{k(x)} K$. It follows that
$$
(D_2 \otimes_k F_{D_1}) \otimes_{F_{D_1}} F_{D_1}K \simeq  D_2 \otimes_k F_{D_1}K  \simeq (D_2 \otimes_k K) \otimes_K F_{D_1}K \simeq M_2(F_{D_1}K).
$$
Thus, the degree $n$ extension $F_{D_1}K/F_{D_1}$ splits the algebra $D_2 \otimes_k F_{D_1}$. Since $n$ is odd, we conclude that $D_2 \otimes_k F_{D_1} \simeq
M_2(F_{D_1})$. By Amitsur's theorem, this means that the quaternion division algebras $D_1$ and $D_2$ are isomorphic.
\end{proof}

\vskip2mm

\addtocounter{thm}{2}

\noindent {\bf Remark 8.5.} The assumption in Proposition \ref{P:curve} that $C$ has a $k$-rational point cannot be omitted. Indeed, let $D_1$ and $D_2$ be two nonisomorphic quaternion division algebras having a common subfield (e.g., one can take $D_1 = (-1 , 3)$ and $D_2 = (-1 , 7)$ over $k = \Q$). Then $D_1 \otimes_K D_2 \simeq M_2(D)$ for the quaternion division algebra $D = (-1 , 21)$. Let $C$ be the Severi-Brauer variety for $D$ (which is a conic without a rational point), and $K = k(C)$. Since $K$ splits $D$, the $K$-algebras $D_1 \otimes_k K$ and $D_2 \otimes_k K$ are isomorphic, hence belong to the same genus. However, by construction, $D_1$ and $D_2$ are not isomorphic as $k$-algebras.

\vskip2mm

The result established in Corollary \ref{C:Quat} prompts the following

\vskip2mm

\noindent {\bf Question 8.6.} {\it Does there exist a central quaternion division algebra $D$ over the field of rational functions $K = k(x)$ over some
field $k$ having \emph{nontrivial} genus $\gen(D)$?}

\vskip2mm

\noindent {\bf 8.2. Proof of Theorem \ref{T:Kill1}.} While the argument involves some of the same considerations as the proof of Theorem \ref{T:GenDA}, it also contains several new elements. In order to apply Amitsur's theorem, we need to match certain maximal subfields of the algebras obtained by a purely transcendental base change, and not just the corresponding maximal tori in the associated norm one groups. It is well-known that given a central division algebra $D$ of degree $n$ over a field $k$, every maximal $k$-torus $T$ of $G = \mathrm{SL}_{1 , D}$ is the norm  torus $\mathrm{R}^{(1)}_{F/k}(\mathbb{G}_m)$ for some maximal separable subfield $F \subset D$. The problem is that in general, given two separable degree $n$ extensions $F_1$ and $F_2$ of $K$, the fact that the corresponding norm tori are $K$-isomorphic, may {\it not} imply that the extensions are isomorphic.\footnote{To construct such an example, it is enough to find a finite group $G$ having two nonconjugate subgroups $H_1$ and $H_2$ such that the permutation lattices
$\Z[G/H_1]$ and $\Z[G/H_2]$ are isomorphic as $\Z[G]$-modules. We learned from correspondence with R.~Guralnick and D.~Saltman that the following example of this situation was found by L.~Scott \cite{Scott}: $G = \mathrm{PSL}_2(F_{29})$ and $H_1 , H_2$ are nonconjugate
subgroups isomorphic to $A_5$.} However, as the following lemma shows, this complication does not arise in the case of generic tori.
We recall that a field extension $F/k$ of degree $n$ is called {\it generic} if it is separable and for its normal closure $\tilde{F}$, the Galois group $\Ga(\tilde{F}/k)$ is isomorphic to the symmetric group $S_n$.
\begin{lemma}\label{L:Iso}
Let $F_1$ and $F_2$ be two degree $n$ extensions of a field $k$, and let $T_i = \mathrm{R}^{(1)}_{F_1/k}(\mathbb{G}_m)$ $(i = 1, 2)$ be the corresponding norm  tori. If at least one of the extensions $F_i$ is generic over $k$ and $T_1 \simeq T_2$ as $k$-tori, then $F_1 \simeq F_2$ over $k$.
\end{lemma}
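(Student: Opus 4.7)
My plan is to reduce the assertion, via Galois theory and the structure of character lattices, to a group-theoretic statement about subgroups of the symmetric group $S_n$. Since the minimal splitting field of a $k$-torus is intrinsic to the torus (being the fixed field of the kernel of the action of $\mathrm{Gal}(k^{\mathrm{sep}}/k)$ on the character lattice), the hypothesis $T_1 \simeq T_2$ forces the Galois closures to coincide: $\tilde F_1 = \tilde F_2 =: M$, once one verifies that $\mathrm{Gal}(\tilde F_i/k)$ acts faithfully on $X(T_i) = \Z[G_i/H_i]/\Z \cdot N_i$, where $G_i = \mathrm{Gal}(\tilde F_i/k)$, $H_i = \mathrm{Gal}(\tilde F_i/F_i)$, and $N_i = \sum_{gH_i} gH_i$ is the norm element. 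This is a short direct check: if some $g \in G_i$ were to act trivially on the quotient, then for every coset $xH_i$ one would have $e_{g \cdot xH_i} - e_{xH_i} = c \cdot N_i$ for some $c \in \Z$; the left-hand side has at most two nonzero coordinates while $N_i$ has $n$, so $c = 0$ and $g$ lies in the normal core of $H_i$, which is trivial since $\tilde F_i$ is the Galois closure of $F_i$. Setting $G = \mathrm{Gal}(M/k)$ and $H_i = \mathrm{Gal}(M/F_i)$, the genericity of $F_1$ gives $G \simeq S_n$ with $H_1$ a point stabilizer isomorphic to $S_{n-1}$; by the Galois correspondence, $F_1 \simeq F_2$ over $k$ will follow from the $G$-conjugacy of $H_1$ and $H_2$.

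The isomorphism $X(T_1) \simeq X(T_2)$ of $\Z G$-modules, after tensoring with $\Q$ and using the splitting $\Q[G/H_i] \simeq \Q \oplus (X(T_i) \otimes \Q)$ arising from semisimplicity of $\Q G$, yields $\Q[G/H_1] \simeq \Q[G/H_2]$ as $\Q G$-modules. Equivalently, the permutation characters $\chi_i$ of $G$ acting on $G/H_i$ are equal. Since $\chi_1$ is the standard permutation character of $S_n$, which attains its maximum value $n$ only at the identity, the normal core of $H_2$ in $G$ must be trivial (otherwise any nontrivial $g$ in it would satisfy $\chi_2(g) = n$, contradicting $\chi_1 = \chi_2$). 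Consequently the representation $G \to \mathrm{Sym}(G/H_2) \simeq S_n$ is an isomorphism; fixing a bijection $G/H_2 \leftrightarrow \{1, \dots, n\}$ realizes this map as an automorphism $\alpha$ of $S_n$, for which $\chi_2(g) = \chi_1(\alpha(g))$. The character equality thus asserts that $\alpha$ preserves the fixed-point count function on $S_n$.

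For $n \neq 6$ one has $\mathrm{Out}(S_n) = 1$, so $\alpha$ is automatically inner, say $\alpha = \mathrm{Int}(c)$. For $n = 6$, the nontrivial class in $\mathrm{Out}(S_6)$ swaps the conjugacy classes of transpositions and of triple transpositions (which have fixed-point counts $4$ and $0$, respectively), so it fails to preserve fixed-point counts; hence $\alpha$ must still be inner. In either case, $H_2$ is the $\alpha$-preimage of a point stabilizer of the standard action, which becomes a $G$-conjugate of $H_1$ after conjugation by $c$, yielding the desired conjugacy. The main subtle point I anticipate is precisely this $n = 6$ exception: concretely, the exotic embedding of $S_5$ in $S_6$ contains no transpositions of $S_6$, while a standard point stabilizer contains $\binom{n-1}{2}$ of them, so the two permutation characters already disagree at a transposition, ruling out the outer possibility.
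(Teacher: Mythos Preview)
Your proof is correct and follows essentially the same route as the paper's: both reduce to showing that the subgroups $H_1,H_2\subset G\simeq S_n$ are conjugate, handle $n\neq 6$ via $\mathrm{Out}(S_n)=1$, and for $n=6$ use the character of the permutation module $\Q[G/H_i]$ (equivalently, the standard representation) to exclude the outer possibility. The only cosmetic differences are that you supply an explicit check that $\mathrm{Gal}(\tilde F_i/k)$ acts faithfully on $X(T_i)$ (the paper simply cites this as well-known), and for $n=6$ you distinguish the two permutation characters via the transposition/triple-transposition swap, whereas the paper uses the $3$-cycle/double-$3$-cycle swap; both work equally well.
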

\begin{proof}
It is well-known that the minimal splitting field of $T_i$ is the normal closure of $F_i$ over $K$, which we will denote by $\tilde{F}_i$. Since $T_1$ and $T_2$ are $k$-isomorphic, we have $\tilde{F}_1 = \tilde{F}_2 =: \tilde{F}$, and then by our assumption, the Galois group $G = \Ga(\tilde{F}/K)$ is isomorphic to $S_n$. Let $H_i = \mathrm{Gal}(\tilde{F}/F_i)$. To prove that $F_1 \simeq F_2$, it is enough to show that the subgroups $H_1$ and $H_2$ are conjugate in $G$. When $n \neq 6$, this follows from the elementary fact in group theory that, in this case, $S_n$ has only one conjugacy class of subgroups of index $n$; in other words, every subgroup of index $n$ is the stabilizer of some point --- see \cite[Kapitel II, Satz 5.5]{Huppert}. 

For $n = 6$, it is well-known that the group of outer automorphisms of $G$ has order 2. Furthermore, if $\sigma$ is an outer automorphism of $G$ and $H \subset G$ is a fixed subgroup of index $n$, then it follows from \cite[Kapitel II, Satz 5.5]{Huppert}
that any subgroup of index $n$ is conjugate to either $H$ or $\sigma(H)$. To prove that $H_1$ and $H_2$ are conjugate in this case as well, we observe that  a $k$-isomorphism between $T_1$ and $T_2$  yields an isomorphism of the character groups $X(T_1) \simeq X(T_2)$ as $\Z[G]$-modules, hence an isomorphism of the vector spaces $W_1 = X(T_1) \otimes_{\Z} \Q$ and $W_2 = X(T_2) \otimes_{\Z} \Q$ as $\Q[G]$-modules. The representation of $G$ afforded by $W_i$ can be described as the permutation representation on $\Q[G/H_i]$ ``minus" the trivial representation (we will refer to this representation as the {\it standard representation} associated with the subgroup $H_i$). So, to complete the argument, it remains to show that the standard representations $\rho \colon G \to \mathrm{GL}(W)$ and $\rho' \colon G \to \mathrm{GL}(W')$ associated with $H$ and $\sigma(H)$ are {\it not} equivalent. Since $\rho' = \rho \circ \sigma$, this follows from the explicit description of the character of the standard representation and the fact that $\sigma$ switches the conjugacy classes of 3-cycles and the products of two disjoint 3-cycles.
\end{proof}

Next, we need the following strengthening of Lemma \ref{L:SB1} that includes the genericity condition.
\begin{prop}\label{P:SB2}
Let $A$ be a central simple algebra of degree $n$ over a finitely generated field $k$, and let $F_A$ be the function field of the Severi-Brauer variety $SB(A)$. Then there exist elements $x_1, \ldots , x_{n-1} \in F_A$ that are algebraically independent over $k$ and such that $F_A/k(x_1, \ldots , x_{n-1})$ is a \emph{generic} field extension of degree $n$.
\end{prop}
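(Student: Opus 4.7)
The plan is to refine the construction used in the proof of Lemma \ref{L:SB1} so that, in addition to producing the degree-$n$ polynomial $p(T)$ that realizes $F_A$ as an extension of $k(x_1, \ldots, x_{n-1})$, one also controls the Galois group of the splitting field of $p(T)$. Concretely, I will show that for a suitable choice of $(n+1)$-dimensional subspace $W \subset A$ inside Matzri's Zariski-dense set and a suitable ordered basis of $W$, the polynomial $p(T) = \nu_W(t_1, \ldots, T, \ldots, 1, \ldots, t_{n+1})$ is generic, i.e.\ has Galois group equal to the full symmetric group $S_n$ over $k(x_1, \ldots, x_{n-1})$.

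The first step is to treat the construction parametrically. Fix a $k$-basis $e_1, \ldots, e_{n^2}$ of $A$, introduce indeterminates $a_{l,\alpha}$ (with $l = 1, \ldots, n+1$ and $\alpha = 1, \ldots, n^2$) standing for the coordinates of a putative basis $w_l = \sum_\alpha a_{l,\alpha} e_\alpha$, and form
\[
\tilde{p}(T; a, x) \; = \; \mathrm{Nrd}\bigl(T w_i + w_j + \sum_{l \neq i, j} x_l w_l\bigr) \; \in \; k[a, x][T].
\]
I claim the Galois group of $\tilde{p}$ over $k(a, x)$ is $S_n$. To verify this, pass to a finite Galois splitting field $L/k$ of $A$; under the identification $A \otimes_k L \cong M_n(L)$ the reduced norm becomes the determinant, and a direct computation gives $\tilde{p}(T; a, x) = \det(M_i) \cdot \chi_{-N}(T)$, where $M_l$ is the matrix of $w_l$, $\chi$ denotes the characteristic polynomial, and $N = M_i^{-1}(M_j + \sum_{l \neq i, j} x_l M_l)$. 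Specializing $M_i \mapsto I$ and $M_l \mapsto 0$ for $l \neq i, j$ sends $N$ to $M_j$, whose entries are the algebraically independent $a_{j,\alpha}$'s; so the specialized polynomial becomes the characteristic polynomial of a generic $n \times n$ matrix, which classically has Galois group $S_n$ over $L(a_{j,\alpha})$. Since Galois groups cannot grow under specialization, the Galois group of $\chi_{-N}(T)$ over $L(a, x)$ already contains $S_n$, and as a permutation group on $n$ roots it must equal $S_n$; since $L/k$ is algebraic, the Galois group of $\tilde{p}$ over $k(a, x)$ is also $S_n$.

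The second step is to specialize the $a$-variables to values in $k$ via Hilbert's irreducibility theorem. Because $k$ is finitely generated over its prime field it is Hilbertian; the relative (``multivariable'') form of Hilbert's theorem then yields a Hilbert subset of $k^{(n+1)n^2}$ consisting of tuples $c = (c_{l,\alpha})$ for which the specialized polynomial $p(T) := \tilde{p}(T; c, x) \in K[T]$, with $K = k(x_1, \ldots, x_{n-1})$, is irreducible of degree $n$ with Galois group $S_n$ over $K$. On the other hand, Matzri's theorem furnishes a Zariski-open, Zariski-dense subset of $k^{(n+1)n^2}$ consisting of tuples whose span $W$ has $Z_W$ absolutely irreducible and birational to $SB(A)$. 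A Hilbert subset is Zariski-dense in the ambient affine space, so the two subsets intersect; any tuple $c$ in the intersection produces explicit elements $x_1, \ldots, x_{n-1} \in F_A$ with $F_A \cong K[T]/(p(T))$ a generic degree-$n$ extension of $K$.

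The main technical point will be invoking Hilbert irreducibility in its relative form, to specialize only the $a_{l,\alpha}$'s while keeping the $x_l$'s as indeterminates; this is a standard, but worth stating carefully, consequence of the Hilbertianity of $k$. The parametric Galois-group computation of step one, although it requires a bit of unpacking, reduces cleanly to the classical fact about the characteristic polynomial of a generic matrix via the explicit specialization $(M_i, \{M_l\}_{l \neq i,j}) \mapsto (I, 0)$ that turns $N$ back into a generic matrix.
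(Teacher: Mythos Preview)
Your approach is sound and genuinely different from the paper's. The paper does not argue parametrically: it first fixes a maximal subfield $P \subset A$ that is itself a generic degree-$n$ extension of $k$ (this exists by Theorem~\ref{T:Exist} since $k$ is finitely generated), then uses Saltman's result \cite[4.2(c)]{Salt1}, \cite[13.28]{Salt2} to choose $a \in A$ so that $W := P + ka$ has $Z_W$ birational to $SB(A)$. With the basis $w_1 = 1, \ldots, w_n$ of $P$ and $w_{n+1} = a$, the polynomial $p_W(T)$ reduces modulo the prime $(x_{n-1})$ of $k[x_1,\ldots,x_{n-1}]$ to the norm-form polynomial $f_{P/k}(T)$ attached to $P/k$, whose splitting field has Galois group $S_n$ precisely because $P/k$ was chosen generic; a decomposition-group argument then forces the Galois group of $p_W$ to be $S_n$ as well. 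So the paper specializes an $x$-variable to land on a pre-selected generic maximal subfield, whereas you specialize the $a$-variables to land on the characteristic polynomial of a generic matrix and then invoke relative Hilbert irreducibility to descend. The paper's route is more explicit and avoids the relative HIT step entirely (Hilbertianity of $k$ enters only through the existence of $P$); your route is conceptually clean but leans on two points requiring care. You already flag the relative HIT; the other is your assertion that Matzri's locus is Zariski-\emph{open}, while the paper, citing \cite{Matzri}, records it only as Zariski-\emph{dense} --- and two merely Zariski-dense subsets of $k^N$ need not meet. If Matzri's set is constructible (as one would expect), density furnishes a nonempty open subset and your argument closes; otherwise one can reroute through Saltman's specific subspaces $P + ka$ as the paper does.
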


Before giving the proof, we first discuss the following auxiliary construction. Let $F/k$ be a separable field extension of degree $n$. Fix a basis $\omega_1 = 1, \omega_2, \ldots , \omega_n$ of $F$ over $k$, and let $t_1, \ldots , t_n$ be variables. Set
\begin{equation}\label{E:Polyn1}
\varphi_{F/k}(t_1, \ldots , t_n) = \prod_{\sigma} (\sigma(\omega_1)t_1 + \cdots + \sigma(\omega_n)t_n),
\end{equation}
where the product is taken over all distinct embeddings $\sigma \colon F \hookrightarrow \overline{k}$. Clearly, $\varphi_{F/k}$ is a homogeneous polynomial
of degree $n$ in $t_1, \ldots , t_n$ with coefficients in $k$. When  $k$ is infinite, $\varphi_{F/k}$ is uniquely characterized by the condition
\begin{equation}\label{E:Polyn2}
\varphi_{F/k}(\alpha_1, \ldots , \alpha_n) = \mathrm{N}_{F/k}(\alpha_1 \omega_1 + \cdots + \alpha_n \omega_n) \ \ \text{for all} \ \ \alpha_1, \ldots , \alpha_n \in k.
\end{equation}
Consider the polynomial $f_{F/k}(T) = \varphi_{F/k}(T, t_2, \ldots , t_{n-1}, 1)$ over $L := k(t_2, \ldots , t_{n-1})$.
\begin{lemma}\label{L:SplF}
Keeping the preceding notations, let $\tilde{F}$ be the normal closure of $F$ over $k$. Then the splitting field $E$ of $f_{F/k}(T)$ over $L$ coincides with $\tilde{F}L = \tilde{F}(t_2, \ldots , t_{n-1})$, and therefore $\Ga(E/L) \simeq \Ga(\tilde{F}/k)$.
\end{lemma}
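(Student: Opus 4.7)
The plan is to exhibit the roots of $f_{F/k}(T)$ explicitly in $\tilde{F}L$, show that they generate $\tilde{F}L$ over $L$, and then identify the Galois groups via the natural restriction map.

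First, I would write the factorization of $f_{F/k}(T)$ over $\tilde{F}L$. By the defining product \eqref{E:Polyn1} and the substitution $t_1 = T$, $t_n = 1$, we get
$$f_{F/k}(T) = \prod_{\sigma} \bigl(T + \sigma(\omega_2) t_2 + \cdots + \sigma(\omega_{n-1}) t_{n-1} + \sigma(\omega_n)\bigr),$$
so the roots are $\alpha_\sigma := -\bigl(\sigma(\omega_2) t_2 + \cdots + \sigma(\omega_{n-1}) t_{n-1} + \sigma(\omega_n)\bigr)$, one for each embedding $\sigma \colon F \hookrightarrow \overline{k}$. Each $\alpha_\sigma$ lies in $\tilde{F}L = \tilde{F}(t_2, \ldots , t_{n-1})$, so the splitting field $E$ is contained in $\tilde{F}L$.

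Next, since $t_2, \ldots , t_{n-1}$ are algebraically independent over $\tilde{F}$, the natural restriction map $\mathrm{Gal}(\tilde{F}L/L) \to \mathrm{Gal}(\tilde{F}/k)$ is an isomorphism, and the action of $\tau \in \mathrm{Gal}(\tilde{F}L/L)$ on roots is given by $\tau(\alpha_\sigma) = \alpha_{\tau\circ\sigma}$. To prove $E = \tilde{F}L$, it suffices to show that any $\tau$ fixing all $\alpha_\sigma$ is trivial. Suppose $\tau(\alpha_\sigma) = \alpha_\sigma$ for all $\sigma$. Comparing coefficients of the algebraically independent variables $t_2, \ldots , t_{n-1}$ and the constant term (now viewed as elements of $\tilde{F}[t_2, \ldots, t_{n-1}]$), we obtain $\tau(\sigma(\omega_i)) = \sigma(\omega_i)$ for $i = 2, \ldots , n$; since $\omega_1 = 1$ is automatically fixed and $\{\omega_1, \ldots , \omega_n\}$ is a $k$-basis of $F$, this means $\tau$ fixes $\sigma(F)$ pointwise for every embedding $\sigma$. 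But $\tilde{F}$ is generated (as a field over $k$) by the subfields $\sigma(F)$, so $\tau$ fixes $\tilde{F}$, hence is trivial.

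Consequently $\mathrm{Gal}(\tilde{F}L/E)$ is trivial, which gives $E = \tilde{F}L$, and the resulting $\mathrm{Gal}(E/L) \simeq \mathrm{Gal}(\tilde{F}/k)$ follows from the restriction isomorphism noted above. The one subtlety to keep in mind (not really an obstacle, but worth verifying carefully) is the passage from ``$\tau$ fixes the coefficients of $\alpha_\sigma$'' to ``$\tau$ fixes $\sigma(F)$,'' which crucially uses the algebraic independence of $t_2, \ldots , t_{n-1}$ over $\tilde{F}$ together with the fact that the $\omega_i$ form a $k$-basis of $F$; these are precisely what ensure that coefficient comparison in $\tilde{F}[t_2, \ldots , t_{n-1}]$ is legitimate and that the recovered information pins down $\sigma|_F$.
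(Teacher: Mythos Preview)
Your proof is correct and follows essentially the same approach as the paper: write out the roots explicitly from the product formula to get $E \subset \tilde{F}L$, then use coefficient comparison (via the algebraic independence of $t_2, \ldots, t_{n-1}$ over $\tilde{F}$) to show that any $\tau \in \mathrm{Gal}(\tilde{F}L/L)$ fixing the roots must fix each $\sigma(\omega_i)$ and hence all of $\tilde{F}$. Your write-up is slightly more explicit than the paper's about the role of the basis $\{\omega_i\}$ and the fact that the $\sigma(F)$ generate $\tilde{F}$, but the argument is the same.
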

Indeed, it follows from (\ref{E:Polyn1}) that
$$
f_{F/k}(T) = \prod_{\sigma} (T + (\sigma(\omega_2)t_2 + \cdots + \sigma(\omega_{n-1})t_{n-1} + \sigma(\omega_n))).
$$
This shows that $E \subset \tilde{F}L$. On the other hand, if $\tau \in \Ga(\tilde{F}L/L) = \Ga(\tilde{F}/k)$ fixes the element $(\sigma(\omega_2)t_2 + \cdots + \sigma(\omega_{n-1})t_{n-1} + \sigma(\omega_n))$, then it fixes all the elements $\sigma(\omega_2), \ldots , \sigma(\omega_n)$. It follows that if $\tau$ fixes $E$ then $\tau = \mathrm{id}$. So, $E = \tilde{F}L$, hence $\Ga(E/L) = \Ga(\tilde{F}/k)$.

\vskip2mm

\noindent {\it Proof of Proposition \ref{P:SB2}.} The argument is a refinement of
the proof of Lemma \ref{L:SB1}, and we will freely use the notations introduced therein. Recall that the key point in realizing $F_D$ as a degree $n$ extension of the rational function field $F = k(x_1, \ldots , x_{n-1})$ was the fact that $SB(D)$ is birationally isomorphic to $Z_W$ for a suitable choice of an $(n+1)$-dimensional subspace $W \subset D$. While this fact remains valid without any changes for any central simple algebra $A$, in order to ensure that the extension $F_A/F$ is generic, we need to specialize the choice of $W$. First, since $k$ is finitely generated, $A$ contains a maximal subfield $P$ that is a generic extension of $k$ (this follows immediately, for example, from Theorem \ref{T:Exist}).  It was shown by Saltman \cite[4.2(c)]{Salt1}, \cite[13.28]{Salt2} that for a Zariski-dense set of $a \in A$, the space $W := P + ka$ is $(n+1)$-dimensional and the corresponding variety $Z_W$ is birationally isomorphic to $SB(A)$. Fix one such $a$. Pick a basis $w_1 = 1, \ldots , w_n$ of $P/k$; then $w_1, \ldots , w_n, w_{n+1} := a$ is a basis of $W$. Take $i = 1$, $j = n$ as in the proof of Lemma \ref{L:SB1} and consider the corresponding polynomial
$$
p_W(T) = \nu_W(T, x_1, \ldots,  x_{n-2}, 1, x_{n-1}),
$$
noting that $p_W$ is monic and  has coefficients in the ring $R := k[x_1, \ldots , x_{n-1}]$. As in Lemma \ref{L:SB1}, the polynomial $p_W(T)$ is irreducible over $F = k(x_1, \ldots , x_{n-1})$ and the extension $F_A/F$ is obtained by adjoining a root of $p_W$. In order to prove that the extension is generic, we will use specialization. Let $E$ be the splitting field of $p_W$ and $G = \Ga(E/F)$ be the corresponding Galois group. Furthermore, let $S$ be the integral closure of $R$ in $E$, and let $\mathfrak{p}$ be the prime ideal of $R$ generated by $x_{n-1}$. Since the restriction of the reduced norm map $\mathrm{Nrd}_{A/k}$ to $P$ coincides with the usual norm map $\mathrm{N}_{P/k}$, we see from (\ref{E:Polyn2}) that $\nu_W(t_1, \ldots , t_n, t_{n+1})(\mathrm{mod}\: t_{n+1})$ coincides with $\varphi_{P/k}(t_1, \ldots , t_n)$, from which it follows that $p_W(T)(\mathrm{mod}\: \mathfrak{p})$ coincides with $f_{P/k}(T)$. In particular, since $f_{P/k}(T)$ is separable, so is $p_W(T)$. Fix a prime ideal $\mathfrak{P} \subset S$ lying above $\mathfrak{p}$, and let $G(\mathfrak{P})$ be its decomposition group.  Then according to \cite[Ch. VII, Proposition 2.5]{Lang-Alg}, there is a natural surjective homomorphism of $G(\mathfrak{P})$ to the automorphism group $H$ of the field of fractions of $S/\mathfrak{P}$ over $L$ (which is the field of fractions of $R/\mathfrak{p}$). On the other hand, it follows from our construction and Lemma \ref{L:SplF} that the Galois group of the splitting field of $f_{P/k}(T)$ is the symmetric group $S_n$, so $H$ admits a surjection onto $S_n$. Thus, a subgroup of $G$ admits a surjection onto $S_n$, and therefore $\vert G \vert \geqslant n!$. However, $G$ is the Galois group of the splitting field of a  separable polynomial of degree $n$, hence must be isomorphic to a subgroup of $S_n$. Thus, $G \simeq S_n$, as required. \hfill $\Box$

\vskip3mm
It would be interesting to determine if the conclusion of the proposition remains valid without assuming that $k$ is finitely generated.

\vskip1mm

We can now complete the proof of Theorem \ref{T:Kill1} by imitating the proof of Theorem \ref{T:GenDA}. So, let $G = \mathrm{SL}_{1, A}$, where $A$ is a central simple  algebra of degree $n$ over a finitely generated field $k$.  Set $F  = k(x_1, \ldots , x_{n-1})$, and suppose that $G' \in \gen_F(G \times_k F)$. Using Theorem \ref{T:transc} repeatedly, we see that $G' = H \times_k F$ for some $H \in \gen_k(G)$. Since $H$ is an inner twist of $G$, we have $H = \mathrm{SL}_{1 , B}$ for some central simple algebra $B$ of degree $n$ over $k$. It remains to show that the classes $[A] , [B] \in \Br(k)$ generate the same subgroup.

Using Proposition \ref{P:SB2}, we present the function field $F_A$ of the Severi-Brauer variety $SB(A)$ as a degree $n$ generic extension of $F$, and then arguing as in the proof of Theorem \ref{T:GenDA}, we conclude that $F_A$ is $F$-isomorphic to a maximal \'etale subalgebra of $A \otimes_k F$.  Let $T =
\mathrm{R}^{(1)}_{F_A/F}(\mathbb{G}_m)$ be the corresponding maximal $F$-torus of $G \times_k F$. By our assumption, $H \times_k F \in \gen_F(G \times_k F)$, so $T$
is $F$-isomorphic to a maximal $F$-torus $T'$ of $G'$; the latter is the norm torus $\mathrm{R}^{(1)}_{E/F}(\mathbb{G}_m)$ for some maximal \'etale subalgebra $E$ of $B
\otimes_k F$, which in fact is a field extension as $T'$ is $F$-anisotropic. Since the field extension $F_A/F$ is generic by construction, we can use Lemma \ref{L:Iso} to conclude that $F_A$ is $F$-isomorphic to $E$; in other words, $F_A$ admits an $F$-embedding into $B \otimes_k F$. As in the proof of Theorem \ref{T:GenDA}, we observe that then $F_A$ splits $B$, so invoking Amitsur's Theorem, we see that $[B] \in \langle [A] \rangle$. The inclusion $[A] \in \langle [B] \rangle$ is established by a symmetric argument. \hfill $\Box$

\vskip2mm

\noindent {\bf 8.3. Proof of Theorem \ref{T:Kill2}.} We recall that an algebraic group $G$ of type $\textsf{G}_2$ over a field $k$ of characteristic $\neq 2$
can be realized as the automorphism group of an octonian algebra $\mathbb{O}(a, b, c)$ corresponding to a triple $(a, b, c) \in k^{\times}  \times k^{\times} \times k^{\times}$. The norm form $q$ of $\mathbb{O}(a, b, c)$ is the Pfister form $\ll a, b, c \gg$ in standard notations; we will write it as
$$
q(x_0, x_1, \ldots , x_7) = x_0^2 + q'(x_1, \ldots , x_7) \ \ \text{where} \ \ q'(x_1, \ldots , x_7) = -ax_1^2 - bx_2^2 + \cdots .
$$
The following facts are well-known:

\vskip2mm

\noindent (1) \parbox[t]{15cm}{for a field extension $F/k$, the group $G$ is either split or anisotropic over $F$, cf. \cite{Tits};}

\vskip1mm

\noindent (2) \parbox[t]{15cm}{Two $K$-groups $G_1$ and $G_2$ of type $\textsf{G}_2$ with associated norm forms $q_1$ and $q_2$ are $F$-isomorphic if and only if $q_1$ and $q_2$ are equivalent over $F$, cf. \cite[Proposition 33.19]{Invol};}

\vskip1mm

\noindent (3) \parbox[t]{15cm}{$G$ is split over $F$ if and only if $q$ is hyperbolic (equivalently, isotropic) over $F$ --- this follows from (2).}

\vskip2mm

\noindent It is enough to show that if $G_1$ and $G_2$ are two $k$-groups of type $\textsf{G}_2$ such that for $P := k(x_1, \ldots , x_6)$ the groups $\mathscr{G}_1 :=G_1 \times_k P$ and $\mathscr{G}_2 :=G_2 \times_k P$ are in the same genus, then $G_1 \simeq G_2$ over $k$. We may assume that $G_1$ and $G_2$ are anisotropic over $k$, and let $q_1$ and $q_2$ be the corresponding norm forms. Set
$$
L = k(x_1, \ldots , x_6)\left(\sqrt{-q'_1(x_1, \ldots , x_6, 1)} \right) = P\left(\sqrt{-q'_1(x_1, \ldots , x_6, 1)} \right)
$$
(the ``homogeneous function field" of $q_1$ in the terminology of \cite{Lam}). Then $q_1$ represents zero over $L$, so $G_1$ splits over $L$. A standard argument shows that $\mathscr{G}_1$ contains a maximal $P$-torus $\mathscr{T}$ of the form $\mathscr{T} = \mathrm{R}^{(1)}_{L/P}(\mathbb{G}_m) \times \mathrm{R}^{(1)}_{L/P}(\mathbb{G}_m)$ cf. (\cite[Lemma 6.17]{PR}).
By our assumption, $\mathscr{G}_1$ and $\mathscr{G}_2$ are in the same genus, and in particular, $\mathscr{T}$ is $P$-isomorphic to a maximal $P$-torus of $\mathscr{G}_2$, implying that $\mathscr{G}_2$ becomes split over $L$. Thus,  the 3-Pfister form $q_2$ becomes split over the function field of the 3-Pfister form $q_1$, and therefore the forms $q_1$ and $q_2$ are equivalent over $k$
(cf. \cite[Ch. X, Corollary 4.10]{Lam}). So, the groups $G_1$ and $G_2$ are $k$-isomorphic, as required.

\vskip2mm

\noindent {\bf 8.4. Motivic genus.} The following variation of the notion of  the genus, proposed by A.S.~Merkurjev, provides a different perspective on the above results. He defined the {\it motivic genus} $\gen_{m}(G)$ of an absolutely almost simple algebraic $k$-group $G$ to be the set of $k$-isomorphism classes of (inner) $k$-forms $G'$ of $G$ such that $G' \times_k F  \in \gen_F(G \times_k F)$ for \emph{all} field extensions $F/k$. Then Theorem \ref{T:Kill1} implies that for $G = \mathrm{SL}_{1 , A}$, where $A$ is a central simple algebra of degree $n$, the motivic genus is always finite of size $\leq n-1$, and reduces to a single element if $A$ has exponent two. In addition, by Theorem \ref{T:Kill2}, the motivic genus of a group of type $\textsf{G}_2$ also reduces to a single element. Furthermore, according to a result of Izhboldin \cite{Izhb}, for given non-degenerate quadratic forms $q$ and $q'$ of {\it odd} dimension over a field $k$ of characteristic $\neq 2$, the condition

\vskip1mm

\noindent $(\dagger)$ {\it $q$ and $q'$ have the same Witt index over any extension $F/k$}

\vskip1mm

\noindent implies that $q$ and $q'$ are scalar multiples of each other (this conclusion being false for even-dimensional forms). It follows that $\vert \gen_m(G) \vert = 1$ for $G = \mathrm{Spin}_n(q)$ with $n$ odd. We note that condition $(\dagger)$ is equivalent to the fact that the motives of $q$ and $q'$ in the category of Chow motives are isomorphic (cf. Vishik \cite{Vish1}, \cite[Theorem 4.18]{Vish2}, and also Karpenko \cite{Karp}), which prompted the choice of terminology for this version of the genus. One can expect the motivic genus to be finite for all absolutely almost simple groups, of size bounded by a constant depending only on the type of the group (at least over fields of ``good'' characteristic, but not necessarily finitely generated). On the other hand, Conjecture~1.7 asserts that the genus gets reduced to the motivic genus (i.e., becomes as small as possible) after a suitable purely transcendental extension of the base field.

\vskip5mm

\section{Weakly commensurable Zariski-dense subgroups}\label{S:WC}

The goal of this section is to prove Theorem \ref{T:WC-GR} that relates the presence of a finitely generated Zariski-dense subgroup weakly commensurable to a given one with good reduction. First, let us  fix a model $\mathfrak{X} = \mathrm{Spec}\: A$ for $k$,  i.e. an affine integral normal scheme of finite type over $\mathbb{Z}$ with function field $k$, and let $V$ denote the set of discrete valuations of $k$ associated with the prime divisors on $\mathfrak{X}$. We will consider separately the two cases where  $\dim \mathfrak{X} = 1$ and $\dim \mathfrak{X} > 1$, respectively.

\vskip1mm

\noindent {\bf \ref{S:WC}.1. Proof of Theorem \ref{T:WC-GR} in the case $\dim \mathfrak{X} = 1$.} In this case, $k$ is a number field (recall that $\mathrm{char}\: k = 0$), and $V$ consists of almost all nonarchimedean valuations of $k$.  The proof here is an adaptation of the argument developed in \cite[\S 5]{PR-Appl} for a different, although related, purpose. By \cite[Theorem 6.7]{PR}, one can find a finite subset $S_1 \subset V$ such that $G$ is quasi-split over the completion $k_v$ for all $v \in V \setminus S_1$. Furthermore, let $\ell/k$ be the minimal Galois extension over which $G$ becomes an inner form of the split group, and choose a finite subset $S_2 \subset V$ so that $\ell/k$ is unramified at all $v \in V \setminus S_2$. Finally, since $k$ coincides with the trace field $k_{\Gamma}$, it follows from the Strong Approximation Theorem of Weisfeiler \cite{Weis} that there exists a finite subset $S_3 \subset V$ such that the closure of $\Gamma$ in $G(k_v)$ in the $v$-adic topology is open for all $v \in V \setminus S_3$. Set $S(\Gamma) = S_1 \cup S_2 \cup S_3$. The fact that this set is as required in Theorem \ref{T:WC-GR} is an immediate consequence of the following.
\begin{prop}\label{P:quasi-split}
Let $G'$ be an absolutely almost simple $k$-group such that there exists a finitely generated Zariski-dense subgroup $\Gamma' \subset G'(k)$ that is weakly commensurable to $\Gamma$. Then $G'$ is quasi-split over $k_v$, and consequently has good reduction, for all $v \in V \setminus S(\Gamma).$
\end{prop}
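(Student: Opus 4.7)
The plan is, for each $v \in V \setminus S(\Gamma)$, to produce a maximal $k_v$-torus of $G'$ whose $k_v$-split rank matches the $k_v$-rank of the quasi-split group $G \times_k k_v$, and then to invoke the uniqueness of the quasi-split inner form over a non-archimedean local field to conclude that $G' \times_k k_v$ is itself quasi-split. Good reduction will follow from the standard characterization (for local fields with finite residue field) that a reductive group has good reduction precisely when it is quasi-split with unramified outer form.

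Concretely, fix $v \in V \setminus S(\Gamma)$. By construction of $S(\Gamma)$, the group $G$ is quasi-split over $k_v$, the extension $\ell/k$ is unramified at $v$, and the closure of $\Gamma$ in $G(k_v)$ is open. Choose a Borel $k_v$-subgroup $B \subset G \times_k k_v$ together with a maximal $k_v$-torus $T(v) \subset B$; then the $k_v$-split rank of $T(v)$ coincides with the $k_v$-rank $r$ of $G \times_k k_v$. Applying Theorem \ref{T:ExGenElts2} to the prescribed local torus $T(v)$ and the subgroup $\Gamma$, I obtain a $k$-generic regular semisimple element $\gamma_1 \in \Gamma$ of infinite order whose centralizer torus $T_1 = C_G(\gamma_1)^\circ$ is $G(k_v)$-conjugate to $T(v)$.

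By the weak commensurability of $\Gamma'$ with $\Gamma$, there exists a semisimple $\gamma_2 \in \Gamma'$ of infinite order weakly commensurable to $\gamma_1$; let $T_2 \subset G'$ be a maximal $k$-torus containing $\gamma_2$. Since $T_1$ is $k$-generic, Corollary \ref{C:Isog} yields a $k$-defined isogeny $T_1 \to T_2$; base-changing to $k_v$, the tori $T_2 \times_k k_v$ and $T(v)$ become $k_v$-isogenous, and hence share the same $k_v$-split rank $r$. Consequently $\rk_{k_v} G' \geq r$. On the other hand, Theorem \ref{T:basic}(3) combined with Proposition \ref{P:Prop-gen1}(ii) (in the case where $G$ and $G'$ have the same Cartan--Killing type; the residual $\textsf{B}_\ell / \textsf{C}_\ell$ ambiguity is handled by a parallel argument, using that quasi-split groups of these types are split) shows that $G' \times_k k_v$ is an inner twist of the quasi-split group $G \times_k k_v$.

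Over a non-archimedean local field, the quasi-split form in an inner form class has the maximum $k_v$-rank among its inner twists, and is uniquely determined by this property; this follows from Kneser's vanishing $H^1(k_v, G^{\mathrm{sc}}) = 0$ in the simply connected case combined with Tits' local classification. Therefore $G' \times_k k_v$ must be quasi-split of $k_v$-rank $r$, and since its outer form is classified by $\ell_2 = \ell$, which is unramified at $v$, we conclude that $G' \times_k k_v$ is quasi-split with unramified outer form, whence it has good reduction at $v$. The main obstacle is the rigorous justification of the uniqueness of the quasi-split inner form as the one of maximum $k_v$-rank (which requires careful bookkeeping between simply connected and adjoint isogeny classes), together with a clean treatment of the $\textsf{B}_\ell / \textsf{C}_\ell$ ambiguity left open by weak commensurability.
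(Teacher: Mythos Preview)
Your approach is essentially the same as the paper's: produce, via Theorem \ref{T:ExGenElts2}, a $k$-generic element $\gamma_1 \in \Gamma$ whose torus $T_1$ is $G(k_v)$-conjugate to a maximal $k_v$-split torus of the quasi-split group $G \times_k k_v$; match it via weak commensurability and the isogeny theorem to a maximal torus $T_2 \subset G'$; and deduce $\rk_{k_v} G' \geq \rk_{k_v} G$. The paper then finishes exactly as you do, using $\ell' = \ell$ unramified at $v$ to get good reduction.

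The one place your write-up diverges is in the justification of ``maximal rank among inner twists $\Rightarrow$ quasi-split.'' You propose to invoke Kneser's vanishing $H^1(k_v, G^{\mathrm{sc}}) = 0$ together with Tits' local classification, and you flag this as the main obstacle. In fact Kneser is not needed and is not the right tool here: the paper's argument is simpler and works over any field. Since $G'$ and $G$ are inner twists (same type case), the $*$-actions on their Tits indices are identical; the $k_v$-rank equals the number of distinguished $*$-orbits; $G$ being quasi-split means \emph{all} $*$-orbits are distinguished; so the inequality $\rk_{k_v} G' \geq \rk_{k_v} G$ forces all $*$-orbits for $G'$ to be distinguished as well, i.e.\ $G'$ is quasi-split. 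This is pure Tits-index bookkeeping and removes the obstacle you identified. Your treatment of the $\textsf{B}_\ell/\textsf{C}_\ell$ case is correct and matches the paper.
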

\begin{proof}
Let $\ell'$ be the minimal Galois extension of $k$ over which $G'$ becomes an inner form of the split group. By Theorem \ref{T:basic}, the existence of $\Gamma'$ that is weakly commensurable to $\Gamma$ implies that either $G$ and $G'$ have the same type or one of them is of type $\textsf{B}_{\ell}$ and the other of type $\textsf{C}_{\ell}$, and also that $\ell = \ell'$. The latter means that when $G$ and $G'$ are of the same type, then the corresponding adjoint groups $\overline{G}'$ and $\overline{G}$ are inner twists of each other
over $k$, hence over $k_v$. In order to prove that $G'$ is quasi-split over $k_v$, it is enough to show that
\begin{equation}\label{E:rank}
\mathrm{rk}_{k_v}\: G' \geqslant \mathrm{rk}_{k_v}\: G.
\end{equation}
Indeed, when one of the groups is of type $\textsf{B}_{\ell}$ and the other of type $\textsf{C}_{\ell}$, we see that $G$ is $k_v$-split, and then the inequality shows that $G'$ is also $k_v$-split. Next, suppose that $G$ and $G'$ are of the same type. As we pointed out above, $\overline{G}'$ is an inner twist of $\overline{G}$, and therefore the $*$-actions of the absolute Galois group of $k_v$ on the Tits indices of $G$ and $G'$ are identical
(cf. \cite[Lemma 4.1(a)]{PR-WC}). It is well-known that the relative rank of a semisimple group equals the number of distinguished orbits under the $*$-action on its Tits index. By our construction, $G$ is quasi-split over $k_v$, so all $*$-orbits are distinguished. Then (\ref{E:rank}) implies that all $*$-orbits in the Tits index of $G'$ are also distinguished, so $G'$ is $k_v$-quasi-split.

Now, to prove (\ref{E:rank}), we first use Theorem \ref{T:ExGenElts2} to find a regular semisimple element $\gamma$ so that the corresponding torus $T = C_G(\gamma)^{\circ}$ is generic over $k$ and contains a maximal $k_v$-split torus of $G$ over $k_v$. By our assumption, $\gamma$ is weakly commensurable to some semisimple element $\gamma' \in \Gamma'$ of infinite order. Let $T'$ be a maximal $k$-torus of $G'$ containing $\gamma'$. Then it follows from Proposition \ref{P:Isog} that there exists a $k$-defined isogeny $T \to T'$.
Since $\mathrm{rk}_{k_v}\: T = \mathrm{rk}_{k_v}\: G$ by construction, the required inequality (\ref{E:rank}) follows.
Thus, $G'$ is quasi-split over $k_v$. Since $\ell' = \ell$, we obtain that $\ell'$ is unramified at $v$, and therefore $G'$ has good reduction at $v$ (cf. \cite[Corollary 7.9.4]{KalPr}). \end{proof}

\noindent {\bf \ref{S:WC}.2. Zariski-density of reductions when $\dim \mathfrak{X} > 1$.} In this case, the residue fields $k^{(v)}$ of the valuations $v \in V$ are
{\it infinite} finitely generated fields. The goal of this subsection is to establish a result about the Zariski-density of the reductions of $\Gamma$ modulo almost all $v \in V$ that will play a crucial role in the proof of Theorem \ref{T:WC-GR}. Fix a faithful $k$-defined representation $G \hookrightarrow \mathrm{GL}_n$. By shrinking $V$, we may assume without loss of generality that for all $v \in V$, the reduction $\Gu^{(v)}$ associated with this realization is a connected absolutely almost simple algebraic group over the residue field $k^{(v)}$. We denote by $\rho_v \colon G(\mathscr{O}_{k,v}) \to
\Gu^{(v)}(k^{(v)})$ the corresponding reduction map (where $\mathscr{O}_{k,v}$ is the valuation ring of $v$ in $k$).
\begin{prop}\label{P:R-ZD}
Let $\Gamma \subset G(k)$ be a finitely generated Zariski-dense subgroup. Then for almost all $v \in V$, we have the inclusion $\Gamma \subset G(\mathscr{O}_v)$ and the reduction $\Gamma^{(v)} := \rho_v(\Gamma)$ is Zariski-dense in $\Gu^{(v)}$.
\end{prop}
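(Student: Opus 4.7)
The plan is to start by replacing $\mathfrak{X}$ with a suitable affine open so that the schematic closure $\mathscr{G} \subset \mathrm{GL}_{n, A}$ of $G$ is a smooth affine $A$-group scheme whose fibers recover the reductions $\underline{G}^{(v)}$. The generators of $\Gamma$ and their inverses have coordinates lying in $k = \mathrm{Frac}(A)$; by excluding the finitely many $v \in V$ at which some coordinate is not a $v$-unit, we obtain $\Gamma \subset \mathscr{G}(A) \subset G(\mathscr{O}_v)$, which proves the first assertion. From now on $V$ denotes the cofinite subset after this shrinking.

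\textbf{Absolute irreducibility via Burnside.} For the Zariski density of $\Gamma^{(v)}$, the key tool will be the adjoint representation $\mathrm{Ad}\colon G \to \mathrm{GL}(\mathfrak{g})$. Since $G$ is absolutely almost simple and $\mathrm{char}\: k = 0$, this representation is absolutely irreducible, hence so is the induced action of $\Gamma$ on $\mathfrak{g}$ (by Zariski density, every $\Gamma$-invariant subspace is $G$-invariant). Burnside's theorem then gives
$$
\mathrm{span}_k \{\mathrm{Ad}(\gamma) : \gamma \in \Gamma\} = \mathrm{End}_k(\mathfrak{g}),
$$
so we can fix $\delta_1, \dots, \delta_N \in \Gamma$ with $N = (\dim \mathfrak{g})^2$ whose images under $\mathrm{Ad}$ form a $k$-basis of $\mathrm{End}_k(\mathfrak{g})$. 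This condition is the nonvanishing of a single determinant $D \in k^{\times}$; by excluding the finitely many $v \in V$ with $v(D) \neq 0$ (and also the finitely many bad residue characteristics if absolute irreducibility of Ad fails in small characteristic) we ensure that $\mathrm{Ad}(\delta_1^{(v)}), \dots, \mathrm{Ad}(\delta_N^{(v)})$ remain a $k^{(v)}$-basis of $\mathrm{End}_{k^{(v)}}(\mathfrak{g}^{(v)})$, so $\Gamma^{(v)}$ acts absolutely irreducibly on $\mathfrak{g}^{(v)}$.

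\textbf{Lie algebra dichotomy and the final obstruction.} Let $H_v \subset \underline{G}^{(v)}$ denote the Zariski closure of $\Gamma^{(v)}$ and let $\mathfrak{h}_v = \mathrm{Lie}(H_v)$. Since $\mathfrak{h}_v$ is invariant under $\mathrm{Ad}(H_v)$, in particular under $\mathrm{Ad}(\Gamma^{(v)})$, the preceding absolute irreducibility forces $\mathfrak{h}_v \in \{0, \mathfrak{g}^{(v)}\}$. In the second case, $H_v^{\circ}$ and $\underline{G}^{(v)}$ share their Lie algebra, and the connectedness of $\underline{G}^{(v)}$ yields $H_v = \underline{G}^{(v)}$, as desired. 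Thus it remains to rule out $\mathfrak{h}_v = 0$ (equivalently, $\Gamma^{(v)}$ finite) for almost all $v \in V$. For this I would apply Theorem~\ref{T:ExGenElts} to extract a $k$-generic regular semisimple element $\gamma \in \Gamma$ of infinite order, with centralizer torus $T = C_G(\gamma)^{\circ}$. By genericity, $\chi \mapsto \chi(\gamma)$ is an injection $X(T) \hookrightarrow \bar{k}^{\times}$, so the eigenvalues of $\gamma$ generate a free abelian subgroup $\Lambda \subset \bar{k}^{\times}$ of rank $\dim T$; it suffices to show that $\Lambda$ continues to inject into $\overline{k^{(v)}}^{\times}$ for almost all $v$, since then $\gamma^{(v)}$ has infinite order and $\Gamma^{(v)}$ is infinite. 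I expect this final specialization step to be the main technical obstacle: over a residue field of characteristic zero it is elementary (for each nontrivial $\lambda \in \Lambda$, only finitely many $v$ satisfy $v(\lambda - 1) > 0$, and only finitely many roots of unity appear in $\overline{k^{(v)}}$ of a given order after one restricts $\Lambda$ to any bounded set of exponents), but in positive residue characteristic one needs a specialization theorem for finitely generated multiplicative subgroups of a finitely generated field, which I would invoke as the decisive input.
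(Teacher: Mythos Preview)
Your argument coincides with the paper's proof through the Burnside/Lie-algebra dichotomy: the reductions act absolutely irreducibly on $\mathfrak{g}^{(v)}$, forcing $\mathfrak{h}_v \in \{0,\mathfrak{g}^{(v)}\}$, and the only remaining task is to exclude $\Gamma^{(v)}$ finite for almost all $v$. The divergence is in this last step, and here your proposal has a genuine gap.

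Your plan is to fix a single $k$-generic $\gamma \in \Gamma$ and show that $\gamma^{(v)}$ has infinite order for almost all $v$ by controlling the specialization of the eigenvalue group $\Lambda$. In residue characteristic zero this is \emph{not} elementary, contrary to what you write. Take $k=\mathbb{Q}(t)$ with the model $\mathbb{A}^1_{\mathbb{Z}}$ and an element $\gamma$ whose eigenvalues lie in $\langle t\rangle$: at the prime divisor $(\Phi_m(t))$ the residue field is a number field and $t$ specializes to a primitive $m$-th root of unity, so $\gamma^{(v)}$ has finite order for \emph{infinitely many} $v$. Genericity of $T$ says nothing about this phenomenon. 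Your parenthetical (``for each nontrivial $\lambda\in\Lambda$, only finitely many $v$ satisfy $v(\lambda-1)>0$'') only rules out $\lambda$ specializing to $1$, not to an arbitrary root of unity, and the order of that root of unity is unbounded as $v$ varies, so no finite check suffices.

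The paper avoids this entirely by arguing structurally on $\Gamma^{(v)}$ rather than through a single element. In residue characteristic $0$ it invokes Jordan's theorem: there is an integer $j=j(n)$ such that every finite subgroup of $\mathrm{GL}_n$ over a characteristic-zero field has $[\Phi^{(j)},\Phi^{(j)}]=1$. Since $\Delta:=[\Gamma^{(j)},\Gamma^{(j)}]$ is still Zariski-dense in $G$, pick $\delta\in\Delta\setminus\{1\}$; for almost all $v$ one has $\rho_v(\delta)\neq 1$, so $\rho_v(\Delta)\neq 1$, and hence $\Gamma^{(v)}$ cannot be finite. In positive residue characteristic the paper uses the trace-field hypothesis: since $k_\Gamma=k$ strictly contains the algebraic closure $k_0$ of $\mathbb{Q}$ in $k$ (we are in the case $\dim\mathfrak{X}>1$), there is $\gamma\in\Gamma$ with $f:=\mathrm{tr}\,\mathrm{Ad}\,\gamma\notin k_0$. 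Bertini--Noether applied to $\mathfrak{X}$ over $\mathrm{Spec}\,\mathcal{O}_{k_0}$ shows that for almost all such $v$ the residue $\bar f$ is transcendental over the (finite) residue field of $k_0$, hence not a sum of roots of unity, so $\gamma^{(v)}$ has infinite order. Both of these are qualitatively different ideas from eigenvalue specialization, and at least the characteristic-zero one is essential: no choice of a single $\gamma$ will do the job there.
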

\begin{proof}
The first assertion is obvious. To prove the second one, we observe that since $G$ is absolutely almost simple and $k$ has characteristic zero, the adjoint representation $r \colon G \to \mathrm{GL}(\mathfrak{g})$ on the Lie algebra $\mathfrak{g}=L(G)$ is (absolutely) irreducible. Being Zariski-dense in $G$, the subgroup $\Gamma$ also acts on $\mathfrak{g}$ absolutely irreducibly. Then by Burnside's Lemma (cf. \cite[7.3]{Lam1}), the image $r(\Gamma)$ spans $\mathrm{End}(\mathfrak{g})$. Fix a basis $a_1, \ldots , a_m$ $(m = \dim \mathfrak{g})$ of $\mathfrak{g}(k) \subset M_n(k)$ over $k$, and let $e_{ij}$ $(i,j = 1, \ldots , m)$ be the corresponding standard basis of $\mathrm{End}_k(\mathfrak{g}(k))$. Then we can find elements $\gamma_1, \ldots , \gamma_d \in \Gamma$ such that for all $i,j = 1, \ldots , m$ there are expressions
\begin{equation}\label{E:basis}
e_{ij} = \sum_{\ell = 1}^d \alpha^{\ell}_{ij} r(\gamma_{\ell}) \ \ \text{with} \ \ \alpha^{\ell}_{ij} \in k.
\end{equation}
Then for almost all $v \in V$, the following properties hold:

\vskip2mm

\noindent $\bullet$ \parbox[t]{16cm}{the elements $a_1, \ldots , a_m$ belong to $\mathfrak{g}(\mathscr{O}_{k,v}) =
\mathfrak{g}(k) \cap M_n(\mathscr{O}_{k,v})$ and their reductions $\bar{a}_1, \ldots , \bar{a}_m \in M_n(K^{(v)})$ form
a $k^{(v)}$-basis of $\underline{\mathfrak{g}}^{(v)}(k^{(v)})$, where $\underline{\mathfrak{g}}^{(v)}$ is the Lie algebra
of the reduction $\Gu^{(v)}$;}

\vskip2mm

\noindent $\bullet$ all coefficients $\alpha^{\ell}_{ij}$ belong to $\mathscr{O}_{k,v}$.

\vskip2mm

\noindent We note that for any such $v$, the endomorphisms $e_{ij}$ leave $\mathfrak{g}(\mathscr{O}_{k,v})$ invariant, and their reductions $\bar{e}_{ij}$ form the standard basis of $\mathrm{End}_{k^{(v)}}(\underline{\mathfrak{g}}^{(v)}(k^{(v)}))$ associated with the basis $\bar{a}_1, \ldots , \bar{a}_m$. Reducing (\ref{E:basis}), we obtain the relations
$$
\bar{e}_{ij} = \sum_{\ell = 1}^d \bar{\alpha}^{\ell}_{ij} r(\rho_v(\gamma_{\ell})),
$$
where $\bar{\alpha}^{\ell}_{ij}$ denotes the image of $\alpha^{\ell}_{ij}$ in $k^{(v)}$. These relations show that $\Gamma^{(v)}$ acts on $\underline{\mathfrak{g}}^{(v)}$ absolutely irreducibly. Letting $H$ denote the Zariski-closure of $\Gamma^{(v)}$ in $\Gu^{(v)}$,
we observe that the Lie algebra $L(H)$ is a $\Gamma^{(v)}$-invariant subspace of $\underline{\mathfrak{g}}^{(v)}$, and therefore there
are only two possibilities: $L(H) = \underline{\mathfrak{g}}^{(v)}$ or $L(H) = \{ 0 \}$. In the first case, $H = \Gu^{(v)}$,
i.e. $\Gamma^{(v)}$ is Zariski-dense, as desired. In the second case, $H$, hence $\Gamma^{(v)}$, is finite. So, to complete the proof of the proposition we need to show that $\Gamma^{(v)}$ is {\it infinite} for almost all $v$. For this, we will consider two cases.

\vskip2mm

\noindent {\sc Case 1:} $\mathrm{char}\: k^{(v)} = 0.$ It follows from Jordan's Theorem (cf. \cite{Collins}) that there exists an integer $j > 0$ (depending on $n$) such that every finite subgroup $\Phi \subset \mathrm{GL}_n(F)$, where $F$ is any field of characteristic zero, contains an abelian normal subgroup of index dividing $j$, and then the commutator subgroup $[\Phi^{(j)} , \Phi^{(j)}]$ of the subgroup $\Phi^{(j)}$ generated by the $j$th powers is trivial. Now, since $\Gamma$ is Zariski-dense in $G \subset \mathrm{GL}_n$, it follows that
$\Delta := [\Gamma^{(j)} , \Gamma^{(j)}]$ is also Zariski-dense. In particular, we can find a {\it nontrivial} element $\delta \in \Delta$; then for almost all $v$ the reduction $\rho_v(\delta)$ is nontrivial. Then the group $\Gamma^{(v)}$ must be  {\it infinite}. Indeed, otherwise Jordan's Theorem would yield that
$$
[(\Gamma^{(v)})^{(j)} , (\Gamma^{(v)})^{(j)}] = \rho_v(\Delta)
$$
is trivial, which is not the case by our construction.

\vskip2mm

\noindent {\sc Case 2:} $\mathrm{char}\: k^{(v)} > 0.$ We will show that there exists $\gamma \in \Gamma$ such that $\rho_v(\gamma)$ has {\it infinite} order for almost all $v$ at hand. For this, it is enough to make sure that the trace $\mathrm{tr}\: (r(\rho_v(\gamma)))$ is not a sum of roots of unity. Let $k_0$ be the algebraic closure of $\Q$ in $k$; we note that $[k_0 : \Q] < \infty$ as $k$ is finitely generated, and $k \neq k_0$ since $\dim \mathfrak{X} > 1$.  So, as $k$ is the trace field of $\Gamma$, we can find $\gamma \in \Gamma$ so that $f := \mathrm{tr}\: (r(\gamma)) \notin k_0.$ We will show that this $\gamma$ is as required. First, we observe that for any $v$ with $\mathrm{char}\: k^{(v)} > 0$, the restriction $v_0$ of $v$ to $k_0$ is nontrivial, and the residue field $k_0^{(v_0)}$ embeds into $k^{(v)}$. Furthermore, the residue $\bar{f}$ coincides with $\mathrm{tr}\: (r(\rho_v(\gamma)))$. Now, it is enough to show that $\bar{f}$ is not algebraic over $k_0^{(v_0)}$, which follows from the fact that the following two properties hold for almost all $v$:

\vskip1mm

\noindent (1) $k_0^{(v_0)}$ is algebraically closed in $k^{(v)}$;

\noindent (2) $\bar{f} \notin k_0^{(v_0)}$.

\vskip1mm

\noindent The proof is based on the following well-known fact: {\it Let $X$ be an irreducible algebraic variety over a field $K$; then $X$ is absolutely
irreducible if and only if $K$ is algebraically closed in the field of rational functions $K(X)$} (see, for example, \cite[Proposition 5.50]{GW}). To apply this fact in our situation, we observe that since the model $\mathfrak{X} = \mathrm{Spec}\: A$ is normal, it can be viewed as a scheme over the ring of integers $\mathcal{O}_0$ of $k_0$. Besides, the $k_0$-variety $X = \mathfrak{X} \times_{\mathcal{O}_0} k_0$ is absolutely irreducible. Then it follows from the classical theorem of Bertini-Noether (cf. \cite[Proposition 10.4.2]{FrJar}) that for almost all $v$, the reduction
$$
\underline{\mathfrak{X}}^{(v_0)} = \mathfrak{X} \otimes_{\mathcal{O}_0} k^{(v_0)}
$$
is an absolutely irreducible variety over $k_0^{(v_0)}$ whose field of rational functions coincides with $k^{(v)}$. Applying the statement mentioned above, we obtain property (1). Furthermore, since $f$ is not constant on $X$, we can find two points $x_1 , x_2 \in X(\overline{k_0})$ such that $f(x_1) \neq f(x_2)$. Then for almost all $v$, the points admit the reductions $\bar{x}_1$, $\bar{x}_2$ with respect to an extension of $v_0$, and for the reduction $\bar{f} \in k^{(v)}$, we have $\bar{f}(\bar{x}_1) \neq \bar{f}(\bar{x}_2)$. This means that $\bar{f} \notin k_0^{(v_0)}$, verifying property (2) and completing the argument. \end{proof}

\vskip1mm

\noindent {\bf \ref{S:WC}.3. Proof of Theorem \ref{T:WC-GR} in the case $\dim \mathfrak{X} > 1$.} It follows from Proposition \ref{P:R-ZD} that there exists a finite subset $S(\Gamma) \subset V$ such that for any $v \in V \setminus S(\Gamma)$ the following two conditions hold:

\vskip2mm

\noindent (a) \parbox[t]{15.5cm}{$G$ has good reduction at $v$, so that the $\underline{G}^{(v)}$ is a connected absolutely almost simple group;}

\vskip1mm

\noindent (b) \parbox[t]{15.5cm}{$\Gamma \subset G(\mathscr{O}_{k, v})$, and for the reduction map $\rho_v \colon G(\mathscr{O}_{k,v}) \to \Gu^{(v)}(k^{(v)})$, the image $\Gamma^{(v)} = \rho_v(\Gamma)$ is Zariski-dense in $\Gu^{(v)}$.}

\vskip2mm

\noindent Suppose now that $G'$ is an absolutely almost simple algebraic $k$-group such that there exists a finitely generated Zariski-dense subgroup $\Gamma' \subset G'(k)$ that is weakly commensurable to $\Gamma$. As before, we denote by $\ell$ (resp., $\ell'$) the minimal Galois extension of $k$ over which $G$ (resp., $G'$) becomes an inner form of the split group. By Theorem \ref{T:basic}, the Weyl groups of $G$ and $G'$ have the same order $w$  and  $\ell = \ell'$. Fix an extension $u$ of $v$ to $\ell$; it follows from (a) that the extension $\ell/k$ is unramified at $v$.

Since $\Gamma^{(v)}$ is (finitely generated and) Zariski-dense in $\Gu^{(v)}$, by Theorem \ref{T:ExGenElts} there exists a regular semisimple element $\bar{\gamma} \in \Gamma^{(v)}$ that is generic over $\ell^{(u)}$. Write $\bar{\gamma} = \rho_v(\gamma)$ with $\gamma \in \Gamma$.
\begin{lemma}\label{L:elt}
{\rm (1)} $\gamma$ is a regular semisimple element of infinite order.

\noindent {\rm (2)} \parbox[t]{15.9cm}{The maximal $k$-torus $T = C_G(\gamma)^{\circ}$ is generic over $k_v$ and the extension $k_T/k$ is unramified at $v$.}
\end{lemma}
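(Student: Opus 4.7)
For part~(1), the plan is to analyze the characteristic polynomial of $\gamma$ in the fixed matrix realization $G \hookrightarrow \mathrm{GL}_n$. Since $\gamma \in \Gamma \subset G(\mathscr{O}_{k,v})$, its characteristic polynomial $p_\gamma(X)$ has coefficients in $\mathscr{O}_{k,v}$ and reduces modulo $v$ to $p_{\bar{\gamma}}(X)$. The regular semi-simplicity of $\bar{\gamma}$ gives $\mathrm{disc}(p_{\bar{\gamma}}) \neq 0$; since the discriminant is polynomial in the coefficients, $\mathrm{disc}(p_\gamma)$ must be a unit in $\mathscr{O}_{k,v}$. Hence $p_\gamma$ has pairwise distinct roots over $\overline{k_v}$, and in characteristic zero this forces $\gamma$ to be diagonalizable and therefore semi-simple. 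Its eigenvalues being distinct also gives regularity (its centralizer, to be analyzed in part~(2), will be a torus). For the infinite-order claim, I would invoke the convention of \S3.2 that a generic element has infinite order by definition, so $\bar{\gamma}$ has infinite order; as the reduction of a torsion element is torsion, $\gamma$ must then have infinite order as well.

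For part~(2), the structural step is to spread $T$ out integrally. Using that $\gamma \in \mathscr{G}(\mathscr{O}_v)$ is regular semi-simple on both fibers, the standard theory of centralizers of regular semi-simple sections in a reductive group scheme yields that $\mathscr{T} := C_{\mathscr{G}}(\gamma)^\circ$ is a maximal torus of $\mathscr{G}$ over $\mathscr{O}_v$, with generic fiber $T \times_k k_v$ and special fiber $\bar{T} := C_{\Gu^{(v)}}(\bar{\gamma})^\circ$. The minimal splitting field of any $\mathscr{O}_v$-torus arises from a finite étale cover of $\mathrm{Spec}\, \mathscr{O}_v$, so the minimal splitting field of $T \times_k k_v$ over $k_v$ is unramified, with residue extension equal to the minimal splitting field $k^{(v)}_{\bar{T}}$ of $\bar{T}$ over $k^{(v)}$. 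In particular, $k_T/k$ is unramified at $v$.

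To conclude the $k_v$-genericity of $T$, the unramifiedness produces a canonical isomorphism $\mathrm{Gal}(k_T \cdot k_v / k_v) \simeq \mathrm{Gal}(k^{(v)}_{\bar{T}}/k^{(v)})$ compatible with the identifications $X_*(T \times_k k_v) = X_*(\mathscr{T}) = X_*(\bar{T})$ and $\Phi(G \times_k k_v, T \times_k k_v) = \Phi(\Gu^{(v)}, \bar{T})$. Since $\bar{T}$ is $\ell^{(u)}$-generic, the image of $\mathrm{Gal}(\overline{k^{(v)}}/\ell^{(u)})$ in $\mathrm{Aut}(\Phi(\Gu^{(v)}, \bar{T}))$ already contains the Weyl group $W$; a fortiori, the image of the larger group $\mathrm{Gal}(\overline{k^{(v)}}/k^{(v)})$ contains $W$, making $\bar{T}$ generic over $k^{(v)}$. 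Transporting through the displayed isomorphism yields $k_v$-genericity of $T$.

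The principal obstacle is the structural claim that $\mathscr{T} = C_{\mathscr{G}}(\gamma)^\circ$ is a genuine maximal torus scheme of $\mathscr{G}$ over $\mathscr{O}_v$, and not merely a family of maximal tori fiberwise. Once this is in place, the unramifiedness of $k_T$ and the Galois-equivariant comparison between the generic and special fibers follow automatically from the étaleness of the splitting cover, and the transfer of genericity from $k^{(v)}$ to $k_v$ becomes a routine matter of identifying the corresponding Galois groups.
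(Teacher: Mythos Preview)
Your approach to part~(2) is sound and in fact more structural than the paper's: you invoke the fact that the scheme-theoretic centralizer $\mathscr{T} = C_{\mathscr{G}}(\gamma)^\circ$ of a fiberwise regular semi-simple section is a maximal torus of $\mathscr{G}$, whence unramifiedness of the splitting field and the transfer of genericity follow directly from the \'etaleness of the splitting cover and the constancy of the root datum. The paper instead argues computationally with the characteristic polynomial of $\mathrm{Ad}\,\gamma$ and degree counts via Lemma~\ref{L:generate}, never explicitly constructing~$\mathscr{T}$. Both routes work; yours is cleaner once the centralizer-scheme fact is granted.

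Part~(1), however, contains a genuine gap. You claim that regular semi-simplicity of $\bar{\gamma}$ in $\Gu^{(v)}$ forces $\mathrm{disc}(p_{\bar{\gamma}}) \neq 0$ for the characteristic polynomial in the fixed embedding $G \hookrightarrow \mathrm{GL}_n$. This is false for an arbitrary faithful representation: take $\mathrm{SL}_2 \hookrightarrow \mathrm{GL}_4$ via $V \oplus V$, where every element has characteristic polynomial with repeated roots, yet regular semi-simple elements abound. Consequently your deduction of semi-simplicity of $\gamma$ collapses, and so does regularity (which you defer to part~(2), but part~(2) as you wrote it \emph{assumes} regularity on the generic fiber). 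The paper circumvents this by working with the characteristic polynomial $c(t)$ of $\mathrm{Ad}\,\gamma$: here the correct invariant is not the discriminant but the multiplicity of $1$ as a root, which equals the rank $r$ precisely when the element is regular semi-simple. Since this multiplicity is $r$ for $\bar{c}(t)$ and can only increase under specialization, it is exactly $r$ for $c(t)$; a short Jordan-decomposition argument (using $\mathrm{char}\,k = 0$) then rules out a nontrivial unipotent part and yields both semi-simplicity and regularity of $\gamma$.
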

\begin{proof}
(1): Let $c(t)$ be the characteristic polynomial of $\mathrm{Ad}\: \gamma$. Then its reduction $\bar{c}(t)$ is the characteristic polynomial of
$\mathrm{Ad}\: \bar{\gamma}$. Since $\bar{\gamma}$ is regular semisimple, the multiplicity of $1$ as a root of $\bar{c}(t)$ equals $r = \mathrm{rk}\:
\underline{G}^{(v)} = \mathrm{rk}\: G$. So, the multiplicity of $1$ as a root of $c(t)$ is $\leq r$, implying that it is in fact precisely $r$ and hence the element $\gamma$ is regular and semisimple. (To see the latter, let us consider the Jordan decomposition if $\gamma = \gamma_s \gamma_u$; then $c(t)$ is also  the characteristic polynomial of $\mathrm{Ad}\: \gamma_s$. On the other hand, since $\mathrm{char}\: k = 0$, the assumption $\gamma_u \neq e$ would imply the existence of a nontrivial nilpotent element in the Lie algebra centralized by $\gamma_s$. But this would clearly make the multiplicity of 1 as a root of $c(t)$ greater than  $r$, a contradiction.)

Furthermore, since $\bar{\gamma}$ has infinite order, so does $\gamma$.

\vskip1mm

(2): Let $E = (k_v)_T$ be the splitting field of $T$ over $k_v$. Clearly, $E$ contains $\ell_u$, and we let $\tilde{u}$ denote the extension of $u$ to $E$.
Since $\ell/k$ is unramified at $v$, it follows from \cite[Lemma 4.1]{PR-WC} that it is enough to prove that
\begin{equation}\label{E:=}
[E : \ell_u] =: w = [E^{(\tilde{u})} : \ell^{(u)}].
\end{equation}
Since $E$ contains the splitting field of $c(t)$, the residue field $E^{(\tilde{u})}$ contains the splitting field of $\bar{c}(t)$. By our construction, the $k^{(v)}$-torus $\bar{T}:= C_{\underline{G}^{(v)}}(\bar{\gamma})$ is generic over $\ell^{(u)}$, so $[(k^{(v)})_{\bar{T}} : \ell^{(u)}] = w$. On the other
hand, the roots of $\bar{c}(t)$ include the values at $\bar{\gamma}$ of all roots $\alpha \in \Phi(\underline{G}^{(v)} , \bar{T})$. So, it follows from Lemma \ref{L:generate} that $(k^{(v)})_{\bar{T}} = E^{(\tilde{u})}$. Thus,
$$
w = [E : \ell_u] \geq [E^{(\tilde{u})} : \ell^{(u)}] = [(k^{(v)})_{\bar{T}} : \ell^{(u)}] = w,
$$
and (\ref{E:=}) follows.
\end{proof}

First, assume that the type of $G$ is different from $\textsf{A}_1$ and $\textsf{B}_{\ell}$, and let $v \in V \setminus S(\Gamma)$. It follows
from Lemma \ref{L:elt} and the discussion preceding it that one can find a regular semisimple element $\gamma \in \Gamma$ of
infinite order such that the $k$-torus $T = C_G(\gamma)^{\circ}$ is generic over $\ell_u$ and the splitting field $k_T$ is
unramified at $v$. By our assumption, $\gamma$ is weakly commensurable to some semisimple element $\gamma' \in \Gamma'$ of infinite
order. Let $T'$ be a maximal $k$-torus of $G'$ containing $\gamma'$. According to Corollary \ref{C:Isog}, the $k$-tori $T$ and $T'$ are
isogenous over $k$. It follows that $T'$ is generic over $\ell_u = {\ell'}_u$, hence over $k_v$. Then $G'$ has good reduction at $v$ by Theorem \ref{T:GR-not}.

Next, let $G$ be of one of the types $\textsf{A}_1$ or $\textsf{B}_{\ell}$. We then first pick a regular semisimple element $\bar{\gamma}_1 \in \Gamma^{(v)}$ of infinite order that is generic over $k^{(v)}$ (note that here $\ell = k$), and let $\bar{T}_1 = C_{\underline{G}^{(v)}}(\bar{\gamma}_1)^{\circ}$ denote the corresponding $k^{(v)}$-torus. We then pick a regular semisimple element $\bar{\gamma}_2 \in \Gamma^{(v)}$ of infinite order that is generic over the splitting field $(k^{(v)})_{\bar{T}_1}$, and let $\bar{T}_2 = C_{\underline{G}^{(v)}}(\bar{\gamma}_2)^{\circ}$. Note that the Dynkin diagrams of the types at hand do not have nontrivial automorphisms, so
$$
[(k^{(v)})_{\bar{T}_1} (k^{(v)})_{\bar{T}_2} : (k^{(v)})_{\bar{T}_1}] = w = [(k^{(v)})_{\bar{T}_2} : k^{(v)}],
$$
which implies that
\begin{equation}\label{E:disjoint}
(k^{(v)})_{\bar{T}_1} \cap (k^{(v)})_{\bar{T}_2} = k^{(v)}.
\end{equation}
Now, pick $\gamma_i \in \Gamma$ so that $\rho_v(\gamma_i) = \bar{\gamma}_i$, and let $T_i = C_G(\gamma_i)^{\circ}$, for $i = 1, 2$. Also, let $c_i(t)$ be the characteristic polynomial of $\mathrm{Ad}\: \gamma_i$. Then the reduction $\bar{c}_i(t)$ is the characteristic polynomial of $\mathrm{Ad}\: \bar{\gamma}_i$. Since $T_i$ (resp., $\bar{T}_i$) is $k_v$- (resp., $k^{(v)}$-)generic, it follows from Lemma \ref{L:generate} that $(k_v)_{T_i}$ (resp., $(k^{(v)})_{\bar{T}_i})$ coincides with the splitting field of $c_i$ (resp., $\bar{c}_i$), and therefore $(k^{(v)})_{\bar{T}_i}$ is precisely the residue field of $(k_v)_{T_i}$. We also know from Lemma \ref{L:elt} that the extension $(k_v)_{T_i}/k_v$ is unramified. Then (\ref{E:disjoint}) yields that $(k_v)_{T_1} \cap (k_v)_{T_2} = k_v$. By our assumption, the elements $\gamma_1 , \gamma_2$ are weakly commensurable to semisimple elements  $\gamma'_1 , \gamma'_2 \in \Gamma'$ of infinite order, respectively. Let $T'_i$ be a maximal $k$-torus of $G'$ containing $\gamma'_i$. Then by Corollary \ref{C:Isog}, the torus $T_i$ is $k$-isogenous to the torus $T'_i$ for $i = 1, 2$. It follows that $T'_i$ is generic over $k_v$, the extension $(k_v)_{T'_i}/k_v$ is unramified for $i = 1, 2$, and
$$
(k_v)_{T'_1} \cap (k_v)_{T'_2} = k_v.
$$
So, $G'$ has good reduction by Theorem \ref{T:AB}.

\vskip2mm

\noindent {\bf \ref{S:WC}.4. Subgroups with the same exceptional set.} A smaller Zariski-dense subgroup $\Delta \subset \Gamma$ may, a priori, require a larger exceptional set $S(\Delta)$ in Theorem \ref{T:WC-GR}. We will show in this subsection, however, that our construction of $S(\Gamma)$ produces an exceptional set that works for many subgroups $\Delta \subset \Gamma$ that are  smaller than $\Gamma$. To describe precisely the possibilities for $\Delta$, we will need the following definition.

\addtocounter{thm}{1}

\vskip2mm

\noindent {\bf Definition 9.4.} Let $\Gamma$ be an abstract group. The following subgroups will be called \emph{principal standard subgroups} of $\Gamma$:

\vskip2mm

\noindent (1) the commutator subgroup $[\Gamma , \Gamma]$;

\vskip1mm

\noindent (2) the subgroups $\Gamma^{(n)}$ generated by the $n$th powers $\gamma^n$ of elements $\gamma \in \Gamma$, for some $n \geqslant 1$.

\vskip2mm

\noindent Furthermore, a subgroup $\Delta$ of $\Gamma$ is called \emph{standard} if there exists a (finite) chain of subgroups $$\Delta = \Gamma_m \subset \Gamma_{m-1} \subset \cdots \subset \Gamma_1 \subset \Gamma_0 = \Gamma$$ such that $\Gamma_{i+1}$ contains a principal standard subgroup of $\Gamma_i$.

\vskip2mm

We note that all standard subgroups of a (finitely generated) Zariski-dense subgroup of a (connected) semisimple algebraic group are automatically Zariski-dense, although they may not be finitely generated. However, typically this does not create any additional problems in the analysis of weak commensurability since the statements dealing with the existence of generic elements having special properties remain valid for those Zariski-dense subgroups
that are contained in a finitely generated subgroup of $G(k)$. Our goal in this subsection is to present the following strengthening of Theorem~\ref{T:WC-GR}.
\begin{thm}\label{T:StSub}
Let $G$ be an absolutely almost simple algebraic group over a finitely generated field $k$ of characteristic zero, and let $V$ be a divisorial set of places of $k$. Given a Zariski-dense subgroup $\Gamma \subset G(k)$ with trace field $k$, there exists a finite subset $S(\Gamma) \subset V$ such that any absolutely almost simple algebraic $K$-group $G'$ with the property that there exists a finitely generated Zariski-dense subgroup $\Gamma' \subset G'(K)$ that is weakly commensurable to some standard subgroup $\Delta \subset \Gamma$, has good reduction at all $v \in V \setminus S(\Gamma)$.
\end{thm}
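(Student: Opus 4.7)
The plan is to verify that the set $S(\Gamma)$ produced by the proof of Theorem~\ref{T:WC-GR} works without enlargement for every standard subgroup $\Delta \subset \Gamma$. The arguments in \S\ref{S:WC}.1 and \S\ref{S:WC}.3 used $\Gamma$, beyond the mere definition of $S(\Gamma)$, in only two essential ways: (i) when $\dim \mathfrak{X} > 1$, through the Zariski-density of $\Gamma^{(v)}$ in $\underline{G}^{(v)}$ for $v \in V \setminus S(\Gamma)$; (ii) in the number-field case, through the openness of the closure of $\Gamma$ in $G(k_v)$ for $v \in V \setminus S(\Gamma)$, needed to invoke Theorem~\ref{T:ExGenElts2} inside the proof of Proposition~\ref{P:quasi-split}. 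The core of the proof will be to propagate both properties along the formation of principal standard subgroups, uniformly in $v$.

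For property (i), I would proceed by induction on the defining chain of $\Delta$. Suppose $H \subset \Gamma$ satisfies $\overline{H^{(v)}} = \underline{G}^{(v)}$; it suffices to show that both $[H,H]^{(v)}$ and $(H^{(n)})^{(v)}$ are Zariski-dense in $\underline{G}^{(v)}$. For commutators, the Zariski closure of $[H^{(v)}, H^{(v)}]$ is a subgroup of $\underline{G}^{(v)}$ normalized by $\overline{H^{(v)}} = \underline{G}^{(v)}$, hence a normal algebraic subgroup of the absolutely almost simple $\underline{G}^{(v)}$; since the adjoint group is nonabelian this closure cannot be central, so it equals $\underline{G}^{(v)}$. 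For $n$th powers, the map $g \mapsto g^n$ is surjective on every maximal torus of $\underline{G}^{(v)}$ and therefore dominant on the group, so the $n$th powers of elements of a Zariski-dense set are themselves Zariski-dense. By induction $\Delta^{(v)}$ is Zariski-dense for every standard $\Delta \subset \Gamma$ and every $v \in V \setminus S(\Gamma)$.

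With density in hand, the proof of Theorem~\ref{T:WC-GR} when $\dim \mathfrak{X} > 1$ transfers to $\Delta$ with only cosmetic changes: applying Theorem~\ref{T:ExGenElts} to the Zariski-dense subsemigroup $\Delta^{(v)} \subset \underline{G}^{(v)}(k^{(v)})$ produces a regular semisimple element $\bar{\gamma} \in \Delta^{(v)}$ of infinite order, generic over $\ell^{(u)}$; any lift $\gamma \in \Delta$ satisfies the conclusions of Lemma~\ref{L:elt}, so $T = C_G(\gamma)^\circ$ is $k_v$-generic with $k_T/k$ unramified at $v$. Weak commensurability of $\Delta$ with $\Gamma'$ combined with Corollary~\ref{C:Isog} then yields an isogenous maximal $k$-torus $T'$ of $G'$ with the analogous properties, and Theorem~\ref{T:GR-not} forces good reduction of $G'$ at $v$. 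In types $\textsf{A}_1$ and $\textsf{B}_{\ell}$ one repeats the construction for a second $\bar{\gamma}_2 \in \Delta^{(v)}$ chosen generic over the splitting field of the first torus and applies Theorem~\ref{T:AB}.

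The main technical obstacle is property (ii) in the number-field case. For $v \notin S(\Gamma)$ the closure $U := \overline{\Gamma} \subset G(k_v)$ is an open compact-mod-center subgroup of the simply connected semisimple $p$-adic Lie group $G(k_v)$ (after the harmless reduction to the simply connected case). I plan to argue that the closures of the principal standard subgroups of $\Gamma$ remain open in $U$, so that iteration yields openness of $\overline{\Delta}$. For commutators, a compact open subgroup of such a $p$-adic Lie group has finite abelianization, whence $[U,U]$ is open of finite index in $U$ and $\overline{[\Gamma,\Gamma]} \supset [U,U]$ is open. For $n$th powers, the differential at the identity of the analytic map $g \mapsto g^n$ on $U$ is multiplication by $n$ on the Lie algebra and is nonzero, so the image of $g \mapsto g^n$ contains an open neighborhood of the identity; in particular, the subgroup generated by $n$th powers is open, whence $\overline{\Gamma^{(n)}}$ is open. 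Once openness of $\overline{\Delta}$ is secured, Proposition~\ref{P:quasi-split} applies to $\Delta$ without modification and delivers good reduction of $G'$ at all $v \in V \setminus S(\Gamma)$.
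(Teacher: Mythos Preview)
Your approach is essentially the same as the paper's: both argue that the set $S(\Gamma)$ already constructed in the proof of Theorem~\ref{T:WC-GR} works for every standard subgroup $\Delta$, by checking that the two properties on which that proof hinges --- Zariski-density of $\rho_v(\Delta)$ in $\underline{G}^{(v)}$ when $\dim\mathfrak{X}>1$, and openness of the $v$-adic closure of $\Delta$ when $\dim\mathfrak{X}=1$ --- are inherited under passage to principal standard subgroups. The paper simply asserts both inheritance statements (``it is easy to see''), whereas you supply justifications; one small imprecision is that $U=\overline{\Gamma}$ need not be compact (or compact-mod-center), so your finite-abelianization argument should be applied to a compact open subgroup $U_0\subset U$, after which $[U,U]\supset [U_0,U_0]$ gives openness.
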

\begin{proof}
It turns out that the set $V(\Gamma)$ we have constructed in the proof of Theorem \ref{T:WC-GR} works in this more general setting. To see this, we will revisit our construction separately in the cases where $\dim \mathfrak{X} = 1$ and $\dim \mathfrak{X} > 1$. In the first case, the exceptional set $S(\Gamma)$ was constructed in subsection 9.1 as the union $S_1 \cup S_2 \cup S_3$ in the notations introduced therein. It follows from the definitions that the finite sets $S_1$ and $S_2$ are independent of $\Gamma$. Now, recall that the finite set $S_3$ is chosen so that the closure of $\Gamma$ in $G(k_v)$ is open for all $v \in V \setminus S_3$. It is easy to see, however, that every principal standard subgroup, and hence any standard subgroup, of an open subgroup of $G(k_v)$ is itself open. This implies that if the closure of $\Gamma$ in $G(k_v)$ is open, then so is the closure of any standard subgroup $\Delta \subset \Gamma$. In other words,
if the set $S_3$ is chosen as in section 9.1 for $\Gamma$, then it also ensures the required property (i.e., the openness of the closure) for any standard subgroup $\Delta \subset \Gamma$. Thus, the set $S(\Gamma) = S_1 \cup S_2 \cup S_3$ will serve as an exceptional set for $\Delta$ as well.

Recall that the exceptional set in the case $\dim \mathfrak{X} > 1$ was actually constructed in subsection 9.3 as $V(\Gamma) = S_1 \cup S_2$,  where $S_1$ consists of those $v \in V$ for which condition (a) fails, and $S_2$ consists of those $v \in V \setminus S_1$ for which condition (b) fails. Clearly, $S_1$ is independent of $\Gamma$. On the other hand, $S_2$ is chosen to be disjoint from $S_1$ so that for $v \in V \setminus (S_1 \cup S_2)$, the image $\rho_v(\Gamma)$ under the reduction map is a Zariski-dense subgroup of the connected absolutely almost simple algebraic $k^{(v)}$-group $\Gu^{(v)}$. Then $\rho_v(\Delta)$ is also Zariski-dense in $\Gu^{(v)}$ for any standard subgroup $\Delta \subset \Gamma$. This means that if we choose the finite set $S_2$ for $\Gamma$,
then condition (b) will hold true for any standard subgroup $\Delta \subset \Gamma$ and for any $v \in V \setminus (S_1 \cup S_2)$. Thus $S(\Gamma) = S_1 \cup S_2$ can be taken for an exceptional set for $\Delta$, completing the proof.
\end{proof}

\vskip2mm

\section{Application to lattices and length-commensurable Riemann surfaces}\label{S:lattice}

As we already mentioned in the introduction, there is a conjecture that predicts the existence of only finitely many possibilities for the algebraic hull of a finitely generated Zariski-dense subgroup that is weakly commensurable to a given one (\cite[Conjecture 6.1]{R-ICM}); this conjecture is a crucial element in the so-called ``eigenvalue rigidity."

\vskip2mm

\noindent {\bf Conjecture \ref{S:lattice}.1.} {\it Let $G_1$ and $G_2$ be absolutely simple (adjoint) algebraic groups over a field $F$ of characteristic zero, and let $\Gamma_1 \subset G_1(F)$ be a finitely generated Zariski-dense subgroup with trace field $k_{\Gamma_1} =: k$. Then there exists a \underline{\emph{finite}} collection $\mathscr{G}_1^{(2)}, \ldots , \mathscr{G}_r^{(2)}$ of $F/k$-forms of $G_2$ such that any finitely generated Zariski-dense subgroup $\Gamma_2 \subset G_2(F)$ that is weakly commensurable to $\Gamma_1$ is conjugate in $G_2(F)$ to a subgroup of one of the $\mathscr{G}_i^{(2)}(k)$ $(\subset G_2(F))$ for $i = 1, \ldots , r$.}

\vskip2mm

Due to Theorem \ref{T:WC-GR}, this conjecture would follow from the Finiteness Conjecture for forms with good reduction, and hence is valid in those cases where the Finiteness Conjecture has been established.  For example,  since the truth of the Finiteness Conjecture is known for inner forms of type $\textsf{A}_n$ over all finitely generated fields of characteristic zero (cf. \cite{CRR3}), we see that given a central simple algebra $A$ over a finitely generated field $k$ with $\mathrm{char}\: k = 0$ and a finitely generated Zariski-dense subgroup $\Gamma \subset G(k)$, where $G = \mathrm{SL}_{1,A}$, with trace field $k_{\Gamma} = k$, there are only finitely many isomorphism classes of central division $k$-algebras $A'$ such that there exists a finitely generated Zariski-dense subgroup $\Gamma' \subset G'(k)$, where $G' = \mathrm{SL}_{1,A'}$, that is weakly commensurable to $\Gamma$. Other available results on the Finiteness Conjecture (cf. \cite{RR-survey}) lead to a variety of cases where Conjecture 10.1 is known. We will not, however, provide a complete list of these cases here, but rather focus on the case of  finitely generated Zariski-dense subgroups  weakly commensurable to lattices (arithmetic or not), where our result (given in Theorem \ref{T:lattice} below)
may have applications to locally symmetric spaces.

\vskip2mm

\noindent {\bf \ref{S:lattice}.1. Conjecture 10.1 for lattices.} We refer the reader to \cite{Marg}, \cite{Ragh} for basic facts about lattices in semisimple Lie groups. In the case of simple Lie groups, we have the following finiteness result for weakly commensurable lattices (arithmetic or not).

\addtocounter{thm}{1}

\begin{thm}\label{T:lattice}
Let $G_1$ be an absolutely simple (adjoint) real algebraic group, and let
$\Gamma_1 \subset G_1(\R)$ be a lattice with trace field $k = k_{\Gamma_1}$. Given an
absolutely simple (adjoint) algebraic group $G_2$ over an extension $F$ of $k$, there exists a finite collection $\mathscr{G}_1^{(2)}, \ldots , \mathscr{G}_r^{(2)}$ of $F/k$-forms of $G_2$ such that a finitely generated Zariski-dense subgroup $\Gamma_2 \subset G_2(F)$ that is weakly commensurable to $\Gamma_1$ is necessarily $G_2(F)$-conjugate to a subgroup of one of the $\mathscr{G}^{(2)}_i(k)$'s $(\subset G_2(F))$.
\end{thm}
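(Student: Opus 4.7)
The plan is to combine Vinberg's theorem on the minimal field of definition with Theorem~\ref{T:WC-GR} and classical finiteness results over number fields. By Vinberg's theorem, any finitely generated Zariski-dense subgroup $\Gamma_2 \subset G_2(F)$ has trace field $k_{\Gamma_2}$ equal to the minimal field of definition of $\mathrm{Ad}\,\Gamma_2$, and the Zariski closure of $\mathrm{Ad}\,\Gamma_2$ in $\mathrm{GL}(\mathfrak{g}_2)$ is an absolutely simple algebraic $k_{\Gamma_2}$-group $\mathscr{G}_2'$. Using the adjointness hypothesis to identify $G_2 \simeq \mathrm{Ad}(G_2) \subset \mathrm{GL}(\mathfrak{g}_2)$, one realizes $\mathscr{G}_2'$ as an $F/k_{\Gamma_2}$-form of $G_2$ embedded as a $k_{\Gamma_2}$-subgroup of $G_2$, with $\Gamma_2 \subset \mathscr{G}_2'(k_{\Gamma_2})$. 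Theorem~\ref{T:basic}(2) then forces $k_{\Gamma_2}=k_{\Gamma_1}=k$, so the problem is reduced to bounding the number of $G_2(F)$-conjugacy classes of such $k$-subgroups of $G_2$.

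The key additional input is that the trace field of a lattice in a real simple Lie group is always a number field---a classical fact valid both for arithmetic lattices (via Borel--Harish-Chandra) and in the non-arithmetic rank-one setting (via invariant trace field theory, cf.\ \cite{MacReid}). Fix a divisorial set $V$ of $k$. By Theorem~\ref{T:WC-GR}, there exists a finite $S(\Gamma_1)\subset V$ such that every algebraic hull $\mathscr{G}_2'$ arising as above has good reduction at every $v\in V\setminus S(\Gamma_1)$. Since $k$ is a number field, the Finiteness Conjecture is classical in this setting: the set of $k$-isomorphism classes of $F/k$-forms of $G_2$ with good reduction outside a fixed finite subset of $V$ is finite, by finiteness of nonabelian Galois cohomology with restricted ramification (Borel--Serre, Harder). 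This yields only finitely many $k$-isomorphism classes of algebraic hulls.

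To finish we must pass from $k$-isomorphism classes to $G_2(F)$-conjugacy classes of $k$-subgroups of $G_2$. Given an abstract $k$-form $\mathscr{G}$ of $G_2$, any two $k$-embeddings $\iota_1,\iota_2\colon\mathscr{G}\hookrightarrow G_2$ realizing $\mathscr{G}$ as an $F/k$-form differ, after base change to $F$, by an automorphism of $G_2$; modulo inner automorphisms---which, since $G_2$ is adjoint, coincide with $G_2(F)$---this residual ambiguity is controlled by $\mathrm{Out}(G_2)$, the automorphism group of the Dynkin diagram, of finite order at most $6$. Hence each $k$-isomorphism class contributes only finitely many $G_2(F)$-conjugacy classes, and combining with the previous paragraph yields the required finite list $\mathscr{G}_1^{(2)},\ldots,\mathscr{G}_r^{(2)}$: every weakly commensurable $\Gamma_2$ becomes a subgroup of some $\mathscr{G}_i^{(2)}(k)$ after $G_2(F)$-conjugation.

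The main obstacle I expect is the cocycle-level bookkeeping in the last paragraph, which will require a careful Galois-cohomology argument to ensure that the element of $G_2(F)$ implementing the conjugation actually sends the $k$-subgroup $\mathscr{G}_2'$ to a genuinely $k$-defined subgroup of $G_2$ (equivalently, that $g^{-1}\sigma(g)$ lies in the normalizer of $\mathscr{G}_2'$ for every $\sigma\in\mathrm{Gal}(\bar F/k)$). A secondary but crucial ingredient is the appeal to the lattice hypothesis to guarantee that $k$ is a number field; without this, classical finiteness for forms with good reduction would be unavailable and one would have to invoke the still-open Finiteness Conjecture in full generality.
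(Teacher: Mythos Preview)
Your overall architecture---Vinberg's theorem to produce the algebraic hull, Theorem~\ref{T:basic}(2) to pin down the trace field, Theorem~\ref{T:WC-GR} to force good reduction outside a finite set, finiteness of forms with good reduction, and a final passage from $k$-isomorphism classes to $G_2(F)$-conjugacy classes via outer automorphisms---matches the paper's proof essentially line for line. The cocycle bookkeeping you flag as an obstacle is handled in the paper exactly as you anticipate: one enlarges the list of forms by precomposing the fixed $F$-isomorphisms with a full set of coset representatives for $\mathrm{Aut}(G_2)(F)/\mathrm{Int}(G_2)(F)$.

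There is, however, a genuine gap in your ``key additional input.'' It is \emph{not} true that the trace field of a lattice in a real simple Lie group is always a number field. Local rigidity (Selberg--Weil, \cite[7.67, 7.68]{Ragh}) gives this for every $G_1$ \emph{except} $\mathrm{PGL}_2$ over $\mathbb{R}$: lattices in $\mathrm{PSL}_2(\mathbb{R})$ deform (Teichm\"uller space is positive-dimensional), and a generic Fuchsian group has traces transcendental over $\mathbb{Q}$. Your citation of \cite{MacReid} does not rescue this; the invariant trace field of a non-arithmetic Fuchsian group is finitely generated but typically not algebraic. Consequently, your appeal to ``classical finiteness over number fields'' fails precisely when $G_1 = \mathrm{PGL}_2$.

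The paper repairs this by a case split you omit. If $G_1 \neq \mathrm{PGL}_2$, then $k$ is a number field and the Finiteness Conjecture is classical (Borel--Serre/Harder), as you say. If $G_1 = \mathrm{PGL}_2$, then by Theorem~\ref{T:basic}(1) the group $G_2$ is also of type $\textsf{A}_1$, and for inner forms of type $\textsf{A}_1$ the Finiteness Conjecture is a theorem over \emph{any} finitely generated field of characteristic $\neq 2$ (via finiteness of the $2$-torsion in the unramified Brauer group; see \cite[Theorem 7.6]{RR-survey}). So the lattice hypothesis is used not to force $k$ to be a number field in all cases, but to guarantee that one lands in one of the two regimes where the Finiteness Conjecture is already known.
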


\vskip1mm

(Here we assume that each $F/k$-form $\mathscr{G}^{(2)}_i$ comes with a {\it fixed} $F$-isomorphism $\phi_i \colon \mathscr{G}^{(2)}_i \times_k F \to G_2$, which then defines an embedding of groups $\mathscr{G}^{(2)}_i(k) \hookrightarrow \mathscr{G}^{(2)}_i(F) \hookrightarrow G_2(F)$.)

\vskip1mm

\begin{proof}
Since $\Gamma_1$ is finitely generated, the field $k$ is also finitely generated.  Let $V$ be a divisorial set of places of $k$. Next, for the algebraic hull  $\mathscr{G}^{(1)}$  of $\Gamma_1$, we have the inclusion $\Gamma_1 \subset \mathscr{G}^{(1)}(k)$. Now, let $\Gamma_2 \subset G_2(F)$ be an arbitrary finitely generated Zariski-dense subgroup weakly commensurable to $\Gamma_1$. Then the trace field $k_{\Gamma_2}$ coincides with $k$ (cf. Theorem \ref{T:basic}(2)). Let $\mathscr{G}_{\Gamma_2}$ be the algebraic hull of $\Gamma_2$, so that $\Gamma_2 \subset \mathscr{G}_{\Gamma_2}(k)$. According to Theorem \ref{T:WC-GR}, we can find a finite subset $S(\Gamma_1) \subset V$ such that all such $\mathscr{G}_{\Gamma_2}$'s have good reduction at any $v \in V \setminus S(\Gamma_1)$. We now recall that unless $G_1 = \mathrm{PGL}_2$, the trace field $k$ is a field of algebraic numbers (cf. \cite[7.67 and 7.68]{Ragh}), and that if $G_1$ is isomorphic to $\mathrm{PGL}_2$, then so is $G_2$ (cf. Theorem \ref{T:basic}(1)). Since the Finiteness Conjecture for forms with good reduction has already been established in the cases where either $k$ is a number field (cf. \cite[Proposition 5.2]{RR-survey}) or the group is isogenous to $\mathrm{PGL}_2$ (cf. \cite[Theorem 7.6]{RR-survey}), it follows that there exists a {\it finite} collection $\overline{\mathscr{G}}^{(2)}_1, \ldots , \overline{\mathscr{G}}^{(2)}_{\bar{r}}$ of $F/k$-forms of $G_2$ with the following property: Given a finitely generated Zariski-dense subgroup $\Gamma_2 \subset G_2(F)$, there exist an $i \in \{1, 2, \ldots , \bar{r}\}$ and a $k$-isomorphism $\varphi_{\Gamma_2 , i} \colon \mathscr{G}_{\Gamma_2} \to \overline{\mathscr{G}}^{(2)}_i$, and then of course $\varphi_{\Gamma_2 , i}(\Gamma_2) \subset \overline{\mathscr{G}}^{(2)}_i(k)$. On the other hand, we have $F$-isomorphisms
$$
\iota_{\Gamma_2} \colon \mathscr{G}_{\Gamma_2} \times_k F \to G_2 \ \ \text{and} \ \ \bar{\iota}_i \colon \overline{\mathscr{G}}^{(2)}_i \times_k F \to G_2.
$$
Then $\sigma_{\Gamma_2 , i} := \bar{\iota}_i \circ \varphi_{\Gamma_2 , i} \circ \iota_{\Gamma_2}^{-1}$ is an $F$-automorphism of $G_2$, which, in terms of the embeddings of the groups of $k$-rational points given by $\iota_{\Gamma_2}$ and $\iota_i$, has the property $\sigma_{\Gamma_2 , i}(\Gamma_2) \subset \mathscr{G}^{(2)}_i(k)$. If $\sigma_{\Gamma_2 , i}$ is inner, then it is conjugation by an element of $G_2(F)$ as $G_2$ is adjoint, giving the required fact. To handle the general case, we need to expand the collection of forms $\overline{\mathscr{G}}^{(2)}_i$ and (fixed) $F$-isomorphisms $\bar{\iota}_i \colon  \overline{\mathscr{G}}_i \to G_2$, $i = 1, \ldots , \bar{r}$. Namely, let $\theta_j$ ($j = 1, \ldots , t$) be a system of representatives of  the cosets $\mathrm{Aut}(G_2)(F)/\mathrm{Int}(G_2)(F)$, where $\mathrm{Aut}(G_2)(F)$ is  the group of $F$-defined automorphisms of the algebraic $F$-group $G_2$ and $\mathrm{Int}(G_2)(F)$ is the subgroup of inner automorphisms. Then for $i = 1, \ldots , \bar{r}$ and $j = 1, \ldots , t$, we set
$$
\mathscr{G}^{(2)}_{i,j} = \overline{\mathscr{G}}_i \ \ \text{and} \ \ \iota_{i,j} = \theta_j^{-1} \circ \iota_i.
$$
We have already seen that given a finitely generated Zariski-dense subgroup $\Gamma_2 \subset G_2(F)$, we can find $i \in \{1, \ldots , \bar{r} \}$ such that $\sigma_{\Gamma_2 , i}(\Gamma_2) \subset \overline{\mathscr{G}}_i(K)$. Furthermore, we can find $j \in \{1, \ldots , t\}$ so that $\sigma_{\Gamma_2 , i} = \theta_j \circ \tau_{\Gamma_2, i, j}$, with $\tau_{\Gamma_2, i, j}$ inner. Then
$$
\tau_{\Gamma_2, i, j}(\Gamma_2) \subset \iota_{i,j}(\mathscr{G}_{i,j}(k)),
$$
as required.
\end{proof}

\noindent {\bf \ref{S:lattice}.2. An application to length-commensurable Riemann surfaces.} Let $\mathbb{H} = \{ x + iy \ \vert \  y  > 0\}$ be the complex upper half-plane equipped with the standard hyperbolic metric $ds^2 = \frac{1}{y^2}\left( dx^2 + dy^2 \right)$. The action of $\mathrm{SL}_2(\R)$ on $\mathbb{H}$ by fractional linear transformations is transitive and isometric. Furthermore, the stabilizer of $i \in \mathbb{H}$ is the special orthogonal group $\mathrm{SO}_2(\R)$, allowing us to identify $\mathbb{H}$ with the symmetric space $\mathrm{SL}_2(\R)/\mathrm{SO}_2(\R)$. Let $\pi \colon \mathrm{SL}_2 \to \mathrm{PSL}_2$ be the canonical isogeny. Given a discrete subgroup $\Gamma \subset \mathrm{SL}_2(\R)$ containing $\{ \pm I \}$ and having torsion-free
image $\pi(\Gamma) \subset \mathrm{PSL}_2(\R)$, the quotient $M = \Gamma \backslash \mathbb{H}$ is a Riemann surface. It is well-known that every compact Riemann surface of genus $> 1$ is of this form. However, in this subsection, we will be interested in more general Riemann surfaces, where $\Gamma$ is only assumed to be finitely generated and Zariski-dense. It was demonstrated in \cite{MacReid} that some properties of $M$ can be understood in terms of the associated quaternion algebra $A_{\Gamma}$, which is constructed as follows.

Let $\Gamma^{(2)}$ denote the subgroup generated by the squares of all elements, and let $A_{\Gamma}$ be the $\Q$-subalgebra of $M_2(\R)$ generated by $\Gamma^{(2)}$. One shows that $A_{\Gamma}$ is a quaternion algebra (although not necessarily a division algebra) with center $k_{\Gamma} = \Q(\mathrm{tr}\: \gamma \ \vert \ \gamma \in \Gamma^{(2)})$ (trace field) --- cf. \cite[Ch. 3]{MacReid}. If $\Gamma_1$ and $\Gamma_2$ are commensurable, then $A_{\Gamma_1} = A_{\Gamma_2}$; in other words, $A_{\Gamma}$ is an invariant of the commensurability class of $\Gamma$. Moreover, if $\Gamma$ is an {\it arithmetic} Fuchsian group, then $k_{\Gamma}$ is a number field and $A_{\Gamma}$ is {\it the} quaternion algebra involved in the description of $\Gamma$ (cf. \cite[\S 8.2]{MacReid}). It follows that if $\Gamma_1$ and $\Gamma_2$ are arithmetic and the algebras $A_{\Gamma_1}$ and $A_{\Gamma_2}$ are isomorphic, then $\Gamma_1$ is commensurable with a conjugate of $\Gamma_2$, and hence the corresponding Riemann surfaces are commensurable, i.e. have a common finite-sheeted cover. The algebra $A_{\Gamma}$ no longer determines the commensurability class of $\Gamma$ if the latter is not arithmetic, but it nevertheless remains an important invariant of the commensurability class.

In differential geometry, one attaches to a Riemannian manifold $M$ various spectra; in particular, the (weak) length spectrum $L(M)$ is defined as the set of the lengths of closed geodesics in $M$. Two Riemannian manifolds $M_1$ and $M_2$ are called {\it length-commensurable} if $\Q \cdot L(M_1) = \Q \cdot L(M_2)$. For arithmetic Riemann surfaces, length-commensurability implies commensurability (cf. \cite{Reid}). ``Most" Riemann surfaces, however, are {\it not} arithmetic, and their investigation presents many challenges. In those cases where we are unable to characterize the commensurability class in terms of the length spectrum, we would like to understand at least the properties of the associated quaternion algebras. As we will see momentarily, the fact that $M_1 = \Gamma_1 \backslash \mathbb{H}$ and $M_2 = \Gamma_2 \backslash \mathbb{H}$ are length-commensurable implies that the trace fields are equal: $k_{\Gamma_1} = k_{\Gamma_2}$, i.e. the corresponding algebras $A_{\Gamma_1}$ and $A_{\Gamma_2}$ have a common center. We can now state the following finiteness result for families of length-commensurable surfaces.
\begin{thm}\label{T:RS}
Let $M_i = \Gamma_i \backslash \mathbb{H}$ $(i \in I)$ be a family of length-commensurable Riemann surfaces, with
$\Gamma_i \subset \mathrm{SL}_2(\R)$ Zariski-dense. Then the associated quaternion algebras $A_{\Gamma_i}$ $(i \in I)$
belong to \emph{finitely many} isomorphism classes (over the common center).
\end{thm}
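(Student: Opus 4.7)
The plan is to reduce Theorem \ref{T:RS} to the case of weakly commensurable Zariski-dense subgroups treated in Theorem \ref{T:WC-GR}, and then to invoke the (classical) Finiteness Conjecture for type $\textsf{A}_1$, i.e.\ the finiteness of the unramified 2-torsion of the Brauer group of a finitely generated field with respect to a divisorial set.

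First I would translate length-commensurability of the $M_i$ into weak commensurability of the $\Gamma_i$. For a hyperbolic element $\gamma\in\mathrm{SL}_2(\R)$ with eigenvalues $\lambda_\gamma^{\pm 1}$, the corresponding closed geodesic on $\Gamma\backslash\HH$ has length $2\log|\lambda_\gamma|$, and Zariski-density of $\Gamma_i$ in $\mathrm{SL}_2$ (together with the torsion-free hypothesis on the image in $\mathrm{PSL}_2(\R)$) guarantees an abundance of such elements. Thus the condition $\Q\cdot L(M_i)=\Q\cdot L(M_j)$ unpacks directly into the multiplicative relations between eigenvalues that define weak commensurability: for every hyperbolic $\gamma\in\Gamma_i$ there exist $\delta\in\Gamma_j$ and positive integers $a,b$ with $\lambda_\gamma^{2a}=\lambda_\delta^{2b}\neq 1$ (the factor of $2$ absorbs any sign). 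Fixing $i_0\in I$, every $\Gamma_i$ is then weakly commensurable to $\Gamma_{i_0}$.

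Next I would identify the invariants. By Theorem \ref{T:basic}(2), the trace fields (in the sense of $\tr\Ad\gamma$) coincide along the family: $k_{\Gamma_i}=k_{\Gamma_{i_0}}=:k$, and by a standard identity in $\mathrm{SL}_2$ this common field agrees with the center $k_{\Gamma}=\Q(\tr g:g\in\Gamma^{(2)})$ appearing in the definition of $A_\Gamma$ (cf.\ \cite[Ch.~3]{MacReid}). The $k$-algebraic hull $\mathscr{G}_i$ of $\Ad\Gamma_i$ is a $\R/k$-form of $\mathrm{PGL}_2$, and is isomorphic to $\mathrm{PGL}_{1,A_{\Gamma_i}}$; since a quaternion algebra is determined up to isomorphism by its Brauer class, finiteness of isomorphism classes of $A_{\Gamma_i}$ is equivalent to finiteness of isomorphism classes of the $\mathscr{G}_i$. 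Applying Theorem \ref{T:WC-GR} to $\Gamma_{i_0}$ (with a chosen divisorial set $V$ of places of $k$) yields a finite subset $S\subset V$ such that every $\mathscr{G}_i$ has good reduction at all $v\in V\setminus S$. By Example 2.2, this is equivalent to the quaternion algebra $A_{\Gamma_i}$ being unramified at every $v\in V\setminus S$, so the Brauer classes $[A_{\Gamma_i}]\in {}_2\Br(k)$ all lie in a single finite set (the 2-torsion of the unramified Brauer group $\Br(k)_{V\setminus S}$, whose finiteness for finitely generated $k$ is the known case of the Finiteness Conjecture in type $\textsf{A}_1$). Finitely many classes yield finitely many isomorphism classes of algebras, as required.

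The main obstacle I anticipate is not any single step but the verification at the very first one: the translation from lengths to eigenvalues must be carried out carefully enough to produce \emph{semi-simple elements of infinite order} in both $\Gamma_i$ and $\Gamma_{i_0}$ that satisfy the precise multiplicative relation in the definition of weak commensurability, and one must also confirm that the two natural candidates for the trace field---$\Q(\tr\Ad\gamma:\gamma\in\Gamma)$ (as used in \S\ref{S:generic}) and $\Q(\tr g:g\in\Gamma^{(2)})$ (as used in the definition of $A_\Gamma$)---in fact coincide. Both are standard but require a short separate argument before the machinery of Theorem \ref{T:WC-GR} can be applied cleanly.
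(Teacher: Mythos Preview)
Your proposal is correct and follows essentially the same route as the paper: translate length-commensurability into weak commensurability, equate the trace fields via Theorem~\ref{T:basic}(2), apply Theorem~\ref{T:WC-GR} to get good reduction away from a finite set, and finish with the known Finiteness Conjecture for type $\textsf{A}_1$. The only cosmetic difference is that the paper passes to $\Gamma_i^{(2)}\subset \mathrm{SL}_{1,A_{\Gamma_i}}(k)$ (using that $[\Gamma_i:\Gamma_i^{(2)}]<\infty$ preserves weak commensurability) rather than to $\mathrm{Ad}\,\Gamma_i\subset \mathrm{PGL}_{1,A_{\Gamma_i}}(k)$ as you do; both packagings achieve the same reduction, and your anticipated ``obstacle'' about matching the two trace-field conventions is exactly the small verification the paper also flags.
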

\begin{proof}
We first recall that closed geodesics in $M = \Gamma \backslash \mathbb{H}$ correspond to hyperbolic elements in $\Gamma$ different from $\pm I$, which are precisely the semisimple elements of $\Gamma$ having infinite order. Furthermore, the length of the closed geodesic $c_{\gamma}$ that corresponds to an element $\gamma \in \Gamma$ which is conjugate to $\left( \begin{array}{cc} t_{\gamma} & 0 \\ 0 & t_{\gamma}^{-1} \end{array}  \right)$ is given by
$$
\ell(c_{\gamma}) = \frac{2}{n_{\gamma}} \cdot \vert \log \vert t_{\gamma} \vert \vert,
$$
where $n_{\gamma}$ is a certain integer (``winding number"). It follows that
\begin{equation}\label{E:RLS}
\Q \cdot L(M) = \Q \cdot \{ \: \log \vert t_{\gamma} \vert \ \vert \ \gamma \in \Gamma \ \ \text{semisimple of infinite order} \: \}.
\end{equation}
Suppose now that two Riemann surfaces $M_i = \Gamma_i \backslash \mathbb{H}$ $(i = 1, 2)$ are length-commensurable. This means that for any
semisimple
element $\gamma_1 \in \Gamma_1$ of infinite order, there exists a semisimple element $\gamma_2 \in \Gamma_2$ of infinite order such that $\ell(c_{\gamma_1}) /
\ell(c_{\gamma_2}) \in \Q^{\times}$, and vice versa. This translates into the relation
$$
t_{\gamma_1}^{n_1} = t_{\gamma_2}^{n_2} \neq 1 \ \ \text{for some} \ \ n_1 , n_2 \in \mathbb{Z},
$$
which implies that the subgroups $\Gamma_1$ and $\Gamma_2$ are weakly commensurable. Then applying Theorem \ref{T:basic}(2), we conclude that their trace fields are the same: $k_{\Gamma_1} = k_{\Gamma_2}$ (we note that the definitions of the trace field given earlier and in the current subsection produce the same result). Fix one subgroup $\Gamma_1$ and set $k = k_{\Gamma_1}$; then $\Gamma_1^{(2)} \subset G_1(k)$, where $G_1 = \mathrm{SL}_{1 , A_{\Gamma_1}}$.  Since the group $\Gamma_1$ is finitely generated, the field $k$ is finitely generated, and we let $V$ denote a divisorial set of discrete valuations of $k$. Now let $\Gamma_i$ $(i \in I)$ be any other subgroup in the family. Then $k_{\Gamma_i} = k$ and $\Gamma_i^{(2)} \subset G_i(k)$, where $G_i = \mathrm{SL}_{1,A_{\Gamma_i}}$. Since $\Gamma_j$ for any $j \in I$ is finitely generated, we have $[\Gamma_j : \Gamma_j^{(2)}] < \infty$, so the weak commensurability of $\Gamma_1$ and $\Gamma_i$ implies that of $\Gamma_1^{(2)}$
and $\Gamma_i^{(2)}$. Thus, it follows from Theorem \ref{T:WC-GR} that there exists a finite subset $S(\Gamma_1) \subset V$ such that all $G_i$ $(i \in I)$ have good reduction at every $v \in V \setminus S(\Gamma_1)$. Here the groups $G_i$ are all of type $\textsf{A}_1$, and since the Finiteness Conjecture for forms with good reduction of this type over fields of characteristic $\neq 2$ has already been established (cf. \cite[Theorem 7.6]{RR-survey}), we conclude that they belong to finitely many isomorphism classes. Consequently, the quaternion algebras $A_{\Gamma_i}$ also belong to finitely many isomorphism classes.
\end{proof}

\vskip.1mm

Let us point out that
this theorem is one of the first examples of the use in differential geometry of new techniques from arithmetic geometry that involve the notion of  good reduction.

\section{The genus problem and good reduction for groups of type $\textsf{F}_4$}\label{S:F4}

In this section we will prove Theorems 1.10-1.13 that address some aspects of the genus problem and the Finiteness Conjecture for simple algebraic groups of type $\textsf{F}_4$. Our considerations will rely on properties of cohomological invariants, which we have assembled in Appendix 2. We therefore recommend that the reader consult Appendix 2 before continuing with this section.
We note that while Theorems 1.10-1.12 deal only with forms that have trivial $g_3$-invariant, Theorem 1.13 shows that truth of the Finiteness Conjecture yields
the properness of the map $\phi$, which is expected to classify all forms of type $\textsf{F}_4$ (cf. Appendix 2).

\vskip1mm

\noindent {\it Proof of Theorem 1.10.} Let $k_0$ be a number field, set $k = k_0(x)$, and let $V$ be the set of discrete valuations of $k$ associated with the closed points of $\mathbb{A}^1_{k_0}$. Let $G$ be a $k$-group of type $\textsf{F}_4$ that splits over a quadratic extension of $k$. We need to show that any $G' \in \gen_k(G)$ is $k$-isomorphic to $G$. According to Proposition A2.3, the group $G$ possesses a maximal $k$-torus $T$ that splits over a quadratic extension of $k$. Since $G$ and $G'$ lie in the same genus, $T$ is $k$-isomorphic to a maximal $k$-torus of $G'$, implying that $G'$ splits over the same quadratic extension of $k$ as $G$, and therefore the invariant $g_3(G')$ is trivial. Then it follows from Theorem A2.1 that in order to prove the isomorphism $G \simeq G'$ over $k$, it is enough to show that
$$
f_3(G) = f_3(G')  \ \ \text{and} \ \ f_5(G) = f_5(G').
$$
Recall that for any $i \geq 1$ and any $v \in V$, we have a residue map
$$
\rho^i_v \colon H^i(k , \mathbb{Z}/2\mathbb{Z}) \longrightarrow H^{i-1}(k^{(v)} , \mathbb{Z}/2\mathbb{Z}).
$$
These maps enable us to construct, in each degree $i \geq 1$, the following exact sequence that in the case $i = 2$ goes back to Faddeev:
\begin{equation}\label{E:Faddeev}
0 \to H^i(k_0 , \mathbb{Z}/2\mathbb{Z}) \longrightarrow H^i(k , \mathbb{Z}/2\mathbb{Z}) \stackrel{\oplus \rho^i_v}{\longrightarrow} \bigoplus_{v \in V} H^{i-1}(k^{(v)} , \mathbb{Z}/2\mathbb{Z}) \to 0
\end{equation}
(cf. \cite[Theorem 9.3]{GMS}). In order to prove that $f_i(G) = f_i(G')$ for $i = 3, 5$, we will prove that
\begin{equation}\label{E:res=}
\rho^i_v(f_i(G)) = \rho^i_v(f_i(G')) \ \ \text{for all} \ \ v \in V.
\end{equation}
Assuming this, we obtain  from (\ref{E:Faddeev}) that
$$
f_i(G') = f_i(G) + \alpha_i \ \ \text{for some} \ \ \alpha_i \in H^i(k_0 , \mathbb{Z}/2\mathbb{Z}).
$$
To complete the proof, one shows by a specialization argument that $\alpha_i = 0$ for $i = 3, 5$.  More precisely, the classes $f_i(G)$ and $f_i(G')$ are represented by symbols, and we can choose $x_0 \in k_0$ so that for the valuation $v_0$ of $k$ corresponding to $x - x_0$, all factors of these symbols are units with respect to $v_0.$ Then $k^{(v_0)} = k_0$, the groups $G$ and $G'$ have good reduction at $v_0$ (cf. Proposition A2.7 in Appendix 2), and the specializations of these symbols in $H^i(k_0 , \mathbb{Z}/2\mathbb{Z})$ coincide with the invariants $f_i(\underline{G}^{(v_0)})$ and $f_i((\underline{G}')^{(v_0)})$ of the corresponding reductions (see Theorem A2.5 and the remark at the end of subsection A2.2). Since $G$ and $G'$ are in the same genus, their reductions are also in the same genus --- see Theorem \ref{T:GoodReduction1}. But the genus of a group of type $\textsf{F}_4$ over a number field reduces to a single element by \cite[Theorem 7.5]{PR-WC},
so $\underline{G}^{(v_0)} \simeq (\underline{G}')^{(v_0)}$ and therefore $f_i(\underline{G}^{(v_0)}) = f_i((\underline{G}')^{(v_0)})$. On the other hand,
$$
f_i((\underline{G}')^{(v_0)}) = f_i(\underline{G}^{(v_0)}) + \alpha_i.
$$
Thus, $\alpha_i = 0$ and $f_i(G) = f_i(G')$, as required.

\vskip1mm

In order to prove (\ref{E:res=}), we will use the following.
\begin{lemma}\label{L:splitA}
Suppose $\mathscr{K}$ is an infinite field of characteristic $\neq$ 2 or 3. Let $G$ be a $\mathscr{K}$-group of type $\textsf{F}_4$ that splits over a quadratic extension of $\mathscr{K}$, and let $G' \in \gen_{\mathscr{K}}(G)$. Then every extension $\mathscr{L}/\mathscr{K}$ of degree $\leq 2$ that splits $f_3(G)$ (resp., $f_5(G)$) also splits $f_3(G')$ (resp., $f_5(G)$).
\end{lemma}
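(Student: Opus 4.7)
The plan is to translate each half of the lemma into a structural statement about reduced $\textsf{F}_4$-groups over $\mathscr{L}$ and transport it across the genus condition via a common maximal $\mathscr{K}$-torus of $G$ and $G'$. The starting observation is that, by Proposition A2.3, $G$ contains a maximal $\mathscr{K}$-torus $T$ split by a quadratic extension $\mathscr{M}/\mathscr{K}$, and because $G' \in \gen_{\mathscr{K}}(G)$, the same torus $T$ sits inside $G'$, forcing $G'$ to split over $\mathscr{M}$ as well. In particular $g_3(G) = g_3(G') = 0$, and this vanishing persists after base change, so both $G_{\mathscr{L}} := G \times_{\mathscr{K}} \mathscr{L}$ and $G'_{\mathscr{L}}$ are reduced $\textsf{F}_4$-groups over $\mathscr{L}$.

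For the $f_3$-part, the key structural input is that a reduced $\textsf{F}_4$-group $H$ with $f_3(H) = 0$ is split: writing the associated reduced Albert algebra as $H_3(C,\Gamma)$, the identity $f_3(H) = [n_C]_3$ forces $C$ to be split octonion, after which $H_3(C_s,\Gamma)$ is the split Albert algebra regardless of $\Gamma$. Thus $\mathscr{L}$-vanishing of $f_3(G)$ makes $G_{\mathscr{L}}$ $\mathscr{L}$-split. For the $f_5$-part, the analogous structural input is the equivalence, for reduced $\textsf{F}_4$-groups $H$, of $f_5(H) = 0$ with $\mathscr{K}$-isotropy of $H$; this I would extract from the Tits-index classification of $\textsf{F}_4$-forms with $g_3 = 0$ (only the split and the rank-one indices are isotropic) combined with the explicit formula $f_5(H_3(C,\Gamma)) = [n_C \otimes \pi_\Gamma]_5$, which vanishes precisely on these two indices. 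Granting it, $\mathscr{L}$-vanishing of $f_5(G)$ yields $\mathscr{L}$-isotropy of $G_{\mathscr{L}}$, i.e., a maximal $\mathscr{L}$-torus of $G_{\mathscr{L}}$ with positive $\mathscr{L}$-split rank.

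In both cases the conclusion for $G'_{\mathscr{L}}$ will follow by exhibiting the split (respectively, positive-split-rank) maximal $\mathscr{L}$-torus of $G_{\mathscr{L}}$ as $\mathscr{L}$-conjugate to the base change of a common maximal $\mathscr{K}$-torus of $G$ and $G'$: inheriting this torus, $G'_{\mathscr{L}}$ is $\mathscr{L}$-split (respectively, $\mathscr{L}$-isotropic), so the corresponding invariant vanishes by the same structural result. The main obstacle is precisely this torus-transfer step, since the relation $G' \in \gen_{\mathscr{K}}(G)$ does not a priori imply $G'_{\mathscr{L}} \in \gen_{\mathscr{L}}(G_{\mathscr{L}})$; for the $f_3$-case I would exploit the rigidity of maximal tori in the split group over $\mathscr{L}$ together with Proposition A2.3 to realize a $\mathscr{K}$-rational witness, while for the $f_5$-case the plan is to use a suitably generic $\mathscr{K}$-torus whose base change to $\mathscr{L}$ realizes the rank-one Tits index.
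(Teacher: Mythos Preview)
Your overall strategy matches the paper's: translate the vanishing of $f_3$ (resp.\ $f_5$) over $\mathscr{L}$ into the structural statement that $G_{\mathscr{L}}$ is split (resp.\ isotropic), produce a maximal $\mathscr{K}$-torus $T$ of $G$ witnessing this over $\mathscr{L}$, and then transfer $T$ to $G'$ via the genus condition. You also correctly identify the only real difficulty, namely producing the $\mathscr{K}$-rational torus that becomes split (resp.\ $\mathscr{L}$-isotropic) after base change.

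For the $f_3$-half, your plan to invoke Proposition~A2.3 does work: if $[\mathscr{L}:\mathscr{K}]=2$ and $G$ splits over $\mathscr{L}$, that proposition supplies an $\mathscr{L}/\mathscr{K}$-admissible maximal $\mathscr{K}$-torus. But note that the proof of Proposition~A2.3 itself rests on the Galois-descent construction $T \subset B\cap B^{\sigma}$, which is exactly what the paper does directly here.

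For the $f_5$-half, however, your proposal has a genuine gap. Saying you will ``use a suitably generic $\mathscr{K}$-torus whose base change to $\mathscr{L}$ realizes the rank-one Tits index'' is not an argument: a generic maximal torus is anisotropic, and there is no mechanism in your sketch that forces $\mathscr{L}$-isotropy. The paper resolves this with the same descent trick in the parabolic setting: pick a proper $\mathscr{L}$-parabolic $P\subset G_{\mathscr{L}}$, form the $\mathscr{K}$-defined subgroup $H=P\cap P^{\sigma}$, and take a maximal $\mathscr{K}$-torus $T$ of $H$ that is maximal in $G$ (using \cite[14.13]{Borel}). Since $T\subset P$, the torus $T$ is $\mathscr{L}$-isotropic, and transferring it to $G'$ via the genus gives $\mathscr{L}$-isotropy of $G'$. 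This single construction handles both halves uniformly and is the missing idea in your $f_5$-case.
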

\begin{proof}
Let $\sigma$ be a generator of $\mathrm{Gal}(\mathscr{L}/\mathscr{K})$. Since $\mathscr{L}$ splits $f_3(G)$ (resp., $f_5(G)$), the group $G$ is $\mathscr{L}$-split (resp., $\mathscr{L}$-isotropic), cf. section A2.1 of Appendix 2. Let $B$ (resp., $P$) be an $\mathscr{L}$-defined Borel subgroup (resp., a proper $\mathscr{L}$-defined parabolic subgroup). Then the group $H = B \cap B^{\sigma}$ (resp., $H = P \cap P^{\sigma}$) is $\mathscr{K}$-defined. Let $T$ be a maximal $\mathscr{K}$-torus of $H$ that is also a maximal torus of $G$ (cf. \cite[14.13]{Borel}). Then $T$ is $\mathscr{L}$-split (resp., $\mathscr{L}$-isotropic). Being in the same genus as $G$, the group $G'$ contains a maximal $\mathscr{K}$-torus isomorphic to $T$. It follows that $G'$ is also $\mathscr{L}$-split (resp., $\mathscr{L}$-isotropic), and therefore $\mathscr{L}$ splits $f_3(G')$ (resp., $f_5(G')$).
\end{proof}

We will now prove (\ref{E:res=}) for $i = 3$. According to (\ref{E:invar}) in subsection A2.1, we can write
$$
f_3(G) = (a) \cup (b) \cup (c) \ \ \text{and} \ \ f_3(G') = (a') \cup (b') \cup (c'),
$$
where $(t) \in H^1(k , \Z/2Z)$ denotes the cohomology class corresponding to $t {k^{\times}}^2$ under the canonical isomorphism $H^1(k , \Z/2\Z) \simeq k^{\times}/{k^{\times}}^2$ (other notations are explained in A2.1).
If all values $v(a), \ldots , v(c')$ are even, then
\begin{equation}\label{E:zero}
\rho_v(f_3(G)) = 0 = \rho_v(f_3(G')).
\end{equation}
Next, suppose that the values $v(a), v(b)$, and $v(c)$ are all even, but among the values $v(a'), v(b'), v(c')$, there is at least one that is odd;
suppose, for example, that $v(c')$ is odd. We will apply Lemma \ref{L:splitA} with $\mathscr{K} = k_v$ and $\mathscr{L} = \mathscr{K}(\sqrt{c'})$,
noting that $G' \in \gen_{\mathscr{K}}(G)$ by Corollary \ref{C:gen}. Thus, since $\mathscr{L}$ splits $f_3(G')$ (over $\mathscr{K}$) by
Lemma \ref{L:splitA}, it also splits $f_3(G)$. Now,
it follows from Hensel's Lemma that the unramified cohomology group $H^i(\mathscr{K} , \mathbb{Z}/2\mathbb{Z})_v$, defined as the kernel of the residue map
$H^i(\mathscr{K} , \mathbb{Z}/2\mathbb{Z}) \to H^{i-1}(\mathscr{K}^{(v)} , \mathbb{Z}/2\mathbb{Z})$, is canonically isomorphic to $H^i(\mathscr{K}^{(v)} , \mathbb{Z}/2\mathbb{Z})$ (cf. \cite[Proposition 7.7]{GMS}). We may assume without loss of generality that $v(a) = v(b) = v(c) = 0$, so that the symbol $(a) \cup (b) \cup (c)$ is unramified. Since it splits over $\mathscr{L}$,  the symbol $(\bar{a}) \cup (\bar{b}) \cup (\bar{c})$ is trivial in $H^3(\mathscr{L}^{(v)} , \mathbb{Z}/2\mathbb{Z}) = H^3(\mathscr{K}^{(v)} , \mathbb{Z}/2\mathbb{Z})$. It follows that $(a) \cup (b) \cup (c)$ is trivial in $H^3(\mathscr{K} , \mathbb{Z}/2\mathbb{Z})$, which implies that $G$ is split. Then $G'$ is also split, and we again obtain (\ref{E:zero}).

It remains to consider the case where each set $\{v(a), v(b), v(c)\}$ and $\{v(a'), v(b'), v(c')\}$  contains at least one odd value. Without loss of generality, we may assume that
$$
v(a) = v(a') = v(b) = v(b') = 0 \ \ \text{and} \ \ v(c) = v(c') = 1.
$$
Then
$$
\rho_v(f_3(G)) = (\bar{a}) \cup (\bar{b}) \ \ \text{and} \ \ \rho_v(f_3(G')) = (\bar{a}') \cup (\bar{b}').
$$
Identifying $H^2(k^{(v)} , \mathbb{Z}/2\mathbb{Z}) = H^2(k^{(v)} , \mu_2)$ with the 2-torsion subgroup ${}_2\mathrm{Br}(k^{(v)})$ of the Brauer group, we see that the residues are represented, respectively, by the classes of the quaternion algebras
$$
\overline{D} = \left( \frac{\bar{a} , \bar{b}}{k^{(v)}}  \right)   \ \ \text{and} \ \ \overline{D}' = \left( \frac{\bar{a}' , \bar{b}'}{k^{(v)}}  \right).
$$
Since the genus of a quaternion division algebra over a number field reduces to a single element (cf. \cite{CRR-Bull}, \cite{RR-manuscr}), in order to prove that $\overline{D} \simeq \overline{D}'$, which would yield (\ref{E:res=}) in this case, one needs to prove that an extension $\ell/k^{(v)}$ of degree $\leq 2$ splits $\overline{D}$ if and only if it
splits $\overline{D}'$. Suppose that $\ell$ splits $\overline{D}$, and let $\mathscr{L}$ denote the unramified extension of $\mathscr{K} = k_v$ with residue field $\ell$. It follows from Hensel's Lemma that $\mathscr{L}$ splits $\displaystyle D = \left( \frac{a , b}{\mathscr{K}}  \right)$, and therefore also splits  $f_3(G)$. By Lemma \ref{L:splitA}, the extension $\mathscr{L}$ also splits $f_3(G')$, and therefore its residue field $\ell$ splits $\rho^3_v(f_3(G')) = (\bar{a}') \cup (\bar{b}')$, i.e. splits $\overline{D}'$. By symmetry, every extension $\ell/k^{(v)}$   of degree $\leq 2$ that splits $\overline{D}'$ also splits $\overline{D}$, completing the argument.

Next, we will prove (\ref{E:res=}) for $i = 5$. Write
$$
f_5(G) = (a) \cup (b) \cup (c) \cup (d) \cup (e) \ \ \text{and} \ \ f_5(G') = (a') \cup (b') \cup (c') \cup (d') \cup (e').
$$
As above, in the following two cases: 1) all values $v(a), \ldots , v(e')$ are even, and 2) all values $v(a), \ldots , v(e)$ are even and among the values
$v(a'), \ldots , v(e')$ there is at least one that is odd, one proves that
$$
\rho_v(f_5(G)) = 0 = \rho_v(f_5(G'))
$$
by repeating basically the same argument. The remaining situations reduce to the case where
$$
v(a) = \cdots = v(d) = v(a') = \cdots = v(d') = 0 \ \ \text{and} \ \ v(e) = v(e') = 1.
$$
Then
$$
\rho_v(f_5(G)) = (\bar{a}) \cup \cdots \cup (\bar{d}) \ \ \text{and} \ \ \rho_v(f_5(G')) = (\bar{a}') \cup \cdots \cup (\bar{d}').
$$
Since $k^{(v)}$ is a number field, by the Poitou-Tate theorem (cf. \cite[8.6.13(ii)]{NSW}, \cite[Ch. II, \S 6, Theorem B]{Serre-GC}), for any $j \geq 3$, we have an isomorphism
$$
H^j(k^{(v)} , \mathbb{Z}/2\mathbb{Z}) \longrightarrow \prod_{w \in W} H^j((k^{(v)})_w , \mathbb{Z}/2\mathbb{Z}),
$$
where $W$ is the set of all archimedean places of $k^{(v)}$. Furthermore, the group $H^j((k^{(v)})_w , \mathbb{Z}/2\mathbb{Z})$ is trivial if $w$ is complex, and has order 2 if $w$ is real; in the latter case, any symbol $(a_1) \cup \cdots \cup (a_j)$ with all $a_i$'s negative in $k^{(v)}$ gives the nontrivial element, while such a symbol in which at least one $a_i$ is positive gives the trivial element. Thus, if (\ref{E:res=}) fails, then there exists a real place $w$ of $k^{(v)}$ such that, say, $\rho_v(f_5(G))$ gives the trivial element and $\rho_v(f_5(G'))$ the nontrivial element of $H^4((k^{(v)})_w , \mathbb{Z}/2\mathbb{Z})$. This means that among $\bar{a}, \ldots , \bar{d}$ at least one element, say $\bar{d}$, is positive in $(k^{(v)})_w$, while all elements $\bar{a}', \ldots , \bar{d}'$ are negative. Consider the extension $\mathscr{L} = \mathscr{K}(\sqrt{d})$ of $\mathscr{K} = k$. Then $\mathscr{L}$ obviously splits $f_5(G)$. Let $\tilde{v}$ be an extension of $v$ to $\mathscr{L}$. Then
$$
\mathscr{L}^{(\tilde{v})} = k^{(v)}(\sqrt{\bar{d}}) \subset (k^{(v)})_w.
$$
It follows that $(\mathscr{L}^{(\tilde{v})})_{\tilde{w}}$ for $\tilde{w} \vert w$ does not split the image of $\rho_v(f_5(G'))$ in $H^4((k^{(v)})_w , \mathbb{Z}/2\mathbb{Z})$, hence $\mathscr{L}^{(\tilde{v})}$ does not split $\rho_v(f_5(G'))$, and $\mathscr{L}$ does not split $f_5(G')$. This contradicts Lemma
\ref{L:splitA}. \hfill $\Box$

\vskip1mm

\noindent {\it Proof of Theorem 1.12.} Let $G_0$ be the $k$-split group of type $\textsf{F}_4$. According to Proposition A2.3, one can view $\mathscr{I}$ as a subset of the set $H^1(k , G_0)_{g_3=0}$ of cohomology classes having trivial $g_3$-invariant, and then by Theorem A2.1, the restriction to $\mathscr{I}$ of the map
$$
\psi \colon H^1(k , G_0) \stackrel{(f_3 , f_5)}{\longrightarrow} H^3(k , \mathbb{Z}/2\mathbb{Z}) \times H^5(k , \mathbb{Z}/2\mathbb{Z})
$$
is injective. On the other hand, it follows from Theorem A2.6 that the $f_3$- and $f_5$-invariants of the forms from $\mathscr{I}$ are $V$-unramified, i.e.
$$
\psi(\mathscr{J}) \subset H^3(k , \mathbb{Z}/2\mathbb{Z})_V \times H^5(k , \mathbb{Z}/2\mathbb{Z})_V.
$$
Furthermore, Proposition 4.2 and Corollary 6.2 in \cite{CRR-Spinor} yield the finiteness of the groups $H^3(k , \mathbb{Z}/2\mathbb{Z})_V$ and $H^5(k , \mathbb{Z}/2\mathbb{Z})_V$ in the case where $k$ is a 2-dimensional global field, while Theorem 5.1(b) in \cite{RR-tori1} provides their finiteness over
a purely transcendental extension $k = k_0(x , y)$ of transcendence degree 2 of a number field $k_0$. In both cases, we obtain the finiteness of $\mathscr{I}$. \hfill $\Box$ 

\vskip.5mm

\noindent {\bf Remark 11.2.} Let $k$ be a 2-dimensional global field with a divisorial set of places $V$. Fix a Killing-Cartan type $\tau \in \{ \textsf{A}_{\ell} \}_{\ell = 1}^{\infty} \cup \cdots \cup \{ \textsf{G}_2\}$, and consider the set $\mathscr{Q}_V(\tau)$ of $k$-isomorphism classes of absolutely almost simple $k$-groups of type $\tau$ that split over a quadratic extension of $k$ and have good reduction at all $v \in V$. The results of \cite{CRR-Spinor} imply that $\mathscr{Q}_V(\tau)$ is finite if $\tau$ is one of the types $\textsf{A}_{\ell}$, $\textsf{B}_{\ell}$, $\textsf{C}_{\ell}$ $(\ell \geq 1)$ or $\textsf{G}_2$. Our Theorem 1.12 yields the finiteness of $\mathscr{Q}_V(\tau)$ for $\tau = \mathscr{F}_4$. The finiteness of $\mathscr{Q}_V(\tau)$ for $\tau = \textsf{D}_{\ell}$ $(\ell \geq 4)$ was recently established in \cite{IR2}. So, over 2-dimensional global fields, it remains to investigate the finiteness of $\mathscr{Q}_V(\tau)$ for $\tau \in \{ \textsf{E}_6, \textsf{E}_7, \textsf{E}_8\}$. Of course, this question can be viewed in the context of the more general problem of classifying absolutely almost simple algebraic groups that split over a quadratic extension of the base field, considered by Weisfeiler \cite{Weis-quadr}, in terms of certain quadratic/Hermitian forms. It should also be mentioned that the finiteness of $\mathscr{Q}_V(\tau)$ (for all $V$) has immediate consequences for the finiteness of the genus of absolutely almost simple algebraic $k$-groups of type $\tau$ that split over a quadratic extension of $k$ --- see the derivation of Theorem 1.11 from Theorem~1.12 above.

\vskip.5mm

\addtocounter{thm}{1}

\noindent {\it Proof of Theorem 1.11.} If $G$ splits over a quadratic extension $\ell/k$, then it has a maximal $k$-torus $T$ that splits over $\ell$. Then any $G' \in \gen_k(G)$ also splits over $\ell$. On the other hand, according to Corollary \ref{C:genus-GR}, there exists a divisorial set $V$ of discrete valuations of $k$ such that every group $G' \in \gen_k(G)$ has good reduction at all $v \in V$. The finiteness of $\gen_k(G)$ now follows from Theorem 1.12. $\Box$

\vskip1mm


The remainder of this section will be devoted to the proof of our last result concerning
the properness of the map $\phi$ (cf. subsection A2.1 of Appendix 2 for the relevant definitions).


\noindent {\it Proof of Theorem \ref{T:F4phi}.} We need to show that for any $k$-group $G$ of type $\mathsf{F}_4$, the fiber $\phi^{-1}(\phi(G))$ is finite. Choose a divisorial set of places $V$ of $k$ such that

\vskip1mm

$\bullet$ $\mathrm{char}\: k^{(v)} \neq 2, 3$ for all $v \in V$;

\vskip1mm

$\bullet$ $G$ has good reduction and the invariant $g_3(G)$ is unramified at all $v \in V$.

\vskip1mm

\noindent (We note that if $G$ has good reduction at $v$ and $\mathrm{char}\: k^{(v)} \neq 3$, then $g_3(G)$ is {\it automatically} unramified --- see Proposition A2.11, but this fact is not used in the argument.) Since we are assuming that the Finiteness Conjecture holds for all $k$-forms of type $\textsf{F}_4$ with respect to $V$, it is enough to show that every $G' \in \phi^{-1}(\phi(G))$ has good reduction at all $v \in V$. We will derive this fact from the following two propositions.

\begin{prop}\label{P:spl-field}
Assume that $\mathrm{char}\: k_v^{(v)} \neq 2$.
If a $k$-form $G'$ of type $\textsf{F}_4$ does not have good reduction at $v \in V$, then $G' \times_k k_v$ either splits over an unramified Galois extension $\ell/k_v$ of degree $2^a$ $(a \geq 0)$ or contains a maximal $k_v$-torus that is anisotropic over $k_v$ and splits over an unramified cubic Galois extension $\ell/k_v$.
\end{prop}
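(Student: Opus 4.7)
The plan is as follows. First, since $\textsf{F}_4$ has no nontrivial Dynkin diagram automorphisms, every $\textsf{F}_4$-form is inner and ``quasi-split'' coincides with ``split.'' Applying Steinberg's theorem over $k_v^{\mathrm{ur}}$ — whose absolute Galois group has cohomological dimension at most $1$ under our residue-characteristic assumptions — I would conclude that $G' \times_k k_v^{\mathrm{ur}}$ is split. Hence the minimal Galois extension $L/k_v$ over which $G' \times_k k_v$ splits is contained in $k_v^{\mathrm{ur}}$, i.e.\ is unramified, with $[L : k_v]$ dividing $|W(\textsf{F}_4)| = 2^7 \cdot 3^2$. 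Note that $G' \times_k k_v$ may nonetheless fail to have good reduction, since the defining cocycle, while trivializing over $L$, need not lift to one valued in $\mathscr{F}_4(\mathcal{O}_L)$.

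Having established this, I would split into two cases according to whether the invariant $g_3(G' \times_k k_v) \in H^3(k_v, \mathbb{Z}/3\mathbb{Z})$ vanishes. If $g_3(G' \times_k k_v) = 0$, then the results of Appendix~2 characterize $G' \times_k k_v$ as arising from a second Tits construction on an Albert algebra (equivalently, $G' \times_k k_v$ splits over a quadratic extension). The splitting data in this case are purely $2$-primary, so $3 \nmid [L : k_v]$ and $L/k_v$ is itself the sought-after unramified Galois extension of $2$-power degree.

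If $g_3(G' \times_k k_v) \neq 0$, then the first Tits construction presents $G' \times_k k_v$ (up to twisting by a scalar) as $\mathrm{Aut}(J(D, \lambda))$, where $D$ is a central simple $k_v$-algebra of degree $3$ whose Brauer class encodes $g_3(G' \times_k k_v)$. By our choice of $V$, the invariant $g_3(G) = g_3(G')$ is unramified at $v$, so $D$ is unramified at $v$ and therefore splits over the unique unramified cyclic cubic extension $\ell/k_v$. The $\textsf{A}_2 \times \textsf{A}_2$-subgroup of $G' \times_k k_v$ naturally furnished by the first Tits construction then contains the maximal $k_v$-torus $\mathrm{R}^{(1)}_{\ell/k_v}(\mathbb{G}_m) \times \mathrm{R}^{(1)}_{\ell/k_v}(\mathbb{G}_m)$, which has $k_v$-rank $4$ and splits precisely over $\ell$. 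The anisotropy over $k_v$ follows since the norm-one torus of a nontrivial cubic extension has no $k_v$-rational characters; equivalently, an order $3$ element of $W(\textsf{F}_4)$ coming from the cubic Galois action (e.g.\ the fourth power of a Coxeter element, whose eigenvalues are the primitive cube roots of unity) acts without fixed vectors on the rank $4$ cocharacter space.

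The main obstacle will be the second case: justifying the explicit identification of $g_3(G' \times_k k_v)$ with the Brauer class of $D$ via the first Tits construction, and then ensuring that the extracted $\textsf{A}_2 \times \textsf{A}_2$-subgroup of $G' \times_k k_v$ indeed contains $\mathrm{R}^{(1)}_{\ell/k_v}(\mathbb{G}_m) \times \mathrm{R}^{(1)}_{\ell/k_v}(\mathbb{G}_m)$ as a maximal torus. This relies on the cohomological-invariant calculus and the correspondence between Albert algebras and $\textsf{F}_4$-forms developed in Appendix~2, together with the observation that $D$ unramified forces its splitting field to be the unramified cubic. A subsidiary difficulty is ruling out a $3$-part in $[L : k_v]$ in the first case, which requires that second Tits construction forms have purely $2$-primary splitting behavior.
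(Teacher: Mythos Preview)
Your approach has two structural gaps that prevent it from going through.

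First, the case split based on $g_3$ is mismatched with the Tits constructions. An Albert algebra arises from the \emph{first} Tits construction $J(D,\lambda)$ precisely when $f_3 = 0$, not when $g_3 \neq 0$. So in the case $g_3(G' \times k_v) \neq 0$ there is no reason for $G' \times k_v$ to come from a first Tits construction, and hence no degree-$3$ algebra $D$ from which to extract your $\textsf{A}_2 \times \textsf{A}_2$ subgroup. Forms with both $f_3 \neq 0$ and $g_3 \neq 0$ fall through your dichotomy entirely. Moreover, you invoke ``by our choice of $V$, the invariant $g_3(G') = g_3(G)$ is unramified at $v$''---but the proposition is stated for an arbitrary $G'$ without good reduction, with no hypothesis tying it to a reference group $G$ or constraining $g_3(G')$. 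You are importing assumptions from the ambient proof of Theorem~\ref{T:F4phi} into a proposition that must stand on its own.

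Second, even in the case $g_3 = 0$, your claim that the minimal unramified splitting field $L$ has $2$-power degree is not justified. Proposition~A2.3 gives splitting over \emph{some} quadratic extension, which may well be ramified; it says nothing about the degree of the minimal \emph{unramified} splitting field. You flag this as a ``subsidiary difficulty'' but offer no argument.

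The paper's proof takes a completely different route: it uses the Bruhat--Tits parametrization of $H^1(k_v, G_0)$ by anisotropic classes over the parahoric group schemes $\mathscr{G}_\Omega$ indexed by subsets $\Omega$ of the extended Dynkin diagram (Theorem~\ref{T:BT}). One then reads off the Levi subgroup $L_\Omega$ of the special fiber case by case ($\textsf{A}_1 \times \textsf{C}_3$, $\textsf{A}_2 \times \textsf{A}_2$, $\textsf{A}_3$, $\textsf{B}_3$, $\textsf{C}_3$, etc.), constructs explicit splitting fields for each Levi type over the residue field, and lifts tori via \cite[Corollary~B.3.5]{Conrad}. The cubic case arises exactly from the $\textsf{A}_2 \times \textsf{A}_2$ Levi, where Wedderburn's theorem produces a cyclic cubic maximal subfield of the associated degree-$3$ division algebra over $k^{(v)}$; all other Levi types are handled by Merkurjev or Merkurjev--Suslin to get $2$-power splitting. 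This approach never touches $g_3$ or the Tits constructions, and the unramifiedness of the extensions is automatic because they are built from the residue field and lifted.
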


\vskip2mm

\begin{prop}\label{P:degree3}
Let $v$ be a discrete valuation of $k$ such that $\mathrm{char}\: k^{(v)} \neq 2, 3$, and let $G'$ be a simple $k$-group of type $\textsf{F}_4$ such that $G' \times_k k_v$ has a maximal $k_v$-torus that is anisotropic over $k_v$ and splits over an unramified cubic Galois extension $\ell/k_v$. If $G'$ does not have good reduction at $v$, then the invariant $g_3(G')$ is ramified at $v$.
\end{prop}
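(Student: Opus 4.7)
I argue by contrapositive: assume that $g_3(G')$ is unramified at $v$ and show that $G'$ has good reduction at $v$. Since the maximal $k_v$-torus $T$ of $G' \times_k k_v$ splits over $\ell$, the group $G' \times_k \ell$ is itself split. Therefore the $2$-torsion invariants $f_3(G' \times k_v)$ and $f_5(G' \times k_v)$ restrict to zero over $\ell$, and since $[\ell : k_v] = 3$ is odd, the standard corestriction--restriction identity yields $f_3(G' \times k_v) = f_5(G' \times k_v) = 0$. Invoking the classification of Albert algebras with trivial $f_3$ and $f_5$ (Petersson--Racine; see Appendix 2), we have $G' \times k_v = \mathrm{Aut}(J')$ for a first Tits construction Albert algebra $J' = J(A, \mu)$, where $A$ is a central simple $k_v$-algebra of degree $3$, $\mu \in k_v^\times$, and $g_3(G' \times k_v) = [A] \cup (\mu) \in H^3(k_v, \mu_3^{\otimes 2})$. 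The splitting of $J'$ over $\ell$ further forces $A \otimes_{k_v} \ell$ to split, so $A$ is cyclic: $A = (\ell/k_v, \sigma, c)$ for some $c \in k_v^\times$.

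Fix a uniformizer $\pi \in k_v$. Since $\mathrm{char}\: k^{(v)} \neq 3$, there is a canonical decomposition
\[
[A] = \gamma + \delta \cup (\pi), \qquad \gamma \in H^2(k_v, \mu_3)\ \text{unramified},\ \delta = \partial[A] \in H^1(k_v, \mathbb{Z}/3\mathbb{Z})\ \text{unramified},
\]
with $A$ unramified at $v$ if and only if $\delta = 0$. Writing $\mu = \pi^m w$ with $w \in \mathcal{O}_v^\times$, and using $(\pi) \cup (\pi) = 0$ in $\mathbb{Z}/3$-cohomology (a consequence of anticommutativity), a direct cup-product computation yields
\[
\partial\bigl([A] \cup (\mu)\bigr) \;=\; m\,\bar\gamma - \bar\delta \cup (\bar w) \;\in\; H^2(k^{(v)}, \mu_3).
\]
The heart of the proof is to deduce from $\partial g_3 = 0$ that $A$ is unramified. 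The pair $(A, \mu)$ is determined by $J'$ only up to $A \leftrightarrow A^{\mathrm{op}}$ and $\mu \mapsto \mu \cdot \mathrm{Nrd}(a) \cdot r^3$ for $a \in A^\times$ and $r \in k_v^\times$. If $A$ is split over $k_v$ then $J'$ is the split Albert algebra and good reduction is automatic; if $A$ is an unramified division algebra then $\delta = 0$ and $\gamma \neq 0$ (the latter because, for henselian $\mathcal{O}_v$, unramified Brauer classes are nontrivial iff their residue is nontrivial), so $\partial g_3 = m\bar\gamma = 0$ forces $m \equiv 0 \pmod{3}$, allowing $\mu$ to be chosen in $\mathcal{O}_v^\times$.

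The remaining case, namely $A$ a ramified division algebra ($\delta \neq 0$), will be ruled out using the anisotropy of $T$. When $A$ is ramified, $\mathrm{Nrd}(A^\times)$ contains both $N_{\ell/k_v}(\ell^\times)$ (whose valuations exhaust $3\mathbb{Z}$) and the element $c$ with $v(c) \not\equiv 0 \pmod{3}$, so $\mu$ can be normalized to $\mu \in \mathcal{O}_v^\times$. The unramified hypothesis then becomes $\bar\delta \cup (\bar\mu) = 0$, i.e.\ $\bar\mu \in N_{\bar F/k^{(v)}}(\bar F^\times)$ for the cubic extension $\bar F/k^{(v)}$ defined by $\bar\delta$. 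On the other hand, the anisotropic $T$, being a rank-$4$ $k_v$-torus split by the cubic $\ell$, arises in the first Tits construction description of $J(A, \mu)$ as essentially a product of two copies of $\mathrm{R}^{(1)}_{\ell/k_v}(\mathbb{G}_m)$ embedded in $\mathrm{Aut}(J(A, \mu))$ through the natural $A^\times$-action on $J$; its anisotropy imposes a constraint on the class of $\mu$ modulo $\mathrm{Nrd}(A^\times)(k_v^\times)^3$. Unpacking this constraint with the help of the structure of $\mathrm{Nrd}(A^\times)$ for a ramified cyclic algebra will show that no representative with $\mu \in \mathcal{O}_v^\times$ can satisfy $\bar\mu \in N_{\bar F/k^{(v)}}(\bar F^\times)$, yielding the desired contradiction. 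Once $\delta = 0$ has been established, letting $\mathcal{A}$ be the unique Azumaya $\mathcal{O}_v$-algebra of degree $3$ extending $A$ and $\tilde\mu \in \mathcal{O}_v^\times$ a unit representative of $\mu$, the Albert algebra $\mathcal{J} := J(\mathcal{A}, \tilde\mu)$ over $\mathcal{O}_v$ has generic fiber $J'$, so $\mathscr{G}' := \mathrm{Aut}(\mathcal{J})$ is a reductive $\mathcal{O}_v$-group scheme of type $\textsf{F}_4$ with generic fiber $G' \times k_v$, exhibiting good reduction.

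The principal difficulty is the third paragraph: a ramified $A$ can a priori coexist with unramified $g_3$ via cancellation in the residue formula $\partial g_3 = m\bar\gamma - \bar\delta \cup (\bar w)$, and it is the anisotropy of $T$ (and not merely its existence as a torus splitting over an unramified cubic) that rules this out. Making this precise requires a careful description of the maximal tori in the first Tits construction and a bookkeeping of the image of $\mathrm{Nrd}$ from a ramified cyclic division algebra of degree $3$ over a henselian discretely valued field.
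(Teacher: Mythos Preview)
Your overall strategy---contrapositive, reduce to a first Tits construction $J(A,\mu)$, then compute the residue of $[A]\cup(\mu)$---is close in spirit to the paper's, but there are two genuine gaps.

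First, the sentence ``The splitting of $J'$ over $\ell$ further forces $A\otimes_{k_v}\ell$ to split'' is unjustified and, for an arbitrary first Tits presentation, false. If $A\otimes\ell$ is still a division algebra of degree~$3$ but $\mu\in\mathrm{Nrd}((A\otimes\ell)^\times)$, then $J(A\otimes\ell,\mu)$ is reduced and hence (since first constructions have $f_3=f_5=0$) split, yet $A\otimes\ell$ is not. Petersson--Racine only hands you \emph{some} pair $(A,\mu)$ with no tie to the given cubic extension~$\ell$, so you cannot conclude that $A$ is $\ell$-cyclic. Second, you openly flag the ramified-$A$ case as the ``principal difficulty'' and then do not prove it; the sentence ``unpacking this constraint \ldots\ will show \ldots'' is not an argument, and it is unclear how the anisotropy of~$T$ interacts with an \emph{unrelated} first Tits presentation.

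The paper circumvents both issues with a structural lemma (Proposition~\ref{P:F4-1}): the hypothesis that $G'\times k_v$ has a $k_v$-anisotropic maximal torus split by~$\ell$ forces the twisting class to lie in $H^1(k_v,T)$ for a \emph{specific} torus $T=S_1\times S_2$ built from~$\ell$, and yields the formula $g_3(G')=\delta(\gamma_1(\zeta))\cup\gamma_2(\zeta)$. Here $\delta(\gamma_1(\zeta))\in{}_3\Br(k_v)$ is, by construction, split by~$\ell$ (it comes from $H^1(k_v,S_1)$ with $S_1$ an $\ell$-split torus), so the ``$A$'' in this presentation is automatically the cyclic algebra $(\ell/k_v,a)$. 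After that the case analysis is short: if both factors are unramified one lifts $\zeta$ to $H^1(\mathcal{O}_v,\mathscr{T})$ via Lemma~A2.10 and twists the split $\mathcal{O}_v$-scheme to exhibit good reduction; if one factor is ramified, a direct residue computation shows $g_3$ is ramified unless $g_3=0$, in which case $G'\times k_v$ is split. No separate ``bookkeeping of $\mathrm{Nrd}$'' is needed, and the anisotropy of~$T$ is used only once, inside the proof of Proposition~\ref{P:F4-1}. To repair your argument you would essentially have to reprove that proposition---i.e., choose the first Tits presentation compatibly with~$T$ rather than invoke an arbitrary one.
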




\vskip1mm

Granting these facts, we will now complete the proof of Theorem \ref{T:F4phi}. Assume that $G' \in \phi^{-1}(\phi(G))$ does not have good reduction at some $v \in V$. According to Proposition \ref{P:spl-field}, the group $G' \times_k k_v$ either splits over an unramified Galois extension $\ell/k_v$ of degree $2^a$ $(a \geq 0)$ or contains a maximal $k_v$-torus that is anisotropic over $k_v$ and splits over an unramified cubic Galois extension $\ell/k_v$.
In the first case, a standard restriction-corestriction argument yields $2^a \cdot g_3(G' \times_k k_v) = 0$, hence $g_3(G' \times_k k_v) = 0$ as we always have $3 \cdot g_3(G' \times_k k_v) = 0$. Thus, $G' \times_k k_v$ corresponds to a cohomology class  in $H^1(k_v , G_0)_{g_3 = 0}$.  But according to Theorem A2.1, the restriction of $\phi$ to $H^1(k_v , G_0)_{g_3=0}$ is injective. Thus $G' \times_k k_v \simeq G \times_k k_v$, contradicting the fact that $G$ has good reduction at $v$ and $G'$ does not.

In the second case, it follows from Proposition \ref{P:degree3} that the invariant $g_3(G' \times_k k_v)$ is ramified, while by construction the invariant $g_3(G \times_k k_v)$ is unramified. This contradicts the fact that $\phi(G) = \phi(G')$, hence $g_3(G \times_k k_v) = g_3(G' \times_k k_v)$. \hfill $\Box$

\vskip1mm

The proof of Proposition \ref{P:spl-field} relies heavily on Bruhat-Tits theory, for which we refer the reader to \cite{BT1}, \cite{BT2}, and \cite{BT3} (see also \cite{KalPr} for a modern exposition). As above, let $G_0$ be the $k$-split group of type $\mathsf{F}_4$, $T_0$ be a maximal $k_v$-split torus of $G_0$, and $\Phi = \Phi(G_0 , T_0)$ be the root system of $G_0$ with respect to $T_0$. Fix a system of simple roots $\Pi = \{\alpha_1, \ldots , \alpha_4\}$ and let $\tilde{\alpha}$ denote the maximal root.  We then have the following extended Dynkin diagram

\vskip.5mm

$$
\begin{picture}(120,10)
\put(00,00){\line(1,0){30}}
\put(30,00){\line(1,0){30}}
\put(60,1.1){\line(1,0){30}}
\put(60,-1.2){\line(1,0){30}}
\put(75,-2.5){$>$}
\put(90,00){\line(1,0){30}}
\put(00,0){\circle*{3}}
\put(30,0){\circle*{3}}
\put(60,0){\circle*{3}}
\put(90,0){\circle*{3}}
\put(120,0){\circle*{3}}
\put(-8,8){$-\tilde{\alpha}$}
\put(22,8){$\alpha_1$}
\put(52,8){$\alpha_2$}
\put(82,8){$\alpha_3$}
\put(112,8){$\alpha_4$}
\end{picture},
$$

\vskip2mm

\noindent whose set of vertices will be denoted $\tilde{\Pi}$. To each (non-empty) subset $\Omega \subset \tilde{\Pi}$, Bruhat-Tits theory associates a smooth group scheme $\mathscr{G}_{\Omega}$ over the valuation ring $\mathcal{O}_v$ of $k_v$ with the following properties:

\vskip2mm

$\bullet$ \parbox[t]{14cm}{the generic fiber $\mathscr{G}_{\Omega} \times_{\mathcal{O}_v} k_v$ is isomorphic to $G_0 \times_k k_v$;}

\vskip1mm

$\bullet$ \parbox[t]{14cm}{the closed fiber $\underline{\mathscr{G}}_{\Omega}^{(v)} = \mathscr{G}_{\Omega} \times_{\mathcal{O}_v} k_v^{(v)}$ is connected (because $G_0$ is simply connected);}

\vskip1mm

$\bullet$ \parbox[t]{14cm}{the unipotent radical $U_{\Omega}$ of $\underline{\mathscr{G}}_{\Omega}^{(v)}$ is defined and split over $k^{(v)}$, and $\underline{\mathscr{G}}_{\Omega}^{(v)}$ has a unique Levi subgroup $L_{\Omega}$ that contains the reduction $\underline{T}^{(v)}$, and hence is $k^{(v)}_v$-split.}

\vskip2mm

\noindent We note that the Dynkin diagram of the semisimple part of $L_{\Omega}$ is obtained from the extended Dynkin diagram of $G_0$ by deleting the vertices belonging to $\Omega$ and the edges having at least one endpoint in $\Omega$. Thus, $L_{\Omega}$ is a $k_v^{(v)}$-split reductive group with central torus of dimension $\vert \Omega \vert - 1$ and semisimple part (= commutator subgroup) $H_{\Omega}$ of rank $5 - \vert \Omega \vert$.

Since $\mathscr{G}_{\Omega}$ is smooth, the natural map
$$
\lambda_{\Omega} \colon H^1(\mathcal{O}_v , \mathscr{G}_{\Omega}) \longrightarrow H^1(k_v^{(v)} , \underline{\mathscr{G}}_{\Omega}^{(v)}), \ \ \xi \mapsto \bar{\xi},  $$
given by reduction is bijective by Hensel's Lemma (cf. \cite[Section 3.4, Lemme 2(2)]{BT3}). Furthermore, since $\underline{\mathscr{G}}_{\Omega}^{(v)}$ has a Levi decomposition and its unipotent radical is split, we have
\begin{equation}\label{E:levi}
H^1(k_v^{(v)} , \underline{\mathscr{G}}_{\Omega}^{(v)}) = H^1(k_v^{(v)} , L_{\Omega}).
\end{equation}
We say that a class $[\xi] \in H^1(k_v^{(v)} , L_{\Omega})$ is {\it anisotropic} if the semisimple part of the twisted group ${}_{\xi} L_{\Omega}$ is $k_v^{(v)}$-anisotropic. The set of all anisotropic classes will be denoted $H^1(k_v^{(v)} , L_{\Omega})_{\mathrm{an}}$, and its inverse image under $\lambda_{\Omega}$ will be denoted $H^1(\mathcal{O}_v , \mathscr{G}_{\Omega})_{\mathrm{an}}$.
\begin{thm}\label{T:BT}
{\rm (cf. \cite[Theorem 3.12]{BT3})} The natural map
$$
\coprod_{\Omega} H^1(\mathcal{O}_v , \mathscr{G}_{\Omega})_{\mathrm{an}} \to H^1(k_v , G_0)
$$
is a bijection.
\end{thm}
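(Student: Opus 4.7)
The plan is to recover Theorem \ref{T:BT} from the general Bruhat--Tits classification of parahoric group schemes in \cite{BT3}, specialized to the case where $G_0$ is the split simply connected group of type $\textsf{F}_4$. First I would invoke the fact that $\textsf{F}_4$ has no nontrivial Dynkin diagram automorphisms, so every $k_v$-form of $G_0$ is an inner form and $H^1(k_v, G_0)$ parametrizes all $k_v$-forms. For surjectivity, given $\xi \in H^1(k_v, G_0)$, consider the twisted simply connected inner form $G_\xi = {}_\xi G_0$. By Steinberg's vanishing theorem, $H^1(k_v^{\mathrm{ur}}, G_0) = 0$, so $\xi$ is represented by an unramified cocycle with values in $G_0(k_v^{\mathrm{ur}})$; equivalently $G_\xi$ becomes quasi-split (actually split) over $k_v^{\mathrm{ur}}$. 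Galois then acts (via this twisted cocycle) on the Bruhat--Tits building $\mathcal{B}(G_0, k_v^{\mathrm{ur}})$, and by the Bruhat--Tits fixed-point theorem this action fixes a facet $F'$. One then selects $F'$ to be the unique facet in the closure of a Galois-stable alcove whose associated parahoric has $k_v^{(v)}$-anisotropic semisimple reductive quotient; this amounts to picking the appropriate subset $\Omega \subset \tilde{\Pi}$ complementary to the vertices lying in $F'$, and yields a class in $H^1(\mathcal{O}_v, \mathscr{G}_\Omega)_{\mathrm{an}}$ mapping to $\xi$.

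For injectivity, suppose $\zeta \in H^1(\mathcal{O}_v, \mathscr{G}_\Omega)_{\mathrm{an}}$ and $\zeta' \in H^1(\mathcal{O}_v, \mathscr{G}_{\Omega'})_{\mathrm{an}}$ have the same image $\xi$ in $H^1(k_v, G_0)$. Then both $\mathscr{G}_\Omega$ twisted by $\zeta$ and $\mathscr{G}_{\Omega'}$ twisted by $\zeta'$ are $\mathcal{O}_v$-models of $G_\xi$, and their generic fibers share the Galois-equivariant embedding into the common building. The two corresponding parahorics of $G_\xi(k_v)$ have $k_v^{(v)}$-anisotropic semisimple reductive quotients; however, in a simply connected group over a complete discretely valued field, the maximally $k_v^{(v)}$-anisotropic parahoric subgroup is unique up to $G_\xi(k_v)$-conjugacy, and this conjugacy translates (via the bijection $\lambda_\Omega$ and the Levi decomposition \eqref{E:levi}) into the equality $\Omega = \Omega'$ together with $\zeta = \zeta'$. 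The absence of nontrivial automorphisms of the extended Dynkin diagram of $\textsf{F}_4$ that exchange distinct facet-types ensures no further identifications occur.

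The main obstacle is the combinatorial bookkeeping in the surjectivity step: one must verify that a Galois-stable alcove in $\mathcal{B}(G_0, k_v^{\mathrm{ur}})$ always contains a facet whose parahoric has anisotropic reductive quotient, and that the type of this facet (i.e., the subset $\Omega$) is determined by $\xi$ alone. This reduces to a case analysis over the eleven possible subsets of $\tilde{\Pi}$, checking in each case that the unipotent radical of the residue group is $k_v^{(v)}$-split and that the Levi $L_\Omega$ has the expected structure. All of this is carried out in detail in \cite[\S 3]{BT3}, and for type $\textsf{F}_4$ the outcome can also be read off from the tables of local indices in Tits' Corvallis article.
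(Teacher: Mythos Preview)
The paper does not supply a proof of this theorem; it is simply quoted from \cite[Theorem 3.12]{BT3} and used as a black box in the proof of Proposition~\ref{P:spl-field}. Your sketch is therefore not a comparison against the paper's argument but rather an outline of the Bruhat--Tits proof itself, and in broad strokes it is correct: surjectivity comes from producing a Galois-fixed facet in the building, and injectivity from the uniqueness (up to conjugacy) of the parahoric with anisotropic reductive quotient, together with the absence of nontrivial automorphisms of the affine Dynkin diagram of $\textsf{F}_4$.

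One point deserves more care. Your appeal to ``Steinberg's vanishing theorem'' to conclude $H^1(k_v^{\mathrm{ur}}, G_0) = 0$ is not automatic: Steinberg's theorem (Serre's Conjecture~I) requires a \emph{perfect} field of cohomological dimension $\leq 1$. The field $k_v^{\mathrm{ur}}$ has separably closed residue field, which gives $\mathrm{cd}_\ell \leq 1$ for $\ell$ prime to the residue characteristic, and in characteristic zero this suffices. In positive residue characteristic, however, $k_v^{\mathrm{ur}}$ need not be perfect, and one must either invoke the extension of Steinberg's theorem to this setting or, as \cite{BT3} actually does, bypass this step entirely: the proof there works directly with \'etale descent of the building $\mathcal{B}(G_0, k_v^{\mathrm{ur}})$ to $\mathcal{B}(G_\xi, k_v)$ and uses the fixed-point theorem for the action of $G_\xi(k_v)$ on its own building, rather than first arguing that the cocycle is unramified. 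Your formulation is thus slightly misordered relative to the source, though the geometric content is the same.
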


\noindent {\bf Remark 11.6.} It should be noted that this result was established in {\it loc. cit.} assuming that the residue field $k_v^{(v)}$ is {\it perfect}, which is not always the case in our situation. In this regard, we observe that since $\mathrm{char}\: k_v^{(v)} \neq 2, 3$, any simple algebraic $k_v$-group $G$ of type $\textsf{F}_4$ splits over the maximal unramified extension $k_v^{\mathrm{ur}}$. Indeed, in this case, the cohomological invariants $f_3, f_5$, and $g_3$ vanish, so the desired fact follows from the triviality of the kernel of $\phi$ (cf. Corollary A2.2). This implies that Bruhat-Tits theory in the sense of Prasad \cite{Prasad-unram} is available for $G$ over $k_v$. Another consequence is that
$$
H^1(k_v , G_0) = H^1(k_v^{\mathrm{ur}}/k_v , G_0).
$$
Then according to Theorem 3.8 in \cite{Gille-Gos}, the assertion of Theorem \ref{T:BT} remains valid for a $k_v$-split group $G_0$ of type $\textsf{F}_4$ whenever $\mathrm{char}\: k_v^{(v)} \neq 2, 3$.

\vskip2mm

{\it Proof of Proposition \ref{P:spl-field}}. Let $G'$ be a simple $k_v$-group of type $\textsf{F}_4$ that does not have good reduction at $v$. Write $G'$ as a twist ${}_{\xi'} G_0$ for some cocycle $\xi' \in Z^1(k_v , G_0)$. According to Theorem \ref{T:BT} and subsequent remarks, there exists a subset $\Omega \subset \tilde{\Pi}$ such that the class $[\xi']$ is the image under the natural map $\nu_{\Omega} \colon H^1(\mathcal{O}_v , \mathscr{G}_{\Omega}) \to H^1(k_v , G_0)$ of some class  from $H^1(\mathcal{O}_v , \mathscr{G}_{\Omega})_{\mathrm{an}}$, and we still use $\xi'$ to denote a cocycle representing this class.
We note that for $\Omega = \{ -\tilde{\alpha} \}$, the group $\mathscr{G}_{\Omega}$ coincides with the split $\mathcal{O}_v$-group scheme $\mathscr{G}_0$ of type $\textsf{F}_4$, and the classes in the image of $\nu_{\Omega}$ correspond precisely to the $k_v$-forms of type $\textsf{F}_4$ that have good reduction. Since by assumption $G'$ does {\it not} have good reduction, we conclude that $\Omega \neq \{ -\tilde{\alpha} \}$. So, our task is to show that {\it in all other cases}, the group $G'$ either splits over an unramified Galois extension $\ell/k_v$ of degree $2^a$ or contains a maximal $k_v$-torus that is anisotropic over $k_v$ and splits over an unramified cubic Galois extension $\ell/k_v$. Viewing $\xi'$ as an element of $Z^1(\mathcal{O}_v , \mathscr{G}_{\Omega})$, we can consider the twist $\mathscr{G}' = {}_{\xi'} \mathscr{G}_{\Omega}$, which is a smooth group scheme over $\mathcal{O}_v$ with generic fiber $G'$ and closed fiber ${}_{\bar{\xi}'} \underline{\mathscr{G}}_{\Omega}^{(v)}$ in the above notations. We recall that if $\ell$ is an unramified extension $k_v$ with residue field $\ell^{(v)}$, then every $\ell^{(v)}$-split torus of $\mathscr{G}' \times_{\mathcal{O}_v} \ell^{(v)}$ can be lifted to a split torus of $\mathscr{G} \times_{\mathcal{O}_v} \mathcal{O}(\ell)$, where $\mathcal{O}(\ell)$ is the valuation ring of $\ell$ (see \cite[Corollary B.3.5]{Conrad}), implying that $\mathrm{rk}_{\ell}\: G' \geq \mathrm{rk}_{\ell^{(v)}}\, (\mathscr{G}' \times_{\mathcal{O}_v} \ell^{(v)})$.

In view of (\ref{E:levi}), we may assume that $\bar{\xi}'$ has values in $L_{\Omega}$, so that we can consider the twisted groups $L'_{\Omega} := {}_{\bar{\xi}'} L_{\Omega}$ and $H'_{\Omega} := {}_{\bar{\xi}'} H_{\Omega}$. We observe that all absolutely simple components of $H'_{\Omega}$ are defined over $k_v^{(v)}$ and are inner forms. Since the central tori in $L_{\Omega}$ and $L'_{\Omega}$ are $k_v^{(v)}$-isomorphic, hence split, it is clear that if $\vert \Omega \vert \geq 3$, then $\mathrm{rk}_{k_v}\: G' \geq \mathrm{rk}_{k_v^{(v)}} \: \mathscr{G}' \geq 2$. In this case, it follows from the classification of forms of type $\textsf{F}_4$  (cf. \cite{Tits}) that $G'$ is $k_v$-split, hence has good reduction, a contradiction. Now, if $\vert \Omega \vert = 2$, then $L'_{\Omega}$ has a 1-dimensional central $k_v^{(v)}$-split torus. If  $H'_{\Omega}$ is not absolutely almost simple then it has a component of type $\textsf{A}_1$ which splits over a separable quadratic extension $\bar{\ell}/k_v^{(v)}$. Letting $\ell$ denote the unramified extension of $k_v$ with residue field $\ell^{(v)} = \bar{\ell}$,  we find as above that $\mathrm{rk}_{\ell}\: G' \geq 2$, and hence $G'$ splits over $\ell$.  Now, let $H'_{\Omega}$ be absolutely almost simple. Then it is of one of the following types: $\textsf{A}_3$, $\textsf{B}_3$, or $\textsf{C}_3$. Since every simple group of type $\textsf{B}_r$ is isogenous to the special orthogonal group of a nondegenerate quadratic form (cf. \cite[Proposition 2.20]{PR}), it obviously splits over a Galois extension of the form $\bar{\ell} = k_v^{(v)}(\sqrt{a_1}, \ldots , \sqrt{a_t})$. Furthermore, every simple group of type $\textsf{C}_r$ is isogenous to the special unitary group of a nondegenerate hermitian form over a central division algebra with a symplectic involution of the first kind (cf. \cite[Proposition 2.19]{PR}). By Merkurjev's theorem \cite{Merk}, the algebra splits over a Galois extension of the same shape $\bar{\ell} = k_v^{(v)}(\sqrt{a_1}, \ldots , \sqrt{a_t})$, and this extension also splits the group.
Picking such an extension if  $H'_{\Omega}$ has type $\textsf{B}_3$ or $\textsf{C}_3$, and letting $\ell$ be the unramified extension of $k_v$ with residue field $\ell^{(v)} = \bar{\ell}$ (which is automatically a Galois extension of $k_v$ of degree $2^a$) we will have that $\mathrm{rk}_{\ell}\: G' \geq 3$, implying that $G'$ splits over $\ell$.

In the remaining case, $H'_{\Omega}$ is an inner form of type $\textsf{A}_3$, i.e. a group of the form $\mathrm{SL}_{1,A}$ for some central simple $k_v^{(v)}$-algebra $A$ of degree 4.  Then it follows from the theorem of Merkurjev-Suslin \cite{MerkSus} that this group splits over a Galois extension $\bar{\ell}/k_v^{(v)}$ of degree $2^a$ (note that one needs to adjoin $\sqrt{-1}$ to the base field before applying the Merkurjev-Suslin theorem), and then arguing as above we find that the unramified extension $\ell$ of $k_v$ with residue field $\ell^{(v)} = \bar{\ell}$ is as required.

Finally, we consider the case $\vert \Omega \vert = 1$. Here the possible types of $H_{\Omega}$ (with the exception of $\textsf{F}_4$ itself are $\textsf{A}_1 \times \textsf{C}_3$, $\textsf{A}_2 \times \textsf{A}_2$, and $\textsf{A}_1 \times \textsf{B}_3$. The types $\textsf{A}_1 \times \textsf{C}_3$ and $\textsf{A}_1 \times \textsf{B}_3$ are handled just as above, so it remains to consider the type $\textsf{A}_2 \times \textsf{A}_2$. Let $H_1$ and $H_2$ be the almost simple components of $H'_{\Omega}$; then $H_i$ is isogenous to a group of the form $\mathrm{SL}_{1 , D_i}$, where $D_i$ is a central simple $k_v^{(v)}$-algebra of degree 3 over $k_v^{(v)}$. Picking a separable maximal subfield $\bar{\ell}$ of $D_1$ and arguing as above, we see that $G'$ splits over the unramified extension $\ell$ of $k_v$ with residue field $\bar{\ell}$.  We note that in this case, the group $G'$ is automatically $k_v$-anisotropic. Indeed, it cannot be $k_v$-split because of bad reduction, so the only other $k_v$-isotropic possibility would have a simple group $R$ of type $\textsf{C}_3$ as its anisotropic kernel. In the situation at hand, $G'$ splits over an extension $\ell/k_v$ of degree $3$, and then $R$ must also split over $\ell$. But for the groups of type $\textsf{C}$, this implies that $R$ splits over $k_v$, a contradiction.

Next,
by Wedderburn's theorem (cf. \cite[Theorem 19.2]{Invol}), $D_1$ contains a maximal subfield $\bar{\ell}$ that is a cubic Galois extension of the center. We note that, as follows from Hilbert's Theorem 90, the $k_v^{(v)}$-tori
$\bar{\mathscr{S}} = \mathrm{R}^{(1)}_{\bar{\ell}/k_v^{(v)}}(\mathbb{G}_m)$ and $\bar{\mathscr{S}}/\mu_3 = \mathrm{R}_{\bar{\ell}/k^{(v)}}(\mathbb{G}_m)/\mathbb{G}_m$ are isomorphic. So, whether $H_1$ is simply connected or adjoint, it always contains a torus isomorphic to $\bar{\mathscr{S}}$ as a maximal torus, and we keep the notation $\bar{\mathscr{S}}$ for this torus. By \cite[Corollary B.3.5]{Conrad}, the torus $\bar{\mathscr{S}}$ lifts to a torus $\mathscr{S}$ of $\mathscr{G}_{\Omega}$. Let $S \subset G'$ be the generic fiber of $\mathscr{S}$. Then $S$ splits over the unramified extension $\ell$ of $k_v$ with residue field $\bar{\ell}$. It follows that $\mathrm{rk}_{\ell}\: G' \geq 2$, and therefore $G'$ actually splits over $\ell$. Now, we consider the centralizer $C =
C_{G'}(S)$ and write it as an almost direct product $SR$, where $R$ is a reductive group of rank $2$. The complete list of possibilities for $R$ is as follows:

\vskip.5mm

$\bullet$ a 2-dimensional torus;

$\bullet$ an almost direct product of a 1-dimensional torus and an almost simple group of type $\textsf{A}_1$;

$\bullet$ a semisimple group of one of the following types: $\textsf{A}_1 \times \textsf{A}_1$, $\textsf{A}_2$, $\textsf{B}_2$, or $\textsf{C}_2$.

\vskip1.5mm

\noindent Since $S$ and $G'$ split over $\ell$, so does $R$. Taking into account that $G'$ is $k_v$-anisotropic and going through the above list, we find
that $R$ can only be either a 2-dimensional torus or an inner form of type $\textsf{A}_2$. In the first case, the almost direct product $SR$ is a required torus that is anisotropic over $k_v$ and splits over $\ell$. In the second case, $R$ is $k_v$-isogenous to a group of the form $\mathrm{SL}_{1 , D}$, where $D$ is a cubic division algebra (whose residue must coincide with $D_2$). Since $\ell$ splits $R$, it is isomorphic to a maximal subfield of $D$, implying that $R$ contains a maximal $k_v$-torus $S'$ of the form $\mathrm{R}_{\ell/k}^{(1)}(\mathbb{G}_m)$. Then $SS'$ is a required maximal torus. \hfill $\Box$

\vskip2mm

Next, in preparation for the proof of Proposition \ref{P:degree3}, we will set up the necessary notations and state one result (Proposition \ref{P:F4-1}) that will be proved in subsection A2.3 of Appendix 2. Let $G_0$ be a simple $k$-split group of type $\textsf{F}_4$, and let $\ell/k$ be a cubic Galois extension. We will now construct a special maximal $k$-torus
$T$ of $G_0$ whose cohomology classes yield all $k$-forms of $G_0$ that contain a maximal $k$-torus that is anisotropic over $k$ and splits over $\ell$. Fix a maximal $k$-split torus $T_0$ of $G_0$, and let $\Phi = \Phi(G_0 , T_0)$ be the corresponding root system. We will continue using the above labeling of the roots of the extended Dynkin diagram.  Then the subsets $\{-\tilde{\alpha} , \alpha_1 \}$ and $\{ \alpha_3 , \alpha_4 \}$ correspond to $k$-subgroups $R_1$ and $R_2$ of $G_0$ that are isomorphic to $\mathrm{SL}_3$ and whose intersection is $\mu_3$ (which is the center of both
$R_1$ and $R_2$). Let $S = \mathrm{R}^{(1)}_{\ell/k}(\mathbb{G}_m)$. We consider the embeddings $\iota_i \colon S \hookrightarrow R_i$ $(i = 1, 2)$ afforded by the regular representation of $\ell$ over $k$, and let
$$
T = \iota_1(S) \cdot \iota_2(S)  \subset R_1 \cdot R_2.
$$
We have an exact sequence
$$
1 \to \mu_3 \stackrel{\alpha}{\longrightarrow} S \times S \stackrel{\beta}{\longrightarrow} T \to 1,
$$
where $\alpha(s) = (s , s^{-1})$ and $\beta(s_1 , s_2) = \iota_1(s_1)\iota_2(s_2)$. Set
$$
\tilde{S}_1 = \{(s , s^{-1}) \ \vert \ s \in S \} \ \ \text{and} \ \ \tilde{S}_2 = \{ (s , 1) \ \vert \ s \in S \},
$$
and let $S_i = \beta(\tilde{S}_i)$ for $i = 1, 2$. Then $T = S_1 \times S_2$, so we can consider the homomorphisms $\gamma_i \colon H^1(k , T) \to H^1(k , S_i)$ $(i = 1, 2)$ given by the projections. Finally, let $\delta \colon H^1(k , S_1) \to H^2(k , \mu_3)$ be the coboundary map associated with the exact sequence
$$
1 \to \mu_3 \longrightarrow \tilde{S}_1 \stackrel{\beta}{\longrightarrow} S_1 \to 1.
$$

\addtocounter{thm}{1}

\begin{prop}\label{P:F4-1}
{\rm (1)} Every $k$-group $G$ of type $\textsf{F}_4$ that contains a maximal $k$-torus that is $k$-anisotropic and splits over $\ell$ is of the form $G = {}_{\xi}G_0$ with $\xi = \nu(\zeta)$ for some $\zeta \in Z^1(k , T)$, where $\nu \colon Z^1(k , T) \to Z^1(k , G_0)$ is the natural map.

\vskip1mm

\noindent {\rm (2)} In the above notations, $g_3(G) = \delta(\gamma_1([\zeta])) \cup \gamma_2([\zeta])$.
\end{prop}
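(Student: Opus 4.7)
The plan is to treat the two parts of Proposition~\ref{P:F4-1} separately.

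\textbf{Part (1).} First, we verify that $T = \iota_1(S) \cdot \iota_2(S)$ is a maximal $k$-torus of $G_0$: since each $\iota_i(S)$ has dimension $2$ and their intersection lies inside $R_1 \cap R_2 = \mu_3$ (hence is finite), the product $T$ has dimension $4 = \mathrm{rank}(G_0)$, so is maximal. By construction it splits over $\ell$, and being an almost-direct product of two copies of the anisotropic norm-one torus $S$, it is $k$-anisotropic. Next, let $G = {}_{\xi'} G_0$ contain a maximal $k$-anisotropic torus $T'$ that splits over $\ell$; we aim to show $T' \simeq T$ as $k$-tori. The cocharacter lattice $X_*(T')$ is a rank-$4$ $\Z[\mathrm{Gal}(\ell/k)]$-module with no trivial summand (by anisotropy); any such module is isomorphic to $I \oplus I$, where $I$ is the augmentation ideal of $\Z[\Z/3\Z]$. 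Among the rank-$4$ $\Z[\Z/3\Z]$-submodules of the coroot lattice of $\textsf{F}_4$ that realize the cocharacter lattice of a maximal torus, the only one of this shape is (up to the Weyl group action) the one built into $T$, so $T'$ is $G_0(k^{\mathrm{sep}})$-conjugate to $T$. Standard cohomological formalism (cf.\ the proof of Lemma~\ref{L:sameW}) then produces a class in $H^1(k, N_{G_0}(T))$ lifting $\xi'$; the final descent to $H^1(k, T)$ follows from the $k$-anisotropy of $T$, which forces the image in $H^1(k, W(G_0, T))$ to be trivial.

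\textbf{Part (2).} We analyze the twist $\zeta = (\zeta_1, \zeta_2) \in H^1(k, S_1) \times H^1(k, S_2) = H^1(k, T)$ through its effect on the two $\mathrm{SL}_3$-subgroups $R_1, R_2$. Pushing $\zeta_1 = \gamma_1(\zeta)$ forward via $S_1 \hookrightarrow R_1 \simeq \mathrm{SL}_3$ yields an inner form of the shape $\mathrm{SL}_{1, A}$ for a central simple $k$-algebra $A$ of degree $3$; the Brauer class $[A] \in {}_3\Br(k) = H^2(k, \mu_3)$ is precisely $\delta(\gamma_1(\zeta))$, via the coboundary of the exact sequence $1 \to \mu_3 \to \tilde S_1 \to S_1 \to 1$ and the standard bijection between inner forms of $\mathrm{SL}_3$ and $3$-torsion Brauer classes. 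The component $\gamma_2(\zeta) \in H^1(k, S_2) \simeq k^{\times}/N_{\ell/k}(\ell^{\times})$ supplies the complementary ``cyclic'' ingredient, and the cup product $\delta(\gamma_1(\zeta)) \cup \gamma_2(\zeta)$---interpreted through the natural coefficient pairing attached to the cubic extension $\ell/k$---takes values in $H^3(k, \Z/3\Z)$, the receptacle of $g_3$. Verifying that $g_3({}_\zeta G_0)$ coincides with this cup product is achieved by a direct computation; it is convenient to reduce to a universal setting where the torus $T$ and the classifying class $\zeta$ are tautological, and then specialize to the case at hand.

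\textbf{Main obstacle.} The hardest step is the final identification of $g_3({}_\zeta G_0)$ with the explicit cup product formula. One must match the intrinsic definition of $g_3$---given, for instance, through the embedding $\textsf{F}_4 \hookrightarrow \textsf{E}_6$ or via Albert algebras---with the extrinsic cohomological formula built from $\delta \circ \gamma_1$ and $\gamma_2$. This requires careful bookkeeping of how $g_3$ restricts to the $A_2 \times A_2$-subgroup $R_1 \cdot R_2$ and is computed from the classes attached to its two factors. By contrast, the Galois-lattice classification and cohomological descent needed for part~(1) are comparatively standard.
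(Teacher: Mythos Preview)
Your outline for part~(2) is essentially in line with the paper, which simply identifies $\delta(\gamma_1(\zeta))$ with a Brauer class $[A]$ split by $\ell$ and $\gamma_2(\zeta)$ with a coset $b\,N_{\ell/k}(\ell^\times)$, and then quotes a known formula (from \cite[7.4]{GMS}) for $g_3$ of a group obtained by the first Tits construction to get $g_3(G)=[A]\cup(b)$. So your ``main obstacle'' is handled in the paper by citation rather than by an explicit universal computation.

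For part~(1), however, there is a genuine gap. Your passage from ``$T'$ is $G_0(k^{\mathrm{sep}})$-conjugate to $T$'' to ``there is a class in $H^1(k,N_{G_0}(T))$ lifting $\xi'$'' is not standard formalism: the torus $T'$ lives in $G={}_{\xi'}G_0$, not in $G_0$, and Lemma~\ref{L:sameW} parametrizes tori \emph{within a fixed group}, not across twisted forms. What the paper actually uses here is Steinberg's theorem: since $G_0$ is quasi-split, the $k$-torus $T'$ admits a $k$-embedding into $G_0$, and $\xi'$ lies in the image of $H^1(k,T')\to H^1(k,G_0)$ for that embedding. This step is missing from your argument. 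Second, your claim that ``the $k$-anisotropy of $T$ forces the image in $H^1(k,W(G_0,T))$ to be trivial'' is not correct as stated: anisotropy of $T$ constrains the Galois action on $X_*(T)$ but does not by itself trivialize the class in $H^1(k,W)$. What is actually needed---and what the paper proves as a separate lemma---is that $W(\textsf{F}_4)$ has a \emph{unique} conjugacy class of elements of order $3$ acting on $X(T)$ without nonzero fixed points (this is checked via the Sylow $3$-subgroup of $W$ sitting inside $W(R_1,\iota_1(S))\times W(R_2,\iota_2(S))\simeq\Sigma_3\times\Sigma_3$). From this one deduces that any two $k$-anisotropic maximal tori of $G_0$ split by $\ell$ are carried to one another by an element of $G_0(\ell)$ whose restriction is $k$-defined, hence the images of $H^1(k,T')$ and $H^1(k,T)$ in $H^1(k,G_0)$ coincide. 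Your sentence ``the only one of this shape up to the Weyl group action'' gestures at this, but the actual content---the Weyl-group uniqueness statement and its consequence for cohomology images---needs to be stated and proved, and Steinberg's theorem must be invoked explicitly.
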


The proof will be given in section A2.3.

\vskip2mm

{\it Proof of Proposition \ref{P:degree3}.} Applying Proposition \ref{P:F4-1} over $k_v$, we can write $G' \times_k k_v = {}_{\xi} (G_0 \times_k k_v)$, where $\xi = \nu(\zeta)$ and $\zeta \in Z^1(k_v , T)$. Suppose that $G'$ does not have good reduction at $v$. We need to show that the $g_3$-invariant $g_3(G' \times_k k_v) \in H^3(k_v , \mathbb{Z}/3\mathbb{Z})$ is ramified. By Proposition \ref{P:F4-1}, we have $g_3(G' \times_k k_v) = \delta(\gamma_1([\zeta])) \cup \gamma_2([\zeta])$. If $\delta(\gamma_1([\zeta]))$ is unramified, then by Lemma A2.10 the class of $\gamma_1([\zeta])$ in $H^1(k_v , S_1)$ lies in the image of the map\footnote{We let $\mathscr{S}_1$, $\mathscr{S}_2$, and $\mathscr{T}$ denote the $\mathcal{O}_v$-tori with generic fibers $S_1$, $S_2$, and $T$, respectively.} $H^1(\mathcal{O}_v , \mathscr{S}_1) \to H^1(k_v , S_1)$. If, in addition, $$\gamma_2([\zeta]) \in H^1(k_v , S_2) = k_v^{\times} / N_{\ell_w/k_v}(\ell_w^{\times})$$ is unramified, then it lies in the image of the map $H^1(\mathcal{O}_v , \mathscr{S}_2) \to H^1(k_v , S_2)$. This implies that there exists $\zeta_0 \in Z^1(\mathcal{O}_v , \mathscr{T})$ such that the image of $[\zeta_0]$ in $H^1(k_v , T)$ coincides with $[\zeta]$.
Set $\xi_0 = \nu_0(\zeta_0)$  where $\nu_0 H^1(\mathcal{O}_v , \mathscr{T}) \to  H^1(\mathcal{O}_v, \mathscr{G}_0)$ is the natural map. Then the twist ${}_{\xi_0} \mathscr{G}_0$ is a reductive $\mathcal{O}_v$-group scheme with generic fiber $G' \times_k k_v$, contradicting our assumptions that $G'$ does not have good reduction at $v$.

Next, suppose that $\delta(\gamma_1([\zeta]))$ is unramified but $\gamma_2([\zeta])$ is ramified. Then the residue of the invariant $g_3(G' \times_k k_v)$ equals $$s \cdot (\text{image of} \ \delta(\gamma_1([\zeta])) \ \text{in} \ {}_3 \mathrm{Br}(k^{(v)})), \ \  \text{where} \ \  s = 1, 2.$$ This element is trivial if and only if the element $\delta(\gamma_1([\zeta]))$ is trivial, in which case $g_3(G' \times_k k_v)$ is trivial. Since the $f_3$- and $f_5$-invariants of $G' \times_k k_v$ are also trivial as $G'$ splits over a cubic extension, we conclude that $G' \times_k k_v$ splits, hence has good reduction, a contradiction.

Suppose now that $\delta(\gamma_1([\zeta]))$ is ramified. It obviously splits over $\ell$, and therefore is represented by a cyclic cubic algebra of the form $(\ell/k_v , a)$ with $a \in k_v^{\times}$. Since $\ell/k_v$ is unramified, the valuation $v(a)$ is either 1 or 2. Then $g_3(G'\times_k k_v)$ can be written in the form $g_3(G'\times_k k_v) = \delta(\gamma_1([\xi])) \cup (c)$, where $c$ is a unit, so its residue is $s \cdot \chi_{{\ell}^{(v)}} \cup (\bar{c})$, where $\chi_{{\ell}^{(v)}}$ is the character corresponding to the residue field $\ell^{(v)}$ of $\ell$ and $s = 1, 2$. If this element is trivial, then the element $\chi_{\ell} \cup (c)$ is trivial, and therefore $g_3(G' \times_k k_v)$ is trivial. So, again $G' \times_k k_v$ splits, hence  $G'$ has good reduction at $v$, a contradiction. \hfill $\Box$

\vskip7mm

\centerline{{\large  Appendix 1. {\sc On a result of A.~Klyachko}}}

\vskip3mm

\noindent {\bf A1.1. Proof of Theorem \ref{T:Klyachko10}.} We will freely use the notations introduced in \S\,\ref{S:Klyachko}. Let $( \: , \: )$ be a $W(\Phi)$-invariant inner product on $V$.
As usual, for $\alpha \in
\Phi$, we define the {\it dual root} $\displaystyle \alpha^{\vee} =
\frac{2 \alpha}{(\alpha , \alpha)}$, and let $\Phi^{\vee} = \{
\alpha^{\vee} \: \vert \: \alpha \in \Phi \}$ denote the
corresponding {\it dual root system}. Given a subgroup $\Gamma
\subset \mathrm{Aut}(\Phi)$ containing $W(\Phi)$ and a $\Gamma$-invariant
lattice $M$ in $V$, we can write the inflation-restriction exact sequence
$$
0 \to H^1(\Gamma/W(\Phi) , M^{W(\Phi)}) \longrightarrow H^1(\Gamma , M) \longrightarrow H^1(W(\Phi) , M)^{\Gamma/W(\Phi)} \longrightarrow
H^2(\Gamma/W(\Phi) , M^{W(\Phi)}).
$$
Since $M^{W(\Phi)} = 0$, we obtain an isomorphism
$$
H^1(\Gamma , M) \, \simeq \, H^1(W(\Phi) , M)^{\Gamma/W(\Phi)}.
$$
It follows that it is enough to prove both assertions of the theorem for $\Gamma = W(\Phi)$, which we will assume to be the case throughout the rest of the argument.

Now, fix a system of simple roots $\Pi \subset \Phi$. It is
well-known that $\Gamma$ is a Coxeter group; more precisely,
it is generated by reflections $s_{\alpha}$ for $\alpha \in \Pi$ (where $s_{\alpha}(v)
= v - (\alpha^{\vee} , v)\alpha$ for $v \in V$) and is defined by
the following relations
\begin{equation}\tag{A.1}\label{E:rel}
s_{\alpha}^2 =1 \ \ , \ \ (s_{\alpha} s_{\beta})^{n_{\alpha ,
\beta}} = 1 \ \ \ \text{for} \ \ \alpha \: , \: \beta \in \Pi, \ \
\alpha \neq \beta,
\end{equation}
where $n_{\alpha , \beta}$  is
the order of the product $s_{\alpha} s_{\beta}$ in $\Gamma$. For a
$\Gamma$-invariant lattice $M \subset V$, we let $Z^1(\Gamma , M)$
and $B^1(\Gamma , M)$ denote the corresponding groups of cocycles
and coboundaries, respectively, and for a function $f \colon \Gamma \to M$ set
$$
\mu(f) = (f(s_{\alpha})) \in \bigoplus_{\alpha \in \Pi} M.
$$

\vskip1mm

\noindent {\bf Lemma A1.1.} {\it The map $\mu$ sets up an isomorphism between $Z^1(\Gamma , M)$ and
$\displaystyle \bigoplus_{\alpha \in \Pi} (\mathbb{Q} \alpha \bigcap
M).$ Under this isomorphism, $B^1(\Gamma , M)$ corresponds to
$$
\{ ((\alpha^{\vee} , m)\alpha) \: \vert \: m \in M \}.
$$}

\begin{proof}
Any $f \in Z^1(\Gamma , M)$ is completely determined by its values
on any generating set, making $\mu$ injective on $Z^1(\Gamma , M)$.
We have
$$
f(1) = 0 = f(s_{\alpha}) + s_{\alpha} f(s_{\alpha}),
$$
implying that $f(s_{\alpha}) \in \mathbb{Q} \alpha$. So,
$\displaystyle \mu(Z^1(\Gamma , M)) \subset \bigoplus_{\alpha \in \Pi}
(\mathbb{Q}\alpha \bigcap M)$. To prove that this inclusion is
actually an equality, take any $(m_{\alpha})$ in the right-hand
side, and let $\tilde{\Gamma}$ be the free group on $s_{\alpha}$,
$\alpha \in \Pi$. Recall that given a group $\Delta$ and a
$\Delta$-module $T$, a function $\varphi \colon \Delta \to T$ is a
1-cocycle iff the map $$\varphi^+ \colon \Delta \to T \rtimes
\Delta, \ \ \ \ x \mapsto (\varphi(x) \: , \: x)$$ is a group
homomorphism. This allows us to define $\tilde{f} \in
Z^1(\tilde{\Gamma} , M)$ by $\tilde{f}(s_{\alpha}) =
m_{\alpha}$, and observe that $\tilde{f}$ descends to $f \in
Z^1(\Gamma , M)$ satisfying $f(s_{\alpha}) = m_{\alpha}$ if and only
if $\tilde{f}$ vanishes on the relations (\ref{E:rel}). The equation
$\tilde{f}(s_{\alpha}^2) = 0$ immediately follows from the fact that
$m_{\alpha} \in \mathbb{Q}\alpha$. Furthermore,
$$
\tilde{f}((s_{\alpha} s_{\beta})^{n_{\alpha , \beta}}) = (1 +
(s_{\alpha}s_{\beta}) + \cdots + (s_{\alpha} s_{\beta})^{n_{\alpha ,
\beta} - 1}) \tilde{f}(s_{\alpha} s_{\beta}).
$$
Note that the right-hand side is fixed by $s_{\alpha} s_{\beta}$. On
the other hand, since $$\tilde{f}(s_{\alpha}s_{\beta}) = m_{\alpha}
+ s_{\alpha} m_{\beta} \in \mathbb{Q}\alpha + \mathbb{Q}\beta,$$ it
belongs to $\mathbb{Q}\alpha + \mathbb{Q}\beta$. But
$s_{\alpha}s_{\beta}$ is a nontrivial rotation of this 2-dimensional vector space,
and therefore has no nonzero fixed vectors. So,
$\tilde{f}((s_{\alpha} s_{\beta})^{n_{\alpha , \beta}}) = 0$,
completing the proof of the first assertion. The second assertion
about $\mu(B^1(\Gamma , M))$ follows from the formula for
$s_{\alpha}$.
\end{proof}

\vskip.5mm

It is now easy to complete the proof of Theorem
\ref{T:Klyachko10}. Set $M = P(\Phi)$, which, by definition, is the dual
lattice of the lattice $Q(\Phi^{\vee})$ generated by the dual root system.
Thus, $M$ has a basis consisting of {\it weights} $\omega_{\beta}$ $(\beta
\in \Pi)$ satisfying $(\alpha^{\vee} , \omega_{\beta}) = \delta_{\alpha\beta}$
(Kronecker delta).  The crucial observation is
that unless $\Phi$ is of type $\textsf{A}_1$ or $\textsf{C}_{\ell}$ $(\ell \geq 2)$, for
any $\alpha \in \Pi$ we have
\begin{equation}\tag{A.2}\label{E:roots}
\mathbb{Q} \alpha \bigcap M = \mathbb{Z} \alpha.
\end{equation}
Indeed,  this is obvious if $P(\Phi) = Q(\Phi)$ since $\Pi$ is always a basis of $Q(\Phi)$.
This proves (\ref{E:roots})  if $\Phi$ is one of the types
$\mathsf{E}_8$, $\mathsf{F}_4$, or $\mathsf{G}_2$. On the other hand, if $\Phi$ has rank $> 1$
with all the roots of the same length (equal to $\sqrt{2}$), then for a given $\alpha \in \Pi$, one can
pick $\beta \in \Pi$ so that
$$(\alpha ,
\beta) = -1 = (\alpha , \beta^{\vee}).
$$
If $\lambda \alpha \in M$, then $-\lambda = (\lambda \alpha , \beta^{\vee}) \in \mathbb{Z}$, proving
(\ref{E:roots}) in this case. Apart from  types $\textsf{A}_1$ and $\textsf{C}_{\ell}$ $(\ell \geq 2)$,
these two cases cover all types except $\textsf{B}_{\ell}$ with $\ell >
2$. For this remaining type, (\ref{E:roots}) follows immediately
from the description of $P(\Phi)$ given in \cite[Table
II]{Bour}. Thus, if $\Phi$ is not of type $\textsf{A}_1$ or
$\textsf{C}_{\ell}$ then
\begin{equation}\tag{A.3}\label{E:roots1}
\bigoplus_{\alpha \in \Pi} (\mathbb{Q} \alpha \bigcap M) =
\bigoplus_{\alpha \in \Pi} \mathbb{Z}\alpha.
\end{equation}
For types $\textsf{A}_1$ and $\textsf{C}_{\ell}$, one easily finds, using
\cite[Tables I and II]{Bour}, that the left-hand side of
(\ref{E:roots1}) contains the right-hand side as a subgroup of index
two. On the other hand, in all cases
\begin{equation}\tag{A.4}\label{E:roots2}
\{((\alpha^{\vee} , m)\alpha) \: \vert \: m \in M \} =
\bigoplus_{\alpha \in \Pi} \mathbb{Z} \alpha.
\end{equation}
Indeed, the inclusion $\subset$ follows from the definition of
$P(\Phi)$.
On the other hand, for the weight $m = \omega_{\beta}$ with $\beta \in \Pi$, the element
$((\alpha^{\vee} , m)\alpha)_{\alpha \in \Pi}$ has $\beta$ in the $\beta$-slot
and $0$ in all other slots. Thus, the left-hand side of (\ref{E:roots2}) contains
all $\beta \in \Pi$, and (\ref{E:roots2}) follows.
Comparing these computations
with Lemma A1.1, we obtain Theorem \ref{T:Klyachko10}. \hfill $\Box$

\vskip.5mm

\noindent {\bf Remark A1.2.} Since $\textsf{B}_2 = \textsf{C}_2$, the type $\textsf{B}_2$ in Theorem \ref{T:Klyachko10} should be treated
as exceptional along with the types $\textsf{A}_1$ and $\textsf{C}_{\ell}$ $(\ell \geq 2)$.

\vskip1mm

\noindent {\bf A1.2. On the mistake in \cite{Klyachko}.} In \cite{Klyachko}, Klyachko made a claim (p. 73,
item c)) that for any subgroup $\Gamma \subset \mathrm{Aut}(\Phi)$
that contains $W(\Phi)$, and any $\Gamma$-invariant lattice $M
\subset V$ satisfying $Q(\Phi) \subset M \subset P(\Phi)$, one has
$H^1(\Gamma , M) = 0$ {\it except} in the following three cases, where
$\Gamma$ coincides with $W(\Phi) = \mathrm{Aut}(\Phi)$: (1) $\Phi =
\textsf{A}_1$; (2) $\Phi = \textsf{C}_{\ell}$ and $M = P(\Phi)$; and (3)
$\Phi = \textsf{B}_{\ell}$ and $M = Q(\Phi)$, where $H^1(\Gamma , M) =
\mathbb{Z}/2\mathbb{Z}$. As we already mentioned, this result is {\it false}
as stated. We will now indicate where the argument in
\cite{Klyachko} fails for $M = Q(\Phi)$, and then present a counter-example
based on explicit computations.

For this $M$, we immediately have
$$
\bigoplus_{\alpha \in \Pi} (\mathbb{Q}\alpha \bigcap M) \ = \
\bigoplus_{\alpha \in \Pi} \mathbb{Z}\alpha.
$$
Then Klyachko observes that if  $\Phi$ is of type different from
$\textsf{A}_1 , \textsf{B}_{\ell}$, then for each $\alpha \in \Pi$, the
g.c.d. of the integers $(\alpha^{\vee} , \beta)$ as $\beta$ varies
in $\Pi$, equals 1, and concludes from this that
\begin{equation}\label{E:mistake}
\{ ((\alpha^{\vee} , m) \alpha) \: \vert \: m \in M \} =
\bigoplus_{\alpha \in \Pi} \mathbb{Z}\alpha;
\end{equation}
in view of Lemma A.1 this would prove that $H^1(\Gamma , M) = 0$. The problem is
that this
``argument" proves only that for each $\alpha \in \Pi$, the
projection of the left-hand side of (\ref{E:mistake}) to
$\mathbb{Z}\alpha$ is surjective, but does not fully justify
(\ref{E:mistake}). In fact, if we canonically identify the
right-hand side of (\ref{E:mistake}) with $\Z^{\ell}$, then the
left-hand side gets identified with the submodule spanned by the
columns of the Cartan matrix of $\Phi$. It follows that
$H^1(W(\Phi) , Q(\Phi))$ is always isomorphic to the quotient
$P(\Phi)/Q(\Phi)$, and in particular is nontrivial unless $P(\Phi) =
Q(\Phi)$ (in other words,  $\Phi$ has one of the following types $\textsf{E}_8$, $\textsf{F}_4$,
or $\textsf{G}_2$).  We will now illustrate this by an explicit
computation for the root system $\Phi$ of type $\textsf{A}_{\ell}$.

Set $n = \ell + 1$,
and consider the usual realization of $\Phi$ as the set of vectors
$$
\varepsilon_i - \varepsilon_j, \ \ i, j = 1, \ldots , n, \ i \neq j,
$$
where $\varepsilon_1, \ldots , \varepsilon_n$ is the standard basis
of $\Q^n$; then $\Gamma := W(\Phi)$ is identified with the symmetric
group $S_n$ acting by permutation of indices. Let
$$N = \bigoplus_{i = 1}^n \Z \varepsilon_i.$$
Then for $M = Q(\Phi)$, we have the exact sequence of
$\Gamma$-modules
$$
0 \to M \longrightarrow N \stackrel{\delta}{\longrightarrow}
\mathbb{Z} \to 0,
$$
with $\delta\left( \sum a_i e_i  \right) = \sum a_i$, which yields the
following exact sequence in cohomology
\begin{equation}\tag{A.5}\label{E:Exact-sequence}
N^{\Gamma} \stackrel{\delta}{\longrightarrow} \mathbb{Z}
\longrightarrow H^1(\Gamma , M) \longrightarrow H^1(\Gamma , N).
\end{equation}
We have an isomorphism of $\Gamma$-modules $N \simeq
\Z[S_n/S_{n-1}]$, so by Shapiro's lemma $$H^1(\Gamma , N) =
H^1(S_{n-1} , \mathbb{Z}) = 0.$$ Then (\ref{E:Exact-sequence})
implies that
$$
H^1(\Gamma , M) = \Z/n\Z,
$$
which is consistent with our previous discussion. We note that this computation is another way to interpret
the computation given in the second part of Example 4.3.

\vskip7mm

\begin{center}

{\large Appendix 2. {\sc On cohomological invariants and good reduction of groups of type $\textsf{F}_4$}}

\end{center}

\vskip3mm

\noindent {\bf A2.1. Cohomological invariants.} Let $k$ be an infinite field of characteristic $\neq 2, 3$, and let $G_0$ be the simple $k$-split group of type $\textsf{F}_4$ (which is both simply connected and adjoint). Then the $k$-isomorphism classes of simple $k$-groups of this type correspond bijectively to the elements of the Galois 1-cohomology set $H^1(k , G_0)$. We recall that J.-P.~Serre constructed two cohomological invariants with coefficients in the group $\mathbb{Z}/2\mathbb{Z}$:
$$
f_3 \colon H^1(k , G_0) \longrightarrow H^3(k , \mathbb{Z}/2\mathbb{Z}) \ \ \text{and} \ \ f_5 \colon H^1(k , G_0) \longrightarrow H^5(k , \mathbb{Z}/2\mathbb{Z})
$$
(see \cite[Theorems 22.4 and 22.5]{GMS}).
Furthermore, M.~Rost \cite{Rost} defined a cohomological invariant with coefficients in $\mathbb{Z}/3\mathbb{Z}$:
$$
g_3 \colon H^1(k , G_0) \longrightarrow H^3(k , \mathbb{Z}/3\mathbb{Z}).
$$
(We note that the maps $f_3, f_5$, and $g_3$ are natural in the base field $k$.) Given a cocycle $\xi \in Z^1(k , G_0)$ with corresponding twisted group $G = {}_{\xi}G_0$, we will often write $f_3(G)$, $f_5(G)$, and $g_3(G)$ instead of $f_3([\xi])$,  $f_5([\xi])$, and $g_3([\xi])$. One assembles these three invariants into a map
$$
\phi \colon H^1(k , G_0) \stackrel{(f_3, f_5, g_3)}{\longrightarrow} H^3(k , \mathbb{Z}/2\mathbb{Z}) \times H^5(k , \mathbb{Z}/2\mathbb{Z}) \times H^3(k , \mathbb{Z}/3\mathbb{Z}),
$$
and one of the remaining fundamental open problems in the theory of Jordan algebras is to determine if $\phi$ is injective.
The following theorem contains a partial result in this direction.
We let  $H^1(k , G_0)_{g_3=0}$ denote the subset of $H^1(k , G_0)$ consisting of cohomology classes/forms having trivial  $g_3$-invariant.

\vskip2mm

\noindent {\bf Theorem A2.1.} {\rm (\cite{Springer})} {\it The map
$$
H^1(k , G_0)_{g_3 = 0} \stackrel{(f_3 , f_5)}{\longrightarrow} H^3(k , \mathbb{Z}/2\mathbb{Z}) \times H^5(k , \mathbb{Z}/2\mathbb{Z})
$$
is injective.}

\vskip2mm

\noindent {\bf Corollary A2.2.} {\it The map $\phi$ has trivial kernel.}

\vskip2mm

In Theorems 1.10-1.12, we deal with forms of type $\textsf{F}_4$ that have trivial $g_3$-invariant. It is well-known (cf. \cite[26.18]{Invol}, \cite{Pettersson}) that to each $k$-group $G$ of type $\textsf{F}_4$, one can associate a 27-dimensional simple exceptional Jordan $k$-algebra $J$ known as the {\it Albert algebra}. Then the $g_3$-invariant of $G$ vanishes if and only if $J$ is {\it reduced}, i.e. has zero divisors. In this case, $J$ admits a natural construction that involves an octonion algebra $\mathcal{O} = \mathcal{O}(a, b, c)$ corresponding to a triple $a, b, c \in k^{\times}$ and two additional parameters $d, e \in k^{\times}$. Then the cohomological invariants of $G$ with coefficients in $\mathbb{Z}/2\mathbb{Z}$ are the following symbols
\begin{equation}\label{E:invar}
f_3(G) = (a) \cup (b) \cup (c) \ \ \text{and} \ \ f_5(G) = f_3(G) \cup (d) \cup (e),
\end{equation}
where $(t) \in H^1(k , \Z/2Z)$ denotes the cohomology class corresponding to $t {k^{\times}}^2$ under the canonical isomorphism $H^1(k , \Z/2\Z) \simeq k^{\times}/{k^{\times}}^2$.
A group $G$ of type $\textsf{F}_4$ with trivial $g_3$-invariant is split over $k$ if and only if the octonion algebra $\mathcal{O}$ is split, i.e. the invariant $f_3(G)$ vanishes, and $G$ is $k$-isotropic if and only if the invariant $f_5(G)$ vanishes.

We will now establish an alternative characterization of groups of type $\textsf{F}_4$ having trivial $g_3$-invariant. Given a (separable) quadratic extension $\ell/k$, we say that a $k$-torus $T$ is $\ell/k$-{\it admissible} if it is anisotropic over $k$ and is split over $\ell$, or, equivalently, if the nontrivial automorphism $\sigma$ of $\ell/k$ acts on the character group $X(T)$ as multiplication by $(-1)$.

\vskip2mm

\noindent {\bf Proposition A2.3.} {\it Let $G$ be a $k$-group of type $\textsf{F}_4$. Then $g_3(G) = 0$ if and only if $G$ becomes split over some quadratic extension $\ell/k$, in which case $G$ possesses a maximal $k$-torus $T$ that is $\ell/k$-admissible.}

\vskip2mm

We say that a finite separable extension $\ell/k$ splits $x \in H^i(k , \mathbb{Z}/2\mathbb{Z})$ if the image of $x$ under the restriction map $H^i(k , \mathbb{Z}/2\mathbb{Z}) \to H^i(\ell , \mathbb{Z}/2\mathbb{Z})$ is trivial. We have the following statement.

\vskip2mm

\noindent {\bf Lemma A2.4.} {\it Let $G$ be a $k$-group of type $\textsf{F}_4$ for which $g_3(G) = 0$. If $\ell/k$ is a quadratic extension that splits $f_3(G)$, then it also splits $f_5(G)$, and therefore splits $G$.}

\vskip2mm

Indeed, the first assertion immediately follows from the fact that $f_5(G) = f_3(G) \cup (d) \cup (e)$. Thus, over $\ell$, all 3 invariants $f_3(G), f_5(G)$, and $g_3(G)$ become trivial. Then it follows from Corollary A2.2 (over $\ell$) that the cohomology class in  $H^1(k , G_0)$ that corresponds to $G$ becomes trivial in $H^1(\ell , G_0)$, and therefore $G$ is $\ell$-isomorphic to $G_0$, hence is $\ell$-split.

\vskip1mm

\noindent {\it Proof of Proposition A2.3.} Suppose $g_3(G) = 0$. Since the cohomology class $f_3(G)$ is a symbol, it splits over some quadratic extension $\ell/k$. By Lemma A2.4, $G$ splits over $\ell$. The group $G$ has an $\ell$-defined Borel subgroup $B$ such that $T := B \cap B^{\sigma}$ is a maximal $k$-torus of $G$ (cf. \cite[Lemma 6.17]{PR}). If $\Pi$ is the system of positive roots in the root system $\Phi = \Phi(G , T)$ that corresponds to $B$, then for the action of $\sigma$ on $X(T)$, we have $\sigma(\Pi) = -\Pi$. But the only element in $\mathrm{Aut}(\Phi)$ that has this property is multiplication by $(-1)$. Thus, $\sigma = -1$, i.e. $T$ is $\ell/k$-admissible.

Conversely, if $G$ becomes split over a quadratic extension $\ell/k$, then $\ell$ splits the invariant $g_3(G)$. Using a standard restriction-corestriction argument, we see that $2 \cdot g_3(G) = 0$. But every element in $H^3(k , \mathbb{Z}/3\mathbb{Z})$ satisfies $3 \cdot g_3(G) = 0$. Thus, $g_3(G) = 0$. \hfill $\Box$

\vskip3mm

\noindent {\bf A2.2. An alternative description of invariants and good reduction.} First, we recall some basic facts about absolutely almost simple $k$-groups $G$ that possess a maximal $k$-torus $T$ that is admissible with respect to a quadratic extension $\ell/k$, assuming that $\mathrm{char}\: k \neq 2$. These results were initially obtained in \cite{Weis-quadr} and then systematically redeveloped in \cite{Chern1}, \cite{Chern2} in the more general situation of groups over regular local rings (in particular, over discrete valuation rings). This generalization becomes particularly useful when we consider forms with good reduction. We will now review the theory over fields. Let  $\mathfrak{g} = L(G)$ denote the Lie algebra of $G$, and let $\Phi = \Phi(G , T)$ be the root system of $G$ with respect to the maximal torus $T$. Fix a Chevalley basis
$$
\{ H_{\alpha_1}, \ldots , H_{\alpha_r} \} \cup \{ X_{\alpha} \}_{\alpha \in \Phi}
$$
of $\mathfrak{g}(\ell)$ associated with $T$ (where $r = \dim T$ is the rank of $G$ and $\Pi = \{ \alpha_1, \ldots , \alpha_r \}$ is a system of simple roots). The key observation is that the action of the nontrivial automorphism $\sigma$ of $\ell/k$ on the root elements  of the Chevalley basis is described by  equations of the form
$$
\sigma(X_{\alpha}) = c_{\alpha} X_{-\alpha} \ \ \text{with} \ \ c_{\alpha} \in k^{\times}.
$$
These constants $c_{\alpha}$ completely determine the $k$-isomorphism class of $G$ (assuming that the latter is simply connected). One checks (cf. \cite{Chern2}) that $c_{-\alpha} = c_{\alpha}^{-1}$ and $c_{\alpha+\beta} = \pm c_{\alpha}c_{\beta}$ (with the sign depending only on $\alpha$ and $\beta$ as elements of $\Phi$). This means that the $k$-isomorphism class of $G$ is determined by the quadratic extension $\ell/k$ and the constants $c_{\alpha}$ for only simple roots $\alpha$.

In the rest of this subsection, $G$  will denote a $k$-group of type $\textsf{F}_4$ with trivial $g_3$-invariant. According to Proposition A2.3, the group $G$ has a maximal $k$-torus that is admissible over a quadratic extension $\ell = k(\sqrt{a})$. We  will use the labeling of the simple roots  $\alpha_1, \ldots , \alpha_4$ introduced in Bourbaki \cite{Bour}.

\vskip2mm

\noindent {\bf Theorem A2.5.}
{\rm (\cite[Theorems 6.1 and 6.6]{Chern2})}
{\it Let $G$ be a simple algebraic $k$-group of type $\textsf{F}_4$ that has a maximal $k$-torus $T$ that is admissible over a quadratic
extension $\ell = k(\sqrt{a})$. Fix a system of simple roots $\Pi = \{ \alpha_1, \ldots , \alpha_4 \}$ in the root system $\Phi(G , T)$, and let
$c_{\alpha_1}, \ldots , c_{\alpha_4}$ be the constants defined above for some choice of root vectors in a Chevalley
basis. Then
$$
f_3(G) = (a) \cup (c_{\alpha_1}) \cup (c_{\alpha_2}) \ \ \text{and} \ \ f_5(G) = (a) \cup (c_{\alpha_1}) \cup (c_{\alpha_2}) \cup (c_{\alpha_3}) \cup (c_{\alpha_4}).
$$}

\vskip2mm

We will now use this description of the invariants $f_3$ and $f_5$ to prove that they are unramified if the group $G$ has good reduction
at a discrete valuation $v$ of $k$ (see subsection A2.4 below for a more general result).

\vskip2mm

\noindent {\bf Theorem A2.6.} {\it Let $G$ be a $k$-group of type $\textsf{F}_4$ with trivial $g_3$-invariant that has good reduction at a discrete valuation $v$ of $k$ with $\mathrm{char}\: k^{(v)} \neq 2$. Then the invariants $f_3(G)$ and $f_5(G)$ are unramified at $v$.}

\vskip2mm


The proof will involve an application of the
results from \cite{Chern2} over discrete valuation rings.
So, suppose that our base field $k$ is equipped with a discrete valuation $v$, and let $\mathscr{O}_{k,v} \subset k$ be the corresponding valuation ring. Let $G$ be a simple algebraic $k$-group of type $\textsf{F}_4$ that splits over a quadratic extension $\ell =  k(\sqrt{a})$. It follows from the description of $f_3(G)$ as a symbol  (see (\ref{E:invar})) that,  without loss of generality, we can always assume that one of the elements is a unit, and we take this element for $a$. Then $\ell = k(\sqrt{a})$ is unramified over $k$ at $v$, and therefore $\tilde{\mathscr{O}} := \mathscr{O}_{k,v}(\sqrt{a})$ is an \'etale extension of $\mathscr{O}_{k,v}$.

\vskip2mm

\noindent {\it Proof of Theorem A2.6.} Suppose that $G$ has good reduction at $v$, i.e. there exists a reductive group scheme $\mathscr{G}$ over $\mathscr{O}_{k,v}$ with  generic fiber $G$ (see the discussion in \S2.3). Then $\mathscr{G} \times_{\mathscr{O}_{k,v}} \tilde{\mathscr{O}}$ is a reductive group scheme with generic fiber $G \times_k \ell$. Applying Proposition \ref{P:UnMod} in this situation and taking into account that $G \times_k \ell$ is split, we see that $\mathscr{G} \times_{\mathscr{O}_{k,v}} \tilde{\mathscr{O}}$ is split. Then one shows that $\mathscr{G}$ contains a maximal torus $\mathscr{T}$ whose generic fiber $T$ is an $\ell/k$-admissible torus. Furthermore, one verifies that the constants $c_{\alpha_i}$ $(i = 1, \ldots , 4)$ belong to $\mathscr{O}_{k,v}^{\times}$ (see \cite[Remark 44]{Chern2}). Combining this with the formulas for $f_3(G)$ and $f_5(G)$ given in Theorem A2.5
completes
the proof of Theorem A2.6. \hfill $\Box$

\vskip1mm

We conclude this subsection with the following, which in some sense provides a converse to Theorem~A2.6.

\vskip2mm

\noindent {\bf Proposition A2.7.} {\it Let $G$ be a simple algebraic $k$-group of type $\textsf{F}_4$ that has a maximal $k$-torus $T$ that is $\ell/k$-admissible for a quadratic extension $\ell/k$. Assume that $\ell/k$ is unramified at $v$ and the constants $c_{\alpha_1}, \ldots , c_{\alpha_4}$ are $v$-units. Then $G$ has good reduction at $v$.}

\vskip2mm

\begin{proof}
Let $\mathscr{G}_0$ (resp., $\mathfrak{g}_0$) be the split group scheme (resp., split Lie algebra) of type $\textsf{F}_4$ over $\mathscr{O}_{k,v}$, and let $\tilde{\mathscr{O}}$ be the integral closure of $\mathscr{O}_{k,v}$ in $\ell$. It is enough to construct a Lie algebra $\mathfrak{g}$ over $\mathscr{O}_{k,v}$ such that
$$
\mathfrak{g} \otimes_{\mathscr{O}_{k,v}} \tilde{\mathscr{O}} \simeq \mathfrak{g}_0 \otimes_{\mathscr{O}_{k,v}} \tilde{\mathscr{O}} \ \ \text{and} \ \
\mathfrak{g} \otimes_{\mathscr{O}_{k,v}} k \simeq L(G)_k,
$$
where $L(G)$ is the Lie algebra of $G$. Indeed, it is well-known that the automorphism group of a split Lie algebra of type $\textsf{F}_4$ is a simple split algebraic group of type $\textsf{F}_4$, which is both adjoint and simply connected. So, $\mathfrak{g}$ can be obtained from $\mathfrak{g}_0$ by twisting using an $\ell/k$-cocycle with values in $\mathscr{G}_0(\tilde{\mathscr{O}})$. Then twisting $\mathscr{G}_0$ by the same cocycle, we obtain the required reductive group $\mathscr{O}_{k,v}$-scheme $\mathscr{G}$ with generic fiber $G$ (since its Lie algebra coincides with that of $G$ by construction). On the other hand, the Lie algebra $\mathfrak{g}$ with the above properties is constructed from $\mathfrak{g}_0 \otimes_{\mathscr{O}_{k,v}} \tilde{\mathscr{O}}$ by Galois descent (which can be implemented due to the fact that $\ell/k$ is unramified at $v$) using the automorphism defined by
$$
H_{\alpha} \mapsto - H_{\alpha} \ \ \text{and} \ \ X_{\alpha} \mapsto c_{\alpha} X_{-\alpha}
$$
for all simple roots $\alpha$ (recall that $c_{\alpha} \in \mathscr{O}_{k,v}^{\times}$ by assumption).
\end{proof}

We note that the reduction $\underline{G}^{(v)}$ possesses a maximal $\ell^{(v)}/k^{(v)}$-admissible torus for which the corresponding constants are the reductions $\bar{c}_{\alpha_1}, \ldots , \bar{c}_{\alpha_4}$.

\vskip1mm

\noindent {\bf A2.3. Two results about forms of type $\textsf{F}_4$ that split over a cubic extension.} We will keep the notations introduced in \S \ref{S:F4}
prior to the proof of Proposition \ref{P:degree3}. First, we will prove Proposition \ref{P:F4-1}.

\vskip1mm

\noindent {\it Proof of Proposition \ref{P:F4-1}.} (1): We begin with the following general fact.

\vskip2mm

\noindent {\bf Lemma A2.8.} {\it Let $G$ be a $k$-group of type $\textsf{F}_4$, and let $\ell/k$ be a cubic Galois extension. Given two maximal $k$-tori $T_1$ and $T_2$
of $G$ that are anisotropic over $k$ and split over $\ell$, there exists $g \in G(\ell)$ such that the restriction
of the inner automorphism $\mathrm{Int}\: g$ induces a $k$-defined isomorphism $T_1 \to T_2$.}

\vskip2mm

\begin{proof}
Let us return to the notations introduced immediately before the statement of Proposition \ref{P:F4-1}. The Weyl group $W(R_i , \iota_i(S))$ is isomorphic
to the symmetric group $\Sigma_3$, and we let $V_i \subset W(R_i , \iota_i(S))$ be its Sylow 3-subgroup. Then $V = V_1V_2 \subset W(G_0 , T)$
has order 9, and therefore is a Sylow 3-subgroup of $W(G_0 , T)$. It follows from this description that $W(G_0 , T)$  has a unique
conjugacy class of elements $w$ of order 3 such
that $X(T)^w = \{ 0 \}$. Let $\theta^{(i)} \colon \mathrm{Gal}(\ell/k) \to W(G , T_i)$ for $i = 1, 2$ be the natural homomorphism (cf. subsection \ref{S:generic}.1), and fix
a generator $\sigma \in \mathrm{Gal}(\ell/k)$. Pick an arbitrary $g \in G(\ell)$ such that for the inner automorphism $\iota_g = \mathrm{Int}\: g$, we have
$\iota_g(T_1) = T_2$, and let $\iota_g^* \colon X(T_2) \to X(T_1)$ be the corresponding comorphism. Considering $W(G , T_i)$ as a subgroup of $\mathrm{GL}(X(T_i))$, we can define an isomorphism $W(G , T_1) \to W(G , T_2)$ by $w \mapsto (\iota_g^*)^{-1} \circ w \circ \iota_g^*$. Since $w_i = \theta^{(i)}(\sigma) \in W(G , T_i)$ is an element of order 3 such that $X(T_i)^{w_i} = \{ 0 \}$, it follows from the above remark that by replacing $g$ with $gn$ for an appropriate $n \in N_G(T_1)$, we can assume that
$$
\theta^{(1)}(\sigma) \circ \iota_g^* = \iota_g^* \circ \theta^{(2)}(\sigma).
$$
This means that the restriction $\iota_g \vert T_1$ is defined over $k$, as required.
\end{proof}

\vskip2mm

\noindent {\bf Corollary A2.9.} {\it
With notations and conventions as in Lemma A2.8, the maps $H^1(\ell/k , T_i) \to H^1(\ell/k , G)$ for $i = 1, 2$ have the same image.}

\vskip2mm

\begin{proof}
By the lemma, we can find $g \in G(\ell)$ such that the restriction of $\iota := \mathrm{Int}\: g$ induces a $k$-defined isomorphism $T_1 \to T_2$.
Then for any $\sigma \in \mathrm{Gal}(\ell/k)$, we have $g \cdot \sigma(g)^{-1} \in T_2(\ell)$.  It follows that an arbitrary cocycle $\xi(\sigma)$ on $\mathrm{Gal}(\ell/k)$ with values in $T_1(\ell)$ is equivalent in $H^1(\ell/k , G)$ to the cocycle
$$
g \xi(\sigma) \sigma(g)^{-1} = (g \xi(\sigma) g^{-1}) \cdot (g \cdot \sigma(g)^{-1})
$$
which has values in $T_2(\ell)$, and vice versa.
\end{proof}

It is now easy to conclude the proof of part (1) of Proposition \ref{P:F4-1}. Let $\xi \in Z^1(k , G_0)$ be a cocycle such that $G = {}_{\xi}G_0$ contains a maximal $k$-torus $T_1$ that is anisotropic over $k$ and splits over $\ell$. It follows from  Steinberg's theorem (cf. \cite[Prop. 6.19]{PR}) that there exists a $k$-embedding $T_1 \hookrightarrow G_0$ such that $[\xi]$ lies in the image of the corresponding map $H^1(k , T_1) \to
H^1(k , G_0)$. But according to Corollary A2.9, the image of this map coincides with the image of the map $H^1(k , T) \to H^1(k , G_0)$, and the required fact follows.

\vskip1mm

Turning now to part (2) of Proposition \ref{P:F4-1}, we recall that for $S = \mathrm{R}^{(1)}_{\ell/k}(\mathbb{G}_m)$, we have $H^1(k , S) = k^{\times}/N_{\ell/k}(\ell^{\times})$; in particular, we can write $\gamma_2([\zeta]) = b N_{\ell/k}(\ell^{\times})$ for some $b \in k^{\times}$. On the other hand, $\delta(\gamma_1([\zeta]))$ corresponds to a Brauer class $[A] \in H^2(k , \mu_3) =
{}_3\mathrm{Br}(k)$. The algebra $A$ splits over $\ell$, so that the corresponding division algebra has degree dividing $3$. According to \cite[7.4]{GMS}, we have $g_3(G) = [A] \cup (b)$, as required. (We note that since $\ell$ splits $A$, the cup-product does not depend on the choice of $b$ in the coset modulo the norm subgroup $N_{\ell/k}(\ell^{\times})$.)

\vskip3mm

The second result of this subsection is the following.

\vskip2mm

\noindent {\bf Lemma A2.10.} {\it {\rm (1)} $\delta$ is injective.

\noindent {\rm (2)} \parbox[t]{16cm}{Assume that $k$ is complete with respect to a discrete valuation $v$ with  $\mathrm{char}\: k^{(v)} \neq 3$, and let $\mathcal{O}_v$ be the valuation ring in $k = k_v$. Furthermore, assume that the extension $\ell/k$ is unramified, so that there is an $\mathcal{O}_v$-torus $\mathscr{S}_1$ with generic fiber $S_1$. If $x \in H^1(k_v , S_1)$ is such that the image $\delta(x) \in H^2(k , \mu_3)$ is unramified, then $x$ belongs to the image of the map $H^1(\mathcal{O}_v , \mathscr{S}_1) \to H^1(k_v , S_1)$.}}

\vskip2mm

\begin{proof}
(1): We have the following long exact sequence
$$
H^1(k , \mu_3) \stackrel{\alpha}{\longrightarrow} H^1(k , \tilde{S}_1) \longrightarrow H^1(k , S_1) \stackrel{\delta}{\longrightarrow} H^2(k , \mu_3).
$$
Since $\alpha$ is surjective, $\delta$ is injective.

\vskip1mm

(2): Let $\tilde{\mathscr{S}}_1$ be an $\mathcal{O}_v$-torus with generic fiber $\tilde{S}_1$. We first show that the map $$H^2(\mathcal{O}_v , \tilde{\mathscr{S}}_1) \stackrel{\varepsilon}{\longrightarrow} H^2(k_v , \tilde{S}_1)$$ is injective. Consider the $k_v$-tori $T_0 = \mathbb{G}_m$ and $T = R_{\ell/k}(\mathbb{G}_m)$, and let $\mathscr{T}_0$ and $\mathscr{T}$ be the $\mathcal{O}_v$-tori with generic fibers $T_0$ and $T$. The exact sequence
$$
1 \to \tilde{S}_1 \longrightarrow T \stackrel{N}{\longrightarrow} T_0 \to 1,
$$
where $N$ is the norm map associated with the extension $\ell/k$, induces the following commutative diagram with exact rows
$$
\begin{CD}
H^1(\mathcal{O}_v , \mathscr{T}_0) @ >>>  H^2(\mathcal{O}_v , \tilde{\mathscr{S}}_1) @ >>> H^2(\mathcal{O}_v , \mathscr{T}) \\
@ V {\rho_1}VV @ V {\varepsilon} VV @ V {\rho_2} VV \\
H^1(k_v , T_0) @ >>> H^2(k_v , \tilde{S}_1) @ >>> H^2(k_v , T)
\end{CD}.
$$
But $H^1(k_v , T_0) = \{ 1 \}$ by Hilbert's Theorem 90, and $H^1(\mathcal{O}_v , \mathscr{T}_0) = \mathrm{Pic}\: \mathcal{O}_v = \{ 1 \}$. On the other hand, by Shapiro's Lemma, the homomorphism $\rho_2$ can be identified with the homomorphism $$H^2(\mathcal{O}(\ell) , \mathscr{T}_0 \times_{\mathcal{O}_v} \mathcal{O}(\ell)) \to H^2(\ell , \mathscr{T}_0 \times_{k} \ell),$$ where $\mathcal{O}(\ell)$ is the valuation ring of $\ell$. So, the injectivity of $\rho_2$ immediately follows from the injectivity of the canonical map of the Brauer group of a discrete valuation ring to the Brauer group of its field of fractions (cf. \cite[3.6]{CT-unram}, \cite[Ch. IV, \S 2]{Milne-EC}). Now, the injectivity of $\varepsilon$ follows from the above commutative diagram.

Next, we have the following commutative diagram with exact rows
$$
\begin{CD}
H^1(\mathcal{O}_v , \mathscr{S}_1) @ >>> H^2(\mathcal{O}_v , \mu_3) @ >>> H^2(\mathcal{O}_v , \tilde{\mathscr{S}}_1) \\
@ VVV @ V{\omega} VV @ V{\varepsilon} VV \\
H^1(k_v , S_1) @ >{\delta}>> H^2(k_v , \mu_3) @ >>> H^2(k_v , \tilde{S}_1)
\end{CD}.
$$
It is well-known that $\mathrm{Im}\: \omega$ coincides with the subgroup of unramified cohomology classes. Then the required assertion follows from the injectivity of $\delta$ and $\varepsilon$ by a diagram chase.
\end{proof}

\vskip1mm

\noindent {\bf A2.4. Cohomological invariant $g_3$ of forms with good reduction.} The goal of this section is to prove the following.

\vskip2mm

\noindent {\bf Proposition A2.11.} {\it Let $k$ be a field with a discrete valuation $v$ such that $\mathrm{char}\: k^{(v)} \neq 3$. If $G$ is a $k$-form of type $\textsf{F}_4$ that has good reduction at $v$, then the invariant $g_3(G)$ is unramified at $v$.}

\vskip2mm

\begin{proof}
Without loss of generality, we may suppose that $k$ is complete with respect to $v$, and  let $\mathcal{O}_v$ be the valuation ring of $k$. By assumption, there exists  a reductive group $\mathcal{O}_v$-scheme $\mathscr{G}$ with generic fiber $G$. Then the reduction $\underline{G}^{(v)} = \mathscr{G} \times_{\mathcal{O}_v} k^{(v)}$ is the automorphism group of a simple exceptional Jordan $k^{(v)}$-algebra $J^{(v)}$. It follows from \cite[Theorem 58]{Pettersson} that there exists a quadratic extension $\bar{\ell}/k^{(v)}$ such that the algebra $J^{(v)} \otimes_{k^{(v)}} \bar{\ell}$ is isomorphic to the Albert algebra $(A^{(v)} , \bar{\mu})$ obtained by Tits' first construction from a central cubic $k^{(v)}$-algebra $A^{(v)}$ and some $\bar{\mu} \in \bar{\ell}^{\times}$. Let $\ell$ be the unramified extension of $k$ with residue field $\ell^{(v)} = \bar{\ell}$, and let $\mathcal{O}(\ell)$ be the valuation ring of $\ell$. We now consider an Azumaya $\mathcal{O}(\ell)$-algebra $\mathscr{A}$ with residue algebra $A^{(v)}$, and let $\mu \in \mathcal{O}(\ell)^{\times}$ be an element with residue $\bar{\mu}$. Applying Tits' first construction with these $\mathscr{A}$ and $\mu$, we obtain a Jordan $\mathcal{O}(\ell)$-algebra $\mathscr{J}$.
It follows from Hensel's Lemma that the reductive group $\mathcal{O}(\ell)$-scheme $\tilde{\mathscr{G}}$ corresponding to $\mathscr{J}$ is isomorphic to $\mathscr{G} \times_{\mathcal{O}_v} \mathcal{O}(\ell)$. In particular, the generic fiber $\tilde{G}$ of $\tilde{\mathscr{G}}$ is isomorphic to $G \times_k \ell$ and corresponds to the Albert algebra $(A , \mu)$, where $A = \mathscr{A} \otimes_{\mathscr{O}(\ell)} \ell$. Let $\mathrm{res}_{\ell/k}$ denote the restriction map in cohomology. Then it follows from the definition of $g_3$ (see \cite{Rost}) that
$$
\mathrm{res}_{\ell/k}(g_3(G)) = g_3(G \times_k \ell) = [A] \cup (\mu) \in H^3(\ell , \Z/3\Z),
$$
where $[A]$ denotes the class of $A$ in $_{3}\mathrm{Br}(\ell) = H^2(\ell , \mu_3)$. Since by construction $A$ comes from an Azumaya algebra and $\mu \in \mathcal{O}(\ell)^{\times}$, we conclude that $\mathrm{res}_{\ell/k}(g_3(G)) \in H^3(\ell , \Z/3\Z)$ is unramified.
On the other hand, since $\ell/k$ is unramified, we have the following commutative diagram
$$
\begin{CD}
H^3(k , \Z/3\Z) @ > \rho_k >> H^2(k^{(v)} , \mu_3) \\
@ V \mathrm{res}_{\ell/k} VV @ VV \mathrm{res}_{\bar{\ell}/k^{(v)}} V\\
H^3(\ell , \Z/3\Z) @ > \rho_{\ell} >> H^2(\bar{\ell} , \mu_3)
\end{CD}
$$
where $\rho_k$ and $\rho_{\ell}$ are the corresponding residue maps. We have
$$
\rho_{\ell}(\mathrm{res}_{\ell/k}(g_3(G))) = 0 = \mathrm{res}_{\bar{\ell}/k^{(v)}}(\rho_k(g_3(G))).
$$
But since $[\bar{\ell} : k^{(v)}] = 2$, a standard restriction-corestriction argument shows that $2 \cdot \rho_k(g_3(G)) = 0$. On the other hand, $3 \cdot H^2(k^{(v)} , \mu_3) = 0$, so $\rho_k(g_3(G)) = 0$, as required.
\end{proof}


\noindent {\small {\bf Acknowledgements.} We are grateful to Eva Bayer-Fluckiger and Skip Garibaldi for their comments, and to Philippe Gille for making the preprint \cite{Gille-Gos} available to us. Thanks are also due to the anonymous referee whose suggestions helped to improve the exposition. The first-named author was partially supported
by an NSERC research grant. The third-named author was partially supported by NSF grant DMS-2154408.}


\bibliographystyle{amsplain}

\end{document}